\newenvironment{keywords}
{\bgroup\leftskip 20pt\rightskip 20pt \small\noindent{\bf Keywords:} }%
{\par\egroup\vskip 0.25ex}
\newcommand{\BlackBox}{\rule{1.5ex}{1.5ex}}  
\newcommand{\qed}{\hfill\BlackBox\\[2mm]}
\newenvironment{proof}{\par\noindent{\bf Proof\ }}{\hfill\BlackBox\\[2mm]}
\newtheorem{example}{Example} 
\newtheorem{theorem}{Theorem}
\newtheorem{lemma}[theorem]{Lemma} 
\newtheorem{proposition}[theorem]{Proposition} 
\newtheorem{remark}[theorem]{Remark}
\newtheorem{proc}{Algorithm}
\newcommand{\un}{\mathds{1}}
\newcommand{\egaldef}{:=} 
\newcommand{\flens}{\rightarrow} 
\newcommand{\telque}{\ \text{s.t.} \ } 
\newcommand{\R}{\mathbb{R}} 
\newcommand{\Z}{\mathbb{Z}} 
\newcommand{\N}{\mathbb{N}} 
\newcommand{\B}{\mathcal{B}}
\newcommand{\Dcal}{\mathcal{D}}
\newcommand{\Ncal}{\mathcal{N}}
\newcommand{\X}{\mathcal{X}}
\newcommand{\dd}{\,\mathrm{d}}
\newcommand{\e}{\mathrm{e}}
\DeclareMathOperator{\tr}{tr} 
\DeclareMathOperator{\card}{Card} 
\DeclareMathOperator{\sign}{sign} 
\DeclareMathOperator{\argmintmp}{argmin} 
\newcommand{\argmin}{\mathop{\argmintmp}}
\newcommand{\parenj}[1]{\mathopen{}\left( #1  \right) \mathclose{}} 
\newcommand{\parens}[1]{( #1 )} 
\newcommand{\parenb}[1]{\bigl( #1 \bigr)}
\newcommand{\parenB}[1]{\Bigl( #1  \Bigr)}
\newcommand{\parenbb}[1]{\biggl( #1 \biggr)}
\newcommand{\parenBb}[1]{\Biggl( #1  \Biggr)}
\newcommand{\parensq}[1]{\mathopen{}\left({#1}\right)^2\mathclose{}}
\newcommand{\parensqB}[1]{\Bigl( #1 \Bigr)^2}
\newcommand{\parensqbb}[1]{\biggl( #1 \biggr)^2}
\newcommand{\parensqBb}[1]{\Biggl( #1 \Biggr)^2}
\newcommand{\floor}[1]{\mathopen{}\left\lfloor #1 \right\rfloor\mathclose{}}
\newcommand{\floorb}[1]{\bigl\lfloor #1 \bigr\rfloor}
\newcommand{\crochj}[1]{\mathopen{}\left[ #1 \right] \mathclose{}} 
\newcommand{\crochs}[1]{[ #1 ]} 
\newcommand{\crochb}[1]{\bigl[ #1 \bigr]}
\newcommand{\crochB}[1]{\Bigl[ #1 \Bigr]}
\newcommand{\crochbb}[1]{\biggl[ #1 \biggr]}
\newcommand{\crochBb}[1]{\Biggl[ #1 \Biggr]}
\newcommand{\crochsq}[1]{\mathopen{}\left[ #1 \right]^2 \mathclose{}} 
\newcommand{\crochsqb}[1]{\bigl[ #1 \bigr]^2}
\newcommand{\crochsqB}[1]{\Bigl[ #1 \Bigr]^2}
\newcommand{\setj}[1]{\mathopen{}\left\{ #1 \right\} \mathclose{}}
\newcommand{\sets}[1]{\{ #1 \}}
\newcommand{\sset}[1]{\{ #1 \}}
\newcommand{\setb}[1]{\bigl\{ #1 \bigr\}}
\newcommand{\setB}[1]{\Bigl\{ #1  \Bigr\}}
\newcommand{\absj}[1]{\mathopen{} \left\lvert #1 \right\rvert \mathclose{}}
\newcommand{\abss}[1]{\lvert #1 \rvert}
\newcommand{\absb}[1]{\bigl\lvert #1 \bigr\rvert}
\newcommand{\absB}[1]{\Bigl\lvert #1 \Bigr\rvert}
\newcommand{\absBb}[1]{\Biggl\lvert #1 \Biggr\rvert}
\newcommand{\normsq}[1]{\mathopen{}\left \lVert #1 \right\rVert^2 \mathclose{}}
\newcommand{\normsqb}[1]{\bigl \lVert #1 \bigr\rVert^2}
\newcommand{\norms}[1]{\lVert #1 \rVert}
\newcommand{\normb}[1]{\bigl \lVert #1 \bigr\rVert}
\newcommand{\perte}[1]{\ensuremath{\mathopen{}\left \lVert #1 - \bayes \right\rVert^2 \mathclose{}}}
\newcommand{\sperte}[1]{\ensuremath{\norms{ #1 - \bayes }^2}}
\newcommand{\pertes}[1]{\ensuremath{\norms{ #1 - \bayes }^2}}
\newcommand{\prodscal}[2]{\mathopen{}\left\langle #1 , \, #2 \right\rangle\mathclose{}} 
\renewcommand{\P}{\mathbb{P}}
\newcommand{\Prob}{\mathbb{P}} 
\newcommand{\E}{\mathbb{E}} 
\DeclareMathOperator{\var}{Var} 
\DeclareMathOperator{\cov}{Cov} 
\newcommand{\sachant}{\, | \,}
\newcommand{\sachantb}{\, \big| \,}
\newcommand{\bayes}{s}
\newcommand{\ERM}{\widehat{s}}
\newcommand{\M}{\mathcal{M}}
\newcommand{\mM}{m \in \M}
\newcommand{\mo}{m^{\star}} 
\newcommand{\mh}{\widehat{m}}
\newcommand{\mHO}{\widehat{m}_{\HO}}
\DeclareMathOperator{\pen}{pen}
\DeclareMathOperator{\crit}{crit}
\newcommand{\pendim}{\pen_{\mathrm{dim}}}
\newcommand{\critVF}{\crit_{\mathrm{VFCV}}} 
\newcommand{\critCV}{\crit_{\mathrm{CV}}}
\newcommand{\critHO}{\crit_{\HO}}
\newcommand{\critcorrVF}{\crit_{\mathrm{corr},\mathrm{VFCV}}} 
\newcommand{\VF}{\ensuremath{\mathrm{VF}}}
\newcommand{\HO}{\ensuremath{\mathrm{HO}}}
\newcommand{\penVF}{\pen_{\VF}} 
\newcommand{\penLOO}{\pen_{\mathrm{LOO}}} 
\newcommand{\penLPO}{\pen_{\mathrm{LPO}}} 
\newcommand{\critLPO}{\crit_{\mathrm{LPO}}} 
\newcommand{\penHO}{\pen_{\HO}} 
\newcommand{\penid}{\pen_{\mathrm{id}}} 
\newcommand{\critid}{\crit_{\mathrm{id}}} 
\newcommand{\ERMW}{\widehat{s}^{\,W}}
\newcommand{\lamm}{\lambda\in\Lambda_m} 
\newcommand{\lL}{\lambda\in\Lambda}
\newcommand{\lpL}{\lambda^{\prime}\in\Lambda}
\newcommand{\lp}{\lambda^{\prime}}
\newcommand{\psil}{\psi_{\lambda}} 
\newcommand{\psilp}{\psi_{\lambda^{\prime}}}
\newcommand{\Boule}{\mathbb{B}}
\newcommand{\ItHO}{T}
\newcommand{\termeBvar}[1]{\beta\parenj{#1}}
\newcommand{\termeBvaracr}[1]{\mathbf{B}\parenj{#1}}
\newcommand{\cteMCCVa}{C_1^{\mathrm{MC}}}
\newcommand{\cteMCCVb}{C_2^{\mathrm{MC}}}
\newcommand{\cteMCCVi}{C_i^{\mathrm{MC}}}
\newcommand{\ctepenVFa}{C_1^{\mathrm{penVF}}}
\newcommand{\ctepenVFb}{C_2^{\mathrm{penVF}}}
\newcommand{\ctepenVFi}{C_i^{\mathrm{penVF}}}
\newcommand{\cteMCVFa}{C_1^{\mathrm{MCVF}}}
\newcommand{\cteMCVFb}{C_2^{\mathrm{MCVF}}}
\newcommand{\cteVFCVa}{C_1^{\mathrm{VF}}}
\newcommand{\cteVFCVb}{C_2^{\mathrm{VF}}}
\newcommand{\cteGENi}{C_i}
\newcommand{\resteA}{\rho_1}
\newcommand{\resteB}{\rho_2}
\newcommand{\resteMCCV}{\rho_{3}}
\newcommand{\resteCVgal}{\rho_{4}}
\newcommand{\RbVFgal}{R^{\ref{le.conc.TermeBiaisCritVFGen}}}
\newcommand{\nred}{\widetilde{n}}
\newcommand{\Cor}{\ensuremath{C_{\mathrm{or}}}}
\newcommand{\CV}{\mathcal{C}}
\newcommand{\CVho}{\mathcal{C}^{\HO}}
\newcommand{\critEpenid}{\CV_{\rm{id}}}
\newcommand{\Ratio}{\mathfrak{R}}
\newcommand{\CVFCV}{\ensuremath{\CV^\mathrm{VF}}}
\newcommand{\Newdiff}{\ensuremath{\Delta}}
\newcommand{\BS}{h}
\DeclareMathOperator{\SR}{SNR}
\newcommand{\petito}{\mathrm{o}}
\newcommand{\CVMCCV}{\CV^{\mathrm{MCCV}}}
\newcommand{\CteVFCV}{\ensuremath{C^{\mathrm{VF}}}}
\newcommand{\oA}{\overline{A}}
\newcommand{\oB}{\overline{B}}
\newcommand{\oC}{\overline{C}}
\newcommand{\oD}{\overline{D}}
\newcommand{\ctevarA}{a}
\newcommand{\ctevarB}{b}
\newcommand{\inter}[1]{\left\llbracket #1 \right\rrbracket}
\renewcommand{\leq}{\leqslant}
\renewcommand{\le}{\leqslant}
\renewcommand{\geq}{\geqslant}
\renewcommand{\ge}{\geqslant}
\newcommand{\fractrain}{\tau}
\newcommand{\cteformCVb}{\sigma}
\newcommand{\olcteformCVb}{\ensuremath{\overline{\cteformCVb}}}
\newcommand{\cteformHOb}{\cteformCVb^{\HO}}
\newcommand{\cteformVFCVb}{\cteformCVb^{\VF}}
\newcommand{\cteformLPOb}{\cteformCVb^{\mathrm{LPO}}}
\newcommand{\olomega}{\ensuremath{\overline{\omega}}}
\newcommand{\omegaHO}{\omega^{\HO}}
\newcommand{\omegaVFCV}{\omega^{\VF}}
\newcommand{\omegaLPO}{\omega^{\mathrm{LPO}}}
\newcommand{\tabespvert}{\noalign{\vspace*{0.075cm}}}
\newenvironment{proofof}[1]{\par\noindent{\bf Proof of #1\ }}{\hfill\BlackBox\\[2mm]}
\newcommand{\fauxparagraph}[1]{\paragraph{\textbf{\textit{#1.}}}}
\newlength{\largfiguniq}
\begin{document}

\title{Choice of $V$ for $V$-Fold Cross-Validation in Least-Squares Density Estimation}

\author{Sylvain Arlot  \\ \texttt{sylvain.arlot@math.u-psud.fr} \\
       Laboratoire de Math\'ematiques d'Orsay \\
Univ. Paris-Sud, CNRS, Universit\'e Paris-Saclay \\
91405 Orsay, France
       \and\ 
       Matthieu Lerasle \\ \texttt{mlerasle@unice.fr} \\
       CNRS\\
       Univ. Nice Sophia Antipolis LJAD CNRS UMR 7351\\
		06100 Nice France}

\maketitle

\begin{abstract} 
This paper studies $V$-fold cross-validation for model selection in least-squares density estimation. 
The goal is to provide theoretical grounds for choosing $V$ in order to minimize the least-squares loss of the selected estimator. 
We first prove a non-asymptotic oracle inequality for $V$-fold cross-validation and its bias-corrected version ($V$-fold penalization). 
In particular, this result implies that $V$-fold penalization is asymptotically optimal in the nonparametric case. 
Then, we compute the variance of $V$-fold cross-validation and related criteria, as well as the variance of key quantities for model selection performance. 
We show that these variances depend on $V$ like $1+4/(V-1)$, at least in some particular cases, suggesting that the performance increases much from $V=2$ to $V=5$ or $10$, and then is almost constant. 
Overall, this can explain the common advice to take $V=5\,$---at least in our setting and when the computational power is limited---, as supported by some simulation experiments. 
An oracle inequality and exact formulas for the variance are also proved for Monte-Carlo cross-validation, also known as repeated cross-validation, where the parameter $V$ is replaced by the number $B$ of random splits of the data.
\end{abstract}

\begin{keywords}
$V$-fold cross-validation, Monte-Carlo cross-validation, leave-one-out, leave-$p$-out, resampling penalties, density estimation, model selection, penalization
\end{keywords}

\section{Introduction} \label{sec.intro}
Cross-validation methods are widely used in machine learning and statistics, 
for estimating the risk of a given statistical estimator \citep{Sto:1974,All:1974,Gei:1975} and for selecting among a family of estimators. 
For instance, cross-validation can be used for model selection, where a collection of linear spaces is given (the models) and the problem is to choose the best least-squares estimator over one of these models. 
Cross-validation is also often used for choosing hyperparameters of a given learning algorithm. 
We refer to \citet{Arl_Cel:2010:surveyCV} for more references about cross-validation for model selection. 

Model selection can target two different goals: 
(i) \emph{estimation}, that is, minimizing the risk of the final estimator, which is the goal of AIC and related methods, or 
(ii) \emph{identification}, that is, identifying the smallest true model in the family considered, assuming it exists and it is unique, which is the goal of BIC for instance;  
see the survey by \citet{Arl_Cel:2010:surveyCV} for more details about this distinction. 
These two goals cannot be attained simultaneously in general \citep{Yan:2005a}. 

We assume throughout the paper that the goal of model selection is estimation. 
We refer to \citet{Yan:2006,Yan:2007b} and \citet{Cel:2008} for some results and references on cross-validation methods with an identification goal. 

\medbreak

Then, a natural question arises: which cross-validation method should be used for minimizing the risk of the final estimator? 
For instance, a popular family of cross-validation methods is $V$-fold cross-validation \citep[often called $k$-fold cross-validation]{Gei:1975}, which depends on an integer parameter $V$, and enjoys a smaller computational cost than other classical cross-validation methods. 
The question becomes (1) which $V$ is optimal, and (2) can we do almost as well as the optimal $V$ with a small computational cost, that is, a small~$V$? 
Answering the second question is particularly useful for practical applications where the computational power is limited. 

Surprisingly, few theoretical results exist for answering these two questions, especially with a non-asymptotic point of view \citep{Arl_Cel:2010:surveyCV}. 
In short, it is proved in least-squares regression that at first order, $V$-fold cross-validation is suboptimal for model selection (with an estimation goal) if $V$ stays bounded, because $V$-fold cross-validation is biased \citep{Arl:2008a}. 
When correcting for the bias \citep{Bur:1989,Arl:2008a}, we recover asymptotic optimality whatever $V$, but without any theoretical result distinguishing among values of $V$ in second order terms in the risk bounds \citep{Arl:2008a}. 

Intuitively, if there is no bias, increasing $V$ should reduce the variance of the $V$-fold cross-validation estimator of the risk, hence reduce the risk of the final estimator, as supported by some simulation experiments \citep[for instance]{Arl:2008a}. 
But variance computations for unbiased $V$-fold methods have only been made in the asymptotic framework for a fixed estimator, and they focus on risk estimation instead of model selection \citep{Bur:1989}. 

\medbreak

This paper aims at providing theoretical grounds for the choice of $V$ by two means: a non-asymptotic oracle inequality valid for any $V$ (Section~\ref{sec.oracle}) and exact variance computations shedding light on the influence of $V$ on the variance (Section~\ref{sec.variance}). 
In particular, we would like to understand why the common advice in the literature is to take $V=5$ or $10$, based on simulation experiments \citep[for instance]{Bre_Spe:1992,Has_Tib_Fri:2001v2009}.

The results of the paper are proved in the least-squares density estimation framework, because we can then benefit from explicit closed-form formulas and simplifications for the $V$-fold criteria. 
In particular, we show that $V$-fold cross-validation and all leave-$p$-out methods are particular cases of $V$-fold penalties in least-squares density estimation (Lemma~\ref{le.penVF-VFCV}). 

The first main contribution of the paper (Theorem~\ref{thm.oracle-penVF.cas_reel}) is an oracle inequality with leading constant $1+\varepsilon_n$, 
with $\varepsilon_n \to 0$ as $n\to \infty$ for unbiased $V$-fold methods, 
which holds for any value of $V$. 
To the best of our knowledge, Theorem~\ref{thm.oracle-penVF.cas_reel} is the first non-asymptotic oracle inequality for $V$-fold methods enjoying such properties: the leading constant $1+\varepsilon_n$ is new in density estimation, and the fact that it holds whatever the value of $V$ had never been obtained in any framework. 
Theorem~\ref{thm.oracle-penVF.cas_reel} relies on a new concentration inequality for the $V$-fold penalty (Proposition~\ref{prop:concpenvf}).
Note that Theorem~\ref{thm.oracle-penVF.cas_reel} implicitly assumes that the oracle loss is of order $n^{-\alpha}$ for some $\alpha \in (0,1)$, that is, the setting is nonparametric; otherwise, Theorem~\ref{thm.oracle-penVF.cas_reel} may not imply the asymptotic optimality of $V$-fold penalization. 
Let us also emphasize that the leading constant is $1+\varepsilon_n$ whatever $V$ for unbiased $V$-fold methods, 
with $\varepsilon_n$ independent from $V$ in Theorem~\ref{thm.oracle-penVF.cas_reel}. 
So, second-order terms must be taken into account for understanding how the model selection performance depends on $V$. 
Section~\ref{sec.oracle.key-quant} proposes a heuristic for comparing these second order terms thanks to variance comparisons. 
This motivates our next result. 

The second main contribution of the paper (Theorem~\ref{theo.variance.penVF}) is the first non-asymptotic variance computation for $V$-fold criteria that allows to understand precisely how the \emph{model selection performance} of $V$-fold cross-validation or penalization depends on $V$. 
Previous results only focused on the variance of the $V$-fold criterion \citep{Bur:1989,Ben_Gra:2005,Cel:2008:phd,Cel:2008,Cel_Rob:2006}, which is not sufficient for our purpose, as explained in Section~\ref{sec.oracle.key-quant}. 
In our setting, we can explain, partly from theoretical results, partly from a heuristic argument, why taking, say, $V>10$ is not necessary for getting a performance close to the optimum,  
as supported by experiments on synthetic data in Section~\ref{sec.simus}. 

An oracle inequality and exact formulas for the variance are also proved for 
other cross-validation methods: 
Monte-Carlo cross-validation, also known as repeated cross-validation, where the parameter $V$ is replaced by the number $B$ of random splits of the data (Section~\ref{sec.discussion.MCCV}), 
and hold-out penalization (Section~\ref{sec.discussion.hold-out}). 

\fauxparagraph{Notation} 

For any integer $k\geq 1$, $\inter{k}$ denotes $\sets{1,\ldots,k}$. 

For any vector $\xi_{\inter{n}} \egaldef (\xi_1,\ldots,\xi_n)$ and any $B\subset \inter{n}$, $\xi_{B}$ denotes $(\xi_{i})_{i\in B}$, $|B|$ denotes the cardinality of $B$ and $B^{c}=\inter{n}\setminus B$. 

For any real numbers $t,u$, we define $t \vee u \egaldef \max\sset{t,u}$, $u_+ \egaldef u \vee 0$ and $u_-  \egaldef (-u) \vee 0$.

All asymptotic results and notation $\petito(\cdot)$ or $\mathcal{O}(\cdot)$ are for the regime when the number $n$ of observations tends to infinity. 

\section{Least-Squares Density Estimation and Definition of $V$-Fold Procedures} \label{sec.cadre}
This section introduces the framework of the paper, the main procedures studied, and some useful notation. 

\subsection{General Statistical Framework} \label{sec.cadre.general}
Let $\xi,\xi_1,...,\xi_n$ be independent random variables taking value in a Polish space $\X$, with common distribution $P$ and density $\bayes$ with respect to some known measure $\mu$. Suppose that $ \bayes \in L^{\infty}(\mu)$, 
which implies that $\bayes\in L^2(\mu)$. 
The goal is to estimate $\bayes$ from  $\xi_{\inter{n}}=(\xi_1,\ldots,\xi_n)$, that is, to build an estimator $\ERM = \ERM(\xi_{\inter{n}}) \in L^2(\mu)$ such that its loss $\perte{\ERM}$ is as small as possible, where for any $t \in L^2(\mu)$, $\norms{t}^2\egaldef \int_{\X} t^2 \dd\mu$.

Projection estimators are among the most classical estimators in this framework (see, for example, \citealp{Dev_Lor:1993}  and \citealp{Mas:2003:St-Flour}). 
Given a separable linear subspace $S_m$ of $L^2(\mu)$ (called a model), the projection estimator of $\bayes$ onto $S_m$ is defined by 
\begin{equation}
\label{def.ERM}
\ERM_m \egaldef \argmin_{t\in S_m} \setj{ \norms{t}^2 - 2 P_n (t) } \enspace ,
\end{equation}
where $P_n$ is the empirical measure; for any $t \in L^2(\mu)$, $P_n(t) = \int t dP_n = \frac1n\sum_{i=1}^n t\parenj{\xi_i} $. 
The quantity minimized in the definition of $\ERM_m$ is often called the empirical risk, and can be denoted by 
\[ P_n \gamma(t) =  \norms{t}^2 - 2 P_n (t) \qquad \text{where} \quad \forall x \in \X , \, \forall t \in L^2(\mu) , \quad \gamma(t;x)=\norms{t}^2 - 2 t(x) \enspace . \]
The function $\gamma$ is called the least-squares contrast. 
Note that $S_m \subset L^1(P)$ since $\bayes \in L^2(\mu)$.

\subsection{Model Selection} \label{sec.cadre.modsel}
When a finite collection of models $(S_m)_{\mM_n}$ is given, following \citet{Mas:2003:St-Flour}, we want to choose from data one among the corresponding projection estimators $(\ERM_m)_{\mM_n}$. 
The goal is to design a model selection procedure $\mh: \X^n \mapsto \M_n$ so that the final estimator $\widetilde{s}\egaldef\ERM_{\mh}$ has a quadratic loss as small as possible, that is, comparable to the oracle loss $\inf_{\mM_n} \pertes{\ERM_m}$.  
This goal is what is called the estimation goal in the Introduction.
More precisely, we aim at proving that an oracle inequality of the form 
\[ 
\perte{\ERM_{\mh}} \leq C_n \inf_{\mM_n} \setb{ \perte{\ERM_m} } + R_n 
\]
holds with a large probability. 
The procedure $\mh$ is called asymptotically optimal when $R_n$ is much smaller than the oracle loss and $C_n\to 1$, as $n \to +\infty$. In order to avoid trivial cases, we will always assume that $|\M_n|\ge 2$.

\medbreak

In this paper, we focus on model selection procedures of the form 
\[
\mh \egaldef \argmin_{\mM_n}\setb{\crit(m)} \enspace,
\]
where $\crit:\M_n \mapsto \R$ is some data-driven criterion. 
Since our goal is to satisfy an oracle inequality, an ideal criterion is 
\[
\critid(m) = \perte{\ERM_m}-\norms{\bayes}^2 = -2P(\ERM_m)+\norms{\ERM_m}^2 = P \gamma\parenj{\ERM_m} \enspace .
\]

Penalization is a popular way of designing a model selection criterion \citep{Bar_Bir_Mas:1999,Mas:2003:St-Flour} 
\[ \crit(m) = P_n \gamma\parens{\ERM_m} + \pen(m)  \]
for some penalty function $\pen: \M_n \flens \R$, possibly data-driven. 
From the ideal criterion $\critid$, we get the ideal penalty 
\begin{align} 
\label{def.penid}
\penid(m) &\egaldef \critid(m) - P_n\gamma\parenj{\ERM_m} 
= (P-P_n) \gamma\parenj{\ERM_m} 
= 2(P_n-P)(\ERM_m) 
\\
\notag 
&= 2(P_n-P)(\ERM_m-\bayes_m)+2(P_n-P)(\bayes_m) 
= 2 \normsq{ \ERM_m-\bayes_m } +2(P_n-P)(\bayes_m) 
 \enspace , 
\\
\text{where} 
\quad 
\bayes_m &\egaldef 
\argmin_{t \in S_m} \setb{ P\gamma(t)} 
= \argmin_{t\in S_m} \setb{\pertes{t}} 
\notag 
\end{align}
is the orthogonal projection of $\bayes$ onto $S_m$ in $L^2(\mu)$. 
Let us finally recall some useful and classical reformulations of the main term in the ideal penalty \eqref{def.penid}, that proves in particular the last equality in Eq.~\eqref{def.penid}: 
If $\Boule_m = \sets{t\in S_m \telque \norms{t}\leq 1}$ and $(\psil)_{\lamm}$ denotes an orthonormal basis of $S_m$ in $L^2(\mu)$, then 
\begin{equation}
\label{eq.p1.4formules}
\begin{split}
(P_n-P)(\ERM_m-\bayes_m) 
&= \sum_{\lamm} \crochsqb{(P_n-P)(\psil)}
\\
&= \normsq{ \ERM_m-\bayes_m }
= \sup_{t\in \Boule_{m}}\crochsqb{(P_{n}-P)(t)}
\enspace ,  
\end{split}
\end{equation}
where the last equality follows from Eq.~\eqref{eq.sup-CS} in Appendix~\ref{sec.proofs}. 

\subsection{$V$-Fold Cross-Validation} \label{sec.cadre.VFCV}
A standard approach for model selection is cross-validation. 
We refer the reader to \citet{Arl_Cel:2010:surveyCV} for references and a complete survey on cross-validation for model selection. 
This section only provides the minimal definitions and notation necessary for the remainder of the paper. 

For any subset $A\subset \inter{n}$, let
\begin{gather*}
P_{n}^{(A)} \egaldef \frac{1}{|A|}\sum_{i\in A}\delta_{\xi_{i}} 
\quad \text{and} \quad 
\ERM_{m}^{(A)} \egaldef \argmin_{t\in S_m} \setB{ \norms{t}^2 - 2 P^{(A)}_n (t) } 
\enspace .
\end{gather*}
The main idea of cross-validation is data splitting: some $T\subset \inter{n} $ is chosen, one first trains $\ERM_m(\cdot)$ with $\xi_{ T}$, then test the trained estimator on the remaining data $\xi_{ T^c}$. 
The hold-out criterion is the estimator of $\critid(m)$ obtained with this principle, that is, 
\begin{equation}\label{def:crit.HO}
\critHO (m, T) \egaldef P_{n}^{(T^{c})} \gamma\parenj{\ERM_m^{(T)}} = -2P_{n}^{(T^{c})} \parenj{ \ERM_{m}^{(T)}} +\normb{\ERM^{(T)}_{m}}^{2}\enspace , 
\end{equation}
and all cross-validation criteria are defined as averages of hold-out criteria with various subsets $T$. 

Let $V \in \{2, \ldots, n\}$ be a positive integer and let $\B=\B_{\inter{V}}=(\B_1,\ldots,\B_V)$ be some partition of $\inter{n}$. 
The $V$-fold cross-validation criterion is defined by 
\[ \critVF(m,\B) \egaldef \frac{1}{V} \sum_{K=1}^{V}\critHO(m, \B_K^c) \enspace . \]
Compared to the hold-out, one expects cross-validation to be less variable thanks to the averaging over $V$ splits of the sample into $\xi_{ \B_K}$ and $\xi_{ \B_K^c}$.

Since $\critVF(m,\B)$ is known to be a biased estimator of $\E\crochj{\critid(m)}$, \citet{Bur:1989} proposed the bias-corrected $V$-fold cross-validation criterion
\begin{align*} 
\critcorrVF(m, \B) 
&\egaldef \critVF(m,\B)+ P_n \gamma\parenj{ \ERM_m } - \frac{1}{V} \sum_{K=1}^{V} { P_n\gamma\parenj{ \ERM_{m}^{(\B_K^{c})} }  } 
\enspace .
\end{align*}
In the particular case where  $V=n$, this criterion is studied by \citet[Section 7.2.1, p.~204--205]{Mas:2003:St-Flour} under the name cross-validation estimator. 

\subsection{Resampling-Based and $V$-Fold Penalties} \label{sec.cadre.penVF}
Another approach for building general data-driven model selection criteria is penalization with a resampling-based estimator of the expectation of the ideal penalty, as proposed by \citet{Efr:1983} with the bootstrap and later generalized to all resampling schemes \citep{Arl:2009:RP}.   
Let $W \sim \mathcal{W}$ be some random vector of $\R^n$ independent from $\xi_{\inter{n}}$ with 
\[ \frac{1}{n} \sum_{i=1}^n W_i = 1 \enspace , \]
and denote by $P_n^W =  n^{-1}\sum_{i=1}^n W_i\delta_{\xi_i}$ the weighted empirical distribution of the sample. 
Then, the resampling-based penalty associated with $\mathcal{W}$ is defined as 
\begin{equation}\label{def.pen.Res}
\pen_{\mathcal{W}}(m) \egaldef C_{\mathcal{W}} \E_W \crochB{ \parenb{ P_n - P_n^W } \gamma\parenb{\ERMW_m} }\enspace, 
\end{equation}
where $\ERMW_m \in \argmin_{t \in S_m} \sets{ P_n^W \gamma\parens{t}}$, $\E_W\crochs{\cdot}$ denotes the expectation with respect to $W$ only (that is, conditionally to the sample $\xi_{\inter{n}}$), and $C_{\mathcal{W}}$ is some positive constant. 
Resampling-based penalties have been studied recently in the least-squares density estimation framework \citep{Le09}, assuming that $W$ is exchangeable, that is, its distribution is invariant by any permutation of its coordinates. 

Since computing exactly $\pen_{\mathcal{W}}(m)$ has a large computational cost in general for exchangeable $W$, some non-exchangeable resampling schemes were introduced by \citet{Arl:2008a}, inspired by $V$-fold cross-validation: 
given some partition $\B=\B_{\inter{V}}$ of $\inter{n}$, the weight vector $W$ is defined by $W_i=(1-\card(\B_J)/n)^{-1}\un_{i\notin \B_J}$ for some random variable $J$ with uniform distribution over $\inter{V}$. 
Then, $P_n^W = P_n^{(\B_J^{c})}$ so that the associated resampling penalty, called \emph{$V$-fold penalty}, is defined by 
\begin{align} \label{eq.def.penVF}
\penVF(m , \B , x) &\egaldef \frac{x}{V} \sum_{K=1}^V  \crochbb{ \parenB{P_n - P_n^{(\B_K^{c})} } \gamma\parenB{ \ERM_m^{(\B_K^{c})}} }\nonumber\\ 
&= \frac{2 x}{V} \sum_{K=1}^V  { \parenB{P_n^{(\B_K^{c})} - P_n } \parenB{ \ERM_m^{(\B_K^{c})}} }
\end{align}
where $x>0$ is left free for flexibility, which is quite useful according to Lemma~\ref{le.penVF-VFCV} below. 

\subsection{Links Between $V$-Fold Penalties, Resampling Penalties and (Corrected) $V$-Fold Cross-Validation} \label{sec.cadre.links}
In this paper, we focus our study on $V$-fold penalties because Lemma~\ref{le.penVF-VFCV} below shows that formula~\eqref{eq.def.penVF} covers all $V$-fold and resampling-based procedures mentioned in Sections \ref{sec.cadre.VFCV} and \ref{sec.cadre.penVF}.

First, when $V=n$, the only possible partition is $\B_{{\rm LOO}}=\sets{\sets{1}, \ldots, \sets{n}}$, and the $V$-fold penalty is called the leave-one-out penalty $\penLOO(m,x)\egaldef \penVF(m,\B_{{\rm LOO}},x)$. 
The associated weight vector $W$ is exchangeable, hence Eq.~\eqref{eq.def.penVF} leads to all exchangeable resampling penalties since they are all equal up to a deterministic multiplicative factor in the least-squares density estimation framework when $\sum_{i=1}^n W_i = n$, as proved by \citet{Le09}.

For $V$-fold methods, let us assume $\B$ is a regular partition of $\inter{n}$, that is, 
\begin{equation}
\label{hyp.part-reg.exact} 
\tag{\ensuremath{\mathbf{Reg}}}
V = |\B|\ge 2 \text{ divides } n\quad \text{and}\quad \forall K \in \inter{V} , \; \absj{\B_K} = \frac{n}{V}
\enspace . 
\end{equation}
Then, we get the following connection between $V$-fold penalization and cross-validation methods. 
\begin{lemma} \label{le.penVF-VFCV}
For least-squares density estimation with projection estimators, 
under assumption \eqref{hyp.part-reg.exact}, 
\begin{align} 
\label{eq.le.penVF-corrVFCV}  
\critcorrVF(m,\B) &=  P_n\gamma\parenj{\ERM_m} + \penVF\parenj{ m,\B,V-1 } 
\\
\label{eq.le.penVF-VFCV}
\critVF(m,\B) &=  P_n\gamma\parenj{\ERM_m} + \penVF\parenj{ m,\B,V-\frac{1}{2} } 
\\ 
\label{eq.le.penLPO-LPO}
\critLPO(m,p) &=  P_n\gamma\parenj{\ERM_m} +  \penLPO\parenj{ m,p,\frac{n}{p}-\frac{1}{2} }
\\ 
\label{eq.le.penLOO-LPO}
&= P_n\gamma\parenj{\ERM_m} +    \penLOO\parenj{ m,(n-1)\frac{n/p - 1/2}{ n/p - 1} }\\
\notag
&= P_n\gamma\parenj{\ERM_m} +    \penVF\parenj{ m,\B_{{\rm LOO}},(n-1)\frac{n/p - 1/2}{ n/p - 1} }
\end{align}
where for any $p \in \inter{n-1}$, the leave-$p$-out cross-validation criterion is defined by 
\begin{equation*}
\critLPO(m,p) \egaldef \frac{1}{|\mathcal{E}_p|} \sum_{A \in \mathcal{E}_p} P_n^{(A)} \gamma\parenB{\ERM_m^{(A^{c})}}
\qquad \text{with} \qquad 
\mathcal{E}_p \egaldef \setb{A \subset \inter{n} \telque |A|=p} 
\end{equation*}
and the leave-$p$-out penalty is defined by 
\[ \forall x >0 , \quad \penLPO(m,p,x) \egaldef \frac{x}{|\mathcal{E}_p|} \sum_{A \in \mathcal{E}_p} \parenB{ P_n - P_n^{(A^{c})} } \gamma\parenB{\ERM_m^{(A^{c})}} \enspace . \]
\end{lemma}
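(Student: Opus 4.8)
The plan is to reduce every identity to quadratic bookkeeping in the coefficients of the projection estimators, exploiting two facts special to least-squares density estimation. First, for any $A\subset\inter{n}$ one has the closed form $\ERM_m^{(A)}=\sum_{\lamm}P_n^{(A)}\paren{\psil}\psil$, so that $\snorm{\ERM_m^{(A)}}^2=\sum_{\lamm}\croch{P_n^{(A)}\paren{\psil}}^2$ and $P_n^{(B)}\paren{\ERM_m^{(A)}}=\sum_{\lamm}P_n^{(A)}\paren{\psil}P_n^{(B)}\paren{\psil}$ for every $A,B$; in particular $P_n^{(A)}\gamma\paren{\ERM_m^{(A)}}=-\snorm{\ERM_m^{(A)}}^2$ by stationarity of the minimiser. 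Second, for any signed measure $Q$ with $Q(\X)=0$ the quadratic $\snorm{t}^2$ term integrates away, so $Q\gamma(t)=-2Q(t)$. Together these make every criterion additive over $\lamm$ and quadratic in the scalars $P_n^{(A)}\paren{\psil}$, which I will handle coordinate by coordinate.

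Under \eqref{hyp.part-reg.exact}, writing $P_n=V^{-1}P_n^{(\B_K)}+(1-V^{-1})P_n^{(\B_K^c)}$ gives the signed-measure identity $P_n^{(\B_K)}-P_n=(V-1)\paren{P_n-P_n^{(\B_K^c)}}$, and the same splitting for leave-$p$-out yields $P_n^{(A)}-P_n=\frac{n-p}{p}\paren{P_n-P_n^{(A^c)}}$. Equation~\eqref{eq.le.penVF-corrVFCV} is then immediate: by definition $\critcorrVF(m,\B)-P_n\gamma\paren{\ERM_m}=V^{-1}\sum_{K}\paren{P_n^{(\B_K)}-P_n}\gamma\paren{\ERM_m^{(\B_K^c)}}$, and applying the mean-zero rule followed by the splitting identity factors out $V-1$ and leaves exactly $\penVF\paren{m,\B,V-1}$.

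The mechanism behind the half-integer shifts is one variance identity. Let $A'$ range over a family of training sets---either the $V$ complements $\B_K^c$, or all subsets of cardinality $n-p$---and set $\Pi\egaldef\frac1N\sum_{A'}\paren{P_n-P_n^{(A')}}\gamma\paren{\ERM_m^{(A')}}$, where $N$ counts those sets. I claim the average in-sample risk of the sub-estimators exceeds the in-sample risk of the full estimator by exactly $\Pi/2$, i.e. $\frac1N\sum_{A'}P_n\gamma\paren{\ERM_m^{(A')}}-P_n\gamma\paren{\ERM_m}=\Pi/2$. Coordinatewise both sides equal $\sum_{\lamm}\paren{\overline{a_\lambda^2}-c_\lambda^2}$, with $c_\lambda=P_n\paren{\psil}$, $a_\lambda=P_n^{(A')}\paren{\psil}$, and $\overline{a_\lambda^2}$ the average of $a_\lambda^2$ over $A'$; the only input is the averaging identity $\overline{a_\lambda}=c_\lambda$, which holds because all training sets have the same size and each observation belongs to the same number of them. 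Now I decompose $\critVF(m,\B)-P_n\gamma\paren{\ERM_m}$ as the mean-zero term $V^{-1}\sum_K\paren{P_n^{(\B_K)}-P_n}\gamma\paren{\ERM_m^{(\B_K^c)}}$ plus this variance gap: the former equals $(V-1)\Pi$ by the previous paragraph and the latter equals $\Pi/2$, so the sum is $\paren{V-\tfrac12}\Pi=\penVF\paren{m,\B,V-\tfrac12}$, using that $\penVF(m,\B,\cdot)$ is linear, which is \eqref{eq.le.penVF-VFCV}. The identical decomposition for leave-$p$-out, with splitting factor $\frac{n-p}{p}$ in place of $V-1$, gives $\paren{\frac{n-p}{p}+\tfrac12}\Pi=\paren{\frac np-\tfrac12}\Pi$ and hence \eqref{eq.le.penLPO-LPO}.

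Finally, \eqref{eq.le.penLOO-LPO} requires $\Pi$ in closed form, and this combinatorial step is the main obstacle. For leave-$p$-out I must evaluate $\overline{a_\lambda^2}-c_\lambda^2$, the average of $\croch{P_n^{(A^c)}\paren{\psil}}^2$ over $\set{A \telque |A|=p}$ minus $c_\lambda^2$; expanding the square and counting how many subsets contain a given index, respectively a given pair of indices (through $\binom{n-1}{p-1}$ and $\binom{n-2}{p-2}$), collapses this to $\frac{p}{(n-p)(n-1)}\,\widehat\sigma_\lambda^2$, where $\widehat\sigma_\lambda^2$ is the empirical variance of $\paren{\psil(\xi_i)}_{i\in\inter{n}}$. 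The decisive point is that $\widehat\sigma_\lambda^2$ does \emph{not} depend on $p$, so $\penLPO(m,p,x)$ and $\penLOO(m,y)=\penVF(m,\B_{{\rm LOO}},y)$ are both proportional to $\sum_{\lamm}\widehat\sigma_\lambda^2$. Equating the two proportionality constants after substituting $x=\frac np-\tfrac12$ forces $y=(n-1)\frac{n/p-1/2}{n/p-1}$, which is exactly \eqref{eq.le.penLOO-LPO}; the final line is merely the definition $\penLOO=\penVF\paren{\cdot,\B_{{\rm LOO}},\cdot}$. I expect the tracking of the two binomial coefficients---and the verification that the outcome is genuinely $p$-free---to be the only subtle calculation; everything else is linear or quadratic in the coordinates.
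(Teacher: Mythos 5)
Your proposal is correct; I checked the three key computations (the splitting identities, the ``variance gap'' identity, and the binomial counting) and they all hold. For Eq.~\eqref{eq.le.penVF-corrVFCV}--\eqref{eq.le.penLPO-LPO} your route is essentially the paper's, just reorganized: the paper works under the single abstract assumption \eqref{hyp.le.penVF-VFCV.gal} (your ``averaging identity'' $\frac{1}{N}\sum_{A}P_n^{(A^c)}=P_n$, which is indeed what \eqref{hyp.part-reg.exact} or symmetry gives) and expands $\crit_{\mathcal{E}}(m)-P_n\gamma\paren{\ERM_m}$ in one shot after substituting $P_n^{(A)}=\frac{n}{p}P_n-\frac{n-p}{p}P_n^{(A^{c})}$, landing on the common factor $\sum_{\lamm}\sum_{A}\scroch{(P_n^{(A^{c})}\psil)^2-(P_n\psil)^2}$; you instead split the increment into the bias-corrected (mean-zero) part plus the in-sample risk gap, and prove the gap equals half the unit penalty. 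Both computations are coordinatewise quadratic algebra on the same quantity, so this buys a cleaner conceptual reading ($\critVF=\critcorrVF+\tfrac12\penVF(m,\B,1)$, which is how the $1/2$ shift arises) but no new generality. The genuine difference is Eq.~\eqref{eq.le.penLOO-LPO}: the paper's proof simply invokes Lemma~6.11 of \cite{Le09} (exchangeability of the leave-$p$-out weights), with an alternative sketch via \cite[Proposition~2.1]{Cel:2008} relegated to a remark, whereas you derive the needed closed form from scratch by counting $\binom{n-1}{p}$ and $\binom{n-2}{p}$ (equivalently $\binom{n-1}{p-1}$, $\binom{n-2}{p-2}$ for membership of $A$), obtaining $\overline{a_{\lambda}^2}-c_{\lambda}^2=\frac{p}{(n-p)(n-1)}\widehat{\sigma}_{\lambda}^2$ with a $p$-free $\widehat{\sigma}_{\lambda}^2$; this is exactly the content of the paper's Eq.~\eqref{eq:penLPO}, and matching constants does give $y=(n-1)\frac{n/p-1/2}{n/p-1}$. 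Your proof is therefore fully self-contained where the paper's is not, at the price of carrying out the combinatorics that \cite{Le09} and \cite{Cel:2008} supply.
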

Lemma~\ref{le.penVF-VFCV} is proved in Section~\ref{sec.app.proof.lemmeLPO}. 
\begin{remark} \label{rk.le.penVF-VFCV.Alain}
Eq.~\eqref{eq.le.penVF-corrVFCV} was first proved by \citet{Arl:2008a} in a general framework that includes least-squares density estimation, assuming only \eqref{hyp.part-reg.exact}. 
Eq.~\eqref{eq.le.penLOO-LPO} follows from \citet[Lemma~A.11]{Le09} since $\penLPO$ belongs to the family of exchangeable resampling penalties, with weights $W_i\egaldef(1-p/n)^{-1}\un_{i\notin A}$ and $A$ is randomly chosen uniformly over $\mathcal{E}_p$; 
note that $\sum_{i=1}^n W_i =n$ for these weights. 
It can also be deduced from Proposition~3.1 by \citet{Cel:2008}, see Section~\ref{sec.app.proof.lemmeLPO}. 
\end{remark}
\begin{remark} \label{rk.le.penVF-VFCV.Pascal}
It is worth mentioning here the cross-validation estimators studied by \citet[Chapter~7]{Mas:2003:St-Flour}. 
First, the unbiased cross-validation criterion defined by \citet{Rud:1982} is exactly $\critcorrVF(m,\B_{{\rm LOO}})$ \citep[see also][Section 7.2.1]{Mas:2003:St-Flour}. 
Second, the penalized estimator of \citet[Theorem 7.6]{Mas:2003:St-Flour} is the estimator selected by the penalty 
 \[\penLOO \parenj{ m, \frac{(1+\epsilon)^6(n-1)^2}{2 \crochb{ n-(1+\epsilon)^6 } } } \]
for some $\epsilon>0$ such that $(1+\epsilon)^6<n$  \textup{(}see Section~\ref{sec.app.proof.lemmeLPO} for details\textup{)}. 
\end{remark}
So, in the least-squares density estimation framework and assuming only \eqref{hyp.part-reg.exact}, Lemma~\ref{le.penVF-VFCV} shows that it is sufficient to study $V$-fold penalization with a free multiplicative factor $x$ in front of the penalty for studying also $V$-fold cross-validation ($x=V-1/2$), corrected $V$-fold cross-validation ($x=V-1$), the leave-$p$-out ($V=n$ and $x=(n-1)(n/p-1/2)/(n/p-1)$) and all exchangeable resampling penalties. 
For any $C>0$ and $\B$ some partition of $\inter{n}$ into $V$ pieces, taking $x=C(V-1)$, the $V$-fold penalization criterion is denoted by 
\begin{equation}
\label{eq.crit-penVF-gal}
\CV_{(C,\B)}(m) \egaldef P_n\gamma\parenj{\ERM_m}+\penVF \parenb{ m,\B,C(V-1) }
\enspace . 
\end{equation} 
A key quantity in our results is the bias $\E\crochs{\CV_{(C,B)}(m)-\critid(m)}$. 
From Lemma~\ref{lem:exact.formula.penvf} in Section~\ref{sect:proofthmconcpenvf}, we have
\begin{align}\label{eq:ExpPenid}
 \E\crochb{\penVF(m,\B,V-1)}=\E\crochb{\penid(m)}
 = 2\E\crochj{\norms{\ERM_m-\bayes_m}^{2}}
 \enspace,
\end{align}
so that for any $C>0$, 
\begin{equation}
\label{eq.biais.critpenVF}
\E\crochj{ \CV_{(C,\B)}(m) - \critid(m) } 
= 2 (C-1) \E\crochj{\norms{\ERM_m-\bayes_m}^2} 
\enspace .  
\end{equation}
In Sections~\ref{sec.oracle}--\ref{sec.algo}, we focus our study on $V$-fold methods, that is, we study the performance of the $V$-fold penalized estimators $\ERM_{\mh}$, defined by 
\begin{equation}
\label{eq.mh-crit-penVF-gal}
\mh = \mh \parenb{ \CV_{(C,\B)} } =\argmin_{\mM_n}\setj{ \CV_{(C,\B)}(m) }
\enspace,
\end{equation} 
for all values of $V$ and $C>1/2$. 
Additional results on hold-out (penalization) are given in Section~\ref{sec.discussion.hold-out} to complete the picture. 

\section{Oracle Inequalities} \label{sec.oracle}
In this section, we state our first main result, that is, a non-asymptotic oracle inequality satisfied by $V$-fold procedures. 
This result holds for any divisor $V\ge 2$ of $n$, any constant $x=C (V-1)$ in front of the penalty with $C>1/2$, and provides an asymptotically optimal oracle inequality for the selected estimator when $C\to 1$ (assuming the setting is non parametric). 
In addition, as proved by Section~\ref{sec.cadre.links}, it implies oracle inequalities satisfied by leave-$p$-out procedures for all $p$. 

\subsection{Concentration of $V$-Fold Penalties}
Concentration is the key property to establish oracle inequalities. Let us start with some new concentration results for $V$-fold penalties.
\begin{proposition}\label{prop:concpenvf}
Let $\xi_{\inter{n}}$ be i.i.d. real-valued random variables with density $\bayes\in L^{\infty}(\mu)$, $\B$ some partition of $\inter{n}$ into $V$ pieces satisfying \eqref{hyp.part-reg.exact}, $S_m$ a separable linear space of measurable functions and $(\psil)_{\lL_m}$ an  orthonormal basis of $S_m$. 
Define 
\begin{gather*} 
\Boule_m=\setb{t\in S_m \telque \norms{t}\leq 1} 
\qquad 
\Psi_m=\sum_{\lL_m}\psil^{2}=\sup_{t\in\Boule_m}t^2
\qquad 
b_{m}\egaldef\norms{\sqrt{\Psi_m}}_\infty
\\
\Dcal_{m}\egaldef P(\Psi_m) -\norms{\bayes_{m}}^{2} 
=n\E\crochB{\norms{\bayes_m-\ERM_m}^2}
\enspace , 
\end{gather*}
where $\ERM_m$ is defined by Eq.~\eqref{def.ERM}, 
and for any $x,\epsilon>0$, 
\begin{equation*}
\resteA\parenj{m,\epsilon,\bayes,x,n}
\egaldef \frac{\norms{\bayes}_{\infty} x}{ \epsilon n}+\frac{ \parenb{ b_m^2+\norms{\bayes}^2 } x^2}{ \epsilon^3 n^2}
\enspace. 
\end{equation*}
Then, an absolute constant $\kappa$ exists such that for any $x \geq 0$, 
with probability at least $1-8\e^{-x}$, 
for any $\epsilon \in (0,1]$, 
the following two inequalities hold true 
\begin{align}
\label{eq:conc.penvf.exp}
\absj{ \penVF(m,\B,V-1) - \frac{2 \Dcal_{m}}n} &\leq \epsilon\frac{\Dcal_{m}}{n} + \kappa \resteA\parenj{m,\epsilon,\bayes,x,n} 
\\
\label{eq:concpenvf.penid}
\absj{ \penVF(m,\B,V-1) - 2 \norms{\bayes_m-\ERM_m}^2 } &\leq \epsilon\frac{\Dcal_{m}}{n} + \kappa \resteA\parenj{m,\epsilon,\bayes,x,n}
\enspace. 
\end{align}
\end{proposition}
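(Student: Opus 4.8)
The plan is to reduce both estimates to the concentration of a single degenerate $U$-statistic of order two, after rewriting the penalty in closed form. Since $\ERM_m^{(A)}=\sum_{\lamm}P_n^{(A)}(\psil)\,\psil$ for every subsample $A$, the exact formula of \lemref{exact.formula.penvf} expresses the penalty, under \eqref{hyp.part-reg.exact}, as a difference of squared empirical-process suprema,
\[
\penVF(m,\B,V-1)=\frac{2}{V(V-1)}\sum_{K=1}^{V}\norm{\ERM_m^{(\B_K)}-\bayes_m}^2-\frac{2}{V-1}\norm{\ERM_m-\bayes_m}^2,
\]
$\ERM_m^{(\B_K)}$ being the projection estimator built from block $\B_K$ alone. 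Introducing the canonical kernel $h(x,y)=\sum_{\lamm}(\psil(x)-P\psil)(\psil(y)-P\psil)$, which satisfies $\E[h(\xi,y)]=0$ for all $y$ and $\E[h(\xi,\xi)]=\Dcal_m$, I would expand each square, separate diagonal from off-diagonal pairs, and observe that the two averaging scales collapse to the single clean identity
\[
\penVF(m,\B,V-1)=2\norm{\ERM_m-\bayes_m}^2-\frac{2V}{(V-1)n^2}\,W,\qquad W\egaldef\sum_{K\neq L}\ \sum_{i\in\B_K,\,j\in\B_L} h(\xi_i,\xi_j),
\]
$W$ being the degenerate $U$-statistic carried by the pairs lying in distinct blocks.

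With this identity, \eqref{eq:concpenvf.penid} is exactly a deviation bound for $\tfrac{2V}{(V-1)n^2}\absj{W}$. Inequality \eqref{eq:conc.penvf.exp} then follows by writing $\penVF(m,\B,V-1)-2\Dcal_m/n=2\paren{\norm{\ERM_m-\bayes_m}^2-\Dcal_m/n}-\tfrac{2V}{(V-1)n^2}W$ and adding the concentration of $\norm{\ERM_m-\bayes_m}^2=\sup_{t\in\Boule_m}[(P_n-P)t]^2$ around its mean $\Dcal_m/n$. Both remaining objects live on the same two pieces: the diagonal $n^{-2}\sum_i(h(\xi_i,\xi_i)-\Dcal_m)$, a sum of $n$ i.i.d. centred variables whose range and variance are controlled by $b_m^2+\norm{\bayes}^2$ and $\Dcal_m$, which Bernstein's inequality bounds by a term dominated by $\resteA$; and degenerate $U$-statistics in $h$, namely $W$ and the complete one $\sum_{i\neq j}h(\xi_i,\xi_j)$ (from which $W$ may alternatively be recovered by subtracting the $V$ independent within-block $U$-statistics).

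The analytic core is a Bernstein-type concentration inequality for degenerate $U$-statistics of order two, of Houdré--Reynaud-Bouret type, whose kernel functionals I would match to the quantities in $\resteA$. The sup-norm is controlled through Cauchy--Schwarz by $\sup_x\sum_{\lamm}(\psil(x)-P\psil)^2\le(b_m+\norm{\bayes})^2$, giving the $b_m^2+\norm{\bayes}^2$ scale; and --- crucially --- the $L^2$ (Hilbert--Schmidt) size satisfies $\E[h(\xi,\xi')^2]=\|\Gamma_m\|_{\mathrm{HS}}^2\le\|\Gamma_m\|_{\mathrm{op}}\,\tr\Gamma_m\le\norm{\bayes}_\infty\Dcal_m$, where $\Gamma_m$ is the covariance operator of $(\psil)_{\lamm}$ (here I use $\|\Gamma_m\|_{\mathrm{op}}=\sup_{t\in\Boule_m}\var_P(t)\le\norm{\bayes}_\infty$ and $\tr\Gamma_m=\Dcal_m$). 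This spectral bound is what pins the leading deviation at order $\sqrt{\norm{\bayes}_\infty\Dcal_m\,x}/n$ \emph{uniformly in $V$}: the prefactor $V/(V-1)\le2$ and the between/within split only cost bounded constants. On a single event of probability at least $1-8e^{-x}$ --- a union bound over the at most eight one-sided tails of the diagonal and of the two $U$-statistics --- one gets a master deviation free of $\epsilon$; repeated use of Young's inequality $2\sqrt{ab}\le\epsilon a+\epsilon^{-1}b$ to trade the successive scales of the $U$-statistic tail against $\Dcal_m/n$ then yields both inequalities for every $\epsilon\in(0,1]$ simultaneously, the first-order and chaos-order contributions producing the $\epsilon^{-1}$ and $\epsilon^{-3}$ weights of $\resteA$.

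I expect the $U$-statistic step to be the main obstacle: obtaining a two-sided concentration inequality whose kernel functionals are all correctly bounded by $\norm{\bayes}_\infty$, $b_m$ and $\norm{\bayes}$, and checking that neither the restriction to between-block pairs nor the factor $V/(V-1)$ smuggles a hidden dependence on $V$ into the dominant term --- so that the same remainder $\resteA\paren{m,\epsilon,\bayes,x,n}$, written with $n$ rather than $n/V$, serves every value of $V$. By comparison, the algebraic reduction to the $W$-form and the diagonal Bernstein step are routine.
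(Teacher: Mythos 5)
Your proposal is correct and, in its core, coincides with the paper's own proof: your closed-form identity $\penVF(m,\B,V-1)=2\norm{\ERM_m-\bayes_m}^2-\tfrac{2V}{(V-1)n^2}W$ is exactly \lemref{exact.formula.penvf} (with $U(m)=W/n^2$), your concentration of the between-block degenerate $U$-statistic via a Houdr\'e--Reynaud-Bouret inequality is \lemref{concUm}---your spectral bound $\E\croch{h(\xi_1,\xi_2)^2}\leq\norm{\Gamma_m}_{\mathrm{op}}\tr(\Gamma_m)\leq\norm{\bayes}_{\infty}\Dcal_m$ is the same computation as the paper's bound on $\oA^2$, and the remaining functionals $\oB,\oC,\oD$ are controlled by the scales $\norm{\bayes}_{\infty}$, $b_m$, $\norm{\bayes}$ you anticipate---and the two claimed inequalities are assembled by the same triangle-inequality-plus-Young scheme, the factor $2V/(V-1)\leq 4$ killing the $V$-dependence exactly as you say. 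The single point of divergence is secondary: for the fluctuations of $\norm{\ERM_m-\bayes_m}^2$ around $\Dcal_m/n$, the paper does not redo a $U$-statistic argument but invokes a ready-made concentration inequality for squared suprema of empirical processes (Proposition~\ref{prop:concLe09}, from \cite{Ler:2010:mixing}) to obtain \lemref{conc.supPn-P}; your diagonal-Bernstein-plus-complete-off-diagonal-$U$-statistic rederivation of that lemma is also valid and buys self-containedness, at the price of a third concentration event. Two bookkeeping slips in your version should be flagged, both harmless: (i) your union bound costs about $14e^{-x}$ (two Houdr\'e--Reynaud-Bouret events at $6e^{-x}$ each plus a Bernstein event at $2e^{-x}$), not $8e^{-x}$ as in the paper, which needs only two events ($6e^{-x}+2e^{-x}$); (ii) after Young's inequality your diagonal Bernstein step leaves a term of order $(b_m^2+\norm{\bayes}^2)x/(\epsilon n^2)$, which is dominated by $\kappa\,\resteA(m,\epsilon,\bayes,x,n)$ only for $x\geq1$. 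Both are repaired by enlarging the absolute constant $\kappa$ (and shifting $x$ by $\log(7/4)$ for (i)), because the statement is vacuous for $x<\log 8$---there $1-8e^{-x}<0$---so only the regime $x\geq\log 8>1$ matters.
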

Proposition~\ref{prop:concpenvf} is proved in Section~\ref{sect:proofthmconcpenvf}.
Eq.~\eqref{eq:conc.penvf.exp} gives the concentration of the $V$-fold penalty around its expectation $2 \Dcal_m / n = \E\crochs{\penid(m)}$, see Eq.~\eqref{eq:ExpPenid}. 
Eq.~\eqref{eq:concpenvf.penid} gives the concentration of the $V$-fold penalty around the ideal penalty, see Eq.~\eqref{def.penid}.
Optimizing over $\epsilon$, the first order of the deviations of $\penVF(m,\B,V-1)$ around $\penid(m)$ is driven by $\sqrt{\Dcal_m} / n$. 
The deviation term in Proposition~\ref{prop:concpenvf} does not depend on $V$ and cannot therefore help to discriminate between different values of this parameter. 

\subsection{Example: Histogram Models} \label{sec.ex.histos}
Histograms on $\R$ provide some classical examples of collections of models. 
Let $\X$ be a measurable subset of $\R$, $\mu$ denote the Lebesgue measure on $\X$ and $m$ be some countable partition of $\X$ such that $\mu(\lambda)>0$ for any $\lambda\in m$. 
The histogram space $S_{m}$ based on $m$ is the linear span of the functions $(\psil)_{\lambda\in \Lambda_m}$ where 
$\Lambda_m = m$ and for every $\lambda \in m$, 
$\psil = \mu(\lambda)^{-1/2} \un_{\lambda}$. 
More precisely, we illustrate our results with the following examples.
\begin{example}[Regular histograms on $\X=\R$]\label{ex:regular}
\[\M_{n}=\setb{m_{\BS},\BS\in\inter{n}}
\qquad \text{where} \qquad 
\forall \BS\in\inter{n},
\quad 
m_{\BS}=\setj{ \mathopen{} \left[\frac{\lambda}{\BS},\frac{\lambda+1}{\BS} \right) \mathclose{} ,\lambda\in\Z}
\enspace.\]
\end{example}
In Example~\ref{ex:regular}, defining $d_{m_{\BS}} = \BS$ for every $h \in \inter{n}$, for every $\mM_n$,  $\Dcal_m=d_m-\norms{\bayes_{m}}^{2}$ since $\Psi_m$ is constant and equal to $d_m$. 
Therefore, Proposition~\ref{prop:concpenvf} shows that $\penVF(m,\B,V-1)$ is asymptotically equivalent to $ \pendim(m) \egaldef 2 d_m/n$ when $d_m\to \infty$. 
Penalties of the form of $\pendim$ are classical and have been studied for instance by \citet{Bar_Bir_Mas:1999}. 
\begin{example}[$k$-rupture points on $\X=\crochs{0,1}$] \label{ex:krupt} 
\[\M_{n}=\setj{m_{\BS_{\inter{k+1}},x_{\inter{k}}} \telque x_{1}<\cdots<x_{k} \in\inter{n-1} \, \mathrm{and} \, \forall i \in \inter{k+1}, \BS_{i}\in \inter{x_{i}-x_{i-1}}}\enspace,\]
where $x_{0}=0$, $x_{k+1}=n$ and for any $x_{1},\ldots,x_{k}\in \inter{n-1}$ such that $x_{1}<\cdots<x_{k}$ and any $\BS_{\inter{k+1}}\in\N^{k+1}$, $m_{\BS_{\inter{k+1}},x_{\inter{k}}}$ is defined as the union
\[\bigcup_{i\in\inter{k}}\setj{\left[\frac{x_{i-1}}n+\frac{(x_{i}-x_{i-1})(\lambda-1)}{n \BS_{i}},\frac{x_{i-1}}n+\frac{(x_{i}-x_{i-1})\lambda}{n \BS_{i}}\right),\lambda\in\inter{\BS_{i}}}\enspace.\]
In other words, $m_{\BS_{\inter{k+1}},x_{\inter{k}}}$ splits $[0,1]$ into $k+1$ pieces \textup{(}at the $x_i$\textup{)}, and then splits the $i$-th piece into $h_i$ pieces of equal size. 
\end{example}
In Example~\ref{ex:krupt}, the function $\Psi_m$ is constant on each interval $[x_{i-1},x_{i})$, equal to $\BS_{i}$, therefore,  
\[\Dcal_m=\sum_{i=1}^{k+1} \BS_i\P\parenb{\xi\in[x_{i-1}, x_i)}-\norms{\bayes_{m}}^{2}\enspace.\] 

\subsection{Oracle Inequality for $V$-Fold Procedures} \label{sec.discussion.oracle}
In order to state the main result, we introduce the following hypotheses: 
\begin{itemize}
 \item \textit{A uniform bound on the $L^{\infty}$ norm of the $L^2$ ball of the models} 
 \begin{equation} 
\label{hyp.NormSupNorm2}
\tag{\ensuremath{\mathbf{H1}}} 
\forall \mM_n , \qquad 
 b_m \leq \sqrt{n}
\end{equation}
where we recall that $b_m \egaldef\sup_{t\in \Boule_m}\norms{t}_{\infty} $ and $\Boule_m\egaldef\setj{t\in S_m,\norms{t}\leq 1}$.

\item \textit{The family of the projections of $\bayes$ is uniformly bounded}. 
\begin{equation}
\label{hyp.UBbayes}\tag{\ensuremath{\mathbf{H2}}}
\exists a >0 , \quad 
 \forall \mM_n , \qquad \norms{\bayes_m}_{\infty}\leq a
 \enspace,
\end{equation}
\item \textit{The collection of models is nested}. 
\begin{equation}
\label{hyp.Nested}\tag{\ensuremath{\mathbf{H2'}}}
 \forall (m, m^{\prime}) \in \M_{n}^2 , \qquad S_{m}\cup S_{m^{\prime}}\in \left\{S_{m}, S_{m^{\prime}}\right\} 
\end{equation}
\end{itemize}
Hereafter, we define $A \egaldef a\vee \norms{\bayes}_{\infty}$ when \eqref{hyp.UBbayes} holds and $A \egaldef \norms{\bayes}_\infty$ when \eqref{hyp.Nested} holds. 
On histogram spaces, \eqref{hyp.NormSupNorm2}  holds if and only if $\inf_{m\in\M_n}\inf_{\lambda\in m}\mu(\lambda)\geq n^{-1}$, 
and \eqref{hyp.UBbayes} holds with $a=\norms{\bayes}_\infty$. 

\begin{theorem}
\label{thm.oracle-penVF.cas_reel}
Let $\xi_{\inter{n}}$ be i.i.d. real-valued random variables with common density $\bayes \in L^{\infty}(\mu)$, 
$\B$ some partition of $\inter{n}$ into $V$ pieces satisfying \eqref{hyp.part-reg.exact}  
and $(S_m)_{m\in\M_n}$ be a collection of separable linear spaces satisfying \eqref{hyp.NormSupNorm2}. 
Assume that either \eqref{hyp.UBbayes} or \eqref{hyp.Nested} holds true.
Let $C \in (1/2,2]$,  
\(
\delta\egaldef 2(C-1)\) and, for any $x,\epsilon>0$,
\begin{equation*}
\resteB\parenj{\epsilon,\bayes,x,n}
\egaldef 
\frac{A  x}{\epsilon n} + \parenj{ 1 + \frac{ \norms{\bayes}^2}{n} } \frac{x^2}{\epsilon^3 n}
\qquad \text{and} \qquad 
x_{n}=x+\log|\M_{n}|
 \enspace. 
\end{equation*}
For every $\mM_n$, let $\ERM_m$ be the estimator defined by Eq.~\eqref{def.ERM} 
and $\widetilde{s} = \ERM_{\mh}$ where 
\[ 
\mh = \mh \parenb{ \CV_{(C,\B)} }
\] 
is defined by Eq.~\eqref{eq.mh-crit-penVF-gal}.
Then, an absolute constant $\kappa$ exists such that, for any $x>0$, with probability at least $1-\e^{-x}$, for any $\epsilon \in (0,1]$, 
\begin{equation}
\label{eq.thm.oracle-penVF.cas_reel}
\frac{1-\delta_{-}-\epsilon}{1+\delta_{+}+\epsilon} \perte{\widetilde{s}}
\leq 
\inf_{m\in\M_{n}}\perte{\ERM_{m}}+\kappa\resteB(\epsilon,\bayes,x_n,n)
\enspace .
\end{equation}
\end{theorem}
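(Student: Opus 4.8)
The plan is to run the classical penalization argument, using Proposition~\ref{prop:concpenvf} as the only probabilistic input for the penalty and reducing everything to a comparison between $\mh$ and an arbitrary competitor $m\in\M_n$. First I would start from the minimality of $\mh$ in \eqref{eq.mh-crit-penVF-gal}, i.e. $\CV_{(C,\B)}(\mh)\leq\CV_{(C,\B)}(m)$. Writing $\CV_{(C,\B)}(m)=\critid(m)+\Delta(m)$ with $\Delta(m)\egaldef\penVF(m,\B,C(V-1))-\penid(m)$ (which holds since $\critid(m)=P_n\gamma(\ERM_m)+\penid(m)$), this rearranges to $\critid(\mh)\leq\critid(m)+\Delta(m)-\Delta(\mh)$, and using $\critid(m)=\perte{\ERM_m}-\norm{\bayes}^2$ gives, for every $m$, the basic inequality $\perte{\tilde{s}}\leq\perte{\ERM_m}+\Delta(m)-\Delta(\mh)$. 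The whole game is then to bound $\Delta(m)$ from above and $\Delta(\mh)$ from below on a single high-probability event valid for all $m\in\M_n$ simultaneously.

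Next I would split $\Delta$. Using linearity of the penalty in its last argument, $\penVF(m,\B,C(V-1))=C\,\penVF(m,\B,V-1)$, together with $\penid(m)=2\norm{\ERM_m-\bayes_m}^2+2(P_n-P)(\bayes_m)$ from \eqref{def.penid}, I write $\Delta(m)=C\bigl[\penVF(m,\B,V-1)-2\norm{\ERM_m-\bayes_m}^2\bigr]+\delta\norm{\ERM_m-\bayes_m}^2-2(P_n-P)(\bayes_m)$, where $\delta=2(C-1)$. The bracket is controlled by \eqref{eq:concpenvf.penid} of Proposition~\ref{prop:concpenvf}, applied uniformly over $\M_n$ via a union bound (whence $x_n=x+\log|\M_n|$), by $\epsilon\Dcal_m/n+\kappa\resteA$. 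Combining \eqref{eq:conc.penvf.exp} and \eqref{eq:concpenvf.penid} also yields $(1-\epsilon)\Dcal_m/n\leq\norm{\ERM_m-\bayes_m}^2+\kappa\resteA\leq\perte{\ERM_m}+\kappa\resteA$, which I use to turn each $\epsilon\Dcal_m/n$ into a multiple of $\epsilon\,\perte{\ERM_m}$ up to a remainder; this is precisely what produces the $\pm\epsilon$ in the leading constants. Finally $\delta\norm{\ERM_m-\bayes_m}^2\leq\delta_+\perte{\ERM_m}$ for the competitor, while for $\mh$ the sign flips and $-\delta\norm{\ERM_{\mh}-\bayes_{\mh}}^2\leq\delta_-\perte{\tilde{s}}$, using $\norm{\ERM_m-\bayes_m}^2\leq\perte{\ERM_m}$ (Pythagoras).

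The main obstacle is the linear term, namely $-2(P_n-P)(\bayes_m)$ from $\Delta(m)$ and $+2(P_n-P)(\bayes_{\mh})$ from $-\Delta(\mh)$, which I would group as $2(P_n-P)(\bayes_{\mh}-\bayes_m)$. Bounding each $(P_n-P)(\bayes_m)$ in isolation fails, since its fluctuations are of order $\sqrt{x_n/n}$, too large for the remainder $\resteB$. The point is that $\bayes_{\mh}-\bayes_m$ has small $L^2(\mu)$-norm: by the triangle inequality and Pythagoras, $\norm{\bayes_{\mh}-\bayes_m}^2\leq2\paren{\perte{\tilde{s}}+\perte{\ERM_m}}$, with an even cleaner bound under \eqref{hyp.Nested}. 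A Bernstein inequality for the fixed functions $\bayes_{m'}-\bayes_m$, applied uniformly over $m'\in\M_n$, with variance at most $\norm{\bayes}_\infty\norm{\bayes_{m'}-\bayes_m}^2/n$ and sup-norm controlled through \eqref{hyp.NormSupNorm2}–\eqref{hyp.UBbayes} (or \eqref{hyp.Nested}), then lets me apply the arithmetic–geometric inequality $2\sqrt{ab}\leq\epsilon a+\epsilon^{-1}b$ with $a=\norm{\bayes_{\mh}-\bayes_m}^2$. This splits the linear term into $\epsilon\paren{\perte{\tilde{s}}+\perte{\ERM_m}}$, absorbed into the loss, plus a remainder of order $(\norm{\bayes}_\infty+A)x_n/(\epsilon n)$ matching the first term of $\resteB$; the sup-norm contribution goes into the $x_n^2/(\epsilon^3 n)$ term via \eqref{hyp.NormSupNorm2}. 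This absorption, rather than the concentration of the penalty, is the delicate step, and it is exactly where \eqref{hyp.UBbayes} or \eqref{hyp.Nested} is indispensable.

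To conclude, I would collect the bounds: all the $\norm{\ERM_m-\bayes_m}^2$, $\Dcal_m/n$ and linear contributions are now expressed as $\delta_+\perte{\ERM_m}$, $\delta_-\perte{\tilde{s}}$, $\epsilon$-multiples of the two losses, and a remainder which, using $b_m^2\leq n$ from \eqref{hyp.NormSupNorm2} to pass from $\resteA$ to $\resteB$, is bounded by $\kappa\resteB(\epsilon,\bayes,x_n,n)$. This gives $(1-\delta_--\epsilon)\perte{\tilde{s}}\leq(1+\delta_++\epsilon)\perte{\ERM_m}+\kappa\resteB(\epsilon,\bayes,x_n,n)$ on the same event for every $m$; dividing by $1+\delta_++\epsilon>0$ and taking the infimum over $m\in\M_n$ yields \eqref{eq.thm.oracle-penVF.cas_reel}. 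The restriction $C\in(1/2,2]$ ensures $\delta_-<1$, so that $1-\delta_--\epsilon>0$ for $\epsilon$ small and the left-hand coefficient is positive.
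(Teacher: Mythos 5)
Your proposal is correct and follows essentially the same route as the paper's proof: the basic inequality from minimality of $\mh$, the decomposition of $\Delta(m)$ into the concentration term $C[\penVF(m,\B,V-1)-2\snorm{\ERM_m-\bayes_m}^2]$ (handled by Proposition~\ref{prop:concpenvf} with a union bound and $x_n=x+\log\absj{\M_n}$), the bias term $\delta\snorm{\ERM_m-\bayes_m}^2$ absorbed via Pythagoras into $\delta_{\pm}$-multiples of the losses, and the cross term $2(P_n-P)(\bayes_{\mh}-\bayes_m)$ controlled by Bernstein's inequality uniformly over pairs together with the arithmetic--geometric trick, using \eqref{hyp.UBbayes} or \eqref{hyp.Nested} for the sup-norm and \eqref{hyp.NormSupNorm2} to pass from $\resteA$ to $\resteB$. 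Your only (immaterial) deviation is deriving the comparison between $\Dcal_m/n$ and $\snorm{\ERM_m-\bayes_m}^2$ by combining the two inequalities of Proposition~\ref{prop:concpenvf} rather than invoking Lemma~\ref{lem:conc.supPn-P} directly, which is the same estimate.
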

Theorem~\ref{thm.oracle-penVF.cas_reel} is proved in Section~\ref{sec.app.proof.thm-oracle.histo}. 

Taking $\epsilon>0$ small enough in Eq.~\eqref{eq.thm.oracle-penVF.cas_reel}, Theorem~\ref{thm.oracle-penVF.cas_reel} proves that $V$-fold model selection procedures satisfy an oracle inequality with large probability. 
The remainder term can be bounded under the following classical assumption
\begin{equation} 
\label{hyp.PolColMod}\tag{\ensuremath{\mathbf{A3}}} 
\exists a'>0 , \, \forall n\in \N^{\star}, \qquad 
|\M_n|\leq n^{a'}\enspace .
\end{equation}
For instance, \eqref{hyp.PolColMod} holds in Example~\ref{ex:regular} with $a'=1$ and in Example~\ref{ex:krupt} with $a'=k$. 
Under \eqref{hyp.PolColMod}, the remainder term in Eq.~\eqref{eq.thm.oracle-penVF.cas_reel} is bounded by $L (\log n)^2 / (\epsilon^3 n)	$ for some $L>0$, which is much smaller than the oracle loss in the nonparametric case.

The leading constant in the oracle inequality~\eqref{eq.thm.oracle-penVF.cas_reel} is $(1+\delta_+)/(1-\delta_-) + \petito(1)$ by choosing $\epsilon = \petito(1)$, so the first-order behaviour of the upper bound on the loss is driven by $\delta$. 
An asymptotic optimality result can be derived from Eq.~\eqref{eq.thm.oracle-penVF.cas_reel} only if $\delta=\petito(1)$. 
The meaning of $\delta=2(C-1)$ is the amount of bias of the $V$-fold penalization criterion, as shown by Eq.~\eqref{eq.biais.critpenVF}. 
Given this interpretation of $\delta$, the model selection literature suggests that no asymptotic optimality result can be obtained in general when $\delta \neq \petito(1)$ in the nonparametric case  \citep[see, for instance,][]{Sha:1997}. 
Therefore, even if the leading constant $(1+\delta_+)/(1-\delta_-)$ is only an upper bound, we conjecture that it cannot be taken as small as $1+\petito(1)$ unless $\delta=\petito(1)$; 
such a result can be proved in our setting using similar arguments and assumptions as the ones of \citet{Arl:2008a} for instance. 

For bias-corrected $V$-fold cross-validation, that is, $C=1$ hence $\delta=0$, Theorem~\ref{thm.oracle-penVF.cas_reel} shows a first-order optimal non-asymptotic oracle inequality, since the leading constant $(1+\epsilon)/(1-\epsilon)$ can be taken equal to $1+\petito(1)$, and the remainder term is small enough in the nonparametric case, under assumption \eqref{hyp.PolColMod}, for instance. 
Such a result valid with no upper bound on $V$ had never been obtained before in any setting. 

$V$-fold cross-validation is also analyzed by Theorem~\ref{thm.oracle-penVF.cas_reel}, since by Lemma~\ref{le.penVF-VFCV} it corresponds to $C=1+1/(2(V-1))$, hence $\delta=1/(V-1)$. 
When $V$ is fixed, the oracle inequality is asymptotically sub-optimal, which is consistent with the result proved in regression by \citet{Arl:2008a}. 
On the contrary, if $\B=\B_n$ has $V_n$ blocs, with $V_n\to\infty$, Theorem~\ref{thm.oracle-penVF.cas_reel} implies under assumption \eqref{hyp.PolColMod} the asymptotic optimality of $V_n$-fold cross-validation in the nonparametric case. 

\medbreak

The bound obtained in Theorem~\ref{thm.oracle-penVF.cas_reel} can be integrated and we get 
\begin{equation*}
\frac{1-\delta_{-}-\epsilon}{1+\delta_{+}+\epsilon} \E\crochB{\pertes{\widetilde{s}}}
\leq  \E\crochB{\inf_{m\in\M_{n}}\perte{\ERM_{m}}}
+ \kappa^{\prime} \resteB \parenB{ \epsilon,\bayes,\log\parenb{ |\M_n| } }
\end{equation*}
for some absolute constant $\kappa^{\prime}>0$. 

Assuming $C>1/2$ is necessary, according to minimal penalty results proved by \citet{Le09}. 
Assuming $C \leq 2$ only simplifies the presentation; if $C>2$, the same proof shows that Theorem~\ref{thm.oracle-penVF.cas_reel} holds with $\kappa$ replaced by $C \kappa$.

An oracle inequality similar to Theorem~\ref{thm.oracle-penVF.cas_reel} holds in a more general setting, as proved in a previous version of this paper \citep[Theorem~1]{Arl_Ler:2012:penVF.v1}; we state a less general result here for simplifying the exposition, since it does not change the message of the paper. 
First, assumption \eqref{hyp.part-reg.exact} can be relaxed into assuming the partition $\B$ is close to regular, that is, 
\begin{equation}
\label{hyp.part-reg} \tag{\ensuremath{\mathbf{Reg'}}}
\B \text{ is a partition of } \inter{n} \text{ of size $V$ and } 
\sup_{k\in\inter{V}} \absj{ \card(\B_k) - \frac{n}{V} } \leq 1
\enspace , 
\end{equation}
which can hold for any $V \in \inter{n}$. 
Second, data $\xi_1, \ldots, \xi_n$ can belong to a general Polish space $\X$, at the price of some additional technical assumption.

\subsection{Comparison with Previous Works on $V$-Fold Procedures} \label{sec.oracle.previous-works}
Few non-asymptotic oracle inequalities have been proved for $V$-fold penalization or cross-validation procedures. 

Concerning cross-validation, previous oracle inequalities are listed in the survey by \citet{Arl_Cel:2010:surveyCV}. 
In the least-squares density estimation framework, oracle inequalities were proved by \citet{vdL_Dud_Kel:2004} in the $V$-fold case, but compared the risk of the selected estimator with the risk of an oracle trained with $n(V-1)/V$ data. 
In comparison, Theorem~\ref{thm.oracle-penVF.cas_reel} considers the strongest possible oracle, that is, trained with $n$ data. 
Optimal oracle inequalities were proved by \citet{Cel:2008} for leave-$p$-out estimators with $p \ll n$, a case also treated in Theorem~\ref{thm.oracle-penVF.cas_reel} by taking $V=n$ and $C=(n/p-1/2)/(n/p-1)$ as shown by Lemma~\ref{le.penVF-VFCV}. 
If $p \ll n$, $C \sim 1$, hence $\delta=\petito(1)$ and we recover the result of \citet{Cel:2008}.

Concerning $V$-fold penalization, previous results were either valid for $V=n$ only---by \citet[Theorem 7.6]{Mas:2003:St-Flour} and \citet{Le09} for least-squares density estimation, by \citet{Arl:2009:RP} for regressogram estimators---, or for $V$ bounded when $n$ tends to infinity---by \citet{Arl:2008a} for regressogram estimators. 
In comparison, Theorem~\ref{thm.oracle-penVF.cas_reel} provides a result valid for all $V$, except for the assumption that $V$ divides $n$, which can be removed \citep{Arl_Ler:2012:penVF.v1}. 
In particular, the loss bound by \citet{Arl:2008a} deteriorates when $V$ grows, while it remains stable in our result. 
Our result is therefore much closer to the typical behavior of the loss ratio $ \pertes{ \widetilde{s} } / \inf_{\mM_n} \pertes{ \ERM_m } $ of $V$-fold penalization, which usually decreases as a function of $V$ in simulation experiments, see Section~\ref{sec.simus} and the experiments by \citet{Arl:2008a}, for instance.

Theorem~\ref{thm.oracle-penVF.cas_reel} may not satisfactorily address the parametric setting, that is, when the collection 
$(S_m)_{\mM_n}$ contains some fixed true model. 
In such a case, the usual way to obtain asymptotic optimality is to use a model selection procedure targetting identification, that is, taking $C \to +\infty$ when $n \to +\infty$. 
For instance, \citet[Theorem~3.3]{Cel:2008} shows that $\log(n) \ll C \ll n$ is a sufficient condition for such a result. 

\section{How to Compare Theoretically the Performances of Model Selection Procedures for Estimation?}
\label{sec.oracle.key-quant}

The main goal of the paper is to compare the model selection performances of several ($V$-fold) cross-validation methods, when the goal is estimation, that is, minimizing the loss $\pertes{\ERM_{\mh}}$ of the final estimator. 
In this section, we discuss how such a comparison can be made on theoretical grounds, in a general setting. 

\medbreak

For some data-driven function $\CV: \M_n \to \R$, the goal is to understand how $\sperte{\ERM_{\mh(\CV)}}$ depends on $\CV$ when the selected model is 
\begin{equation} 
\label{eq.def.mhCVgal}
\mh(\CV) \in \argmin_{\mM_n} \setb{\CV(m)}
\enspace . 
\end{equation}
From now on, in this section, $\CV$ is assumed to be a cross-validation estimator of the risk, but the heuristic developed here applies to the general case.  

\fauxparagraph{Ideal comparison}
Ideally, for proving that $\CV_1$ is a better method than $\CV_2$ in some setting, we would like to prove that 
\begin{equation} 
\label{eq.ideal-comparison}
\perte{\ERM_{\mh(\CV_1)}} < (1-\varepsilon_n) \perte{\ERM_{\mh(\CV_2)}}
\end{equation}
with a large probability, for some $\varepsilon_n \geq 0$. 

\fauxparagraph{Previous works and their limits}
When the goal is estimation, the classical way to analyze the performance of a model selection procedure is to prove an oracle inequality, that is, to \emph{upper bound} (with a large probability or in expectation) 
\[ 
\perte{\ERM_{\mh(\CV)}} - \inf_{\mM_n} \setj{\perte{\ERM_m}} 
\qquad \text{or} \qquad 
\Ratio_n(\CV) \egaldef \frac{\perte{\ERM_{\mh(\CV)}}} { \inf_{\mM_n} \setb{\perte{\ERM_m}} } 
\enspace . 
\]
Alternatively, asymptotic results show that when $n$ tends to infinity, $\Ratio_n(\CV) \to 1$ (asymptotic optimality of $\CV$) or $\Ratio_n(\CV_1) \sim \Ratio_n(\CV_2)$ (asymptotic equivalence of $\CV_1$ and $\CV_2$); 
see \citet[Section~6]{Arl_Cel:2010:surveyCV} for a review of such results. 
Nevertheless, proving Eq.~\eqref{eq.ideal-comparison} requires a lower bound on $\Ratio_n(\CV)$ (asymptotic or not), which has been done only once for some cross-validation method, to the best of our knowledge. 
In some least-squares regression setting, $V$-fold cross-validation ($\CVFCV$) performs (asymptotically) worse than all asymptotically optimal model selection procedures since $\Ratio_n(\CVFCV) \geq \kappa(V) > 1$ with a large probability \citep{Arl:2008a}. 

The major limitation of all these previous results is that they can only compare $\CV_1$ to $\CV_2$ at first order, that is, according to $\lim_{n \to \infty} \Ratio_n(\CV_1)/\Ratio_n(\CV_2)$, which only depends on the bias of $\CV_i(m)$ ($i=1,2$) as an estimator of $\E\crochs{\sperte{\ERM_m}}$, hence, on the asymptotic ratio between the training set size and the sample size \citep[Section~6]{Arl_Cel:2010:surveyCV}. 
For instance, the leave-$p$-out and the hold-out with a training set of size $(n-p)$ cannot be distinguished at first order, while the leave-$p$-out performs much better in practice, certainly because  its ``variance'' is much smaller. 

\fauxparagraph{Beyond first-order}
So, we must go beyond the first-order of $\Ratio_n(\CV)$ and take into account the variance of $\CV(m)$. 
Nevertheless, proving a lower bound on $\Ratio_n(\CV)$ is already challenging at first order---probably the reason why only one has been proved up to now, in a specific setting only---so the challenge of computing a precise lower bound on the second order term of $\Ratio_n(\CV)$ seems too high for the present paper. 
We propose instead a heuristic showing that the variances of some quantities---depending on $(\CV_i)_{i=1,2}$ and on $\M_n$---can be used as a proxy to a proper comparison of $\Ratio_n(\CV_1)$ and $\Ratio_n(\CV_2)$ at second order. 
Since we focus on second-order terms, from now on, we assume that $\CV_1$ and $\CV_2$ have the same bias, that is, 
\begin{equation}
\label{hyp.heur-var.same-bias}
\tag{\ensuremath{\mathbf{SameBias}}}
\forall \mM_n ,  \quad \E\crochb{\CV_1(m)} = \E\crochb{\CV_2(m)} \enspace . 
\end{equation}
In least-squares density estimation, given Lemma~\ref{le.penVF-VFCV}, this means that for $i \in \sset{1,2}$, 
\[
\CV_i = \CV_{(C,\B_i)}
\]
as defined by Eq.~\eqref{eq.crit-penVF-gal}, with different partitions $\B_i$ satisfying \eqref{hyp.part-reg.exact} with different $V=V_i$, but the same constant $C>0$; $C=1$ corresponds to the unbiased case. 

\fauxparagraph{The variance of the cross-validation criteria is not the correct quantity to look at}
If we were only comparing cross-validation methods $\CV_1, \CV_2$ as estimators of $\E\crochb{\perte{\ERM_m}}$ for every single $\mM_n$, we could naturally compare them through their mean squared errors. 
Under assumption \eqref{hyp.heur-var.same-bias}, this would mean to compare their variances. 
This can be done from Eq.~\eqref{eq.pro.variance.critpenVF} below, but it is not sufficient to solve our problem, since it is known that the best cross-validation estimator of the risk does not necessarily yield the best model selection procedure \citep{Bre_Spe:1992}. 
More precisely, 
the selected model $\mh(\CV)$ defined by Eq.~\eqref{eq.def.mhCVgal} is unchanged when $\CV(m)$ is translated by any random quantity, but such a translation does change $\var\parens{\CV(m)}$ and can make it as large as desired. 
For model selection, what really matters is that 
\begin{equation}
\notag 
\sign \parenb{ \CV(m_1) - \CV(m_2) } = \sign \parenj{ \perte{\ERM_{m_1}} - \perte{\ERM_{m_2}} }
\end{equation}
as often as possible for every $(m_1,m_2) \in \M_n^2$, 
and that most mistakes in the ranking of models occur when $\sperte{\ERM_{m_1}} - \sperte{\ERM_{m_2}}$ is small, so that 
$\sperte{\ERM_{\mh(\CV)}}$ cannot be much larger than $\inf_{\mM_n} \sset{\sperte{\ERM_m}}$. 

\fauxparagraph{Heuristic} 
The heuristic we propose goes as follows. 
For simplicity, we assume that $\mo = \argmin_{\mM_n} \E\crochs{\sperte{\ERM_m}}$ is uniquely defined. 
If the goal was identification, we could directly state that for any $\CV$, the smaller is $\Prob(m=\mh(\CV))$ for all $m \neq \mo$, the better should be the performance of $\mh(\CV)$. 
In this paper, our goal is estimation, but a similar claim can be conjectured by considering ``all $\mM_n$ sufficiently far from $\mo$ in terms of risk'', 
that is, all $\mM_n$ such that $\E\crochs{\sperte{\ERM_m}}$ is significantly worse than $\E\crochs{\sperte{\ERM_{\mo}}}$. 
Indeed, for any $m$ ``close to $\mo$'' in terms of risk, selecting $m$ instead of $\mo$ 
does not significantly change the performance of $\mh(\CV)$; 
on the contrary, for any $m$ ``far from $\mo$'' in terms of risk, selecting $m$ instead of $\mo$ does increase significantly the risk $\E\crochs{\sperte{\ERM_{\mh(\CV)}}}$. 

Then, our idea is to find a proxy for $\Prob(m=\mh(\CV))$, that is, a quantity that should behave similarly as a function of $\CV$ and its ``variance'' properties. 
For all $m,m^{\prime} \in \M_n$, 
let $\Delta_{\CV}(m,m^{\prime}) \egaldef \CV(m) - \CV(m^{\prime})$, 
$\Ncal$ some standard Gaussian random variable and, 
for all $t \in \R$, 
$\overline{\Phi}(t) = \Prob\parenj{\Ncal > t}$. 
Then, for every $\mM_n$ 
\begin{align}
\notag 
& \quad \, \, \Prob\parenb{\mh(\CV) = m} 
= \Prob\parenb{\forall m^{\prime} \neq m , \, \Delta_{\CV}(m,m^{\prime}) < 0 } 
\\
\label{eq.moresimus.heur.approx1}
&\asymp 
\min_{m^{\prime} \neq m} \Prob\parenb{\Delta_{\CV}(m,m^{\prime}) < 0 } 
\\
\label{eq.moresimus.heur.approx2}
&\approx \min_{m^{\prime} \neq m} \Prob\parenj{\E\crochj{ \Delta_{\CV}(m,m^{\prime}) } + \Ncal  \sqrt{\var\parenj{\Delta_{\CV}(m,m^{\prime})}} < 0 }
\\
\notag 
&=   \overline{\Phi} \parenb{ \SR_{\, \CV}(m) }
\quad \text{where} \quad 
\SR_{\, \CV}(m) \egaldef \max_{m^{\prime} \neq m} \frac{\E\crochb{ \Delta_{\CV}(m,m^{\prime}) }}{\sqrt{\var\parenb{\Delta_{\CV}(m,m^{\prime})}}} 
\enspace . 
\end{align}
So, if $\SR_{\, \CV_1}(m) > \SR_{\, \CV_2}(m)$  for all $m$ ``sufficiently far from $\mo$'', $\CV_1$ should be better than $\CV_2$. 
Assuming \eqref{hyp.heur-var.same-bias} holds true and that 
\begin{equation} \label{hyp.heur-var.same-min}
\tag{\ensuremath{\mathbf{SameMin}}}
\setj{\mo} =  \argmin_{\mM_n} \E\crochb{\CV_1(m)} = \argmin_{\mM_n} \E\crochb{\CV_2(m)}
\enspace , 
\end{equation} 
this leads to the following heuristic 
\begin{equation}
\label{eq.heuristic.variance-comp}
\forall m \neq m^{\prime}  , \qquad 
\var\parenj{\Newdiff_{\CV_1}(m,m^{\prime})} < \var\parenj{\Newdiff_{\CV_2}(m,m^{\prime})}
\Rightarrow 
\CV_1 \text{ better than } \CV_2 
\enspace . 
\end{equation}
Indeed, for every $m \neq m^{\prime}$, assumption~\eqref{hyp.heur-var.same-min} implies that $\SR_{\, \CV_i}(m)>0$ for $i=1,2$, hence we can restrict the max in the definition of $\SR_{\, \CV_i}$ to all $m'$ such that $\E\crochs{ \Delta_{\CV_i}(m,m^{\prime}) }$ 
is positive. 
By assumption~\eqref{hyp.heur-var.same-bias}, the numerator in the definition of $\SR_{\, \CV_i}$ does not depend on $i$, hence the ratio is maximal when the denominator is minimal, which leads to Eq.~\eqref{eq.heuristic.variance-comp}. 
Let us make some remarks. 
\begin{itemize}
\item The quantity $\Newdiff_{\CV}(m,m^{\prime})$ appears in relative bounds \citep[Section~1.4]{Cat:2007} which can be used as a tool for model selection \citep{Aud:2004:tech}.

\item Assumptions~\eqref{hyp.heur-var.same-bias} and~\eqref{hyp.heur-var.same-min} hold true in particular in the unbiased case, that is, when $\E\crochs{\CV_i(m)} = \E\crochs{\sperte{\ERM_m}}$ for all $\mM_n$ and $i\in \{1,2\}$. 

\item Assumption~\eqref{hyp.heur-var.same-min} is necessary: Figure~\ref{fig.surpen.LS-Dya2.n500} shows an example where a larger variance corresponds to better performance under assumption~\eqref{hyp.heur-var.same-bias} alone. 

\item As noticed above, the heuristic \eqref{eq.heuristic.variance-comp} should apply when the goal is estimation \emph{and} when the goal is identification, provided that \eqref{hyp.heur-var.same-bias} and~\eqref{hyp.heur-var.same-min} hold true. 
What should depend on the goal is the suitable amount of bias for $\CV_i(m)$ as an estimator of the risk $\E\crochs{\sperte{\ERM_m}}$. 

\item Approximation \eqref{eq.moresimus.heur.approx1} is the strongest one. 
Clearly, inequality $\leq$ holds true. 
The equality case occurs is for a very particular dependence setting, 
that is, when one among the events $ \parens{\sset{\Delta_{\CV}(m,m^{\prime}) < 0}}$, $m^{\prime} \in \M_n$, is included into all the others. 
In general, the left-hand side is significantly smaller than the right-hand side; we conjecture that they vary similarly as a function of $\CV$. 

\item The Gaussian approximation \eqref{eq.moresimus.heur.approx2} for $\Delta_{\CV}(m,m^{\prime})$ does not hold exactly, but it seems reasonable to make it, at first order at least. 

\item The validity of approximations \eqref{eq.moresimus.heur.approx1} and~\eqref{eq.moresimus.heur.approx2} is supported by the numerical experiments of Section~\ref{sec.simus}. 
\end{itemize}
In the heuristic \eqref{eq.heuristic.variance-comp}, all $(m,m^{\prime})$ do not matter equally for explaining a quantitative difference in the performances of $\CV$. 
First, we can fix $m^{\prime} = \mo$, since intuitively, the strongest candidate against any $m \neq \mo$ is $\mo$, which clearly holds in all our experiments, 
see Figures~\ref{fig.variance.SR-vs-Rmo.SiG5-Li01Regu.n100} and~\ref{fig.variance.SR-vs-Rmo.SiG5-Li01Regu.n500} 
in Section~\ref{sec.supmat.simus}. 
Second, as mentioned above, if $m$ and $\mo$ are very close, that is, $\sperte{\ERM_m}/\sperte{\ERM_{\mo}}$ is smaller than the minimal order of magnitude we can expect for $\Ratio_n(\CV)$ with a data-driven $\CV$, taking $m$ instead of $\mo$ does not decrease the performance significantly. 
Third, if $ \overline{\Phi} \parenj{ \SR_{\, \CV}(m) }$ is very small, increasing it even by an order of magnitude will not affect the performance of $\mh(\CV)$ significantly; hence, all $m$ such that, say,  $\SR_{\, \CV}(m) \gg (\log(n))^{\alpha}$ for all $\alpha>0$, can also be discarded.  
Overall, pairs $(m,m^{\prime})$ that  really matter in \eqref{eq.heuristic.variance-comp} are pairs $(m,\mo)$ that are at a ``moderate distance'', in terms of $\E\crochs{ \sperte{\ERM_m} - \sperte{\ERM_{\mo}} }$. 

\section{Dependence on $V$ of $V$-Fold Penalization and Cross-Validation} \label{sec.variance}
Let us now come back to the least-squares density estimation setting. 
Our goal is to compare the performance of cross-validation methods having the same bias, that is, according to Section~\ref{sec.cadre.links}, $\mh(\CV_{(C,\B)})$ with the same constant $C$ but different partitions $\B$, where $\mh(\CV_{(C,\B)})$ is defined by Eq.~\eqref{eq.mh-crit-penVF-gal}. 
\begin{theorem} \label{theo.variance.penVF}
Let $\xi_{\inter{n}}$ be i.i.d. random variables with common density $\bayes\in L^{\infty}(\mu)$, $\B$ some partition of $\inter{n}$ into $V$ pieces satisfying \eqref{hyp.part-reg.exact}, and $(\psil)_{\lL_{m_1}}$, $(\psil)_{\lL_{m_2}}$ two orthonormal families in $L^2(\mu)$. 
For any $m,m' \in \sets{m_1,m_2}$, we define $S_{m}$ 
the linear span of $\parens{\psil}_{\lL_{m}}$, 
$\bayes_m$ the orthogonal projection of $\bayes$ onto $S_m$ 
in $L^2(\mu)$, 

$\Boule_{m}=\{t \in S_{m} \telque \norms{t} \leq 1\}$, 
$\Psi_{m} \egaldef \sup_{t\in\Boule_{m}}t^2$, 
\begin{align*}
 \termeBvar{m,m'} 
 &\egaldef \sum_{\lL_m} \sum_{\lpL_{m'}} \parenbb{\E\crochB{\parenb{\psil(\xi_1)-P\psil}\parenb{\psilp(\xi_1)-P\psilp}}}^2
 \\
\text{and} \quad 
\termeBvaracr{m_1,m_2} 
&\egaldef \termeBvar{m_1 , m_1} + \termeBvar{m_2 , m_2} - 2\termeBvar{m_1 , m_2} 
\enspace .
\end{align*}
Then, for every $C>0$, 
\begin{align}
\label{eq.pro.variance.critpenVF}
 &\var \parenj{ \CV_{(C,\B)}(m_1)} 
 = \frac{2}{n^2}\parenj{1+\frac{4C^2}{V-1}-\frac{\parens{2C-1}^2}{n} } \termeBvar{m_1 , m_1} \\
\notag 
&\qquad +\frac{4}{n}\var \parenj{ \parenj{1+\frac{2C-1}{n}}\bayes_{m_1}(\xi_1)-\frac{2C-1}{2n}\Psi_{m_1}(\xi_1)}
\\ 
\text{and} \quad 
\label{eq.pro.variance.penVF-penid-incrReg}
&\var \parenj{  \CV_{(C,\B)}(m_1) - \CV_{(C,\B)}(m_2)}=\frac{2}{n^2}\parenj{1+\frac{4C^2}{V-1}-\frac{(2C-1)^2}{n}} \termeBvaracr{m_1 , {m_2} }
\\
\notag 
&\qquad +\frac{4}{n} \var \parenj{ \parenj{1+\frac{2C-1}{n}} (\bayes_{m_1}-\bayes_{m_2}) (\xi_1)-\frac{2C-1}{2n}(\Psi_{m_1}-\Psi_{m_2})(\xi_1)}
\end{align} 
where $\CV_{(C,\B)}$ is defined by Eq.~\eqref{eq.crit-penVF-gal}.
\end{theorem}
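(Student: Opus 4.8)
The plan is to reduce everything to a single exact polynomial expression for $\CV_{(C,\B)}(m)$ in the data and then run a Hoeffding--type (ANOVA) variance decomposition. Writing $\hat a_\lambda = P_n\psil$ and using $P_n\gamma(\ERM_m) = -\sum_{\lL_m}\hat a_\lambda^2$ together with the exact formula for $\penVF(m,\B,C(V-1))=C\,\penVF(m,\B,V-1)$ from \lemref{exact.formula.penvf}, I would first rewrite the criterion through the reproducing kernel $h_m(x,y)\egaldef\sum_{\lL_m}\psil(x)\psil(y)$ of $S_m$. A direct computation (grouping the contribution of each fold according to whether two indices fall in the same block of $\B$) gives the compact form
\begin{equation*}
 n^2\,\CV_{(C,\B)}(m) = (2C-1)\sum_{i\sim j}h_m(\xi_i,\xi_j) - \Bigl(1+\tfrac{2C}{V-1}\Bigr)\sum_{i\not\sim j}h_m(\xi_i,\xi_j),
\end{equation*}
where $i\sim j$ means that $i,j$ lie in the same block (including $i=j$) and $i\not\sim j$ the contrary. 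This is the step where assumption \eqref{hyp.part-reg.exact} is used, since it makes all block sizes equal to $n/V$ and the fold weights constant.

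Next I would decompose the kernel. Since $h_m(\xi_i,\xi_i)=\Psi_m(\xi_i)$ depends on one observation, and since $\E[h_m(\xi_1,\xi_2)\mid\xi_1]=\bayes_m(\xi_1)$, I would write, for $i\neq j$, $h_m(\xi_i,\xi_j)=\bayes_m(\xi_i)+\bayes_m(\xi_j)-\norm{\bayes_m}^2+\tilde h_m(\xi_i,\xi_j)$ with the canonical (degenerate) kernel $\tilde h_m(x,y)=\sum_{\lL_m}(\psil(x)-P\psil)(\psil(y)-P\psil)$, which satisfies $\E[\tilde h_m(\xi_1,\xi_2)\mid\xi_1]=0$ and $\E[\tilde h_m(\xi_1,\xi_2)^2]=\termeBvar{\Lambda_m,\Lambda_m}$. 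Substituting this into the display splits $n^2\CV_{(C,\B)}(m)$ into a constant, a \emph{linear} part $L$ (a sum of i.i.d. terms in $\Psi_m(\xi_i)$ and $\bayes_m(\xi_i)$) and a \emph{degenerate} part $Q$. The crucial point, which I would check first, is that $L$ and $Q$ are uncorrelated: this follows from $\E[\tilde h_m(\xi_i,\xi_j)\mid\xi_i]=0$, so that $\Var{\CV_{(C,\B)}(m)}=n^{-4}\paren{\Var{L}+\Var{Q}}$.

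For the linear part I would collect the coefficients of $\sum_i\bayes_m(\xi_i)$ and $\sum_i\Psi_m(\xi_i)$; counting same-block and different-block pairs gives respective multiplicities $2(\tfrac nV-1)$ and $2n\tfrac{V-1}{V}$, and a short simplification shows the $\bayes_m$-coefficient collapses to $-2\bigl(n+(2C-1)\bigr)$, which after factoring reproduces exactly $\tfrac4n\Var{(1+\tfrac{2C-1}{n})\bayes_m(\xi_1)-\tfrac{2C-1}{2n}\Psi_m(\xi_1)}$. For the degenerate part I would use that, for a canonical kernel and i.i.d. data, $\E[\tilde h_m(\xi_i,\xi_j)\tilde h_m(\xi_k,\xi_l)]$ vanishes unless $\set{i,j}=\set{k,l}$; hence $\Var{Q}=2\,\termeBvar{\Lambda_m,\Lambda_m}\sum_{i\neq j}d_{ij}^2$, where $d_{ij}\in\set{2C-1,\;-(1+\tfrac{2C}{V-1})}$ according to the block pattern. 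Inserting the pair counts $\tfrac{n(n-V)}{V}$ (same block, $i\neq j$) and $\tfrac{n^2(V-1)}{V}$ (different blocks) yields the factor $1+\tfrac{4C^2}{V-1}-\tfrac{(2C-1)^2}{n}$, giving Eq.~\eqref{eq.pro.variance.critpenVF}.

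Finally, Eq.~\eqref{eq.pro.variance.penVF-penid-incrReg} requires no new argument: running the same decomposition on the \emph{difference} kernel $h_{m_1}-h_{m_2}$ (with $\bayes_{m_1}-\bayes_{m_2}$ and $\Psi_{m_1}-\Psi_{m_2}$ in the linear part) and using the bilinear identity $\E[(\tilde h_{m_1}-\tilde h_{m_2})^2]=\termeBvar{\Lambda_{m_1},\Lambda_{m_1}}+\termeBvar{\Lambda_{m_2},\Lambda_{m_2}}-2\termeBvar{\Lambda_{m_1},\Lambda_{m_2}}=\termeBvaracr{m_1,m_2}$ gives the stated formula. I expect the main obstacle to be purely bookkeeping: combining the same-block/different-block pair counts weighted by $d_{ij}^2$ and verifying that the resulting rational expression in $C,V,n$ simplifies to the clean factor $1+\tfrac{4C^2}{V-1}-\tfrac{(2C-1)^2}{n}$, while keeping the centering consistent so that the linear/degenerate cross terms genuinely cancel.
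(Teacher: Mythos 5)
Your proposal is correct and is essentially the paper's own proof in a different packaging: the paper likewise expands $\CV_{(C,\B)}(m)$ exactly as a quadratic form in the degenerate kernel $U_m=\tilde h_m$ with same-block/different-block weights (via \lemref{exact.formula.penvf} and Eq.~\eqref{eq:penvf.poids}) plus a linear part, and derives the variance from the same orthogonality relations, packaged there as a general covariance formula (\lemref{CovGen}), with the identical pair counts producing the factor $1+\frac{4C^2}{V-1}-\frac{(2C-1)^2}{n}$. The only differences are organizational---you work with the uncentered kernel and fold the diagonal terms $\Psi_m(\xi_i)$ into the linear part before applying the Hoeffding decomposition, and you treat the increment by rerunning the argument on the difference kernel, whereas the paper keeps the diagonal inside $U_m$ and computes $\cov\paren{\CV_{(C,\B)}(m),\CV_{(C,\B)}(m')}$ bilinearly, specializing to obtain both \eqref{eq.pro.variance.critpenVF} and \eqref{eq.pro.variance.penVF-penid-incrReg}; your verified coefficients (the $\bayes_m$-coefficient $-2(n+2C-1)$ and the pair counts) all match the paper's.
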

Theorem~\ref{theo.variance.penVF} is proved in Section~\ref{sec.proof.variance.main}.

\fauxparagraph{Unbiased case} When $C=1$, Theorem~\ref{theo.variance.penVF} shows that 
\[\var \parenj{ \CV_{(1,\B)}(m_1)-\CV_{(1,\B)}(m_2) } = \ctevarA + \parenj{1+\frac{4}{V-1}-\frac{1}{n}} \ctevarB \] 
for some $\ctevarA, \ctevarB \geq 0$ depending on $n,m_1,m_2$ but not on $V$. 
If we admit that the heuristic \eqref{eq.heuristic.variance-comp} holds true, this implies that the model selection performance of bias-corrected $V$-fold cross-validation improves when $V$ increases, but the improvement is at most in a second order term as soon as $V$ is large. 
In particular, even if $\ctevarA \ll \ctevarB$, the improvement from $V=2$ to $5$ or $10$ is much larger than from $V=10$ to $V=n$, which can justify the commonly used principle that taking $V=5$ or $V=10$ is large enough. 

Assuming in addition that $S_{m_1}$ and $S_{m_2}$ are regular histogram models (Example~\ref{ex:regular} in Section~\ref{sec.ex.histos}) with $d_{m_1}$ that divides $d_{m_2}$, then, by Lemma~\ref{lem:var2} in Section~\ref{sec.app.variance.eval-terms-histos},  
\begin{align*}
\ctevarA &=\frac{4}{n} \parensq{1+\frac{1}{n}} \var \parenb{ \bayes_{m_1}(\xi) - \bayes_{m_2}(\xi)} 
\approx \mathcal{O} \parenj{ \frac{1}{n} \norms{\bayes_{m_1}-\bayes_{m_2}}^2 }
\\
\text{and} \quad 
\ctevarB &= \frac{2}{n^2} \termeBvaracr{m_1,m_2} 
\asymp \norms{\bayes_{m_2}}^2 \frac{d_{m_2}}{n^{2}} 
\enspace . 
\end{align*}
When $d_{m_2} / n$ is at least as large as $\norms{\bayes_{m_1}-\bayes_{m_2}}^2$, we obtain that the first-order term in the variance is of the form $\alpha + \beta/(V-1)$ where $\alpha,\beta>0$ do not depend on $V$ and are of the same order of magnitude, as supported by the numerical experiments of Section~\ref{sec.simus}. 
Then, increasing $V$ from $2$ to $n$ does reduce significantly the variance, by a constant multiplicative factor. 

Let $\critEpenid(m) \egaldef P_n\gamma(\ERM_m)+\E\crochj{\penid(m)}$ be the criterion we could use if we knew the expectation of the ideal penalty. 
From Proposition~\ref{pro.variance} in Section~\ref{sec.app.proof.thm-variance}, 
\begin{multline*}
\var \parenb{ \critEpenid(m_1)-\critEpenid(m_2)}=\frac{2}{n^2}\parenj{1-\frac{1}{n}} \termeBvaracr{m_1 , m_2 }\\
+\frac{4}{n} \var \parenj{ \parenj{1-\frac{1}{n}}(\bayes_{m_1}-\bayes_{m_2})(\xi_1) + \frac{1}{2n}(\Psi_{m_1}-\Psi_{m_2}) (\xi_1)} 
\end{multline*}
which easily compares to formula~\eqref{eq.pro.variance.penVF-penid-incrReg} obtained for the $V$-fold criterion when $C=1$. 
Up to smaller order terms, the difference lies in the first term, where $(1+4/(V-1)-1/n)$ is replaced by $(1-1/n)$ when using the expectation of the ideal penalty instead of a $V$-fold penalty. 
In other words, the leave-one-out penalty---that is, taking $V=n$---behaves like the expectation of the ideal penalty. 

We can also compare Eq.~\eqref{eq.pro.variance.critpenVF} with the asymptotic results 
obtained by \citet{Bur:1989}, 
which imply that for any fixed model $m_1$ 
\[
\var \parenj{ \CV_{(1,\B)}(m_1) - P \gamma\parenj{\ERM_{m_1}} }
= \frac{\gamma_0}{n} + \parenj{ \frac{V}{V-1} \gamma_1 + \gamma_2} \frac{1}{n^2} 
+ \petito\parenj{\frac{1}{n^2}}
\]
with $\gamma_0,\gamma_1,\gamma_2$ that depend on $m_1$ and $\gamma_1>0$. 
Here, putting $C=1$ in Eq.~\eqref{eq.pro.variance.critpenVF} yields a result with a similar flavour, 
valid for all $n \geq 1$, 
even if Eq.~\eqref{eq.pro.variance.critpenVF} computes the variance of a slightly different quantity.

\fauxparagraph{Cross-validation criteria} 
$V$-fold cross-validation and the leave-$p$-out are also covered by Theorem~\ref{theo.variance.penVF}, according to Lemma~\ref{le.penVF-VFCV}, respectively with $C=1+1/(2(V-1))$ and with $V=n$ and $C=1 + 1/(2(n/p-1))$. 
As in the unbiased case, increasing $V$ decreases the variance, and 
if we admit that the heuristic \eqref{eq.heuristic.variance-comp} holds true,
$V$-fold cross-validation performs almost as well as the leave-$(n/V)$-out as soon as $V$ is larger than $5$ or $10$. 

Similarly, the variances of the $V$-fold cross-validation and leave-$p$-out criteria, for instance, can be derived from Eq.~\eqref{eq.pro.variance.critpenVF}. 
In the leave-$p$-out case, we recover formulas obtained by \citet{Cel:2008} and \citet{Cel_Rob:2006}, with a different grouping of the variance components; 
Eq.~\eqref{eq.pro.variance.critpenVF} clearly emphasizes the influence of the bias---through $(C-1)$---on the variance. 
For $V$-fold cross-validation, we believe that Eq.~\eqref{eq.pro.variance.critpenVF} shows in a simpler way how the variance depends on $V$, compared to the result of \citet{Cel_Rob:2006} which was focusing on the difference between $V$-fold cross-validation and the leave-$(n/V)$-out; here the difference can be written 
\[
\frac{8}{n^{2}} \parenj{\frac{1}{V-1}-\frac{1}{n-1}} \parensq{1+\frac{1}{2(V-1)}}  
\termeBvar{m_1 , m_1} 
\enspace.\]
A major novelty in Eq.~\eqref{eq.pro.variance.critpenVF} is also to cover a larger set of criteria, such as bias-corrected $V$-fold cross-validation. 
Note that $\var(\CV_{(C,\B)}(m_1))$ is generally much larger than 
\[ 
\var \parenb{ \CV_{(C,\B)}(m_1)-\CV_{(C,\B)}(m_2) }
\enspace , 
\] 
which illustrates again why computing the former quantity might not help for understanding the model selection properties of $\CV_{(C,\B)}$, as explained in Section~\ref{sec.oracle.key-quant}.  
For instance, comparing Eq.~\eqref{eq.pro.variance.critpenVF} and~\eqref{eq.pro.variance.penVF-penid-incrReg}, changing $\bayes_{m_1}$ into $\bayes_{m_1} - \bayes_{m_2}$ in the second term can reduce dramatically the variance when $\bayes_{m_1}$ and $\bayes_{m_2}$ are close, which happens for the pairs $(m_1,m_2)$ that matter for model selection according to Section~\ref{sec.oracle.key-quant}. 

The variance of other criteria and their increments are computed in subsequent sections of the paper: 
Monte-Carlo cross-validation (Theorem~\ref{thm.var-MCCV} in Section~\ref{sec.discussion.MCCV} 
and Theorem~\ref{thm.var-MCCV-gal} in Section~\ref{sec.supmat.MCCV.var}) 
and 
hold-out penalization (Proposition~\ref{pro.variance.penHO} in Section~\ref{sec.supmat.penHO.variance}).

\begin{remark} 
The term $\termeBvaracr{m_1,m_2}$ does not depend on the choice of particular bases of $S_{m_1}$ and $S_{m_2}$\textup{:} as proved by Proposition~\ref{pro.calcul.termes.variance} in Section~\ref{sec.app.proof.thm-variance} 
\begin{equation}
\notag 
\termeBvaracr{m_1,m_2} = n \var\parenb{\parenj{\ERM_{m_1} - \ERM_{m_2}} (\xi)}- (n+1) \var\parenb{\parens{\bayes_{m_1} - \bayes_{m_2}} (\xi)} 
\enspace .
\end{equation}
\end{remark}

\section{Simulation Study} \label{sec.simus}
This section illustrates the main theoretical results of the paper with some experiments on synthetic data. 

\subsection{Setting} \label{sec.simus.setting}
In this section, we take $\X=[0,1]$ and $\mu$ is the Lebesgue measure on $\X$. 
Two examples are considered for the target density $\bayes$ and for the collection of models $(S_m)_{\mM_n}$. 

\emph{Two density functions} $\bayes$ are considered, see Figure~\ref{fig.cadres}: 
\begin{itemize}
\item Setting L: 
$\bayes(x) = \frac{10 x}{3} \un_{0 \leq x < 1/3} + \parens{ 1 + \frac{x}{3} } \un_{1 \geq x \geq 1/3}$. 
\item Setting S: 
$\bayes$ is the mixture of the piecewise linear density $x \mapsto (8 x - 4 ) \un_{1 \geq x \geq 1/2}  $ (with weight 0.8) and four truncated Gaussian densities with means $(k/10)_{k=1 , \ldots , 4}$ and standard deviation $1/60$ (each with weight 0.05). 
\end{itemize}
\begin{figure}
\begin{minipage}[b]{.48\linewidth}
\includegraphics[width=\textwidth]{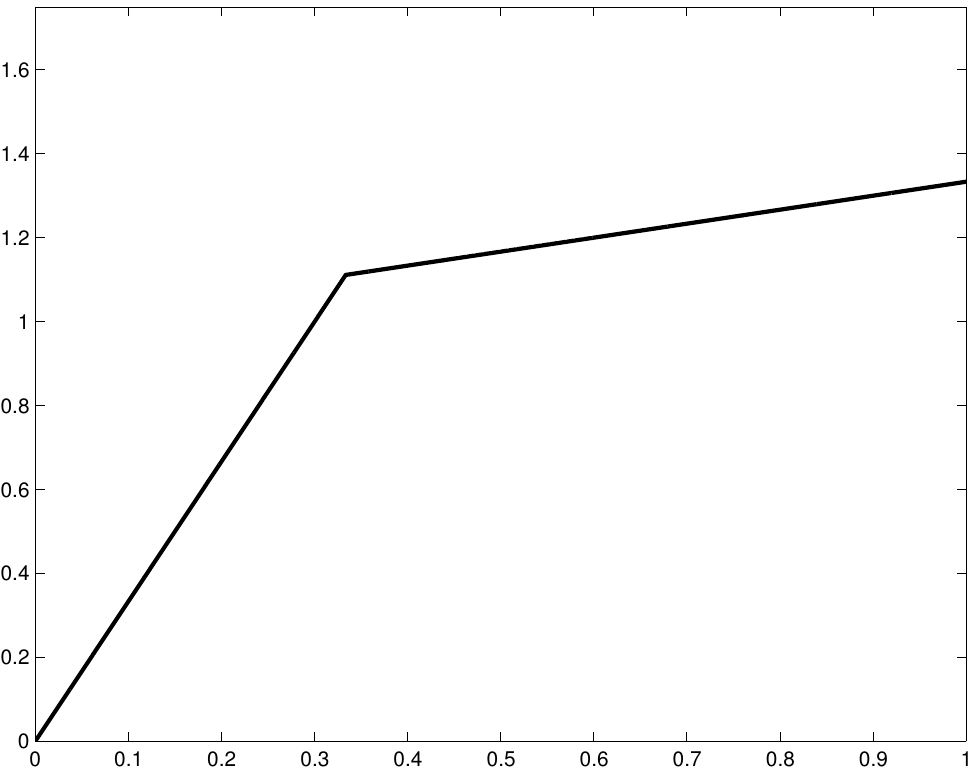}
\end{minipage}
\hspace{.025\linewidth}
\begin{minipage}[b]{.48\linewidth}
\includegraphics[width=\textwidth]{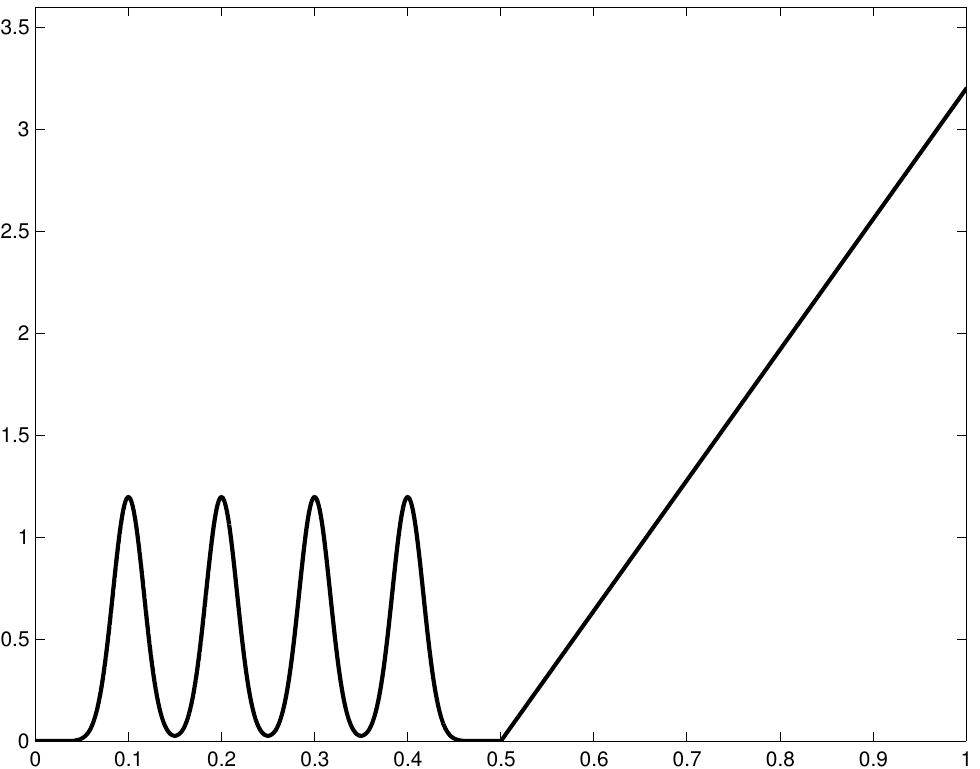}
\end{minipage}
\caption{The two densities considered. Left: setting L. Right: setting S.
\label{fig.cadres}
}
\end{figure}

\emph{Two collections of models} are considered, both leading to histogram estimators: for every $\mM_n$, $S_m$ is the set of piecewise constant functions on some partition $\Lambda_m$ of  $\X$. 
\begin{itemize}
\item ``Regu'' for regular histograms: $\M_n = \setj{1, \ldots, n}$ where for every $\mM_n$, $ \Lambda_m $ is the regular partition of $[0,1]$ into $m$ bins. 
\item ``Dya2'' for dyadic regular histograms with two bin sizes and a variable change-point: 
\[ 
\M_n = \bigcup_{ k \in \sets{1, \ldots, \nred } } \sets{ k } \times \setB{ 0 , \ldots, \floorb{\log_2(k)} }  \times \setB{ 0 , \ldots, \floorb{\log_2( \nred - k)} } 
\]
where $\nred=\floor{n/\log(n)}$ and for every $(k,i,j) \in \M_n$, $\Lambda_{(k,i,j)}$ is the union of the regular partition of $[0,k/\nred)$ into $2^i$ pieces and the regular partition of $[k/\nred,1]$ into $2^j$ pieces. 
\end{itemize}

The difference between ``Regu'' and ``Dya2'' can be visualized on Figure~\ref{fig.oracles.Regu-Dya2.SiG5}, on which the corresponding oracle estimators $\ERM_{\mh^{\star}}$ 
have been plotted for one sample in setting S, where \[ 
\mh^{\star} \in \argmin_{\mM_n} \, \pertes{\ERM_m} 
\enspace . \]
While ``Regu'' is one of the simplest and most classical collections for density estimation, the flexibility of ``Dya2'' allows to adapt to the variability of the smoothness of $\bayes$. Intuitively, in settings L and S, the optimal bin size is smaller on $[0,1/2]$ (where $\bayes$ is varying fastly) than on $[1/2,1]$ (where $\absj{\bayes^{\prime}}$ is much smaller). 
\begin{figure}
\begin{minipage}[b]{.48\linewidth}
\includegraphics[width=\textwidth]{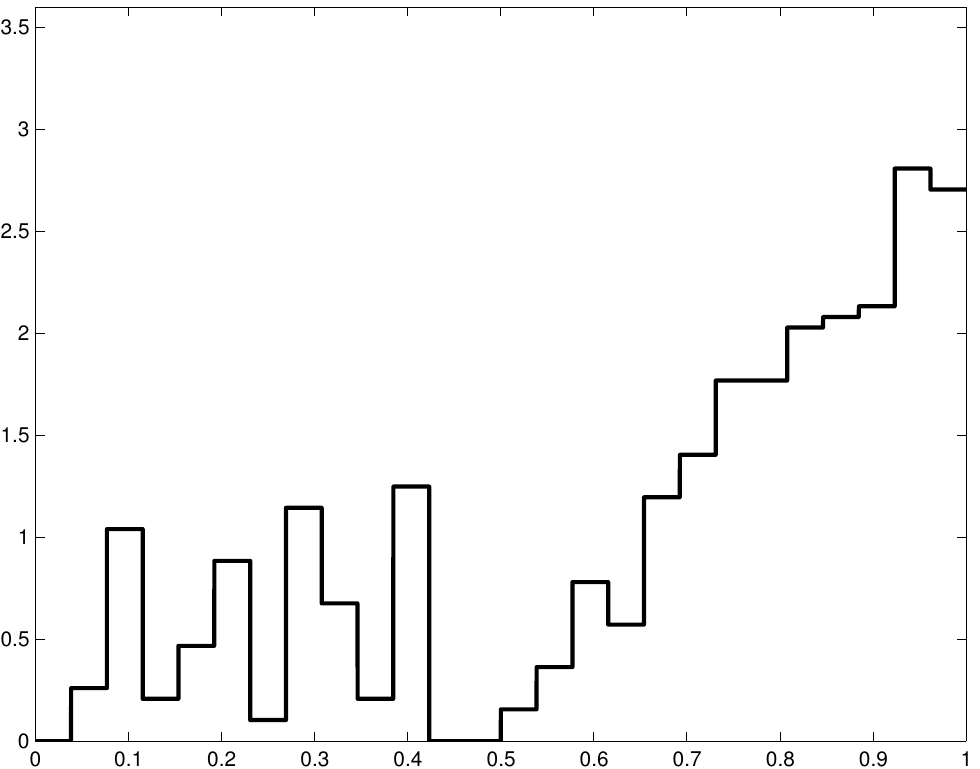}
\end{minipage}
\hspace{.025\linewidth}
\begin{minipage}[b]{.48\linewidth}
\includegraphics[width=\textwidth]{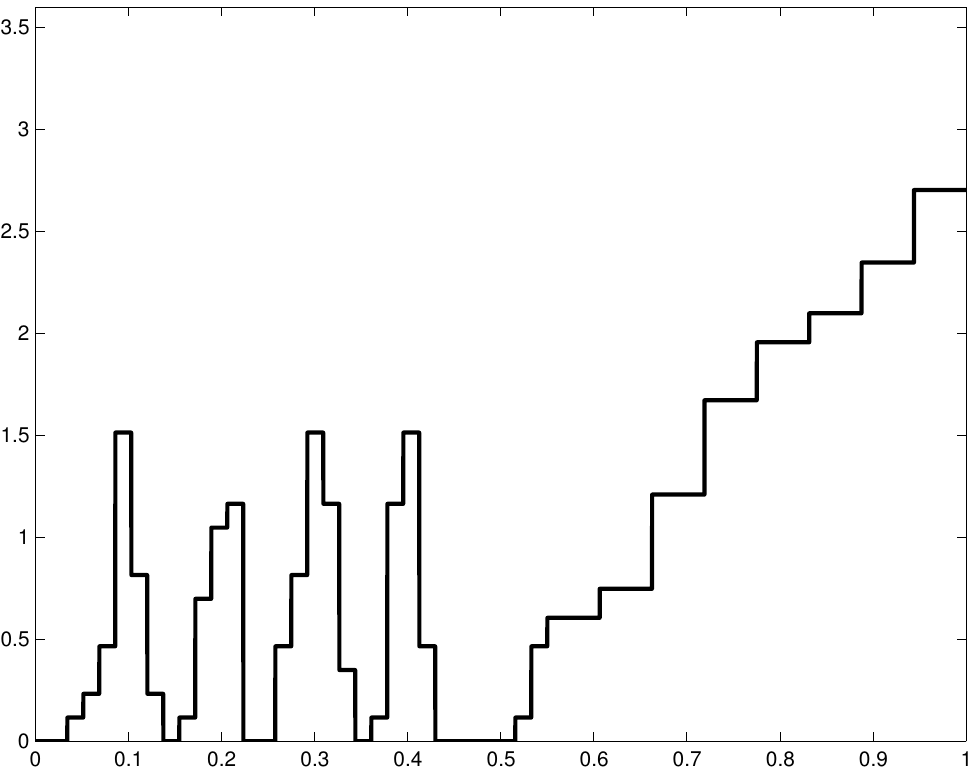}
\end{minipage}
\caption{Oracle estimator for one sample of size $n=500$, in setting S. Left: Regu. Right: Dya2. 
\label{fig.oracles.Regu-Dya2.SiG5}
}
\end{figure}

Another point of comparison of Regu and Dya2 is given by Table~\ref{tab.comp.Regu-Dya2}, that reports values of the quadratic risks obtained depending on the collection of models considered. 
Table~\ref{tab.comp.Regu-Dya2} shows that in settings L and S, the collection Dya2 helps reducing the quadratic risk by approximately 20\% (when comparing the best data-driven procedures of our experiment), and even more when comparing oracle estimators (30\% in setting S, 59\% in setting L). 
Therefore, in settings L and S, it is worth considering more complex collections of models (such as Dya2) than regular histograms. 
\newlength{\cellulewidth}
\setlength{\cellulewidth}{0.16\textwidth}
\begin{table} 
\begin{center}
\begin{tabular}{l@{\hspace{0.025\textwidth}} c @{\hspace{0.025\textwidth}} c @{\hspace{0.025\textwidth}} c @{\hspace{0.025\textwidth}} c}
\hline
\tabespvert
Setting      & \makebox[\cellulewidth][c]{Oracle(Regu)}   & \makebox[\cellulewidth][c]{Oracle(Dya2)}    & \makebox[\cellulewidth][c]{Best(Regu)}      & \makebox[\cellulewidth][c]{Best(Dya2)}    \\
\tabespvert
\hline
\tabespvert
L            & \makebox[\cellulewidth][c]{$13.4 \pm 0.1$} & \makebox[\cellulewidth][c]{$\phantom{4}5.46 \pm 0.02$} & \makebox[\cellulewidth][c]{$\phantom{1}25.8 \pm 0.1$}  & \makebox[\cellulewidth][c]{$19.4 \pm 0.1$} \\
S            & \makebox[\cellulewidth][c]{$62.4 \pm 0.1$} & \makebox[\cellulewidth][c]{$43.9\phantom{6} \pm 0.1\phantom{0}$}  & \makebox[\cellulewidth][c]{$100.9 \pm 0.2$} & \makebox[\cellulewidth][c]{$83.4 \pm 0.2$} \\
\tabespvert
\hline
\end{tabular} 
\end{center}
\caption{Comparison of Regu and Dya2: quadratic risks $\E\crochs{ \pertes{ \ERM_{\mh}} }$ of ``Oracle'' and ``Best'' estimators \textup{(}multiplied by $10^{3}$\textup{)} with the two collections of models. ``Best'' means that $\mh$ is the data-driven procedure minimizing $\E\crochs{ \pertes{ \ERM_{\mh} } }$ among all the data-driven procedures we considered in our experiments \textup{(}see Section~\ref{sec.simus.proc}\textup{)}. ``Oracle'' means that $\mh \in \argmin_{\mM_n} \perte{ \ERM_m}$ is the oracle model for each sample. \label{tab.comp.Regu-Dya2}}
\end{table}

Let us finally remark that Dya2 does not reduce the quadratic risk in all settings as significantly as in settings L and S. 
We performed similar experiments with a few other density functions, sometimes leading to less important differences between Regu and Dya2 in terms of risk (results not shown). 
The oracle model was always better with Dya2, but in two cases, the risk of the best data-driven procedure with Dya2 was larger than with Regu by 6 to 8\%.

\subsection{Procedures Compared} \label{sec.simus.proc}
In each setting, we consider the following model selection procedures:
\begin{itemize}
\item $\pendim$ \citep{Bar_Bir_Mas:1999}: penalization with $\pen(m) = 2 \card(\Lambda_m) / n$.
\item $V$-fold cross-validation with $V \in \sset{2,5,10,n}$, see Section~\ref{sec.cadre.VFCV}.
\item $V$-fold penalties (with leading constant $x=V-1$, that is, bias-corrected $V$-fold cross-validation), for $V \in \setj{2,5,10,n}$, see Section~\ref{sec.cadre.penVF}. 
\item for comparison, penalization with $\E\crochs{\penid(m)}$, that is, $\mh(\critEpenid)$. 
\end{itemize}

Since it is often suggested to multiply the usual penalties by some factor larger than one \citep{Arl:2008a}, we consider all penalties above multiplied by a factor $C \in [0,10]$. 
Complete results can be found in Section~\ref{sec.supmat.simus}.  

\subsection{Model Selection Performances} \label{sec.simus.modsel}
In each setting, all procedures are compared on $N=10\,000$ independent synthetic data sets of size $n=500$. 
For measuring their respective model selection performances, for each procedure $\mh(\CV)$ we estimate
\[ \Cor(\CV) \egaldef \E\crochb{ \Ratio_n(\CV) } = \E\crochj{ \frac{\perte{\ERM_{\mh(\CV)}}}{ \inf_{\mM_n} \perte{ \ERM_m } } } \]
by the corresponding average over the $N$ simulated data sets; 
$ \Cor(\CV)$ represents the constant that would appear in front of an oracle inequality. 
The uncertainty of estimation of $\Cor(\CV)$ is measured by the empirical standard deviation of $\Ratio_n(\CV)$ divided by $\sqrt{N}$. 
The results are reported in Table~\ref{tab.simus-results} for settings L and S, with the collection Dya2. 
\begin{table} 
\begin{center}
\begin{tabular}
{p{0.15\textwidth}@{\hspace{0.05\textwidth}}p{0.14\textwidth}@{\hspace{0.05\textwidth}}p{0.12\textwidth}}
\hline
\tabespvert
Procedure          & \makebox[\linewidth][c]{L--Dya2}             & \makebox[\linewidth][c]{S--Dya2}              \\
\tabespvert
\hline
\tabespvert
$\pendim$              & $\phantom{1}8.27 \pm 0.07$      & $3.21 \pm 0.01$       \\
\tabespvert
\hline
\tabespvert
pen2F         & $10.21 \pm 0.08$ & $2.39 \pm 0.01$ \\
pen5F         & $\phantom{1}7.47 \pm 0.06$                  & $2.16 \pm 0.01$ \\
pen10F        & $\phantom{1}6.89 \pm 0.06$                  & $2.11 \pm 0.01$ \\
penLOO        & $\phantom{1}6.35 \pm 0.05$                  & $2.06 \pm 0.01$ \\
\tabespvert
\hline
\tabespvert
2FCV               & $\phantom{1}6.41 \pm 0.05$       & $2.05 \pm 0.01$       \\
5FCV               & $\phantom{1}6.27 \pm 0.05$       & $2.05 \pm 0.01$       \\
10FCV              & $\phantom{1}6.24 \pm 0.05$       & $2.05 \pm 0.01$       \\
LOO                & $\phantom{1}6.34 \pm 0.05$       & $2.06 \pm 0.01$       \\
\tabespvert
\hline\hline
\tabespvert
$\E\crochs{\penid}$ & $\phantom{1}6.52 \pm 0.05$       & $2.07 \pm 0.01$       \\
\tabespvert
\hline
\end{tabular} 
\end{center}
\caption{Estimated model selection performances, see text. `LOO' is a shortcut for `leave-one-out', that is, $V$-fold with $V=n=500$. 
\label{tab.simus-results}}
\end{table}

Results for Regu are not reported here since dimensionality-based penalties are already known to work well with Regu \citep{Le09}, so $V$-fold methods cannot improve significantly their performance, with a larger computational cost. Complete results (including Regu, with $n=100$ and $n=500$) are given in Tables~\ref{tab.complet.Li01.SiG5} and~\ref{tab.complet.Li01.SiG5.n100} in Section~\ref{sec.supmat.simus}, showing that the performances of $\pendim$ and $V$-fold methods indeed are very close. 

\fauxparagraph{Performance as a function of $V$}
Let us first consider $V$-fold penalization. 
In both settings L and S, as suggested by our theoretical results, $\Cor$ decreases when $V$ increases. 
The improvement is large when $V$ goes from 2 to 5  ($27$\% for L, $10$\% for S) and small when $V$ goes from 5 to 10 and when $V$ goes from 10 to $n=500$ (each time, $8$\% for L, $2$\% for S). 
Since the main influence of $V$ is on the variance of the $V$-fold penalty, these experiments support our interpretation of Theorem~\ref{theo.variance.penVF} in Section~\ref{sec.variance}: increasing $V$ helps much more from 2 to 5 or 10 than from 10 to $n$.

The picture is less clear for $V$-fold cross-validation, for which almost no difference is observed among $V \in \setj{2,5,10,n}$---less than $2\%$---, and \Cor\ is minimized for $V\in \setj{5,10}$. 
Indeed, increasing $V$ simultaneously decreases the bias and the variance of the $V$-fold cross-validation criterion, leading to various possible behaviours of \Cor\ as a function of $V$, depending on the setting. 
The same phenomenon has been observed in regression \citep{Arl:2008a}. 

\fauxparagraph{Overpenalization}
In all settings considered in this paper, $V$-fold penalization 
performs much better when multiplying the penalty by $C>1$, as illustrated by Figure~\ref{fig.surpen.LS-Dya2.n500}. 
In particular, the best overpenalization factor for $\penLOO$ is $C^{\star}_n\approx 2.5$ for L-Dya2 and $C^{\star}_n\approx 1.4$ for S-Dya2, when $n=500$. 
Such a phenomenon, which can also be observed in regression \citep{Arl:2008a},  
is related to the fact that some nonparametric model selection problems are ``practically parametric'', using the terminology of \citet{Liu_Yan:2011}, that is, BIC beats AIC and the optimal $C$ is closer to $\log(n)/2$ than to~$1$. 
For instance, Figure~\ref{fig.surpen.LS-Dya2.n500} shows that L-Dya2 is practically parametric, while S-Dya2 is practically nonparametric since AIC beats BIC and the optimal $C$ is close to~1.

Given an overpenalization factor $C$ close to its optimal value $C^{\star}_n$, $V$-fold penalization performs significantly better than $V$-fold cross-validation in settings S-Dya2 and L-Dya2 (Figure~\ref{fig.surpen.LS-Dya2.n500}). 
Since $V$-fold cross-validation corresponds to taking 
\[ 
C=\CteVFCV(V) \egaldef 1 + \frac{1}{2(V-1)} 
\] 
according to Lemma~\ref{le.penVF-VFCV}, this mostly means that $\CteVFCV(V)$ is not close to $C^{\star}_n$ in these settings. 
In addition, when $C\approx C^{\star}_n$ is fixed, increasing $V$ always improves the performance of $V$-fold penalization, as predicted by the heuristic of Section~\ref{sec.oracle.key-quant} and the theoretical results of Section~\ref{sec.variance}. 
Let us emphasize that this fact does not depend on the parametricness of the setting: although the value of $C^{\star}_n$ is quite different for S-Dya2 and L-Dya2, in both cases, we observe qualitatively the same relationship between $V$ and the performance of the procedure. 

\begin{figure}
\begin{center}
\begin{minipage}[b]{.48\linewidth}
\includegraphics[width=\textwidth]{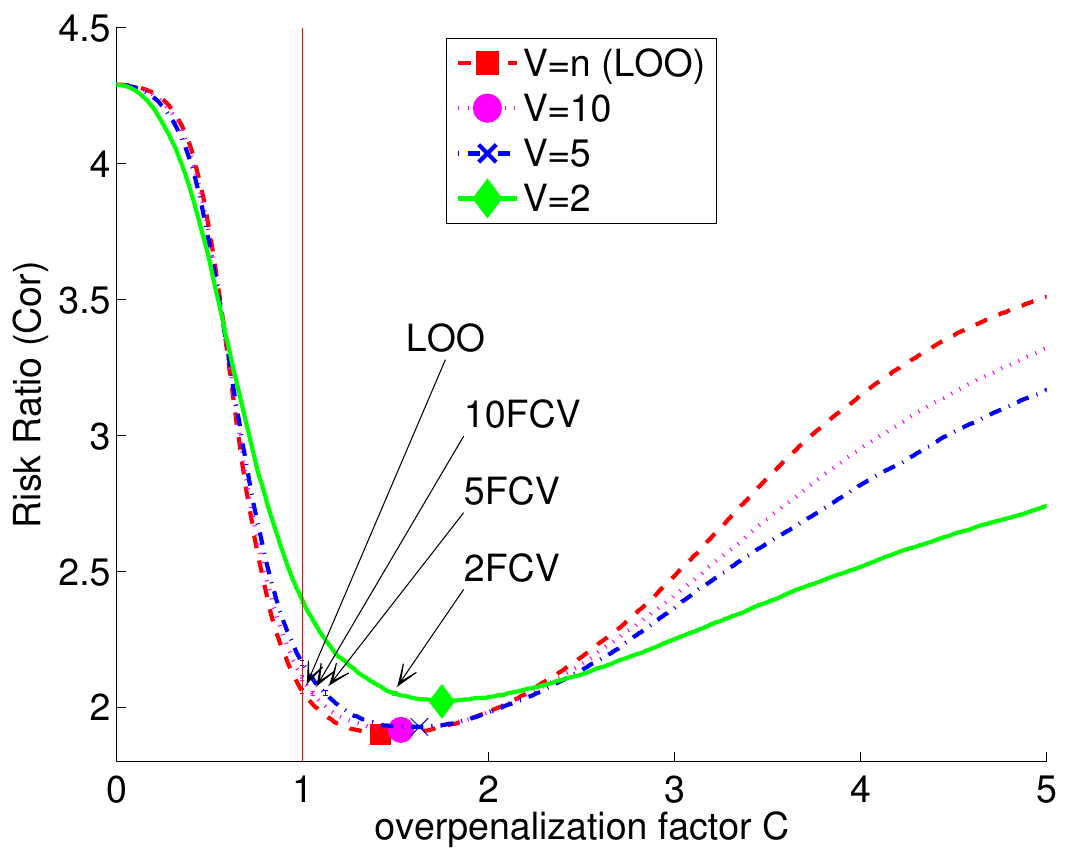}
\end{minipage}
\hspace{.025\linewidth}
\begin{minipage}[b]{.48\linewidth}
\includegraphics[width=\textwidth]{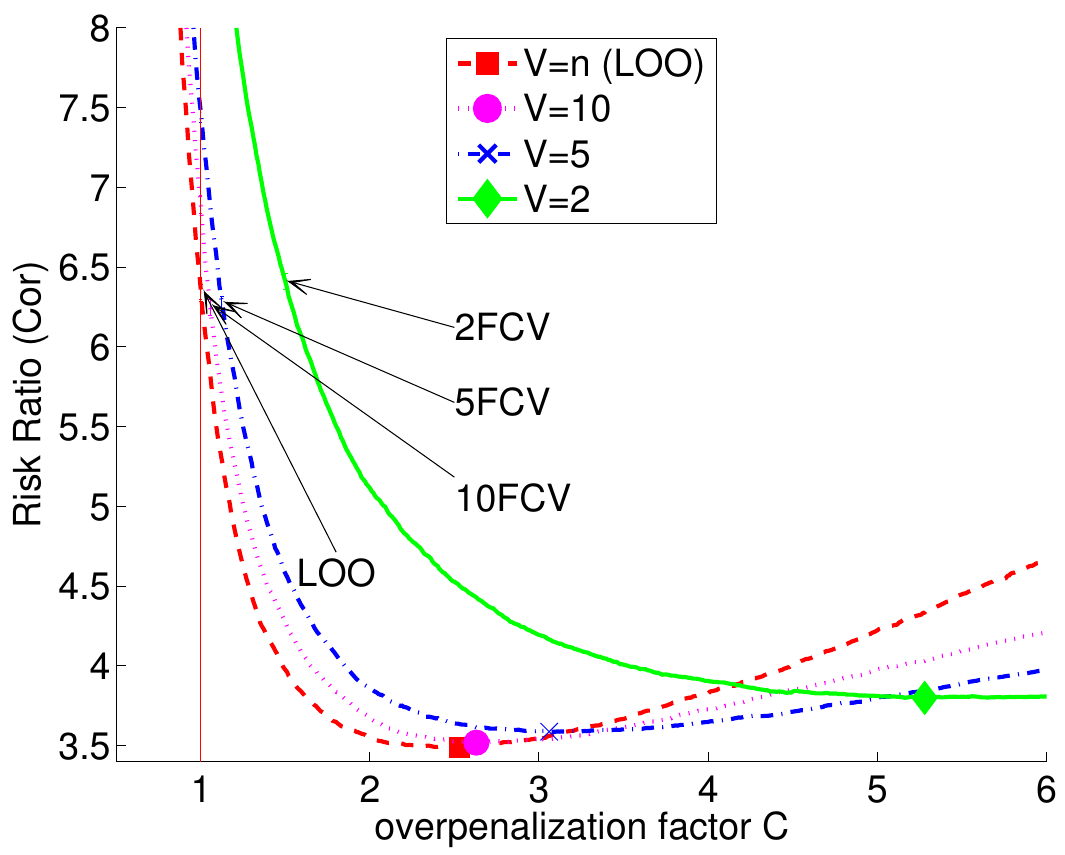}
\end{minipage}
\end{center}
\caption{%
Overpenalization in settings S-Dya2 (left) and L-Dya2 (right), with $n=500$ in both cases. 
Each plot represents the estimated model selection performance $\Cor(\CV_{(C,\B)})$ of several penalization procedures, as a function of the overpenalization constant $C$; unbiased risk estimation ($C=1$) is materialized by a vertical red line. 
For each value of $V$, the estimated optimal value of $C$ is shown on the graph; 
some arrows also show the performance of $V$-fold cross-validation, that is, $C=1+1/[2(V-1)]$. 
Error bars are not shown for clarity; Table~\ref{tab.simus-results} shows their order of magnitude, which is smaller than visible differences in the above graph. 
The performance obtained with the penalty $\E[\penid(m)]$ (not shown on the graph) is almost the same as with the leave-one-out penalty. 
\label{fig.surpen.LS-Dya2.n500}
}
\end{figure}

The results reported in Section~\ref{sec.supmat.simus} lead to similar conclusions in several other settings, as well as unshown results in a truly parametric setting, with a true model of dimension~2. 
Although a wider simulation study would be necessary to get general conclusions, this suggests at least that the heuristic of Section~\ref{sec.oracle.key-quant} and the theoretical results of Section~\ref{sec.variance} can be applied to both parametric and nonparametric settings.

Figure~\ref{fig.surpen.LS-Dya2.n500} also helps understanding how the performance of $V$-fold cross-validation depends on $V$ in Table~\ref{tab.simus-results}. 
Indeed, the performance of $V$-fold cross-validation for each value of $V$ can be visualized on Figure~\ref{fig.surpen.LS-Dya2.n500} by taking the point of abscissa $C=\CteVFCV(V)$ on the curve associated with $V$-fold penalization. 
Two phenomena are coupled when $C\le C^{\star}_n$, which always holds in our simulations for $V$-fold cross-validation since 
$\max_{V}\CteVFCV(V) = 1.5$ and the estimated value of $C^{\star}_n$ is always larger. 
(i) The performance improves when $V$ is fixed and $C$ gets closer to $C^{\star}_n$. 
(ii) The performance improves when $C$ is fixed and $V$ increases. 
Even if both phenomena (i) and (ii) seem quite universal, their coupling can result in various behaviours for $V$-fold cross-validation as a function of $V$, as shown by Table~\ref{tab.complet.Li01.SiG5} in Section~\ref{sec.supmat.simus} for instance.

\fauxparagraph{Other comments}
\begin{itemize}
\item $\pendim$ performs much worse than $V$-fold penalization (except $V=2$ in setting L) with the collection Dya2. On the contrary, $\pendim$ does well with Regu (see Table~\ref{tab.complet.Li01.SiG5} in Section~\ref{sec.supmat.simus}), but $V$-fold penalization then performs as well. 
\item In other settings considered in a preliminary phase of our experiments, for $V$-fold penalization, differences between $V=2$ and $V=5$ were sometimes smaller or not significant, but always with the same ordering (that is, the worse performance for $V=2$ when $C$ is fixed). 
In a few settings, for which the ``change-point'' in the smoothness of $\bayes$ was close to the median of $\bayes \mathrm{d}\mu$, we found $\pendim$ among the best procedures with collection Dya2; then, $V$-fold penalization and cross-validation always had a performance very close to $\pendim$. 
Both phenomena lead us to discard all settings for which there were no significant difference to comment. 
\end{itemize}

\subsection{Variance as a Function of $V$} \label{sec.simus.variance}
We now illustrate the results of Section~\ref{sec.variance} about the variance of $V$-fold penalization and the heuristic of Section~\ref{sec.oracle.key-quant} about its influence on model selection. 
We focus on the unbiased case, that is, criteria $\CV_{(1,\B)}$ with partitions $\B$ satisfying \eqref{hyp.part-reg.exact}. 
Since the distribution of $(\CV_{(1,\B)}(m))_{\mM_n}$ then only depends on $V = |\B|$, we write $\CV_V$ instead of $\CV_{(1,\B)}$ by abuse of notation. 
All results presented in this subsection have been obtained from $N=10\,000$ independent samples in setting S with a sample size $n=100$ and the collection Regu---for which models are naturally indexed by their dimension.

First, Figure~\ref{fig.variance.std.SiG5Regu.n100} shows the variance of $\Delta_{\CV_V}(m,\mo)=\CV_V(m)-\CV_V(\mo)$ as a function of the dimension $m$ of $S_m$, illustrating the conclusions of Theorem~\ref{theo.variance.penVF}: the variance decreases when $V$ increases. 
More precisely, the variance decrease is significant between $V=2$ and $V=5$, an order of magnitude smaller between $V=5$ and $V=10$ and between $V=10$ and $V=n$, while the leave-one-out $\CV_n$ is hard to distinguish from the ideal penalized criterion $\critEpenid$.  
On Figure~\ref{fig.variance.std.SiG5Regu.n100}, we can remark that for $m > \mo$  
\begin{equation*}
\var\parenj{ \Delta_{\CV_V}(m,m^{\star}) } \approx \frac{1}{n^2} \crochj{ K_1 \parenj{ 1 + \frac{K_2}{V-1}}  + K_3 \parenj{ 1 + \frac{K_4}{V-1}} (m - \mo) }
\end{equation*}
with $K_1 \approx 29$, $K_2 \approx 0.81$, $K_3 \approx 3.7$ and $K_4 \approx 3.8$. 
The shape of the dependence on $V$ already appears in Theorem~\ref{theo.variance.penVF}, the above formula clarifies the relative importance of the terms called $\ctevarA$ and $\ctevarB$ in Section~\ref{sec.variance}, and their dependence on the dimension $m$ of $S_m$. 
Remark that the same behaviour holds when $n=500$ with very close values for $K_3$ and $K_4$ (see Figure~\ref{fig.variance.var.SiG5Regu.n500} in Section~\ref{sec.supmat.simus}), 
as well as in setting L with $n=100$ or $n=500$ with $K_3 \approx 2.1$ and $K_4 \approx 4.2$ (see Figures~\ref{fig.variance.var.Li01Regu.n100} and~\ref{fig.variance.var.Li01Regu.n500} in Section~\ref{sec.supmat.simus}). 
The fact that $K_4$ is close to $4$ in both settings supports that the term $1+4/(V-1)$ appearing  Theorem~\ref{theo.variance.penVF} indeed drives how $\var\parens{ \Delta_{\CV_V}(m,m^{\star}) }$ depends on $V$. 
\begin{figure}
\begin{center}
\begin{minipage}[b]{.65\linewidth}
\includegraphics[width=\textwidth]{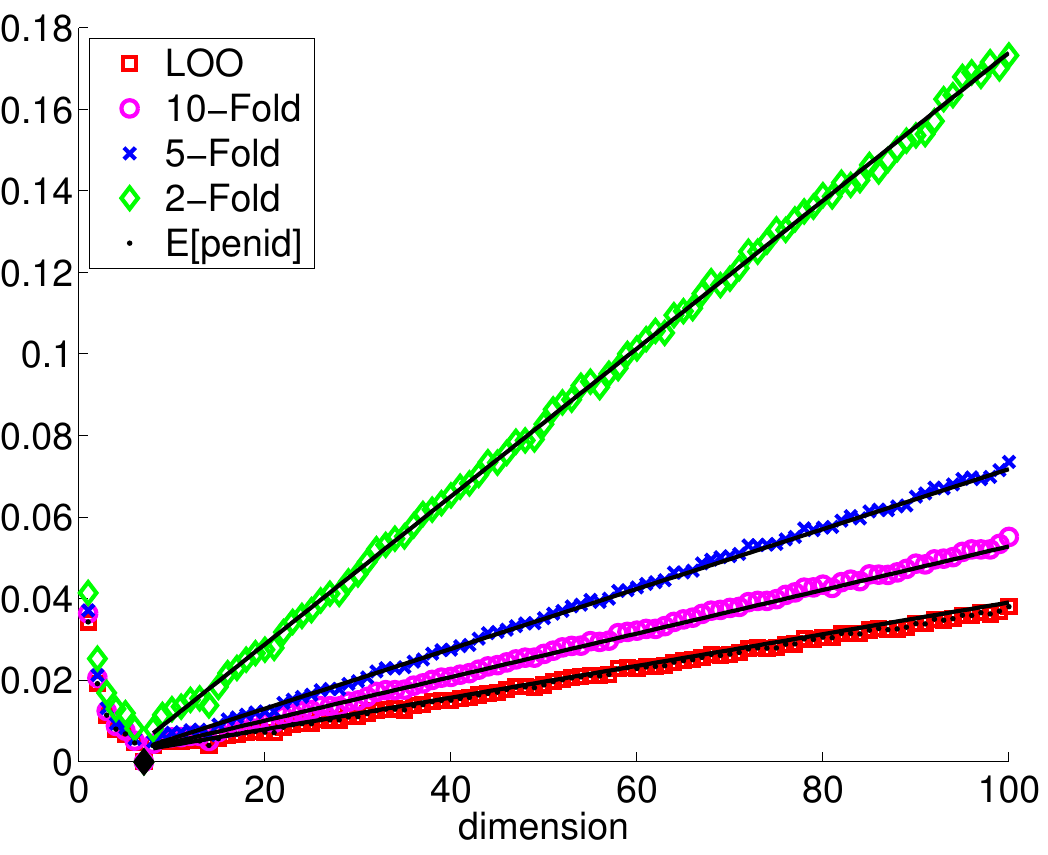}
\end{minipage}
\end{center}
\caption{%
Illustration of the variance heuristic: 
$\var(\Delta_{\CV}(m,\mo))$ as a function of $m$ for five different $\CV$. 
Setting S-Regu, $n=100$. 
The black diamond shows $\mo=7$. 
The black lines show the linear approximation $n^{-2} [ 29 (1 + \frac{0.81}{V-1} ) + 3.7 ( 1 + \frac{3.8}{V-1} ) (m-\mo)]$ for $m > \mo$. 
\label{fig.variance.std.SiG5Regu.n100}
}
\end{figure}

Figures~\ref{fig.variance.Pmh.SiG5Regu.n100} and~\ref{fig.variance.PhiSR.SiG5Regu.n100} respectively show $\Prob(\mh(\CV)=m)$ and its proxy $\overline{\Phi}(\SR_{\, \CV}(m))$ as a function of $m$ for $\CV=\CV_V$ with $V \in \setj{2, 5, 10, n}$ and for $\CV=\critEpenid$. 
First, we remark that both quantities behave similarly as a function of $m$ and $\CV$---see also Figure~\ref{fig.variance.SR-vs-Pmh.SiG5Regu.n100} in Section~\ref{sec.supmat.simus}---supporting empirically the heuristic of Section~\ref{sec.oracle.key-quant}. 
The decrease of the variance observed on Figure~\ref{fig.variance.std.SiG5Regu.n100} when $V$ increases here translates into a better concentration of the distribution of $\mh(\CV_V)$ around $\mo$, which can explain the performance improvement observed in Section~\ref{sec.simus.modsel}. 
Figures~\ref{fig.variance.Pmh.SiG5Regu.n100}--\ref{fig.variance.PhiSR.SiG5Regu.n100} actually show how the decrease of the variance quantitatively influences the distribution of $\mh(\CV_V)$: 
$\mh(\CV_5)$ is significantly more concentrated than $\mh(\CV_2)$, while the difference between $V=10$ and $V=5$ is much smaller and comparable to the difference between $V=n$ and $V=10$; 
$\CV_n$ is hard to distinguish from $\critEpenid$. 
Similar experiments with $n=500$ and in setting L are reported in Section~\ref{sec.supmat.simus}, leading to similar conclusions. 
\begin{figure}
\begin{center}
\begin{minipage}[b]{.65\linewidth}
\includegraphics[width=\textwidth]{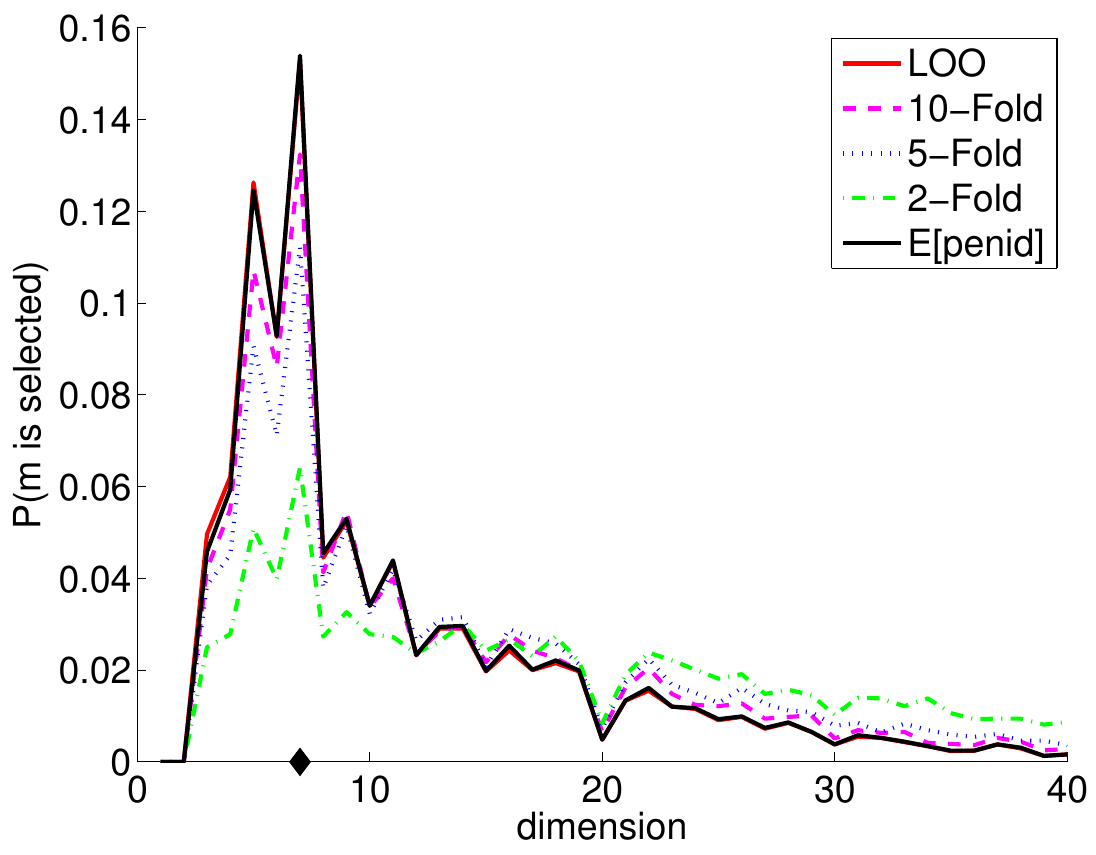}
\end{minipage}
\end{center}
\caption{%
$\Prob(\mh(\CV)=m)$ as a function of $m$ for five different $\CV$. 
Setting S-Regu, $n=100$.  
The black diamond shows $\mo=7$. 
\label{fig.variance.Pmh.SiG5Regu.n100}
}
\end{figure}
\begin{figure}
\begin{center}
\begin{minipage}[b]{.65\linewidth}
\includegraphics[width=\textwidth]{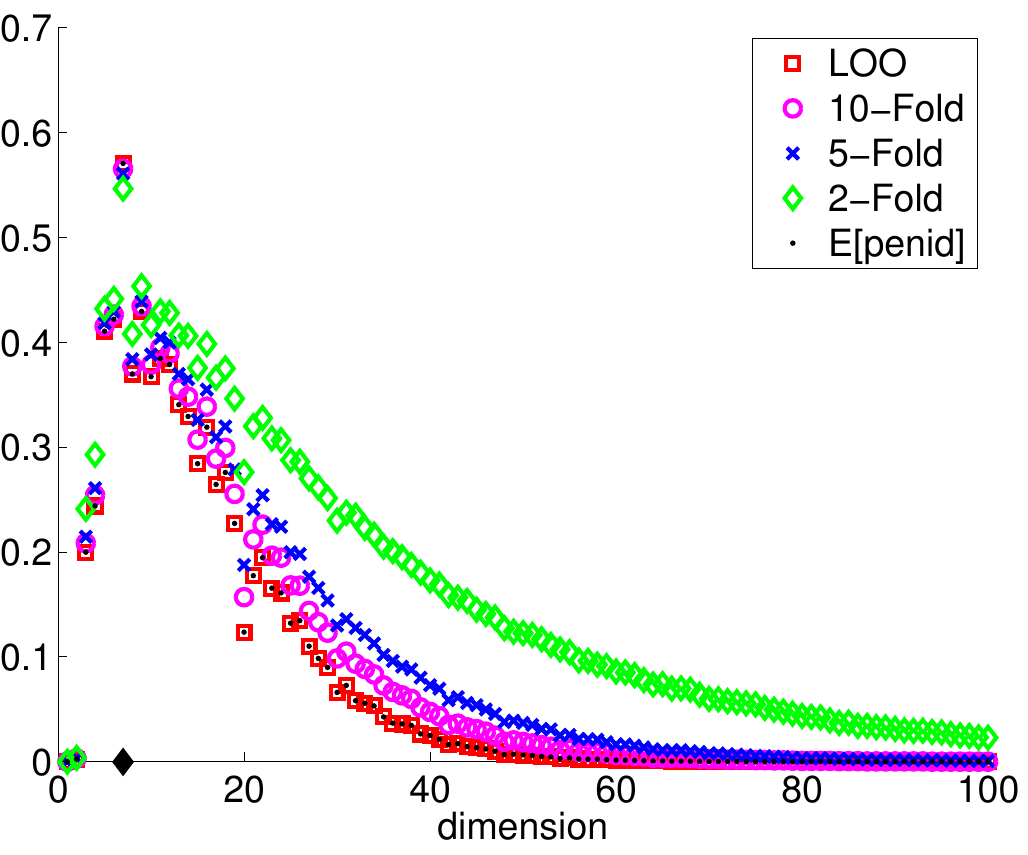}
\end{minipage}
\end{center}
\caption{%
Illustration of the variance heuristic: $\overline{\Phi}(\SR_{\, \CV}(m))$ as a function of $m$ for five different $\CV$. 
Setting S-Regu, $n=100$.  
The black diamond shows $\mo=7$. 
\label{fig.variance.PhiSR.SiG5Regu.n100}
}
\end{figure}

\section[Fast Algorithm for Computing V-Fold Penalties]{Fast Algorithm for Computing V-Fold Penalties for Least-Squares Density Estimation} \label{sec.algo}
Since the use of $V$-fold algorithms is motivated by computational reasons, it is important to discuss the actual computational cost of $V$-fold penalization and cross-validation as a function of $V$. 
In the least-squares density estimation framework, two approaches are possible: a naive one---valid for all other frameworks---, and a faster one---specific to least-squares density estimation. 
For clarifying the exposition, we assume in this section that \eqref{hyp.part-reg.exact} holds true---so, $V$ divides $n$.
The general algorithm for computing the $V$-fold penalized criterion and/or the $V$-fold cross-validation criterion consists in training the estimator with data sets $(\xi_i)_{i \notin \B_j}$ for $j=1, \ldots, V$ and then testing each trained estimator on the data sets $(\xi_i)_{i \in \B_j}$ and/or $(\xi_i)_{i \notin \B_j}$. 
In the least-squares density estimation framework, for any model $S_m$ given through an orthonormal family $\parenj{\psil}_{\lamm}$ of elements of $L^2(\mu)$, we get the ``naive'' algorithm described and analysed more precisely in Section \ref{sec.algo.naive}, whose complexity is of order $n V \card(\Lambda_m)$. 

Several simplifications occur in the least-squares density estimation framework, that allow to avoid a significant part of the computations made in  the naive algorithm. 
\begin{proc} \label{proc.penVF.fast}\hfill \\
\vspace{-0.5cm}
\begin{enumerate}
\item[] \textbf{Input\textup{:}} $\B$ some partition of $\setj{1, \ldots ,n}$ satisfying \eqref{hyp.part-reg.exact}, 
 $\xi_1, \ldots, \xi_n \in \X$ 
 and $\parenj{\psil}_{\lamm}$ a finite orthonormal family of $L^2(\mu)$. 
\item For $i \in \sets{1, \ldots, V}$ and $\lamm$, compute $ A_{i,\lambda} \egaldef \frac{V}{n} \sum_{j \in B_i} \psil(\xi_j) $. 
\item For $i,j \in \sets{1, \ldots, V}$, compute $  C_{i,j} \egaldef \sum_{\lamm} A_{i,\lambda} A_{j,\lambda} $.
\item Compute  $ \mathcal{S} \egaldef \sum_{1 \leq i,j \leq V} C_{i,j}$ and $ \mathcal{T} \egaldef \tr(C)$.
\item[] \textbf{Output\textup{:}} \\
Empirical risk\textup{:} $\displaystyle P_n \gamma\parenj{\ERM_m} = \frac{- \mathcal{S} }{ V^2 }$; 
\\
$V$-fold cross-validation criterion\textup{:} $\displaystyle \critVF(m) = \frac{\mathcal{T}}{V(V-1)} - \frac{\mathcal{S} - \mathcal{T}}{(V-1)^2}$; 
\\
$V$-fold penalty\textup{:}  $\displaystyle \penVF(m) = \parenb{ \critVF(m) - P_n \gamma\parenj{\ERM_m} } \frac{V-1/2}{V-1}$.
\end{enumerate}
\end{proc}
To the best of our knowledge, Algorithm~\ref{proc.penVF.fast} is new, even for computing the $V$-fold cross-validation criterion. 
Its correctness and complexity are analyzed with the following proposition. 
\begin{proposition} \label{pro.proc.penVF-fast.general-density}
Algorithm~\ref{proc.penVF.fast} is correct and has a computational complexity of order 
\[ 
\parenb{ n + V^2 } \card(\Lambda_m) 
\enspace . 
\]
In the histogram case, that is, when $\Lambda_m$ is a partition of $\X$ and $\forall \lamm$, $\psil = \mu(\lambda)^{-1/2} \un_{\lambda}$, the computational complexity of Algorithm~\ref{proc.penVF.fast} can be reduced to the order of $n + V^2 \card(\Lambda_m)$. 
\end{proposition}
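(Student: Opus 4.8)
The plan is to treat the two assertions---correctness and complexity---in turn, with correctness resting entirely on the closed form of the projection estimator in an orthonormal basis. Minimizing $\norm{t}^2 - 2 P_n^{(A)}(t)$ over $t = \sum_{\lamm} a_\lambda \psil$ gives $a_\lambda = P_n^{(A)}\psil$, so that $\ERM_m^{(A)} = \sum_{\lamm}(P_n^{(A)}\psil)\psil$ and $\norm{\ERM_m^{(A)}}^2 = \sum_{\lamm}(P_n^{(A)}\psil)^2$. First I would record two elementary identities linking the quantities of Step~1 to empirical means: $A_{i,\lambda} = P_n^{(\B_i)}\psil$ (using $\absj{\B_i}=n/V$ under \eqref{hyp.part-reg.exact}), and, summing over the $V$ blocks, $P_n\psil = \frac1V \sum_{i=1}^V A_{i,\lambda}$. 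Since $\absj{\B_K^c} = n(V-1)/V$, one also gets $P_n^{(\B_K^c)}\psil = \frac1{V-1}\sum_{i \neq K} A_{i,\lambda}$.

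The three outputs then follow by expanding squares and regrouping through the quantities $C_{i,j}=\sum_{\lamm}A_{i,\lambda}A_{j,\lambda}$. For the empirical risk, $P_n\gamma(\ERM_m) = -\sum_{\lamm}(P_n\psil)^2$, and because $\sum_{\lamm}\paren{\sum_i A_{i,\lambda}}^2 = \sum_{i,j} C_{i,j} = \mathcal{S} = V^2\sum_{\lamm}(P_n\psil)^2$, this equals $-\mathcal{S}/V^2$. For the cross-validation criterion I would write each hold-out term as $\crit_{\HO}(m,\B_K^c) = \norm{\ERM_m^{(\B_K^c)}}^2 - 2 P_n^{(\B_K)}(\ERM_m^{(\B_K^c)})$ and express both pieces via $C$: using $A_{K,\lambda} = P_n^{(\B_K)}\psil$ together with the formula for $P_n^{(\B_K^c)}\psil$, the $K$-th term equals $\frac1{(V-1)^2}\sum_{i,j \neq K} C_{i,j} - \frac2{V-1}\sum_{i \neq K} C_{K,i}$, where $\sum_{i,j\neq K}$ means both indices differ from $K$.

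I expect the main obstacle to be the averaging step $\critVF(m,\B) = \frac1V \sum_{K=1}^V \crit_{\HO}(m,\B_K^c)$, that is, collapsing the block double-sums into the two scalars $\mathcal{S}$ and $\mathcal{T} = \tr(C)$. The key combinatorial fact is that, for a fixed ordered pair $(i,j)$, the number of indices $K$ with $K \neq i$ and $K \neq j$ equals $V-1$ when $i=j$ and $V-2$ when $i \neq j$; hence $\sum_K \sum_{i,j \neq K} C_{i,j} = (V-1)\mathcal{T} + (V-2)(\mathcal{S}-\mathcal{T}) = \mathcal{T} + (V-2)\mathcal{S}$, while $\sum_K \sum_{i \neq K} C_{K,i} = \mathcal{S} - \mathcal{T}$. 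Substituting and reducing over the common denominator $V(V-1)^2$ produces the numerator $(2V-1)\mathcal{T}-V\mathcal{S}$, i.e. $\critVF(m) = \frac{\mathcal{T}}{V(V-1)} - \frac{\mathcal{S}-\mathcal{T}}{(V-1)^2}$, as claimed. The penalty output is then immediate: by Lemma~\ref{le.penVF-VFCV}, Eq.~\eqref{eq.le.penVF-VFCV} gives $\critVF(m) - P_n\gamma(\ERM_m) = \penVF(m,\B,V-1/2)$, and since $\penVF(m,\B,\cdot)$ is linear in its last argument, rescaling this difference by the appropriate ratio recovers the stated $V$-fold penalty.

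For the complexity I would count each step. Step~1 evaluates the $\card(\Lambda_m)$ basis functions at the $n$ observations and accumulates each value into a single block sum, for a cost $O(n\,\card(\Lambda_m))$; Step~2 forms $V^2$ inner products of length $\card(\Lambda_m)$, costing $O(V^2\,\card(\Lambda_m))$; Step~3 and the outputs cost $O(V^2)$. Summing yields the announced $O\paren{(n+V^2)\card(\Lambda_m)}$. In the histogram case $\psil = \absj{\lambda}^{-1/2}\un_\lambda$ and the $\lambda$ partition $\X$, so each observation falls in exactly one bin; forming all nonzero entries of $A$ then reduces to tallying, per block, the number of observations in each bin, an $O(n)$ computation once a bin is located in constant time. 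Only Step~1 changes, dropping from $O(n\,\card(\Lambda_m))$ to $O(n)$, which leaves the total at $O\paren{n + V^2\card(\Lambda_m)}$.
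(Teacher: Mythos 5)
Your proposal is correct and follows essentially the same route as the paper's proof: both reduce everything to the block quantities $C_{i,j}=\sum_{\lamm}A_{i,\lambda}A_{j,\lambda}=P_n^{(\B_i)}\sparen{\ERM_m^{(\B_j)}}$, use the same counting argument (the number of $K$ avoiding a pair $(i,j)$ is $V-1-\un_{i\neq j}$, which is exactly your case split $V-1$ versus $V-2$), arrive at the identical closed form $\critVF(m)=\frac{\mathcal{T}}{V(V-1)}-\frac{\mathcal{S}-\mathcal{T}}{(V-1)^2}$, invoke Lemma~\ref{le.penVF-VFCV} for the penalty output, and perform the same step-by-step complexity count, including the constant-time bin-location trick for histograms. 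The only cosmetic difference is that you work at the level of basis coefficients $A_{i,\lambda}$ while the paper phrases the expansion in terms of the empirical measures $P_n^{(\B_i)}$ applied to the block estimators, which is an equivalent formulation.
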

Proposition~\ref{pro.proc.penVF-fast.general-density} is proved in Section~\ref{sec.app.proof.pro.algo}.
It shows that Algorithm~\ref{proc.penVF.fast} is significantly faster than the ``naive'' Algorithm~\ref{proc.penVF.naive} described in Section~\ref{sec.algo.naive}, by a factor of order 
\[ \frac{nV}{n+V^2} = \mathopen{} \left( \frac{1}{V} + \frac{V}{n} \right)^{-1} \mathclose{}
\ll 1 \qquad \mathrm{if} \qquad 1 \ll V \ll n \, . \]
Note that closed-form formulas are available for the leave-$p$-out criterion in least-squares density estimation \citep{Cel:2008}, allowing to compute it with a complexity of order $n \card(\Lambda_m)$ in general, and smaller in some particular cases---for instance, $n$ for histograms.

\section{Discussion} \label{sec.discussion}
Before discussing how to choose $V$ when using $V$-fold methods for model selection---or more generally for choosing among a given family of estimators---, we state some additional results and we discuss the model selection literature in least-squares density estimation. 

\subsection{Monte-Carlo Cross-Validation} \label{sec.discussion.MCCV}
Our analysis of $V$-fold procedures for model selection can be extended to some other 
cross-validation procedures. 
We here present results for Monte-Carlo cross-validation \citep[MCCV,][]{Pic_Coo:1984}, 
also known as repeated cross-validation, 
where $B$ training samples of the same size $n-p$ are chosen independently and uniformly 
\citep[see also][Section~4.3.2]{Arl_Cel:2010:surveyCV}. 
Formally, we consider the criterion 
\begin{equation}
\label{def.CVgal}
\critCV \parenb{ m , (T_K)_{1 \leq K \leq B} } 
\egaldef \frac{1}{B} \sum_{K=1}^B \critHO(m, T_K)
\enspace , 
\end{equation}
where 
$T_1, \ldots, T_B$ are subsets of $\inter{n}$ and 
we recall that the hold-out criterion is defined by Eq.~\eqref{def:crit.HO}. 
We make the following three assumptions throughout this subsection 
\begin{gather}
\label{hyp.CV.same-size} 
\tag{\ensuremath{\mathbf{SameSize}}}
\exists p \in \inter{n-1}, \quad 
\forall j \in \inter{B}, \qquad 
| T_j | = n - p=n\fractrain_n
\enspace , 
\\
\label{hyp.CV.ind}
\tag{\ensuremath{\mathbf{Ind}}}
(T_K)_{1 \leq K \leq B} 
\quad \text{is independent from} \quad 
D_n 
\enspace , 
\\
\label{hyp.MCCV}
\tag{\ensuremath{\mathbf{MCCV}}}
T_1, \ldots, T_B  
\quad \text{are independent with uniform distribution over} \quad 
\mathcal{E}_{n-p} 
\enspace , 
\end{gather}
where we recall that 
$ \mathcal{E}_{n-p} = \setj{ A \subset \inter{n} \telque |A| = n-p }$. 
Under these assumptions, we write $\CVMCCV(m)$ as a shortcut for 
$\critCV \parens{ m , (T_K)_{1 \leq K \leq B} } $. 

\medbreak

Similarly to Theorem~\ref{thm.oracle-penVF.cas_reel}, 
we prove in Section~\ref{sec.supmat.MCCV.oracle} the following oracle inequality for MCCV. 
\begin{theorem} \label{thm.oracle-MCCV}
Let $\xi_{\inter{n}}$ be i.i.d. real-valued random variables with common density $\bayes \in L^{\infty}(\mu)$, 
$(T_K)_{1 \leq K \leq B}$ some sequence of subsets of $\inter{n}$ satisfying \eqref{hyp.CV.same-size}, \eqref{hyp.CV.ind} and \eqref{hyp.MCCV}
and $(S_m)_{m\in\M_n}$ be a collection of separable linear spaces satisfying \eqref{hyp.NormSupNorm2}. 
Assume that either \eqref{hyp.UBbayes} or \eqref{hyp.Nested} holds true.
For every $\mM_n$, let $\ERM_m$ be the estimator defined by Eq.~\eqref{def.ERM}, 
and $\widetilde{s} = \ERM_{\mh}$ where 
\[ 
\mh \in \argmin_{\mM_n} \setB{ \critCV \parenb{ m , (T_K)_{1 \leq K \leq B} } }
\]
and $\critCV$ is defined by Eq.~\eqref{def.CVgal}. 
Let us define, for any $x,y,\epsilon>0$, 
$x_n = x + \log |\M_n| $ and 
\begin{equation*}
\resteMCCV \parenj{\epsilon,x,y,n,\fractrain_n,B,A}
\egaldef 
\frac{1}{n \fractrain_n^2 } \mathopen{} \left( { 1+ \frac{B\wedge(\log n+y)}{B (1-\fractrain_n)} } \right)^{\alpha} \mathclose{} \parenj{\frac{ A x}{ \fractrain_n \epsilon} + \frac{(A \vee 1) x^2}{\epsilon^3}} 
\end{equation*}
with $\alpha = 1$ under assumption \eqref{hyp.UBbayes}
and $\alpha = 2$ under assumption \eqref{hyp.Nested}. 
Then, an absolute constant $\kappa>0$ exists such that, for any $x,y \geq 0$, with probability at least $1- \e^{-x} - \e^{-y}$, 
for any $\epsilon \in (0,\kappa^{-1})$, 
\begin{equation}\label{eq:oracle_MCCV}
\parenj{1-\frac{\epsilon}{\fractrain_n}} \perte{ \widetilde{s} }
\leq
\frac{1+\epsilon}{\fractrain_n} \inf_{\mM_n} \setj{ \perte{\ERM_{m}} }
+ \kappa \resteMCCV \parenj{\epsilon,x_n,y,n,\fractrain_n,B,A}
\enspace . 
\end{equation}
\end{theorem}
Theorem~\ref{thm.oracle-MCCV} actually is a corollary of a more general result
(Theorem~\ref{thm.oracle-CV-gal} in Section~\ref{sec.supmat.MCCV.oracle}), which is valid without assumption~\eqref{hyp.MCCV} and extends therefore our previous results on $V$-fold cross-validation). 

Very few results exist in the literature about the model selection performance of MCCV with an estimation goal. 
Some asymptotic optimality result has been obtained by \citet{Bur:1990} for spline regression, 
and some oracle inequalities comparing the risk of the selected estimator with the risk of an oracle trained with $\fractrain_n n<n$ data 
have been proved by \citet{vdL_Dud:2003} in a general framework and by \citet{vdL_Dud_Kel:2004} for density estimation with the Kullback-Leibler loss. 
In comparison, Theorem~\ref{thm.oracle-MCCV} provides a precise non-asymptotic comparison to an oracle trained with $n$ data. 

As in Theorem~\ref{thm.oracle-penVF.cas_reel}, the leading constant of the oracle inequality \eqref{eq:oracle_MCCV} 
is directly related to the bias, which is here quantified by $\fractrain_n^{-1} - 1 \geq 0$ instead of $\delta$. 
The remainder term $\resteMCCV$ is also comparable to $\resteB$ in Theorem~\ref{thm.oracle-penVF.cas_reel}: 
they differ by a factor between  $\fractrain_n^{-2}$ (when $B$ is large enough) 
and $\fractrain_n^{-2} (1-\fractrain_n)^{-\alpha}$ (when $B$ is small). 
In particular, let $V \geq 2$ and assume that $p=n/V$ in Theorem~\ref{thm.oracle-MCCV}, hence $\fractrain_n = 1 - V^{-1} \in [1/2,1)$. 
Then, for the hold-out ($B=1$), $\resteMCCV$ is larger than $\resteB$ by a factor $V^{\alpha}$ with $\alpha\in\sets{1,2}$. 
For $B=V$, MCCV with $\fractrain_n = 1 - V^{-1}$ can be called ``Monte-Carlo $V$-fold'' (MCVF); 
then, with $y \approx \log n$, we loose a factor at most $\log n$ for MCVF compared to $V$-fold cross-validation. 
Finally, when $B$ is large enough, that is, larger than $V \log n$, $\resteMCCV$ and $\resteB$ are of the same order. 

\medbreak

The above comparison of remainder terms suggests a hierarchy between several cross-validation methods with a common 
training sample size $n-p=n\fractrain_n$: 
from the (presumably) worse to the (presumably) best procedure, 
the hold-out, Monte-Carlo CV with $B=V$, $V$-fold CV, 
Monte-Carlo CV with $B$ large and the leave-$p$-out. 
Nevertheless, upper bounds comparison can be misleading, so, following the heuristics 
\eqref{eq.heuristic.variance-comp} presented in Section~\ref{sec.oracle.key-quant}, we compute below the variance of 
$\Delta_{\CV}(m,m')$ when $\CV$ is a Monte-Carlo CV criterion. 
\begin{theorem} \label{thm.var-MCCV}
We consider the setting and notation of Theorem~\ref{theo.variance.penVF}, 
and we assume that \eqref{hyp.CV.same-size}, \eqref{hyp.MCCV} and \eqref{hyp.CV.ind} hold true. 
We recall that 
$\CVMCCV(m)$ is defined above at the beginning of Section~\ref{sec.discussion.MCCV}.
Then, for regular histogram models $m_1,m_2$ 
\textup{(}Example~\ref{ex:regular} in Section~\ref{sec.ex.histos}\textup{)}, we have 
\begin{align}
\label{eq.var.MCCV.incr.histos}
\var\parenj{ \CVMCCV(m_1) - \CVMCCV(m_2) } 
&= 
\cteMCCVa (B,n,\fractrain_n) 
\frac{2}{n^2} \termeBvaracr{m_1,m_2}
\\
\notag 
&\qquad + 
\cteMCCVb (B,n,\fractrain_n) \frac{4}{n} \var\parenb{\bayes_{m_1} (\xi_1) - \bayes_{m_2}(\xi_1)} 
\end{align}
where 
\begin{align*}
\cteMCCVa  (B,n,\fractrain_n) 
&= 
\frac{1}{B}  \parenj{ \frac{1}{\fractrain_n^2} + \frac{2}{\fractrain_n (1-\fractrain_n)} - \frac{1}{n \fractrain_n^3} } 
+ \parenj{ 1 - \frac{1}{B} } \crochj{1+\frac{1}{n-1} \parensq{ \frac{1}{\fractrain_n} + 1 } -\frac{1}{n \fractrain_n^2}}   
\\
\cteMCCVb (B,n,\fractrain_n) 
&= \frac{1}{B} \parenj{ \frac{1}{n^2 \fractrain_n^3}  + \frac{1}{1-\fractrain_n} } 
+ \parenj{ 1 - \frac{1}{B} } 
\parensq{1+\frac{1}{n \fractrain_n}}
\end{align*}
and we recall that $\fractrain_n = |T_K|/n = 1 - (p/n)$. 
\end{theorem}
Theorem~\ref{thm.var-MCCV} is proved in Section~\ref{sec.supmat.MCCV.var}, 
as a corollary of a more general result, Theorem~\ref{thm.var-MCCV-gal}, 
which holds for all models $m_1,m_2$---not only regular histograms---and 
provides a formula for the variance of the criterion itself---not its increments. 
Let us make a few comments. 

Eq.~\eqref{eq.var.MCCV.incr.histos} is similar to the formula obtained for 
bias-corrected $V$-fold and $V$-fold penalization, see 
Eq.~\eqref{eq.pro.variance.penVF-penid-incrReg} in Theorem~\ref{theo.variance.penVF}. 
In the particular case of regular histogram models, 
Eq.~\eqref{eq.pro.variance.penVF-penid-incrReg} even fits the general form of Eq.~\eqref{eq.var.MCCV.incr.histos}, 
with constants $\ctepenVFi  (V,n,C)$ 
instead of $\cteMCCVi  (B,n,\fractrain_n)$. 

Assuming the heuristics of Section~\ref{sec.oracle.key-quant} is valid, 
for $m_1,m_2$ which matter for model selection, 
the two terms 
$ 2 n^{-2} \termeBvaracr{m_1,m_2}$ and 
$ 4 n^{-1} \var\parens{\bayes_{m_1} (\xi_1) - \bayes_{m_2}(\xi_1)}$ 
are of the same order of magnitude (see Section~\ref{sec.variance}). 
Then, we can compare model selection performance of several cross-validation 
methods by comparing the values of the constants $\cteGENi$ only. 

In order to get a variance of the same order of magnitude as the one of 
bias-corrected $V$-fold CV---that is, constants $\cteGENi$ of order~$1$---, 
MCCV requires to take $\fractrain_n$ far enough from $0$ and~$1$, 
hence training and sample sets of comparable sizes, 
unless $B$ is large enough. 

Eq.~\eqref{eq.var.MCCV.incr.histos} allows to compare the hold-out ($B=1$) with the 
leave-$p$-out ($B\to+\infty$), for a given value $n\fractrain_n = n-p$ of the training sample size. 
Let us assume for simplicity that $n\to+\infty$ 
and $\fractrain_n \gg n^{-1/2}$.  
Then, 
\begin{align*} 
&\hspace*{-0.75cm}\cteMCCVa(1,n,\fractrain_n) \sim \frac{1}{\fractrain_n^2} + \frac{2}{\fractrain_n (1-\fractrain_n)} 
> 11 
\qquad &\text{and} \qquad 
\cteMCCVb(1,n,\fractrain_n) &\sim \frac{1}{1-\fractrain_n} 
\geq 1
\\
\text{whereas} \qquad 
&\cteMCCVa(\infty,n,\fractrain_n) \to 1
\qquad &\text{and} \qquad 
\cteMCCVb(\infty,n,\fractrain_n) &\to 1
\end{align*}
which shows an improvement at least by a constant factor in general. 
When $\fractrain_n$ tends to zero---leave-most-out---or~$1$---such as for the leave-one-out---, 
the improvement is by an order of magnitude. 
The fact that the leave-$p$-out has a smaller variance than the hold-out is not surprising at all---it holds in full generality, as a consequence of Jensen's inequality---, 
but the exact quantification of the improvement given by Theorem~\ref{thm.var-MCCV} 
is new and can be useful in practice for choosing the number of splits $B$ 
when using Monte-Carlo cross-validation. 

Eq.~\eqref{eq.var.MCCV.incr.histos} also allows to compare $V$-fold cross-validation, 
given by Theorem~\ref{theo.variance.penVF} with \[ C=1+ \frac{1}{2(V-1)} 
\enspace , \]
with MCCV with $B=V$ and $\fractrain_n = (V-1)/V$, 
which can be named ``Monte-Carlo $V$-fold'' cross-validation. 
The only difference between the two methods is that the $V$ splits are chosen independently 
for ``Monte-Carlo $V$-fold'', 
whereas the usual $V$-fold makes a balanced use of each observation---putting it exactly $(V-1)$ times in the training set. 
Let us assume for simplicity that $n\to+\infty$ while $V=V_n$ can vary with $n$.  
Then, we have 
\begin{align*} 
\cteMCVFa(V_n , n) &\egaldef 
\cteMCCVa \parenj{ V_n , n , \frac{V_n - 1}{V_n} } \sim 
3 + \frac{2 V_n + 1}{V_n (V_n-1)} + \frac{1}{(V_n - 1)^2}
\\
\cteVFCVa(V_n , n) &\egaldef 
\ctepenVFa \parenj{ V_n , n, 1 + \frac{1}{2(V_n-1)} } \sim 
1 + \frac{4}{V_n-1} + \frac{4}{(V_n-1)^2} + \frac{1}{(V_n-1)^3}
\end{align*}
\begin{align*}
&\text{hence} \qquad \qquad 
\frac{\cteMCVFa(V_n , \infty)}{\cteVFCVa(V_n , \infty)} > 1 \text{ if } V_n \geq 3 
\enspace , \qquad 
\frac{\cteMCVFa(V_n , n)}{\cteVFCVa(V_n , n)} \xrightarrow[n,V_n \to +\infty]{}  3 
\enspace , 
\end{align*}
\begin{align*}
\cteMCVFb(V_n , n) 
&\egaldef 
\cteMCCVb \parenj{ V_n , n , \frac{V_n - 1}{V_n} } \sim 
2 - \frac{1}{V_n} \in \crochj{ \frac{3}{2}, 2 }
\\
\text{and} \qquad \qquad \hspace{1.91cm}
\cteVFCVb(V_n , n) 
&\egaldef 
\ctepenVFb \parenj{ V_n , n, 1 + \frac{1}{2 (V_n - 1)} } \to 
1
\enspace . 
\end{align*}
Overall, we get that $V$-fold cross-validation has a smaller variance than 
``Monte-Carlo $V$-fold'' for $V \geq 3$, at least for $n$ large enough, 
and that the improvement is by a constant factor between $3/2$ and $3$. 
Since increasing $V$ cannot decrease the variance of (bias-corrected) VFCV 
by more than a small constant factor, 
the above difference between two methods with the same computational complexity 
is quite important. 
This supports strongly the use of $V$-fold CV methods instead of ``Monte-Carlo $V$-fold''. 
Such an improvement was previously noticed in the asymptotic computations of 
\citet{Bur:1989}; 
here we show that it holds in a non-asymptotic framework, 
where the models $m_1,m_2$ can depend on $n$. 

\subsection{Hold-Out Criteria} \label{sec.discussion.hold-out}
Our analysis of cross-validation procedures for model selection can also be extended to hold-out criteria. 
First, let us emphasize that the hold-out criterion defined by Eq.~\eqref{def:crit.HO} 
corresponds to taking $B=1$ in the results of Section~\ref{sec.discussion.MCCV}, 
since choosing $T$ uniformly over $\mathcal{E}_{n-p}$, independently from $D_n$, 
is equivalent to choosing some arbitrary $T$ of size $n-p$ before seeing the data $D_n$. 

Second, similarly to the definition of the hold-out criterion in Eq.~\eqref{def:crit.HO}, 
we can define the hold-out penalty by 
\begin{equation} \label{def.penHO} 
\forall x \geq 0 , \quad 
\penHO(m,\ItHO,x) \egaldef 2 x \parenj{ P^{(\ItHO)}_n-P_{n}} \parenj{ \ERM^{(\ItHO)}_m-\ERM_{m}}
\enspace , 
\end{equation}
that is, the hold-out estimator of $\E\crochs{2 (P_n - P) (\ERM_m - \bayes_m)}$ which is equal to the expectation of the ideal penalty, see Eq.~\eqref{def.penid}. 
We do not define $\penHO$ by Eq.~\eqref{eq.def.penVF}  with $V=1$ and $\ItHO=\B_1^c$---that is, the hold-out estimator of $\E\crochs{(P-P_n)\gamma(\ERM_m)}$, which amounts to removing the centering term $-\ERM_{m}$ in Eq.~\eqref{def.penHO}---because this would dramatically increase its variability. 
Note that adding such a term  $-\ERM_{m}$ in Eq.~\eqref{eq.def.penVF} does not change the value of the $V$-fold penalty under \eqref{hyp.part-reg.exact} since $\sum_{K=1}^V (P_n^{(\B_K^c)} - P_n) = 0$. 

Denoting by $\fractrain_n=\absj{T}/n$ as in Section~\ref{sec.discussion.MCCV}, it comes from Lemma~\ref{lem:DecPenHO} in Section~\ref{sec.supmat.penHO.oracle} that
\[
\E\crochb{\penHO(m,\ItHO,x)}
= x \frac{1-\fractrain_n}{\fractrain_n} \E\crochb{\penid(m)}
\enspace.\]
In the following, we choose $x=C\fractrain_n/(1-\fractrain_n)$ so that $C=1$ corresponds to the unbiased case, as in the previous sections for the $V$-fold penalty.

\begin{remark} \label{rk.ho=2-fold}
Since $P_n=\fractrain_n P_n^{(\ItHO)}+(1-\fractrain_n)P_n^{(\ItHO^c)}$, by linearity of the estimator $\ERM_m$, 
\begin{equation} \notag 
\penHO(m,\ItHO,x) \egaldef 2 x (1-\fractrain_n)^2 \parenj{ P^{(\ItHO)}_n-P^{(\ItHO^c)}_{n}} \parenj{ \ERM^{(\ItHO)}_m-\ERM^{(\ItHO^c)}_{m}}
\end{equation}
which is symmetric in $\ItHO$ and $\ItHO^c$, hence 
$\penHO(m,\ItHO^c,x)=\penHO(m,\ItHO,x)$. 
In particular, if $|T|=n/2$, the $2$-fold penalty computed on the partition $\B=\sset{\ItHO,\ItHO^c}$ and the hold-out penalty coincide 
\begin{equation}
\notag 
\forall x >0, \quad 
\penVF \parenb{ m,\sets{\ItHO,\ItHO^c},x } = \penHO(m,\ItHO,x)
\enspace . 
\end{equation}
\end{remark}

\begin{theorem}\label{thm:PenHo}
Let $\xi_{\inter{n}}$ be i.i.d. real-valued random variables, 
$\bayes \in L^{\infty}(\mu)$ their common density, 
$T \subset \inter{n}$ with $\fractrain_n = |T|/n \in (0,1)$ 
and $(S_m)_{m\in\M_n}$ be a collection of separable linear spaces satisfying \eqref{hyp.NormSupNorm2}. 
Assume that either \eqref{hyp.UBbayes} or \eqref{hyp.Nested} holds true.
Let $C \in (1/2,2]$ and \( \delta\egaldef 2(C-1)\). 
For every $\mM_n$, let $\ERM_m$ be the projection estimator onto $S_m$ defined by Eq.~\eqref{def.ERM}, and 
$\widetilde{s}_{\HO}= \ERM_{\mHO}$ where 
\begin{equation*}
\mHO 
= \argmin_{m\in\M_{n}} \setj{ P_n\gamma\parenj{\ERM_{m}} +  \penHO\parenj{m,\ItHO,\frac{C\fractrain_n}{1-\fractrain_n}}} 
\enspace .
\end{equation*}
Then, an absolute constant $\kappa$ exists such that, for any $x>0$, defining $x_n = x + \log |\M_n|$, with probability at least $1-\e^{-x}$, for any $\epsilon \in (0,1]$, 
\begin{align}
\label{eq.ineq-oracle.penHO}
\frac{1-\delta_{-}-\epsilon}{1+\delta_{+}+\epsilon} 
\perte{ \widetilde{s}_{\HO} }\leq \inf_{\mM_n} \perte{\ERM_m}+\kappa\parenj{\frac{Ax_n}{ \epsilon n}+\frac{\fractrain_n^2+(1-\fractrain_n)^2}{\fractrain_n(1-\fractrain_n)}\frac{x_n^2}{ \epsilon^3 n}}\enspace .
\end{align}
\end{theorem}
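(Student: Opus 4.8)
The plan is to mirror the $V$-fold argument behind Theorem~\ref{thm.oracle-penVF.cas_reel}: combine a concentration inequality for the hold-out penalty with the standard penalization comparison. Since $\mHO$ minimizes $P_n\gamma\paren{\ERM_m}+\penHO\paren{m,T,C\tau/(1-\tau)}$ and $P_n\gamma\paren{\ERM_m}=\critid(m)-\penid(m)$, comparing this criterion at $\mHO$ with its value at an arbitrary $m$, then using $\critid(m)=\perte{\ERM_m}-\norm{\bayes}^2$, yields
\begin{equation*}
\perte{\ERM_{\mHO}}-\perte{\ERM_m}\leq\croch{\penid(\mHO)-\penHO(\mHO)}+\croch{\penHO(m)-\penid(m)}\enspace,
\end{equation*}
where I write $\penHO(m)\egaldef\penHO\paren{m,T,C\tau/(1-\tau)}$. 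Everything reduces to controlling $\penHO(m)-\penid(m)$ uniformly over $\M_n$, which is exactly where the concentration of $\penHO$ enters.

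The core step is therefore a concentration inequality for $\penHO$, the hold-out analogue of Proposition~\ref{prop:concpenvf}. Using the symmetric form recalled in the remark preceding the theorem, one has $\penHO(m,T,x)=2x(1-\tau)^2\sum_{\lamm}\croch{\paren{P_n^{(T)}-P_n^{(T^c)}}\psil}^2$, a nonnegative quadratic form in the centered increments $Z_\lambda\egaldef\paren{P_n^{(T)}-P_n^{(T^c)}}\psil$, which are moreover built from the two independent samples indexed by $T$ and $T^c$. A direct moment computation gives $\Ex{\penHO(m,T,x)}=\frac{2x(1-\tau)}{n\tau}\Dcal_m$, equal to $C\,\Ex{\penid(m)}=2C\Dcal_m/n$ for the chosen $x=C\tau/(1-\tau)$, in agreement with \lemref{DecPenHO}. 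I would then apply the same chi-square/Bernstein-type concentration used to prove Proposition~\ref{prop:concpenvf}, bounding $\absj{\penHO(m,T,x)-2C\Dcal_m/n}$ and $\absj{\penHO(m,T,x)-2C\norm{\bayes_m-\ERM_m}^2}$ by $\epsilon\Dcal_m/n$ plus a remainder whose quadratic ($x^2$) term carries the factor $\frac{\tau}{1-\tau}+\frac{1-\tau}{\tau}=\frac{\tau^2+(1-\tau)^2}{\tau(1-\tau)}$.

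The remaining ingredients are identical to the $V$-fold proof: under \eqref{hyp.NormSupNorm2} and either \eqref{hyp.UBbayes} or \eqref{hyp.Nested}, the quantity $\norm{\ERM_m-\bayes_m}^2$ concentrates around $\Dcal_m/n$ and the linear term $(P_n-P)(\bayes_m)$ concentrates, so that $\penid(m)=2\norm{\ERM_m-\bayes_m}^2+2(P_n-P)(\bayes_m)$ is controlled and compared to $\penHO(m)$. Assembling these bounds with a union bound over the $\absj{\M_n}$ models (which produces $\log\absj{\M_n}$, hence $x_n=x+\log\absj{\M_n}$), the bias contribution $(C-1)\Ex{\penid(m)}$ generates the leading-constant factors $\delta_{\pm}$ with $\delta=2(C-1)$; the fluctuations generate $\epsilon$; and the two remainders---the linear one bounded through the constant $A$, the quadratic one after replacing $b_m^2/n$ by $1$ via \eqref{hyp.NormSupNorm2}---combine into $\kappa\paren{\frac{Ax_n}{\epsilon n}+\frac{\tau^2+(1-\tau)^2}{\tau(1-\tau)}\frac{x_n^2}{\epsilon^3 n}}$. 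Rearranging the displayed comparison then gives Eq.~\eqref{eq.ineq-oracle.penHO}.

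The main obstacle is the concentration inequality of the second paragraph, and specifically obtaining the exact dependence $\frac{\tau^2+(1-\tau)^2}{\tau(1-\tau)}$ in the remainder. In the $V$-fold case, averaging over the $V$ blocks symmetrizes the contributions; here the single split puts asymmetric weights $1/(\tau n)$ on $T$ and $1/((1-\tau)n)$ on $T^c$, and one must track how these enter the variance and the fourth-moment bounds governing the deviations of the quadratic form $\sum_{\lamm}\croch{\paren{P_n^{(T)}-P_n^{(T^c)}}\psil}^2$, checking that they recombine into the stated symmetric factor. This bookkeeping, rather than any new idea, is the delicate part of the argument.
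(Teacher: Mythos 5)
Your proposal is correct and follows essentially the same route as the paper: the paper also reduces everything to concentration of $\penHO$ via the exact two-sample decomposition you describe (its Lemma~\ref{lem:DecPenHO}, which is precisely your quadratic form $\sum_{\lamm}\scroch{(P_n^{(\ItHO)}-P_n^{(\ItHO^c)})\psil}^2$ expanded into the two squared norms plus a cross term), with the asymmetric weights $1/(\tau n)$ and $1/((1-\tau)n)$ recombining into the factor $(\tau^2+(1-\tau)^2)/(\tau(1-\tau))$ exactly as you indicate, and then concludes as in Theorem~\ref{thm.oracle-penVF.cas_reel}. The only organizational difference is that for the ``delicate'' cross term the paper does not reuse the U-statistic machinery of Proposition~\ref{prop:concpenvf} but exploits the independence of the two subsamples directly, applying Bernstein's inequality conditionally on $(\xi_i)_{i\notin \ItHO}$ (Lemma~\ref{lem:ConcIneqPenHO}), which is exactly the simplification your two-independent-samples observation permits.
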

Theorem~\ref{thm:PenHo} is proved in Section~\ref{sec.supmat.penHO.oracle}. 

Theorem~\ref{thm:PenHo} extends Theorem~\ref{thm.oracle-penVF.cas_reel} to hold-out penalties, under similar assumptions. 
As in Theorem~\ref{thm.oracle-penVF.cas_reel}, $\delta$ quantifies the bias of the hold-out penalized criterion, and plays the same role in the leading constant of the oracle inequality \eqref{eq.ineq-oracle.penHO}. 

We can compare the results obtained for hold-out and $V$-fold penalization in Theorems~\ref{thm.oracle-penVF.cas_reel} and~\ref{thm:PenHo}. 
For this comparison, let $V$ be some divisor of $n$, $T \subset \inter{n}$ such that $|T|=n-n/V$ and choose the same $C$ so that both criteria have the same bias $\delta$. 
Then, the only difference lies in the remainder term, the one in Eq.~\eqref{eq.ineq-oracle.penHO} is larger than the one of Eq.~\eqref{eq.thm.oracle-penVF.cas_reel} in Theorem~\ref{thm.oracle-penVF.cas_reel} by a factor of order $V$ 
when $V$ is large. 
These only are upper bounds, but at least they are consistent with the common intuition about the stabilizing effect of averaging over $V$ folds. \\
We can also compare the results obtained for hold-out penalization in Theorem~\ref{thm:PenHo} 
and for the hold-out criterion in Theorem~\ref{thm.oracle-MCCV}. 
First, hold-out penalization gives a flexibility to choose an unbiased criterion and therefore to obtain asymptotically optimal oracle inequalities while hold-out criteria are always biased for fixed $\fractrain_n$, 
hence a leading constant $\fractrain_n^{-1}>1$ in the oracle inequality. 
The loss in the remainder term is also smaller in Eq.~\eqref{eq.ineq-oracle.penHO} than in Eq.~\eqref{eq:oracle_MCCV} 
by a factor of order $\fractrain_n^{-1} (1-\fractrain_n)^{-1}$ under assumption \eqref{hyp.Nested}.

Similarly to Theorems~\ref{theo.variance.penVF} and~\ref{thm.var-MCCV}, the variance terms can be computed for the hold-out penalty in order to understand separately the roles of the training sample size and of averaging over the $V$ splits, in the $V$-fold criteria. 
Detailed results are given by Proposition~\ref{pro.variance.penHO} in Section~\ref{sec.supmat.penHO.variance}. 

\subsection{Other Oracle Inequalities for Least-Squares Density Estimation} \label{sec.discussion.other-proc}
Although the primary topic of the paper is the study of $V$-fold procedures, let us compare briefly our results to other oracle inequalities that have been proved in the least-squares density estimation setting. 
For projection estimators, \citet[Section~7.2]{Mas:2003:St-Flour} proves an oracle inequality for some penalization procedures, which are suboptimal since the leading constant $C_n$ does not tend to~1 as $n$ goes to $+\infty$. 
Oracle inequalities have also been proved for other estimators: blockwise Stein estimators \citep{Ri06}, 
linear estimators \citep{Gol_Lep:2011:AoS} and some $T$-estimators \citep{Bi08}. 
The models considered by \citet{Bi08} are more general than ours, but the corresponding estimators are not computable in practice, and the oracle inequality by \citet{Bi08} also has a suboptimal constant $C_n$. 
Some aggregation procedures also satisfy oracle inequalities 
\citep{RT07,Bun_Tsy_Weg_Bar:2010}. 
Overall, under our assumptions, none of these results imply strictly better bounds than ours. 

Let us finally mention that \citet{BR03} propose a precise evaluation of the penalty term in the case of regular histogram models and the log-likelihood contrast. 
Their final penalty is a function of the dimension, only slightly modified compared to $\pendim$, performing very well on regular histograms. 
These performances are likely to become much worse on the collection Dya2 presented in Section~\ref{sec.simus}. 
This can be seen, for example, in Table~\ref{tab.complet.Li01.SiG5} in Section~\ref{sec.supmat.simus}, where we present the performances of $\pendim$ with different over-penalizing constants.

\subsection{Conclusion on the Choice of $V$} \label{sec.discussion.choice-V}
This section summarizes the results of the paper in order to address the main question we would like to answer: 
How to choose a $V$-fold procedure for model selection? 

\fauxparagraph{Generality of the results} 
The results of the paper only hold for projection estimators in least-squares density estimation, but we conjecture that most of the statements below are valid much more generally. 
At least, they have been observed experimentally for projection estimators in least-squares regression \citep{Arl:2008a} and they are supported by theoretical results for kernel density estimators \citep[][Chapters~3--4]{Mag:2015}. 
Nevertheless, it is reported in the literature that $V$-fold cross-validation can behave differently in other settings \citep{Arl_Cel:2010:surveyCV}, so we must keep in mind that the statements below may not be universal. 

Let us also recall that we focus here on model selection with an estimation goal, that is, minimizing the risk of the final estimator; see \citet{Yan:2006,Yan:2007b} and \citet{Cel:2008} for results when the goal is identification. 

\medbreak

\fauxparagraph{Choice of a model selection procedure} 
Choosing among procedures of the form $\mh(\CV)$, as defined by Eq.~\eqref{eq.def.mhCVgal}, 
requires to take into account three quantities:
\begin{itemize}
\item \emph{the bias} of $\CV(m)$ as an estimator of the risk of $\ERM_m$ for every $\mM_n$, 
or equivalently, the \emph{overpenalization factor} $C$, 
which usually drives the performance at first order when $n \to +\infty$, as in Theorem~\ref{thm.oracle-penVF.cas_reel}. 
The simulation experiments of Section~\ref{sec.simus} also show that varying $C$ can strongly change the performance of the procedure. 
In all settings considered in the paper, some $C^{\star}_n$ exists (the optimal overpenalization constant) such that the performance decreases for $C \in [0,C^{\star}_n]$ and increases for $C>C^{\star}_n$ (Figure~\ref{fig.surpen.LS-Dya2.n500}). 

Note that $C^{\star}_n$ strongly depends on the setting, and can also vary with $V$ when using $V$-fold penalization (in particular from $V=2$ to $V\geq 5$). 
In the nonparametric case, when $n \to +\infty$, Theorem~\ref{thm.oracle-penVF.cas_reel} shows that 
$C^{\star}_n \sim 1$. 
On the contrary, in the parametric case, when $n \to +\infty$, it is known that a BIC-type penalty performs better, hence $C^{\star}_n \to +\infty$. 
For a finite sample size, Section~\ref{sec.simus} and \citet{Liu_Yan:2011} show that some 
nonparametric settings can be ``practically parametric'', that is, $C^{\star}_n$ can be much larger than~$1$. 

\item \emph{the variance} of increments $\CV(m) - \CV(m')$ drives the performance $\mh(\CV)$ at second order, according to the heuristic of Section~\ref{sec.oracle.key-quant}, which suggests that this variance should be minimized, at least for a given ``good enough'' value of the overpenalization factor $C$. 

\item \emph{the computational complexity} of the procedure $\mh(\CV)$, that we want to minimize---for a given statistical performance---, or on which some upper bound is given---fixed budget. 
\end{itemize}

\fauxparagraph{$V$-fold cross-validation} 
The paper analyzes how the aboves three terms depend on $V$ when $\CV=\CV^{\mathrm{VFCV}}_V$ is a $V$-fold cross-validation procedure, under assumption \eqref{hyp.part-reg.exact}. 
First, by Lemma~\ref{le.penVF-VFCV}, its overpenalization factor is $\CteVFCV(V) = 1 + 1/[2(V-1)] \in [1,3/2]$, which decreases to 1 as $V$ increases to $+\infty$. 
Second, by Theorem~\ref{theo.variance.penVF}, its variance decreases as $V$ increases. 
Theoretical and empirical arguments in Sections~\ref{sec.variance} and~\ref{sec.simus} show that the variance almost reaches its minimal value by taking, say, $V=5$ or $V=10$. 
Third, by Section~\ref{sec.algo}, its computational complexity is proportional to $V$ in general; 
in the least-squares density estimation setting, it can be reduced to $(n+V^2) \card(\Lambda_m)$ . 

These three results can explain why the most common advices for choosing $V$ in the literature \citep[for instance][Section~7.10.1]{Bre_Spe:1992,Has_Tib_Fri:2001v2009} are between $V=5$ and $V=10$. 
Indeed, taking $V$ larger does not reduce the variance significantly---with almost no impact on the risk of the final estimator---, and it reduces the overpenalization factor although $C^{\star}_n$ is often larger than $\CteVFCV(10)=19/18$ or $\CteVFCV(5)=9/8$. 
So, if $C^{\star}_n$ is not much larger than $1+1/8$, which is likely to occur in many nonparametric settings, taking $V=5$ or $10$ can be close to be optimal. 

Nevertheless, other situations can occur, for instance in (practically) parametric settings where $C^{\star}_n$ is much larger, possibly leading to the failure of the heuristic ``$5 \leq V \leq 10$ is almost optimal''. 
More generally, understanding precisely how $\CV^{\mathrm{VFCV}}_V$ performs as a function of $V$ seems to be a difficult question: 
$V$ influences the performance in two opposite directions simultaneously, through the bias and the variance, so that various behaviours can result from this coupling of bias and variance, as shown in the simulation experiments. 

\medbreak

\fauxparagraph{$V$-fold penalization} 
Lemma~\ref{le.penVF-VFCV} shows that a natural way to solve this difficulty is to consider 
instead a $V$-fold penalization procedure $\CV^{\penVF}_{(C,V)}$, with overpenalization factor $C>0$. 
The value $C=\CteVFCV(V)$ corresponds to $V$-fold cross-validation, but any other value of $C$ can also be considered, making it easier to understand. 
Indeed, the overpenalization factor is directly given by $C$, while the variance and computational complexity of $\CV^{\penVF}_{(C,V)}$ vary with $V$---independently from $C$---exactly as for $V$-fold cross-validation. 
So, $V$ should be taken as large as possible---depending on the maximal computational budget available---, while $C$ should be taken as close as possible to $C^{\star}_n$. 

Compared to $V$-fold cross-validation, another interest of $V$-fold penalization is the improvement of the performance for a given computational cost, that is, a given value of $V$, because it is then possible to take $C$ closer to $C^{\star}_n$ than $\CteVFCV(V)$. 
This is especially true in (practically) parametric settings for which $C^{\star}_n>3/2 \geq \CteVFCV(V)$ for all $V \geq 2$. 

\fauxparagraph{Data-driven overpenalization factor $C$}
Although the paper shows that choosing well $C$ is a key practical problem, 
making an optimal data-driven choice of $C$ remains an open question 
which deserves to be studied, even independently from the analysis of cross-validation procedures. 
We postpone such a study to future works, but we can already make two suggestions. 
First, an external cross-validation loop can be used for choosing $C$, if the computational power is not a limitation. 
Second, a procedure built for choosing between AIC and BIC can be used in order to detect whether $C$ should be close to~$1$ or significantly larger (see, for instance, \citealp{Liu_Yan:2011} and references therein).

\section*{Acknowledgments} 
The authors thank the two referees for their comments that allowed us to improve the paper. 
The authors thank gratefully Yannick Baraud and Guillaume Obozinski for precious comments on an earlier version of the paper, and Nelo Magalh\~{a}es for his careful reading and helpful remarks on this earlier version. 
Let us also emphasize that this paper has changed a lot since its first version \citep{Arl_Ler:2012:penVF.v1}, 
and that the most recent results (Sections~\ref{sec.discussion.MCCV} and~\ref{sec.supmat.MCCV}) 
and some aspects of the proofs (for instance, the systematic use of $U_m(x,y)$ and $K_m(x,y)$) 
have been strongly influenced by our ongoing collaboration with Nelo Magalh\~{a}es \citep[Chapters~3--4][]{Mag:2015}. 
\\
This work was done while the first author was financed by CNRS 
and member of the Sierra team in the Departement d'Informatique de l'Ecole normale superieure (CNRS/ENS/INRIA UMR 8548), 
45 rue d'Ulm, F-75230 Paris Cedex 05, France. 
The authors acknowledge the support of the French Agence Nationale de la Recherche (ANR) under reference ANR-09-JCJC-0027-01 ({\sc Detect} project)  and ANR 2011 BS01 010 01 (projet Calibration). 
The first author also acknowledges the support of the GARGANTUA project funded by the Mastodons program of CNRS, 
and the support of Institut des Hautes \'Etudes Scientifiques (IHES, Le Bois-Marie, 35, route de Chartres,
91440 Bures-Sur-Yvette, France) during the last days of writing of this paper.

\appendix

\section{Proofs} \label{sec.proofs}
Before proving the main results stated in the paper, let us recall two simple results that we use repeatedly in the paper. 
First, if $(b_{\lambda})_{\lamm}$ is a family of real numbers such that $\sum_{\lamm} b_{\lambda}^2 < \infty$, then 
\begin{equation}
\label{eq.sup-CS} 
\sup_{\sum_{\lamm} a_{\lambda}^2 \leq 1} \parensqBb{ \sum_{\,\lamm} a_{\lambda} b_{\lambda} } = \sum_{\lamm} b_{\lambda}^2
\enspace . 
\end{equation}
The left-hand side is smaller than the right-hand side by Cauchy-Schwarz inequality, and considering $a_{\lambda} = b_{\lambda} / \parens{\sum_{\lambda' \in \Lambda_m} b_{\lambda'}^2}^{1/2}$ shows that the converse inequality holds true. 
Second, for any probability distribution $Q$ on $\X$, 
\begin{equation} \label{eq.proj-Q-formule}
\sum_{\lamm} (Q \psil) \psil \in \argmin_{t \in S_m} \setb{ Q \gamma(t) }
\enspace , 
\end{equation}
a result which provides in particular a formula for $\ERM_m$ and for $\bayes_m$, 
by taking $Q=P_n$ and $Q=P$, respectively.

\subsection{Proof of Lemma~\ref{le.penVF-VFCV}}
\label{sec.app.proof.lemmeLPO}
Let us first recall here the proof of Eq.~\eqref{eq.le.penVF-corrVFCV}---coming from \citet{Arl:2008a}---for the sake of completeness.
By \eqref{hyp.part-reg.exact}, 
\begin{equation*}
P_n - P_n^{(\B_K^{c})} = \frac{1}{V} \parenB{ P_n^{(\B_K)} - P_n^{(\B_K^{c})} }
\qquad \text{and} \qquad 
P_n^{(\B_K)} - P_n = \frac{V-1}{V} \parenB{ P_n^{(\B_K)} - P_n^{(\B_K^{c})} }
\enspace , 
\end{equation*}
so that 
\begin{align*}
\CV_{1,\B}(m) 
&\egaldef P_n\gamma\parenj{\ERM_m} + \penVF(m,\B,V-1) 
\\
&= P_n\gamma\parenj{\ERM_m} + \frac{V-1}{V^2} \sum_{K=1}^V \crochj{ \parenj{P_n^{(\B_K)} - P_n^{(\B_K^{c})} } \gamma\parenj{ \ERM_m^{(\B_K^{c})}} } 
\\
&= P_n\gamma\parenj{\ERM_m} + \frac{1}{V} \sum_{K=1}^V \crochj{ \parenj{P_n^{(\B_K)} - P_n } \gamma\parenj{ \ERM_m^{(\B_K^{c})}} } 
\\
&= \critcorrVF(m,\B) \enspace . 
\end{align*}

Eq.~\eqref{eq.le.penVF-VFCV} and \eqref{eq.le.penLPO-LPO} follow simultaneously from Eq.~\eqref{eq.le.penVF-VFCV.gal} below. 
Let $\mathcal{E}$ be a set of subsets of $\inter{n}$ such that 
\begin{equation}
\label{hyp.le.penVF-VFCV.gal}
\forall A \in \mathcal{E} , \quad |A|=p 
\quad \text{and} \quad 
\frac{1}{|\mathcal{E}|} \sum_{A \in  \mathcal{E}} P_n^{(A^{c})} = P_n \enspace .
\end{equation}
Let us consider the associated penalty 
\[ \pen_{\mathcal{E}}(m,C) = \frac{C}{|\mathcal{E}|} \sum_{A \in \mathcal{E}} \parenB{ P_n - P_n^{(A^{c})} } \gamma\parenj{\ERM_m^{(A^{c})}} = \frac{2 C}{|\mathcal{E}|} \sum_{A \in \mathcal{E}} \parenB{ P_n^{(A^{c})} - P_n } \parenj{\ERM_m^{(A^{c})}} \]
and the associated cross-validation criterion
\[ \crit_{\mathcal{E}}(m) = \frac{1}{\absj{\mathcal{E}}} \sum_{A \in \mathcal{E}} P_n^{(A)} \gamma\parenj{\ERM_m^{(A^{c})}} \enspace . \]
When $\mathcal{E} = \B$, we get the $V$-fold penalty $\penVF=\pen_{\mathcal{E}}$ and the $V$-fold cross-validation criterion $\critVF=\crit_{\mathcal{E}}$, and Eq.~\eqref{hyp.le.penVF-VFCV.gal} holds true with $p = n/V$ under assumption \eqref{hyp.part-reg.exact}. 
When $\mathcal{E} = \mathcal{E}_p \egaldef \setj{A \subset \inter{n} \telque |A|=p}$, Eq.~\eqref{hyp.le.penVF-VFCV.gal} always holds true and we get the leave-$p$-out penalty $\penLPO=\pen_{\mathcal{E}}$ and the leave-$p$-out cross-validation criterion $\critLPO=\crit_{\mathcal{E}}$.

Let $(\psil)_{\lamm}$ be some orthonormal basis of $S_m$ in $L^2(\mu)$. 
On the one hand, using Eq.~\eqref{hyp.le.penVF-VFCV.gal}, we get 
\begin{align}
\pen_{\mathcal{E}}(m,C)
&= \frac{2 C}{\absj{\mathcal{E}}} \sum_{A \in \mathcal{E}} \parenB{ P_n^{(A^{c})} - P_n } \parenj{\ERM_m^{(A^{c})}}
\notag \\
&= \frac{2 C}{\absj{\mathcal{E}}}  \sum_{A \in \mathcal{E}} \sum_{\lamm} \crochj{ \parenj{P_n^{(A^{c})}(\psil) - P_n(\psil) } P_n^{(A^{c})} (\psil)}
\notag \\
&= \frac{2 C}{\absj{\mathcal{E}}} \sum_{\lamm} \crochj{ \sum_{A \in \mathcal{E}}  \parensq{P_n^{(A^{c})}(\psil)} - P_n(\psil) \sum_{A \in \mathcal{E}}  P_n^{(A^{c})} (\psil) }
\notag \\
&= \frac{2 C}{\absj{\mathcal{E}}} \sum_{\lamm} \sum_{A \in \mathcal{E}}  \crochj{  \parensq{P_n^{(A^{c})}(\psil)} - \parensq{ P_n(\psil) } }
\label{eq.pr.le.penVF-VFCV.gal.penVF}
\enspace .
\end{align}
On the other hand, using that $P_n^{(A)} = \frac{n}{p} P_n - \frac{n-p}{p} P_n^{(A^{c})}$ by Eq.~\eqref{hyp.le.penVF-VFCV.gal}, 
\begin{align}
&\qquad \crit_{\mathcal{E}}(m) - P_n \gamma\parenj{\ERM_m} 
\notag \\
&= \frac{1}{\absj{\mathcal{E}}} \sum_{A \in \mathcal{E}} \crochj{ P_n^{(A)} \gamma\parenj{ \ERM_m^{(A^{c})}} - P_n \gamma\parenj{\ERM_m} }
\notag \\
&= \frac{1}{\absj{\mathcal{E}}} \sum_{A \in \mathcal{E}} \crochj{ \normb{ \ERM_m^{(A^{c})} }^2 - 2 P_n^{(A)} \parenj{ \ERM_m^{(A^{c})}} - \norms{\ERM_m}^2 + 2 P_n \parenj{\ERM_m} }
\notag \\
&= \frac{1}{\absj{\mathcal{E}}} \sum_{A \in \mathcal{E}} \sum_{\lamm} \crochj{ \parenB{ P_n^{(A^{c})} (\psil) }^2 - 2 P_n^{(A)} (\psil) P_n^{(A^{c})} (\psil) + \parenb{ P_n(\psil) }^2  }
\notag \\
&= \frac{1}{\absj{\mathcal{E}}} \sum_{\lamm} \sum_{A \in \mathcal{E}} \crochj{ \parenj{ \frac{2 n}{p} -1 } \parenB{ P_n^{(A^{c})} (\psil) }^2 -  \frac{2 n}{p} P_n (\psil) P_n^{(A^{c})} (\psil)  + \parenb{ P_n(\psil) }^2  }
\notag \\
&= \parenj{ \frac{2 n}{p} -1 } \frac{1}{\absj{\mathcal{E}}} \sum_{\lamm} \sum_{A \in \mathcal{E}}  \crochj{   \parenB{ P_n^{(A^{c})} (\psil) }^2 - \parenb{ P_n(\psil) }^2  }
\enspace ,
\label{eq.pr.le.penVF-VFCV.gal.VFCV}
\end{align}
where we used again Eq.~\eqref{hyp.le.penVF-VFCV.gal}. 
Comparing Eq.~\eqref{eq.pr.le.penVF-VFCV.gal.penVF} and~\eqref{eq.pr.le.penVF-VFCV.gal.VFCV} gives 
\begin{equation} 
\label{eq.le.penVF-VFCV.gal}
\crit_{\mathcal{E}}(m) = P_n \gamma\parenj{\ERM_m} +  \pen_{\mathcal{E}} \parenj{ m, \frac{n}{p} - \frac{1}{2} }
\end{equation}
which implies Eq.~\eqref{eq.le.penVF-VFCV} and~\eqref{eq.le.penLPO-LPO}. 
Eq.~\eqref{eq.le.penLOO-LPO} follows by Lemma~A.11 of \citet{Le09}.
\qed

We now prove the statements made in Remarks~\ref{rk.le.penVF-VFCV.Alain}--\ref{rk.le.penVF-VFCV.Pascal} below Lemma~\ref{le.penVF-VFCV}. 
\medbreak
\begin{proofof}{Remark~\ref{rk.le.penVF-VFCV.Alain}} 
We first note that 
Eq.~\eqref{eq.le.penLOO-LPO} can also be deduced from \citet[Proposition~2.1]{Cel:2008}, which proves  
\begin{align*}
\critLPO(m,p)= \frac{1}{n(n-p)} \sum_{\lL_{m}}\parenj{\sum_{i=1}^{n}\psil(\xi_{i})^{2} - \frac{n -p +1}{n-1} \sum_{1 \leq i\neq j \leq n}\psil(\xi_{i})\psil(\xi_{j})}\enspace.
\end{align*}
Elementary algebraic computations then show that
\begin{multline}\label{eq:penLPO}
\critLPO(m,p)-P_{n}\gamma(\ERM_{m})\\
=\frac{2n-p}{n^{2}(n-p)}\sum_{\lL_{m}}\parenj{\sum_{i=1}^{n}\psil(\xi_{i})^{2}-\frac{1}{n-1}\sum_{1 \leq i\neq j \leq n}\psil(\xi_{i})\psil(\xi_{j})}
\end{multline}
hence for any $p,p' \in \inter{n}$, 
\begin{align*}
\frac{n/p-1}{n/p-1/2}\parenb{\critLPO\parenj{ m,p}-P_{n}\gamma(\ERM_{m})}=\frac{n/p'-1}{n/p'-1/2}\parenb{\critLPO\parenj{ m,p'}-P_{n}\gamma(\ERM_{m})}\enspace. 
\end{align*}
In particular, when $p'=1$, from Eq.~\eqref{eq.le.penLPO-LPO}, since $\penLPO(m,1,C)=\penLOO(m,C)$,
\begin{align*}
 \penLPO\parenj{ m,p,\frac{n}{p}-\frac{1}{2} }&= \frac{n/p-1/2}{n/p-1}\frac{n-1}{n-1/2}\penLPO\parenj{ m,1,n-\frac{1}{2}}\\
 &=\penLOO\parenj{ m,(n-1)\frac{n/p-1/2}{n/p-1} }\enspace.
\end{align*}
\end{proofof}
\begin{proofof}{Remark~\ref{rk.le.penVF-VFCV.Pascal}}
Note first that the CV estimator of \citet[Sec. 7.2.1, p. 204--205]{Mas:2003:St-Flour} is defined as the minimizer of 
\begin{multline}\label{eq:CVEst}
\norms{\ERM_m}^2-\frac{2}{n(n-1)} \sum_{1 \leq i\neq j \leq n} \sum_{\lL_m}\psil(\xi_i)\psil(\xi_j)\\
= P_n\gamma\parenj{\ERM_m} + \frac{2}{n^2} \sum_{\lL_m} \parenj{\sum_{i=1}^n\psil(\xi_i)^2 - \frac{1}{n-1}\sum_{1 \leq i\neq j \leq n}\psil(\xi_i)\psil(\xi_j)}\enspace. 
\end{multline}
On the other hand, from Eq.~\eqref{eq:penLPO} and \eqref{eq.le.penLPO-LPO} with $p=1$, we have
\begin{align*}
\penLOO\parenj{m,n-1} 
&=\frac{2}{n^2}\sum_{\lL_{m}}\parenj{\sum_{i=1}^{n}\psil(\xi_{i})^{2}-\frac{1}{n-1}\sum_{1 \leq i\neq j \leq n}\psil(\xi_{i})\psil(\xi_{j})}
\enspace. 
\end{align*}
Hence, from Eq.~\eqref{eq:CVEst}, the CV estimator is the minimizer of $\critcorrVF\parens{m,\B_{\mathrm{LOO}}}$. 
\citet[Theorem 7.6]{Mas:2003:St-Flour} studies the minimizers of the criterion
\begin{equation} 
\label{eq.crit.MasThm7.6}
P_n\gamma(\ERM_m)+\frac{C}{n^2}\sum_{i=1}^n \sum_{\lL_m} \psil(\xi_i)^2 
\enspace,\end{equation}
where $C=(1+\epsilon)^6$ for any $\epsilon>0$. 
Let $\alpha = C/n$, so that $\alpha=(C-\alpha)/(n-1)$.  
Then, the criterion \eqref{eq.crit.MasThm7.6} is equal to
\begin{align*}
& \quad 
(1-\alpha) 
P_n\gamma(\ERM_m) + \frac{C-\alpha}{n^2}\sum_{\lL_{m}}\sum_{i=1}^n\psil(\xi_i)^2-\frac{\alpha}{n^2}\sum_{\lL_{m}}\sum_{1 \leq i\neq j \leq n} \psil(\xi_i)\psil(\xi_j)
\\
&= 
(1-\alpha) P_n\gamma(\ERM_m) 
+ \frac{C-\alpha}{n^2} \sum_{\lL_{m}} \parenj{\sum_{i=1}^n\psil(\xi_i)^2-\frac{1}{n-1}\sum_{\lL_{m}}\sum_{1 \leq i\neq j \leq n} \psil(\xi_i)\psil(\xi_j)}
\\
&= 
(1-\alpha) \crochj{ P_n\gamma(\ERM_m)+\frac{C-\alpha}{2 (1-\alpha)}\penLOO\parenj{m,n-1} }
\\
&= 
(1-\alpha) \crochj{ P_n\gamma(\ERM_m)+\penLOO\parenj{m,\frac{C (n-1)^2}{2(n-C)}} }
\enspace.
\end{align*}
\end{proofof}

\subsection{Proof of Proposition~\ref{prop:concpenvf}}\label{sect:proofthmconcpenvf}
Note that the two formulas given for $\Psi_m$ in the statement of Proposition~\ref{prop:concpenvf} coincide by Eq.~\eqref{eq.sup-CS}. 
The proof is decomposed into 3 lemmas.
\begin{lemma}\label{lem:exact.formula.penvf}
Let $\xi_{\inter{n}}$ denote i.i.d. random variables taking value in a Polish space $\X$, $\B_{\inter{V}}$ some partition of $\inter{n}$ satisfying \eqref{hyp.part-reg.exact}, $S_m$ some separable linear subspace of $L^2(\mu)$ with orthonormal basis $(\psil)_{\lL_m}$ and 
\begin{align} 
\label{def:Um}
U(m) &\egaldef \frac{1}{n^{2}}\sum_{1 \leq k \neq k' \leq V} \sum_{i\in \B_{k}} \sum_{j\in \B_{k'}} \sum_{\lL_m} \parenb{ \psil(\xi_{i})-P\psil } \parenb{ \psil(\xi_{j})-P\psil }
\enspace . 
\end{align}
Then, the $V$-fold penalty is equal to 
\begin{align} 
\label{eq.lem:exact.formula.penvf}
\penVF(m,\B,C) &= 
\frac{2C}{V-1} \norms{\bayes_m-\ERM_m}^2 - \frac{2VC}{(V-1)^{2}} U(m)
\\
\hspace{-2cm}
\text{and} \quad 
\E\crochj{\penVF\parenj{m,\B,\frac{V-1}{2}}}
&=\E\crochj{ \norms{\bayes_m-\ERM_m}^2 } 
= \frac{\Dcal_m}{2 n}
\label{eq.EpenVF.2}
\enspace. 
\end{align}
\end{lemma}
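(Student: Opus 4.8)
The plan is to establish the exact identity \eqref{eq.lem:exact.formula.penvf} by expanding $\penVF(m,\B,C)$ in the orthonormal basis and reducing everything to the centered block sums of the $\psil$; formula \eqref{eq.EpenVF.2} then follows by specializing $C=(V-1)/2$ together with a one-line centering argument. Starting from the second form in \eqref{eq.def.penVF}, $\penVF(m,\B,C)=\frac{2C}{V}\sum_{K=1}^{V}(P_n^{(\B_K^{c})}-P_n)(\ERM_m^{(\B_K^{c})})$, and writing $\ERM_m^{(\B_K^{c})}=\sum_{\lL_m}P_n^{(\B_K^{c})}(\psil)\psil$, I would introduce the centered functions $\phi_\lambda\egaldef\psil-P\psil$, the block sums $S_K^\lambda\egaldef\sum_{i\in\B_K}\phi_\lambda(\xi_i)$ and the total $T^\lambda\egaldef\sum_{K=1}^{V}S_K^\lambda=n(P_n-P)(\psil)$. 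Under \eqref{hyp.part-reg.exact} one has $P_n^{(\B_K^{c})}(\phi_\lambda)=\frac{V}{n(V-1)}(T^\lambda-S_K^\lambda)$ and $P_n(\phi_\lambda)=T^\lambda/n$.

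The first---and only delicate---step is to show that the terms carrying a factor $P\psil$ cancel after averaging over folds. Expanding a single summand as $\sum_{\lL_m}\big(P_n^{(\B_K^{c})}(\phi_\lambda)+P\psil\big)\big(P_n^{(\B_K^{c})}(\phi_\lambda)-P_n(\phi_\lambda)\big)$, the contribution proportional to $P\psil$ becomes, after dividing by $V$ and summing over $K$, the quantity $\sum_{\lL_m}P\psil\big(\frac1V\sum_{K}P_n^{(\B_K^{c})}(\phi_\lambda)-P_n(\phi_\lambda)\big)$; the inner bracket vanishes, since $\frac1V\sum_K(T^\lambda-S_K^\lambda)=\frac{V-1}{V}T^\lambda$ gives $\frac1V\sum_K P_n^{(\B_K^{c})}(\phi_\lambda)=T^\lambda/n=P_n(\phi_\lambda)$. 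Hence $\penVF(m,\B,C)=\frac{2C}{V}\sum_{K}\sum_{\lL_m}P_n^{(\B_K^{c})}(\phi_\lambda)\big(P_n^{(\B_K^{c})}(\phi_\lambda)-P_n(\phi_\lambda)\big)$, now expressed entirely through the $S_K^\lambda$ and $T^\lambda$.

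The remainder is bookkeeping. Substituting the explicit expressions, each summand equals $\frac{V}{n^{2}(V-1)^{2}}(T^\lambda-S_K^\lambda)(T^\lambda-V S_K^\lambda)$, and summing over $K$ with $\sum_K S_K^\lambda=T^\lambda$ yields the clean identity $\sum_K(T^\lambda-S_K^\lambda)(T^\lambda-V S_K^\lambda)=V\sum_K(S_K^\lambda)^2-(T^\lambda)^2$. Summing over $\lambda$, I would identify $\frac1{n^{2}}\sum_{\lL_m}(T^\lambda)^2=\sum_{\lL_m}[(P_n-P)(\psil)]^2=\norm{\bayes_m-\ERM_m}^2$ by \eqref{eq.p1.4formules}, and $\frac1{n^{2}}\sum_{\lL_m}\sum_K(S_K^\lambda)^2=\norm{\bayes_m-\ERM_m}^2-U(m)$, the latter being exactly the definition \eqref{def:Um} of $U(m)$ since $\sum_{k\neq k'}S_k^\lambda S_{k'}^\lambda=(T^\lambda)^2-\sum_K(S_K^\lambda)^2$. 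Simplifying the coefficient in $\frac{2C}{n^2(V-1)^2}[V(n^2\norm{\bayes_m-\ERM_m}^2-n^2U(m))-n^2\norm{\bayes_m-\ERM_m}^2]$ then produces precisely $\frac{2C}{V-1}\norm{\bayes_m-\ERM_m}^2-\frac{2VC}{(V-1)^2}U(m)$, which is \eqref{eq.lem:exact.formula.penvf}.

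Finally, for \eqref{eq.EpenVF.2} I would take $C=(V-1)/2$, so the first coefficient becomes $1$ and $\penVF(m,\B,(V-1)/2)=\norm{\bayes_m-\ERM_m}^2-\frac{V}{V-1}U(m)$. Taking expectations and using $\Ex{U(m)}=0$---every term of $U(m)$ involves $\phi_\lambda(\xi_i)\phi_\lambda(\xi_j)$ with $i\in\B_k$, $j\in\B_{k'}$ for $k\neq k'$, hence $i\neq j$, so that $\xi_i$ and $\xi_j$ are independent and each centered factor has mean zero---gives $\Ex{\penVF(m,\B,(V-1)/2)}=\Ex{\norm{\bayes_m-\ERM_m}^2}=\Dcal_m/n$ by the definition $\Dcal_m=n\,\Ex{\norm{\bayes_m-\ERM_m}^2}$ recalled in Proposition~\ref{prop:concpenvf}. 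The whole argument hinges on the fold-averaging cancellation of the $P\psil$ terms in the second paragraph; once that is secured, only the combinatorial factors $V/(V-1)$ coming from the training-set sizes require care.
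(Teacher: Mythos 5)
Your proof is correct, and it is essentially the paper's computation in different notation: both arguments expand the penalty in the orthonormal basis $(\psil)_{\lL_m}$, use \eqref{hyp.part-reg.exact} to kill the uncentered contributions (your fold-averaging cancellation of the $P\psil$ terms is exactly the paper's step $\E_W\croch{(P_n^W-P_n)(\ERM_m^W)}=\E_W\croch{(P_n^W-P_n)(\ERM_m^W-\ERM_m)}$), and then split the resulting quadratic form in the centered variables $\psil(\xi_i)-P\psil$ into its within-block and cross-block parts, the latter being $U(m)$. The only difference is bookkeeping: the paper routes through the resampling representation \eqref{def.pen.Res} and computes the weight covariances $E_{i,j}^{(\VF)}=\E_W\croch{(W_i-1)(W_j-1)}$, equal to $(V-1)^{-1}$ within a block and $-(V-1)^{-2}$ across blocks, whereas you work directly from \eqref{eq.def.penVF} via the block sums $S_K^{\lambda}$ and $T^{\lambda}$; the two ways of organizing the double sum over $(i,j)$ coincide term by term, so nothing substantive is gained or lost.

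One discrepancy should be flagged, and it is not a gap on your side: your expectation computation gives $\E\croch{\penVF(m,\B,(V-1)/2)}=\E\croch{\norm{\bayes_m-\ERM_m}^2}=\Dcal_m/n$, whereas the last member of \eqref{eq.EpenVF.2} reads $\Dcal_m/(2n)$. Your value is the one forced by the paper's own definition $\Dcal_m=n\,\E\croch{\norm{\bayes_m-\ERM_m}^2}$ in Proposition~\ref{prop:concpenvf}, and it is the only one consistent with Eq.~\eqref{eq:ExpPenid}, which asserts $\E\croch{\penVF(m,\B,V-1)}=2\Dcal_m/n$: since $\penVF(m,\B,C)$ is proportional to $C$, halving the constant must halve the expectation. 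The final equality of \eqref{eq.EpenVF.2} is therefore a typo in the paper; your derivation of the first equality, which is all the paper's own proof establishes as well (it simply says it ``directly follows'' from \eqref{eq.lem:exact.formula.penvf}), is complete and correct.
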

\begin{proof}
Let $W_{i}=\frac{V}{V-1}\un_{i\notin \B_{J}}$ and use the formulation \eqref{def.pen.Res} of the $V$-fold penalty as a resampling penalty. 
Then,
\begin{align}
\notag\hspace{-0.75cm} 
\penVF(m,\B,C) &= C \, \E_{W}\crochB{ \parenb{ P_{n}-P_{n}^{W} } \parenB{ \gamma \parenb{ \ERMW_m }} }\\
\notag &= 2C\, \E_{W}\crochB{ \parenb{ P_{n}^{W}-P_{n} } \parenb{ \ERMW_m }}\\
\notag &= 2C\, \E_{W}\crochB{ \parenb{ P_{n}^{W}-P_{n} } \parenb{ \ERMW_m-\ERM_{m}} }
\qquad \text{by \eqref{hyp.part-reg.exact}} \\
\notag &= 2C \sum_{\lL_m}\E_{W}\crochj{\parensqB{ \parenb{ P_{n}^{W}-P_{n} }(\psil)}}\\
\notag &= 2C \sum_{\lL_m}\E_{W}\crochj{\parensqB{ \parenb{ P_{n}^{W}-P_{n} } (\psil-P\psil)}}\\
\label{eq:penvf.poids}
&= \frac{2C}{n^{2}} \sum_{\lL_m}\sum_{1 \leq i,j \leq n}
e_{i,j}^{(\VF)} \parenb{ \psil(\xi_{i})-P\psil } \parenb{ \psil(\xi_{j})-P\psil } 
\end{align}
where $e_{i,j}^{(\VF)} \egaldef \E\crochj{(W_{i}-1)(W_{j}-1)}$. 
Since $\E[W_i]=1$ by \eqref{hyp.part-reg.exact} and 
\[ 
W_{i} W_{j} = \parensq{ \frac{V}{V-1} } \un_{J\notin\setj{J_{0},J_{1}}}
\qquad 
\text{if} 
\quad i \in \B_{J_0} \quad \text{and} \quad j \in \B_{J_1}
\enspace , 
\]
we get that $e_{i,j}^{(\VF)} = (V-1)^{-1}$ if $i$ and $j$ belong to the same block and $e_{i,j}^{(\VF)} = - (V-1)^{-2}$ otherwise. 
So, 
\begin{align}
\notag
& \qquad \penVF(m,\B,C) 
\\
\notag
&=\frac{2C}{n^{2}(V-1)}
\sum_{\lL_m}\sum_{k=1}^{V}\sum_{(i,j)\in \B_{k}} \parenb{ \psil(\xi_{i})-P\psil } \parenb{ \psil(\xi_{j})-P\psil }
-\frac{2C}{(V-1)^{2}} U(m)
\\
\notag
&=\frac{2C}{V-1}\sum_{\lL_m} \parenb{(P_n-P)\psil}^2
- \frac{2C V}{(V-1)^{2}} U(m)
\end{align}
and Eq.~\eqref{eq.lem:exact.formula.penvf} follows by Eq.~\eqref{eq.p1.4formules}. 
Eq.~\eqref{eq.EpenVF.2} directly follows from Eq.~\eqref{eq.lem:exact.formula.penvf}. 
\end{proof}

\begin{lemma}\label{lem:conc.supPn-P}
Let $\xi_{\inter{n}}$ be i.i.d. random variables taking values in a Polish space $\X$ with common density $\bayes\in L^{\infty}(\mu)$, 
$S_m$ a separable linear subspace of $L^2(\mu)$ and denote by $(\psil)_{\lL_m}$ an orthonormal basis of $S_m$. 
Let $\Boule_m=\setj{t\in S_m \telque \norms{t}\leq 1}$, 
$\Dcal_{m}=\sum_{\lL_m} P\parenj{\psil^{2}} - \norms{\bayes_{m}}^{2}$ 
and assume that $b_{m}=\sup_{t\in\Boule_m} \norms{t}_{\infty}<\infty$. 
An absolute constant $\kappa$ exists such that, for any $x>0$, with probability larger than $1-2\e^{-x}$, we have for every $\epsilon >0$, 
\[
\absj{  \norms{\bayes_m-\ERM_m}^2 - \frac{\Dcal_{m}}{n} }
\leq \epsilon\frac{\Dcal_{m}}{n} + \kappa\parenj{\frac{\norms{\bayes}_{\infty} x}{(\epsilon \wedge 1) n} + \frac{b_{m}^2 x^2}{(\epsilon \wedge 1)^3 n^2}}
\enspace.\]
\end{lemma}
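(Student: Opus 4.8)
The plan is to recognize $\norm{\bayes_m-\ERM_m}^2$ as the square of the supremum of a bounded empirical process and to combine Talagrand-type concentration with elementary (Young) inequalities. Set $Z\egaldef\sup_{t\in\Boule_m}(P_n-P)t$. By Eq.~\eqref{eq.p1.4formules} and Eq.~\eqref{eq.sup-CS}, $Z=\sqrt{\sum_{\lL_m}\paren{(P_n-P)\psil}^2}$ is a measurable function of the data, the set $\Boule_m$ is symmetric and contains $0$ (so $Z\geq 0$ and $\sup_{t\in\Boule_m}[(P_n-P)t]^2=Z^2$), and hence $\norm{\bayes_m-\ERM_m}^2=Z^2$. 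Since $P\psil=\langle\bayes,\psil\rangle$ and $\bayes_m=\sum_{\lL_m}(P\psil)\psil$, expanding on the basis gives the exact identity $\Ex{Z^2}=\sum_{\lL_m}\E\croch{\paren{(P_n-P)\psil}^2}=n^{-1}\paren{\sum_{\lL_m}P\psil^2-\norm{\bayes_m}^2}=\Dcal_m/n$, which is the center in the statement. By separability of $S_m$ the supremum may be restricted to a countable dense subfamily, so the concentration tools below apply.

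Next I would apply concentration to $Z$. The centered functions $g_t\egaldef t-Pt$, $t\in\Boule_m$, satisfy $\norm{g_t}_\infty\leq 2b_m$ and $\Var{g_t(\xi_1)}=Pt^2-(Pt)^2\leq\norm{\bayes}_\infty\norm{t}^2\leq\norm{\bayes}_\infty$. Bousquet's upper-tail and the Klein--Rio lower-tail inequalities then yield an absolute constant such that, with probability at least $1-2e^{-x}$,
\[
\absj{Z-\E Z}\leq D\egaldef\sqrt{\frac{2\paren{\norm{\bayes}_\infty+4b_m\E Z}x}{n}}+\frac{2b_m x}{3n}\enspace.
\]
This event does not depend on $\epsilon$, which matches the order of quantifiers in the statement. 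In parallel, the standard variance bound for suprema of bounded empirical processes gives $\Var{Z}\leq c\,(\norm{\bayes}_\infty+b_m\E Z)/n$ for an absolute $c$; together with $\Ex{Z^2}=\Dcal_m/n$ this controls how far $\E Z$ can fall below $\sqrt{\Dcal_m/n}$, which is precisely what the lower deviation needs.

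On this event I would pass from $Z$ to $Z^2$. Writing $a\egaldef\sqrt{\Dcal_m/n}$, so that $\E Z\leq a$, the upper direction is immediate: $Z^2\leq(\E Z+D)^2\leq(a+D)^2=a^2+2aD+D^2$. For the lower direction, $(\E Z)^2=a^2-\Var{Z}$ gives $\E Z\geq a-\sqrt{\Var{Z}}$, whence $Z^2\geq(\E Z-D)_+^2\geq a^2-2a\paren{\sqrt{\Var{Z}}+D}$. Together,
\[
\absj{Z^2-\frac{\Dcal_m}{n}}\leq 2a D+D^2+2a\sqrt{\Var{Z}}\enspace.
\]
Using $\sqrt{u+v}\leq\sqrt u+\sqrt v$ (and, harmlessly, $x\gtrsim 1$ on the range where $1-2e^{-x}>0$), each term on the right is of one of the shapes $\sqrt{a^2\cdot(\norm{\bayes}_\infty x/n)}$, $\sqrt{a^2\cdot(ab_m x/n)}$, $a\,(b_m x/n)$, $\norm{\bayes}_\infty x/n$, or $b_m^2 x^2/n^2$. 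Applying Young's inequality with a parameter proportional to $\epsilon\wedge 1$ turns every $a^2$-proportional piece into $\leq\epsilon\,\Dcal_m/n$ and pushes the rest into the remainder; collecting constants into a single $\kappa$ gives the claim for every $\epsilon>0$ on the same event.

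The main obstacle is the bookkeeping in this last step: converting a clean first-order deviation of the \emph{linear} process $Z$ into a deviation of $Z^2$ around its exact mean $\Dcal_m/n$, while keeping the leading term proportional to $\epsilon\,\Dcal_m/n$ and producing exactly $\norm{\bayes}_\infty x/((\epsilon\wedge1)n)+b_m^2 x^2/((\epsilon\wedge1)^3 n^2)$. This forces an iterated use of Young's inequality to track the powers of $1/\epsilon$ — the doubly-nested application on the $\sqrt{a^2\cdot ab_m x/n}$ term is exactly what generates the cubic factor $(\epsilon\wedge1)^{-3}$ — and, for the lower bound on $Z^2$, the auxiliary variance estimate relating $\E Z$ to $\sqrt{\Dcal_m/n}$; the concentration inputs themselves are standard.
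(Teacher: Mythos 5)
Your proposal is correct, but it follows a different route from the paper at the decisive step. The paper's own proof of Lemma~\ref{lem:conc.supPn-P} is three lines: it makes exactly your opening reduction --- by Eq.~\eqref{eq.p1.4formules}, $\norm{\bayes_m-\ERM_m}^2=\sup_{t\in\Boule_m}\croch{(P_n-P)t}^2=Z^2$ has expectation $\Dcal_m/n$, and $\sup_{t\in\Boule_m}\Var{t(\xi_1)}\leq\norm{\bayes}_{\infty}$ by Eq.~\eqref{eq:cont.vm} --- and then invokes as a black box Proposition~\ref{prop:concLe09} (Theorem~4.1 of \cite{Ler:2010:mixing}, recalled in the supplementary material), which is stated precisely as a deviation bound for the \emph{square} $Z^2$ around $\E\croch{Z^2}$, with the structure $\epsilon D/N+\kappa\paren{v^2x/(\epsilon N)+b^2x^2/(\epsilon^3N^2)}$ and the correct quantifier order (an event depending only on $x$, then all $\epsilon$). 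What you do instead is re-prove that cited inequality from first principles: Bousquet and Klein--Rio applied to the linear supremum $Z$, the Efron--Stein-type bound $\Var{Z}\leq c\paren{\norm{\bayes}_{\infty}+b_m\E Z}/n$, and the passage from $Z$ to $Z^2$ via Young's inequality. Your treatment of the delicate points is sound: the concentration event depends only on $x$, so the ``for every $\epsilon$'' quantifier is deterministic bookkeeping; the lower deviation of $Z^2$ genuinely requires the variance bound to compare $\E Z$ with $\sqrt{\Dcal_m/n}$ (Jensen alone only gives the upper direction); the reduction to $x\gtrsim 1$ is harmless since the statement is vacuous for $x\leq\log 2$; and the doubly-nested Young inequality on the term $a^{3/2}(b_m x/n)^{1/2}$, with $a=\sqrt{\Dcal_m/n}$, is indeed exactly what generates the $(\epsilon\wedge 1)^{-3}$ factor. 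The trade-off between the two routes: the paper's argument is much shorter and delegates all the analytic work to a result proved elsewhere, while yours is self-contained, relies only on standard Talagrand-type inputs, and makes visible where the cubic $\epsilon^{-3}$ remainder comes from, which the citation hides.
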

\begin{proof}
By Eq.~\eqref{eq.p1.4formules}, 
$ \norms{\bayes_m-\ERM_m}^2 = \sup_{t\in \Boule_{m}} \crochsq{ (P_{n}-P)(t)} $ has expectation 
$\Dcal_m/n$. 
In addition, for any $t\in \Boule_{m}$, 
\begin{equation} \label{eq:cont.vm}
\var \parenb{ t(\xi_1)}\leq \int_{\R}t^{2}\bayes \dd\mu\leq \norms{\bayes}_{\infty}\norms{t}^{2}
\leq \norms{\bayes}_{\infty} 
\enspace ,
\end{equation}
which gives the conclusion thanks to Proposition~\ref{prop:concLe09} in Section~\ref{sec.supmat.proba-tools}. 
\end{proof}

\begin{lemma}\label{lem:concUm}
Assume that $\xi_{\inter{n}}$ is a sequence of i.i.d. real-valued random variables with common density $\bayes\in  L^{\infty}(\mu)$ and $\B_{\inter{V}}$ is some partition of $\inter{n}$ satisfying \eqref{hyp.part-reg.exact}. 
Let $S_m$ denote a separable subspace of $L^2(\mu)$ with orthonormal basis $(\psil)_{\lL_m}$ such that 
\[ 
b_{m} \egaldef \sup_{t\in S_m, \norms{t}\leq 1} \norms{t}_{\infty} < +\infty
\enspace . 
\]
Let $U(m)$ be the $U$-statistics defined by Eq.~\eqref{def:Um}. 
Using the notations of Lemma~\ref{lem:conc.supPn-P}, an absolute constant $\kappa$ exists such that, 
with probability larger than $1-6\e^{-x}$, 
\begin{align*}
\absb{U(m)}
& \leq \frac{3 \sqrt{(V-1)\norms{\bayes}_\infty\Dcal_{m}x}}{\sqrt{V}n} 
+ \kappa \parenj{ 
\frac{\norms{\bayes}_{\infty} x}{n} 
+ \frac{ \parenb{ b_m^2+\norms{\bayes}^2 } x^2}{n^2} }
\enspace .
\end{align*}
Hence, an absolute constant $\kappa'$ exists such that, for any $x>0$, with probability larger than $1-6\e^{-x}$, for any $\theta \in (0,1]$,
\begin{equation*}
 \absb{U(m)} \leq \theta \frac{\Dcal_{m}}{n} 
 + \kappa' \parenj{\frac{\norms{\bayes}_{\infty} x}{\theta n}
 + \frac{\parenb{ b_m^2+\norms{\bayes}^2 } x^2}{n^2}}
 \enspace.
\end{equation*}
\end{lemma}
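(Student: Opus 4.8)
The plan is to view $U(m)$ as a normalized \emph{degenerate} (canonical) $U$-statistic of order two and to apply a Bernstein-type exponential inequality for such objects. Setting $h(x,y)\egaldef\sum_{\lL_m}\paren{\psil(x)-P\psil}\paren{\psil(y)-P\psil}$, Eq.~\eqref{def:Um} reads
\[
U(m)=\frac{1}{n^{2}}\sum_{1\leq k\neq k'\leq V}\sum_{i\in\B_k,\,j\in\B_{k'}}h(\xi_{i},\xi_{j})\enspace,
\]
an incomplete, symmetric $U$-statistic whose kernel $h$ is canonical, since $\Ex{h(x,\xi_{1})}=\sum_{\lL_m}\paren{\psil(x)-P\psil}\Ex{\psil(\xi_1)-P\psil}=0$ for every $x$. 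I would then apply a concentration inequality for degenerate $U$-statistics of order two (of Houdr\'e--Reynaud--Bonnet type), which bounds the deviations of $n^{2}U(m)$ by four terms of increasing order in $x$: a ``Gaussian'' term of order $\sqrt{N\,\Ex{h(\xi_1,\xi_2)^2}\,x}$, where $N=n^{2}(V-1)/V$ is the number of ordered pairs $(i,j)$ lying in different blocks, and three higher-order terms governed respectively by the operator norm of the kernel, by $\sup_x\Ex{h(x,\xi_1)^2}$, and by $\snorm{h}_\infty$. It is precisely the pair count $N=n^{2}(V-1)/V$ that produces the factor $(V-1)/V$ in the announced bound; one may also reach this via the decomposition $\sum_{\text{diff. blocks}}h=\sum_{i\neq j}h-\sum_{k}\sum_{i\neq j\in\B_k}h$ into complete $U$-statistics, the within-block sums being independent across the $V$ blocks.

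The heart of the argument is a single covariance-operator estimate. Let $M_{\lambda\lp}\egaldef\Ex{\paren{\psil(\xi_1)-P\psil}\paren{\psilp(\xi_1)-P\psilp}}$, so that $\Ex{h(\xi_1,\xi_2)^2}=\sum_{\lL_m,\lpL_m}M_{\lambda\lp}^2=\trace{M^2}=\termeBvar{\Lambda_m,\Lambda_m}$ while $\trace{M}=\sum_{\lL_m}\Var{\psil(\xi_1)}=\Dcal_m$. The key bound is $\snorm{M}_{\mathrm{op}}\leq\norm{\bayes}_{\infty}$, which holds because, for $t=\sum_{\lL_m}a_\lambda\psil$ with $\sum_{\lL_m}a_\lambda^2\leq1$, one has $a^{\top}Ma=\Var{t(\xi_1)}\leq\norm{\bayes}_\infty$ by \eqref{eq:cont.vm}. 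This single estimate controls two of the four parameters at once: the variance, through $\Ex{h(\xi_1,\xi_2)^2}=\trace{M^2}\leq\snorm{M}_{\mathrm{op}}\trace{M}\leq\norm{\bayes}_\infty\Dcal_m$, and the operator term, which for this kernel equals $\snorm{M}_{\mathrm{op}}\leq\norm{\bayes}_\infty$. For the two remaining parameters, Cauchy--Schwarz gives $\snorm{h}_\infty\leq\sup_x\sum_{\lL_m}\paren{\psil(x)-P\psil}^2\leq 2b_m^2+2\norm{\bayes}^2$ (using $\sum_{\lL_m}\psil(x)^2\leq b_m^2$ and $\norm{\bayes_m}^2=\sum_{\lL_m}(P\psil)^2\leq\norm{\bayes}^2$), and $\Ex{h(x,\xi_1)^2}\leq\snorm{M}_{\mathrm{op}}\sum_{\lL_m}\paren{\psil(x)-P\psil}^2$, whence $\sup_x\Ex{h(x,\xi_1)^2}\leq 2\norm{\bayes}_\infty\paren{b_m^2+\norm{\bayes}^2}$.

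Plugging these four estimates into the concentration inequality and dividing by $n^{2}$ produces the leading term $\tfrac1n\sqrt{\tfrac{V-1}{V}\norm{\bayes}_\infty\Dcal_m x}$ together with the $\snorm{h}_\infty$-term of order $(b_m^2+\norm{\bayes}^2)x^2/n^2$ and two intermediate terms, which are folded into the two announced remainders by repeated use of $2uv\leq u^2+v^2$. For instance the term coming from $\sup_x\Ex{h(x,\xi_1)^2}$ is of order $\tfrac{x^{3/2}}{n^{3/2}}\sqrt{\norm{\bayes}_\infty(b_m^2+\norm{\bayes}^2)}=\sqrt{\tfrac{\norm{\bayes}_\infty x}{n}}\cdot\sqrt{\tfrac{(b_m^2+\norm{\bayes}^2)x^2}{n^2}}$, which splits into a multiple of $\tfrac{\norm{\bayes}_\infty x}{n}$ and of $\tfrac{(b_m^2+\norm{\bayes}^2)x^2}{n^2}$, while the operator term of order $\norm{\bayes}_\infty x/n^2$ is dominated by $\norm{\bayes}_\infty x/n$. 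Adjusting the absolute constants to $\kappa$ and to $3$ in front of the leading term yields the first displayed inequality. The second (the $\theta$-version) then follows immediately: since $(V-1)/V\leq1$ and $2\sqrt{\Dcal_m\cdot\norm{\bayes}_\infty x}\leq\alpha\Dcal_m+\alpha^{-1}\norm{\bayes}_\infty x$ with $\alpha\asymp\theta$, the leading term splits into $\theta\Dcal_m/n$ and a multiple of $\norm{\bayes}_\infty x/(\theta n)$, whereas $\theta\leq1$ leaves the $x^2$-remainder unaffected.

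I expect the main obstacle to be twofold. First, one must invoke (or establish) the concentration inequality for degenerate $U$-statistics in a form valid for the \emph{incomplete}, block-structured pair set, and track the variance parameter precisely enough to recover the exact $(V-1)/V$ factor rather than a cruder constant; this is where the decoupling, or the reduction to complete $U$-statistics together with the exact count $N=n^{2}(V-1)/V$, is needed. Second, the bookkeeping of the four parameters must be arranged so that the two genuinely higher-order terms are absorbed into the target remainders $\norm{\bayes}_\infty x/n$ and $(b_m^2+\norm{\bayes}^2)x^2/n^2$ without inflating the leading constant. The clean covariance-operator bound $\snorm{M}_{\mathrm{op}}\leq\norm{\bayes}_\infty$ is exactly what makes all four parameters reduce to the three quantities $\Dcal_m$, $b_m^2+\norm{\bayes}^2$ and $\norm{\bayes}_\infty$ appearing in the statement.
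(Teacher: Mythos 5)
Your proposal follows essentially the same route as the paper's proof: there too, $U(m)$ is written as a degenerate $U$-statistic whose kernel is set to zero on same-block pairs, Theorem~3.4 of Houdr\'e--Reynaud is applied, and the four parameters are bounded exactly as you do---your estimates $\tr(M^2)\leq\norm{M}_{\mathrm{op}}\tr(M)\leq\norm{\bayes}_\infty\Dcal_m$, $\norm{h}_\infty\leq 2\paren{b_m^2+\norm{\bayes}^2}$ and $\sup_x\Ex{h(x,\xi_1)^2}\leq 2\norm{\bayes}_\infty\paren{b_m^2+\norm{\bayes}^2}$ are precisely the paper's bounds on $\oA$, $\oD$ and $\oC$ phrased in covariance-operator language---before the same AM--GM splittings yield both displayed inequalities. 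The only slip is harmless: the bilinear parameter $\oB$ scales like $n\norm{\bayes}_\infty$ (not $\norm{\bayes}_\infty$), so after dividing by $n^2$ that term is of order $\norm{\bayes}_\infty x/n$ rather than $\norm{\bayes}_\infty x/n^2$, but it is absorbed into the remainder $\kappa\norm{\bayes}_\infty x/n$ exactly as you state.
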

\begin{proof}
For any $x,y \in \R$ and $i,j \in \inter{n}$, let us define 
\begin{align*}
U_m(x,y) &= \sum_{\lambda\in\Lambda_m} \parenb{ \psil(x)-P\psil } \parenb{ \psil(y)-P\psil }
\\
\text{and} \qquad 
g_{i,j}(x,y) &= U_m(x,y) \un_{\sets{ \exists k,k' \in \inter{V} \telque k\neq k', \, i \in \B_k, \, j \in \B_{k'} }}
\\
\text{so that} \qquad  
U(m) &=\frac{2}{n^{2}} \sum_{i=2}^{n} \sum_{j=1}^{i-1} g_{i,j}(\xi_{i},\xi_{j})
= \frac{2}{n^{2}} \sum_{k=2}^{V}\sum_{k'=1}^{k-1}\sum_{i\in \B_{k},j\in \B_{k'}} U_m(\xi_i,\xi_j)
\enspace .
\end{align*}
From \citet[Theorem~3.4]{Hou_Rey:2003}, an absolute constant $\kappa$ exists such that, for any $x>0$ and $\epsilon \in (0,1]$,
\begin{equation} \label{eq:Conc.U.Stat}
\P\parenj{\absj{U(m)}\geq \frac{1}{n^{2}} 
\crochj{(4+\epsilon)\oA\sqrt{x} + \kappa\parenj{\frac{\oB x}{\epsilon} 
+ \frac{\oC x^{3/2}}{\epsilon^{3}} + \frac{\oD x^{2}}{\epsilon^{3}} } } } 
\leq 6\e^{-x}
\enspace . 
\end{equation}
\begin{align*}
 \oA^{2}&=\sum_{i=2}^{n}\sum_{j=1}^{i-1}\E\crochj{g_{i,j}(\xi_{i},\xi_{j})^{2}}
 \enspace,\\
 \oB &=\sup \mathopen{}\left\{ \E\crochj{\sum_{i=2}^{n}\sum_{j=1}^{i-1}a_{i}(\xi_{i})b_{j}(\xi_{j})g_{i,j}(\xi_{i},\xi_{j})}
\right. \mathclose{} \\
&\hspace{2cm} \mathopen{}\left. 
\text{such that} \qquad 
\E\crochj{\sum_{i=1}^{n}a_{i}^{2}(\xi_{i})}\leq 1
\quad \text{and} \quad 
\E\crochj{\sum_{i=1}^{n}b_{i}^{2}(\xi_{i})}\leq 1
\right\} \mathclose{}
\enspace,\\
 \oC^{2} &= 
\sup_{x \in \R} \setj{ \sum_{i=2}^{n} \E\crochj{g_{i,1}(\xi_{i},x)^{2}} } 
\quad 
\text{and} \quad 
\oD =\sup_{x,y}\absb{g_{i,j}(x,y)}
\enspace.
\end{align*}
It remains to upper bound these different terms for proving the first inequality, and the second inequality follows. 
First, 
\begin{align}
\notag 
\E\crochB{ U_m(\xi_1,\xi_2)^2 }
&= 
\sum_{\lambda\in\Lambda_m,\lambda^{\prime}\in \Lambda_m}\E\crochsqB{ \parenb{ \psil(\xi_{1})-P\psil } \parenb{ \psilp(\xi_{1})-P\psilp } }
\\
\notag 
&= 
\sum_{\lambda\in\Lambda_m} \mathopen{}\left( \sup_{\sum_{\lambda^{\prime}\in \Lambda_m}a_{\lambda^{\prime}}^2\leq 1}\E\crochBb{\parenb{ \psil(\xi_{1})-P\psil }\sum_{\lambda^{\prime}\in \Lambda_m}a_{\lambda^{\prime}} \parenb{ \psilp(\xi_{1})-P\psilp } } \right)^2 \mathclose{}
\\
\notag 
&= 
\sum_{\lambda\in\Lambda_m} \mathopen{}\left( {\sup_{t\in\Boule_m}\E\crochB{\parenb{ \psil(\xi_{1})-P\psil } \parenb{ t(\xi_{1})-P(t) }}} \right)^2 \mathclose{}
\\
\notag 
&\leq  
\Dcal_{m} \sup_{t \in \Boule_m} \E\crochj{ \parenb{t(\xi_1) - P(t)}^2 } 
\\
\label{eq.maj-E[Um^2]}
&\leq 
\norms{\bayes}_\infty\Dcal_{m} \qquad \qquad \text{by Eq.~\eqref{eq:cont.vm} } 
\end{align}
so that  
\begin{align*}
\oA^{2} 
&= \sum_{k=2}^{V}\sum_{k'=1}^{k-1}\sum_{i\in \B_{k},j\in \B_{k'}}
\E\crochB{ U_m(\xi_i,\xi_j)^2 }
\leq \frac{n^{2}(V-1)}{2V}
\times \norms{\bayes}_\infty\Dcal_{m}
\enspace.
\end{align*}
Second, let $a_1, \ldots, a_n, b_1, \ldots, b_n$ be functions in $L^2(\mu)$ such that
\[
\E\crochj{\sum_{i=1}^{n}a_{i}^{2}(\xi_{i})}\leq 1 
\qquad \text{and} \qquad 
\E\crochj{\sum_{i=1}^{n}b_{i}^{2}(\xi_{i})}\leq 1
\enspace.\]
Using successively the independence of the $\xi_i$ and that $\alpha \beta \leq (\alpha^2 + \beta^2)/2$ for every $\alpha,\beta\in \R$, 
for every $i \neq j$, 
\begin{align}
\notag 
&\qquad \absB{\E\crochb{ a_i(\xi_i) b_j(\xi_j) U_m(\xi_i,\xi_j) }}
\\ 
\notag 
&= 
\absBb{\sum_{\lambda\in\Lambda_m} 
\E \crochB{ a_{i}(\xi_{i}) \parenb{ \psil(\xi_{i})-P\psil } } \E \crochB{ b_{j}(\xi_{j})\parenb{ \psil(\xi_{j})-P\psil } }}
\\ 
&\le \frac{1}{2} 
\sum_{\lambda\in\Lambda_m} 
\parenbb{ \E \crochB{ a_{i}(\xi_{i}) \parenb{ \psil(\xi_{i})-P\psil } }^2 
+ \E \crochB{ b_{j}(\xi_{j})\parenb{ \psil(\xi_{j})-P\psil } }^2 }
\enspace . 
\label{eq:BorneB}
\end{align}
Now, we have, for every $i \in \inter{n}$, using Eq.~\eqref{eq.sup-CS}, Cauchy-Schwarz inequality and the fact that for every $t \in L^2(\mu)$, $\var \parenj{ t(\xi_1)} \leq \norms{\bayes}_{\infty} \norms{t}^2$, 
\begin{align*}
\sum_{\lambda\in\Lambda_m} \E\crochsqB{a_{i}(\xi_{i}) \parenb{ \psil(\xi_{i})-P\psil }}
&= \sup_{\sum_{\lL_m} t_\lambda^2 \le 1} \parensq{ \E\crochj{ a_{i} ( \xi_{i} ) \sum_{\lambda\in\Lambda_m} t_\lambda \psil(\xi_{i}) - P(t_\lambda\psil) } } \\
&=\sup_{t\in\Boule_m} \parensqbb{ \E\crochB{a_{i}(\xi_{i}) \parenb{ t(\xi_{i})-P(t) } } }  \\
&\le \E\crochb{ a_{i}(\xi_{i})^2 } \sup_{t\in\Boule_m}\var \parenb{ t(\xi_1)} 
\le \E\crochb{ a_{i}(\xi_{i})^2 } \norms{\bayes}_\infty
\enspace.
\end{align*}
Plugging this bound in \eqref{eq:BorneB} yields
\begin{equation}
\absB{ \E\crochb{ a_i(\xi_i) b_j(\xi_j) U_m(\xi_i,\xi_j) }}
\leq \frac{\norms{\bayes}_{\infty}}{2} \parenB{ \E\crochb{ a_i(\xi_i)^2 } + \E\crochb{ b_j(\xi_j)^2 } }
\label{eq.maj-terme-B-Um}
\end{equation}
hence 
\begin{equation} \notag 
 \oB \leq n\norms{\bayes}_\infty
 \enspace.
\end{equation}
Third, for every $x,y \in \R$, let $g_{x}(y)=\sum_{\lambda\in\Lambda_m} \parens{ \psil(x)-P\psil } \psil(y)$ so that 
\begin{align*}
\norms{g_{x}}^{2}
=\sum_{\lambda\in\Lambda_m}\parenb{ \psil(x)-P\psil }^{2}
&\leq 2 \sum_{\lambda\in\Lambda_m} \parenb{ \psil(x) }^2 + 2 \sum_{\lambda\in\Lambda_m} (P\psil)^{2}
\\
&= 2 \Psi_m(x)^2 + 2 \norms{\bayes_m}^2 
\leq 2  \parenB{ b_{m}^{2}+\norms{\bayes_{m}}^{2} }
\enspace.
\end{align*}
Then, 
\begin{align}
\E \crochj{ U_m(\xi_i,x)^{2} }
= \var \parenb{ g_{x}(\xi_{1})} 
\leq \norms{ g_x }^2 \norms{\bayes}_{\infty}
&\leq 2  \parenB{ b_{m}^{2}+\norms{\bayes_{m}}^{2} } \norms{\bayes}_{\infty}
\label{eq.maj-terme-C-Um}
\end{align}
and, using \eqref{hyp.part-reg.exact}, we get that 
\begin{equation} \notag 
 \oC^{2} \leq \frac{2 n (V-1)}{V} \parenB{b_{m}^2+\norms{\bayes_{m}}^{2}} \norms{\bayes}_{\infty} 
 \enspace.
\end{equation}
Fourth, from Cauchy-Schwarz inequality, 
for every $x,y\in \X$, 
\begin{equation}
\label{eq.maj-sup-Um}
U_m(x,y) 
\leq \sup_{x\in \R}\sum_{\lambda\in\Lambda_m} \parenb{\psil(x)-P\psil}^{2}
\leq 2 \parenB{ b_{m}^2+\norms{\bayes_{m}}^2 }
\enspace.
\end{equation}
Hence,
\begin{equation} \notag 
 \oD \leq 2 \parenB{ b_{m}^2+\norms{\bayes_{m}}^{2} }
\end{equation}
and we get the desired result. 
\end{proof}

Let us conclude the proof of Proposition~\ref{prop:concpenvf}. 
From Lemmas \ref{lem:exact.formula.penvf} and~\ref{lem:concUm}, an absolute constant $\kappa$ exists such that, with probability larger than $1-6\e^{-x}$, for every $\epsilon \in (0,1]$, 
\begin{align} 
\notag 
&\quad \absB{ \penVF(m,V,V-1) - 2 \norms{\bayes_m-\ERM_m}^2}
\\
\label{eq:conc.penvf.exp:pr}
&= \frac{2V}{V-1} \absb{U(m)} 
\leq \epsilon\frac{\Dcal_{m}}{n} 
+ \kappa \parenj{\frac{\norms{\bayes}_{\infty} x}{\epsilon n}
+\frac{\parenb{ b_m^2+\norms{\bayes}^2 } x^2}{n^2}}
\enspace. 
\end{align}
Using in addition Lemma~\ref{lem:conc.supPn-P}, we get that an absolute constant $\kappa^{\prime}$ exists such that with probability larger than $1-8\e^{-x}$, for every $\epsilon \in (0,1]$, Eq.~\eqref{eq:conc.penvf.exp:pr} holds true and 
\begin{equation*} 
\absj{ \penVF(m,V,V-1) - \frac{2 \Dcal_{m}}{n} }
\leq \epsilon\frac{\Dcal_{m}}{n} 
+ \kappa \parenj{\frac{\norms{\bayes}_{\infty} x}{\epsilon n}
+\frac{ \parenb{ b_m^2 \epsilon^{-3} + \norms{\bayes}^2 } x^2}{n^2}}
\enspace, 
\end{equation*}
which implies Eq.~\eqref{eq:conc.penvf.exp} and~\eqref{eq:concpenvf.penid}. 
\qed

\subsection{Proof of Theorem~\ref{thm.oracle-penVF.cas_reel}} \label{sec.app.proof.thm-oracle.histo}
By construction, the penalized estimator satisfies, for any $m\in \M_n$,
\begin{equation*} 
\begin{split}
\perte{\ERM_{\mh}}-\parenB{\penid(\mh)-\penVF \parenb{ \mh,V,C(V-1) } }
\\
\leq \perte{\ERM_{m}}+\parenB{ \penVF \parenb{ m,V,C(V-1) } - \penid(m) }
\enspace. 
\end{split}
\end{equation*}
Now, by Eq.~\eqref{def.penid} and~\eqref{eq.p1.4formules}, 
$\penid(m) = 2 \norms{\ERM_m - \bayes_m}^2 + 2(P_{n}-P)(\bayes_{m})$, hence 
\begin{align}
\notag \perte{ \ERM_{\mh} }
&\leq \perte{ \ERM_{m} } 
+ \crochB{ \penVF \parenb{ m,V,C(V-1) } - 2 \norms{\bayes_m-\ERM_m}^2 }
\\
 \notag& \qquad -\crochB{ \penVF \parenb{ \mh,V,C(V-1) } 
 - 2 \norms{\bayes_{\mh}-\ERM_{\mh}}^2 }
 + 2(P_{n}-P)(\bayes_{m}-\bayes_{\mh}) 
 \\
\notag &= \perte{ \ERM_{m} } + \crochB{ \penVF \parenb{ m,V,C(V-1) } 
- 2C \norms{\bayes_{m}-\ERM_{m} }^2 } 
\\
 \notag& \qquad -\crochB{ \penVF \parenb{ \mh,V,C(V-1) } 
 - 2C \norms{\bayes_{\mh}-\ERM_{\mh}}^2  } 
 + 2(P_{n}-P)(\bayes_{m}-\bayes_{\mh}) 
 \\
 \label{eq:int.oracle1}
 &\qquad + 2 \parens{ C-1 } \parenB{ \norms{\ERM_m-\bayes_m}^{2} - \norms{\ERM_{\mh}-\bayes_{\mh} }^{2} }
 \enspace.
\end{align}
Let $x>0$ and $x_{n}=\log(\absj{\M_n})+x$. 
A union bound in Proposition~\ref{prop:concpenvf} gives 
\begin{equation}
\label{eq:cont.unif.penvf}
\begin{split}
&\P 
\Bigl( 
\exists m\in \M_n, \, \epsilon \in (0,1] \, \telque 
\absj{\penVF(m,V,V-1)-2 \norms{\bayes_m-\ERM_m}^2 } 
\\
&\qquad \qquad > 
\epsilon\frac{\Dcal_m}{n}
+\kappa \resteA(m,\epsilon,\bayes,x_n,n)
\biggr) 
\leq 8\sum_{m\in \M_n}\e^{-x_{n}}
=8\e^{-x}\sum_{m\in\M_n}\frac{1}{\absj{\M_n}}
=8\e^{-x}
\end{split}
\end{equation}
and a union bound in Lemma~\ref{lem:conc.supPn-P} gives
\begin{align}\label{eq:cont.unif.Pn-P}
 \notag&\P\parenj{
 \exists m\in \M_n, \, \epsilon \in (0,1] \, \telque 
\absj{\norms{\ERM_m-\bayes_m}^{2}-\frac{\Dcal_{m}}{n}} 
> \epsilon\frac{\Dcal_m}n+\kappa \resteA(m,\epsilon,\bayes,x_n,n)}\\
 &\leq 2 \sum_{m\in\M_n}\e^{-x_{n}}
 = 2 \e^{-x}
 \enspace.
\end{align}

It remains to bound $2(P_n-P)(\bayes_{m}-\bayes_{m^{\prime}})$ uniformly over $m$ and $m^{\prime}$ in $\M_n$. 
In order to apply Bernstein's inequality, we first bound the variance and the sup norm of $\bayes_{m}-\bayes_{m^{\prime}}$ for some $m, m^{\prime} \in \M_n$. 
Since $\bayes \in L^{\infty}(\mu)$, 
\[
\var \parenb{ (\bayes_{m}-\bayes_{m^{\prime}})(\xi_1)}\leq \norms{\bayes}_{\infty}\norms{\bayes_{m}-\bayes_{m^{\prime}}}^2
\enspace.\]
Under assumption~\eqref{hyp.UBbayes}
\[
\norms{\bayes_{m}-\bayes_{m^{\prime}}}_{\infty}\leq \norms{\bayes_{m}}_{\infty}+\norms{\bayes_{m^{\prime}}}_{\infty}\leq 2a
\enspace.\]
Under assumption~\eqref{hyp.Nested}, $\bayes_{m}-\bayes_{m^{\prime}} \in S_{m^{\prime\prime}}$ for some $m^{\prime\prime} \in \{m,m'\}$, hence by \eqref{hyp.NormSupNorm2} we have
\[
\norms{\bayes_{m}-\bayes_{m^{\prime}}}_{\infty}
\leq b_{m^{\prime\prime}} \norms{\bayes_{m}-\bayes_{m^{\prime}}} 
\leq \sqrt{n}\norms{\bayes_{m}-\bayes_{m^{\prime}}}
\enspace.\]
Therefore, by Bernstein's inequality, for any $x>0$, for any $m,m^{\prime}$, with probability larger than $1-\e^{-x}$, for any $\epsilon \in (0,1]$, 
\begin{align*}
(P_n-P)(\bayes_{m}-\bayes_{m^{\prime}})
&\leq \sqrt{ \frac{2 x \var \parenb{ (\bayes_{m}-\bayes_{m^{\prime}})(\xi_1)} }{n} } 
+ \frac{\norms{\bayes_{m}-\bayes_{m^{\prime}}}_{\infty}x}{3n}
\\
&\leq \epsilon\norms{\bayes_{m}-\bayes_{m^{\prime}}}^2 + \frac{\kappa \parenb{ Ax + x^2 }}{\epsilon n}
\enspace. 
\end{align*}
for some absolute constant $\kappa$, where the last inequality is obtained by considering separately the cases \eqref{hyp.UBbayes} and \eqref{hyp.Nested}, and by using that for every $\alpha,\beta,\epsilon>0$, $\alpha \beta \leq \epsilon \alpha^2 + (\beta^2)/(4\epsilon)$. 
A union bound gives that for any $x>0$, with probability at least $1-|\M_n|^2 \e^{-x}$, 
for every $m,m' \in \M_n$ and every $\epsilon \in (0,1]$, 
\begin{equation}\label{eq:termes.croises}
(P_n-P)(\bayes_{m}-\bayes_{m^{\prime}}) \leq \epsilon\norms{\bayes_{m}-\bayes_{m^{\prime}}}^2+\frac{\kappa  \parenb{ A x + x^2 } }{\epsilon n}
\end{equation}
for some absolute constant $\kappa$. 
Plugging Eq.~\eqref{eq:cont.unif.penvf}, \eqref{eq:cont.unif.Pn-P} and~\eqref{eq:termes.croises} into Eq.~\eqref{eq:int.oracle1} and using that $C \in (1/2,2]$ yields that, 
with probability $1-(|\M_n|^2  + 10) \e^{-x}$, 
for any $\epsilon\in (0,1/2]$, 
\begin{align*}
\notag
(1-4\epsilon)\perte{ \ERM_{\mh} }
&\leq (1+4\epsilon)\perte{ \ERM_{m} }
+\parenj{\delta_{+} + 4 \epsilon}\frac{\Dcal_{m}}{n} 
+\parenj{\delta_{-}+ 3\epsilon} \frac{\Dcal_{\mh}}{n} 
\\
\notag&
\quad + \kappa\parenj{\resteA(m,\epsilon,\bayes,x,n) 
+\resteA(\mh,\epsilon,\bayes,x,n) 
+ \frac{A x + x^2}{\epsilon n}} 
 \\
\notag
&\leq (1+\delta_{+}+16\epsilon) \perte{ \ERM_{m} }
+\parenj{\delta_{-}+8\epsilon} \norms{\ERM_{\mh}-\bayes_{m}}^{2} 
\\
&\quad
+\kappa^{\prime} \parenj{
\resteA(m,\epsilon,\bayes,x,n) 
+ \resteA(\mh,\epsilon,\bayes,x,n)
+ \frac{A x + x^2}{\epsilon n} } 
\end{align*}
for some absolute constants $\kappa, \kappa^{\prime}>0$. 
Since $b_m \leq \sqrt{n}$ for all $\mM_n$, we get 
\[ 
2 \sup_{\mM_n} \resteA(m,\epsilon,\bayes,x,n) 
+ \frac{A x + x^2}{\epsilon n}
\leq 
\frac{ \parenb{ 2\norms{\bayes}_{\infty} + A } x}{\epsilon n} + \parenj{ 3 + \frac{2 \norms{\bayes}^2}{n} } \frac{x^2}{\epsilon^3 n} 
\]
for every $\epsilon \in (0,1]$. 
Hence, 
with probability larger than $1-(|\M_n|^2 + 10) \e^{-x}$, 
for any $\epsilon\in (0,1]$, 
\begin{equation} \label{pr.eq.thm.oracle-penVF.cas_reel.fin}
\frac{1-\delta_{-}-\epsilon}{1+\delta_{+}+\epsilon}\perte{ \ERM_{\mh} }
\leq \perte{ \ERM_{m} } + 
\kappa 
\crochj{ \frac{ \parenb{ \norms{\bayes}_{\infty} + A } x}{\epsilon n} + \parenj{ 1 + \frac{ \norms{\bayes}^2}{n} } \frac{x^2}{\epsilon^3 n} }
\end{equation}
for some absolute constant $\kappa>0$. 
To conclude, we remark that Eq.~\eqref{eq.thm.oracle-penVF.cas_reel} clearly holds true when $|\M_n|=1$, so we can assume that $|\M_n| \geq 2$. 
Therefore, for every $x>0$, Eq.~\eqref{pr.eq.thm.oracle-penVF.cas_reel.fin} holds true 
with probability at least 
\[ 
1-\parenB{ |\M_n|^2 + 10 } \e^{-x} \geq 
1 - |\M_n|^4 \e^{-x} \geq 1 - \e^{ -  x + 4 \log|\M_n|  }
\enspace . 
\]
So, if we replace $x$ by $4 x_n \geq x + 4 \log|\M_n| $ in Eq.~\eqref{pr.eq.thm.oracle-penVF.cas_reel.fin}, 
we get that 
Eq.~\eqref{eq.thm.oracle-penVF.cas_reel} holds true with probability at least $1 - \e^{-x}$ 
for some absolute constant $\kappa>0$, 
slightly larger than the one appearing in Eq.~\eqref{pr.eq.thm.oracle-penVF.cas_reel.fin}.
\qed

\subsection{Proof of Theorem~\ref{theo.variance.penVF}} \label{sec.proof.variance.main}
For every $x,y \in \X$ and $m \in \{m_1,m_2\}$, let $K_m(x,y)\egaldef \sum_{\lL_m}\psil(x)\psil(y)$ and remark that 
\begin{align}
\notag 
 U_m(x,y)&= \sum_{\lL_m} \parenb{ \psil(x)-P\psil } \parenb{ \psil(y)-P\psil } \\
 &=K_m(x,y)-\bayes_m(x)-\bayes_m(y)+\norms{\bayes_m}^2
\label{eq.Um}
 \enspace.
\end{align}
For every $x \in \X$, $K_m(x,x)=\Psi_m(x)$ by Eq.~\eqref{eq.sup-CS}, $U_m(x,x)=\Psi_m(x)-2\bayes_m(x)+\norms{\bayes_m}^2$ and, by independence, for every $m,m' \in \{m_1,m_2\}$
\begin{align*}
\cov&\parenb{U_m(\xi_1,\xi_2),U_{m'}(\xi_1,\xi_2)}\\
&=\sum_{\lL_m,\lpL_{m'}}\E\crochB{\parenb{\psil(\xi_1)-P\psil}\parenb{\psil(\xi_2)-P\psil}\parenb{\psilp(\xi_1)-P\psilp}\parenb{\psilp(\xi_2)-P\psilp}}\\
&=\sum_{\lL_m,\lpL_{m'}}\E\crochsqB{\parenb{\psil(\xi_1)-P\psil}\parenb{\psilp(\xi_1)-P\psilp}}= \termeBvar{m , m^{\prime}}\enspace, 
\end{align*}
hence, $\var \parenj{ U_{m_1}(\xi_1,\xi_2)-U_{m_2}(\xi_1,\xi_2)}=\termeBvaracr{m_1,m_2}$.  
For every $m \in \{m_1,m_2\}$, by Eq.~\eqref{eq.Um}, 
\begin{align}\label{eq:DecRiskEmp}
 P_n\gamma(\ERM_m)
 &=-\sum_{\lL_m}(P_n\psil)^2=-\frac{1}{n^2}\sum_{1\leq i,j\leq n}K_m(\xi_i,\xi_j)\\
 \notag 
 &=-\frac{1}{n^2}\sum_{1\leq i,j\leq n}U_m(\xi_i,\xi_j)-\frac{2}n\sum_{i=1}^n\bayes_m(\xi_i)+\norms{\bayes_m}^2
 \enspace.
\end{align}
Moreover, by Eq.~\eqref{eq:penvf.poids} in the proof of Lemma~\ref{lem:exact.formula.penvf}, 
\begin{gather*}
\penVF \parenb{ m,\B,C(V-1) } = \frac{2C}{n^2} \sum_{1\leq i,j\leq n} E_{i,j}^{(\VF)} U_m(\xi_i,\xi_j)
\\
\text{where} \quad 
\forall I,J \in \setj{1, \ldots, V}, \, \forall i \in B_I , \, \forall j \in B_J, \quad 
E_{i,j}^{(\VF)} = 1 - \frac{V \un_{I \neq J}}{V-1} = (V-1) e_{i,j}^{(\VF)} 
\enspace . 
\end{gather*}
It follows that
\begin{equation} \label{eq.crit-penVF-gal.U-stat.1}
\CV_{C,\B}(m) = 
\sum_{1\leq i,j\leq n} \frac{2CE_{i,j}^{(\VF)}-1}{n^2} U_m(\xi_i,\xi_j)
+ \sum_{i=1}^n \frac{-2\bayes_m(\xi_i)}{n} 
+ \norms{\bayes_m}^2
\enspace.
\end{equation}
Hence, up to the deterministic term $\norms{\bayes_m}^2$, 
$\CV_{C,\B}(m)$ has the form of a function $\CV_m$ defined in Lemma~\ref{lem:CovGen.2} below with  
\[
\olomega_{i,j}=\frac{2CE_{i,j}^{(\VF)}-1}{n^2} 
\enspace ,
\qquad f_m=\frac{-2\bayes_m}{n}
\qquad \text{and} \qquad \olcteformCVb_i = 1 
\enspace .
\]
It remains to evaluate the quantities appearing in Lemma~\ref{lem:CovGen.2} for these weights and function. 
First, 
\[ 
\sum_{i=1}^n E_{i,i}^{(\VF)}  
= n
\qquad \text{and} \qquad 
\sum_{i=1}^n \parenB{ E_{i,i}^{(\VF)} }^2 = n 
\enspace . 
\]
Second, by \eqref{hyp.part-reg.exact}, 
\begin{align}
\sum_{1 \leq i \neq j \leq n} \parenB{ E_{i,j}^{(\VF)} }  
&= n \parenj{ \frac{n}{V} - 1} + \frac{-1}{(V-1)} \times \frac{n^2 (V-1)}{ V} 
= -n 
\notag 
\\
\notag 
\text{and} \quad 
\sum_{1 \leq i \neq j \leq n} \parenB{ E_{i,j}^{(\VF)} }^2 
&= n \crochj{ \parenj{ \frac{n}{V} - 1} + \frac{n}{V(V-1)} }
= \frac{n^2}{V-1} - n 
\enspace . 
\end{align}
It follows that 
\begin{gather*}
\sum_{1\leq i\leq n}\olomega_{i,i}^2 
= \frac{\parens{2C-1}^2}{n^{3}}
\enspace , 
\qquad 
\sum_{i=1}^n \olomega_{i,i} \olcteformCVb_i = \frac{2C-1}{n} 
\\
\text{and} \quad 
\sum_{1\leq i\neq j\leq n} \olomega_{i,j} \olomega_{j,i}
= \sum_{1\leq i\neq j\leq n}\olomega_{i,j}^2
= \frac{1}{n^2} \parenj{1+\frac{4C^2}{V-1} - \frac{\parens{2C-1}^2}{n}}
\enspace. 
\end{gather*}
Hence, from Lemma~\ref{lem:CovGen.2}, for every $m,m' \in \{m_1,m_2\}$, 
\begin{align*}
 \cov&\parenj{\CV_{C,\B}(m),\CV_{C,\B}(m')}
 = \frac{2}{n^2}\parenj{1+\frac{4C^2}{V-1}-\frac{(2C-1)^2}n} \termeBvar{m , m^{\prime}}
 \\
 &\quad+\frac{\parens{2C-1}^2}{n^3} \cov\parenb{U_m(\xi,\xi),U_{m'}(\xi,\xi)} + \frac{4}{n}\cov\parenb{\bayes_m(\xi),\bayes_{m'}(\xi)}
 \\
 &\quad-\frac{2\parenj{2C-1}}{n^2} \crochB{ \cov \parenb{ U_{m}(\xi,\xi),\bayes_{m'}(\xi)}+\cov\parenb{U_{m'}(\xi,\xi),\bayes_{m}(\xi)}}
 \\
 &=\frac{2}{n^2} \parenj{1+\frac{4C^2}{V-1}-\frac{\parens{2C-1}^2}n}\termeBvar{m , m^{\prime}}
 \\
 &\quad+\frac{1}{n} \cov\parenj{\frac{2C-1}nU_m(\xi,\xi)-2\bayes_m(\xi),\frac{2C-1}nU_{m'}(\xi,\xi)-2\bayes_{m'}(\xi)}\enspace.
\end{align*}
Therefore,
\begin{align*}
&\var \parenb{ \CV_{C,\B}(m_1)}
= \frac{2}{n^2}\parenj{1+\frac{4C^2}{V-1}-\frac{\parens{2C-1}^2}n}\termeBvar{m_1 , m_1}
 \\
&\phantom{\var \parenb{ \CV_{C,\B}(m_1)}} \qquad +\frac{1}{n}\var \parenj{ \frac{2C-1}{n} U_{m_1}(\xi,\xi)-2\bayes_{m_1}(\xi)}
\\
\text{and} \qquad 
&\var \parenj{ \CV_{C,\B}(m_1)-\CV_{C,\B}(m_2)}=\frac{2}{n^2}\parenj{1+\frac{4C^2}{V-1}-\frac{\parens{2C-1}^2}n}\termeBvaracr{m_1,m_2}
 \\
 & \qquad +\frac{1}{n}\var \parenj{ 2(\bayes_{m_1}-\bayes_{m_2})(\xi)-\frac{2C-1}n\parenb{U_{m_1}(\xi,\xi)-U_{m_2}(\xi,\xi)}} \\
 &\quad =\frac{2}{n^2}\parenj{1+\frac{4C^2}{V-1}-\frac{\parens{2C-1}^2}n}\var \parenb{ U_{m_1}(\xi,\xi)-U_{m_2}(\xi_1,\xi_2)}
 \\
 &\qquad +\frac{4}{n}\var \parenj{ \parenj{1+\frac{2C-1}n}(\bayes_{m_1}-\bayes_{m_2})(\xi)-\frac{2C-1}{2n}\parenb{\Psi_{m_1}(\xi)-\Psi_{m_2}(\xi)}}\enspace,
\end{align*}
which concludes the proof. \qed
\begin{lemma}\label{lem:CovGen.2}
 Let $\CV_m=\sum_{1\leq i,j\leq n}\olomega_{i,j}U_m(\xi_i,\xi_j)+\sum_{i=1}^n \olcteformCVb_i f_m(\xi_i)$, where $U_m$ is defined by Eq.~\eqref{eq.Um} and $f_m \in L^2(\mu)$. 
For every $m,m'$, we have 
\begin{align*}
 \cov&\parenj{\CV_m,\CV_{m'}}
= \parenj{\sum_{1\leq i\neq j\leq n} \olomega_{i,j}^2 + \olomega_{i,j}\olomega_{j,i} }\cov\parenb{U_m(\xi_1,\xi_2),U_{m'}(\xi_1,\xi_2)}\\
 &+\parenj{\sum_{i=1}^n\olomega_{i,i}^2}\cov\parenb{U_m(\xi_1,\xi_1),U_{m'}(\xi_1,\xi_1)}\\
 &+\parenj{\sum_{i=1}^n\olomega_{i,i} \olcteformCVb_i }\crochB{\cov\parenb{U_{m}(\xi_1,\xi_1),f_{m'}(\xi_1)} + \cov\parenb{U_{m'}(\xi_1,\xi_1),f_{m}(\xi_1)}}\\
 &+\parenj{ \sum_{i=1}^n \olcteformCVb_i^2} \cov\parenb{f_m(\xi_1),f_{m'}(\xi_1)}
\enspace.
\end{align*}
\end{lemma}
\begin{proof} 
We develop the covariance to get
 \begin{align*}
 \cov\parenj{\CV_m,\CV_{m'}}
 &=\sum_{1\leq i,j,k,\ell\leq n}\olomega_{i,j}\olomega_{k,\ell}\cov\parenb{U_m(\xi_i,\xi_j),U_{m'}(\xi_k,\xi_{\ell})}\\
 &\quad +\sum_{1\leq i,j,k\leq n}\olomega_{i,j}\olcteformCVb_k \cov\parenb{U_m(\xi_i,\xi_j),f_{m'}(\xi_k)}\\
 &\quad +\sum_{1\leq i,j,k\leq n}\olomega_{i,j}\olcteformCVb_k \cov\parenb{U_{m'}(\xi_i,\xi_j),f_{m}(\xi_k)}\\
 &\quad +\sum_{1\leq i,j\leq n}\olcteformCVb_i \olcteformCVb_j \cov\parenb{f_m(\xi_i),f_{m'}(\xi_j)}\enspace.
\end{align*}
The proof is then concluded with the following remarks, which rely on the fact that the random variables $\xi_{\inter{n}}$ 
are independent and identically distributed. 
\begin{enumerate}
\item $\cov\parenb{f_m(\xi_i),f_{m'}(\xi_j)}= 0$ unless $i \neq j$, therefore
\begin{align*}
\sum_{1\leq i,j\leq n} \olcteformCVb_i \olcteformCVb_j \cov\parenb{f_m(\xi_i),f_{m'}(\xi_j)}
&=\parenj{ \sum_{i=1}^n \olcteformCVb_i^2} \cov\parenb{f_m(\xi_1),f_{m'}(\xi_1)}
\enspace. 
\end{align*}
\item By definition \eqref{eq.Um} of $U_m$, 
$\cov\parenb{U_{m}(\xi_i,\xi_j),f_{m'}(\xi_k)} = 0$ unless $i=j=k$, hence
\begin{equation*}
\sum_{1\leq i,j,k\leq n}\olomega_{i,j} \olcteformCVb_k \cov\parenb{U_{m}(\xi_i,\xi_j),f_{m'}(\xi_k)}
=\parenj{\sum_{i=1}^n\olomega_{i,i} \olcteformCVb_i } \cov\parenb{U_{m}(\xi_1,\xi_1),f_{m'}(\xi_1)}
\enspace. 
\end{equation*}
\item By definition \eqref{eq.Um} of $U_m$, 
$\cov\parenb{U_m(\xi_i,\xi_j),U_m(\xi_k,\xi_l)} = 0$ unless 
$i=j=k=\ell$ or $i=k \neq j=\ell$ or $i=\ell \neq j=k$. 
It follows that
\begin{align*}
& \quad 
\sum_{1\leq i,j,k,\ell\leq n}\olomega_{i,j}\olomega_{k,\ell}
\cov\parenb{U_m(\xi_i,\xi_j),U_{m'}(\xi_k,\xi_{\ell})}
\\
&= \parenj{\sum_{1\leq i\neq j\leq n}\olomega_{i,j}^2 + \olomega_{i,j}\olomega_{j,i}} 
\cov\parenb{U_m(\xi_1,\xi_2),U_{m'}(\xi_1,\xi_2)}
\\
&\quad +
\parenj{\sum_{i=1}^n\olomega_{i,i}^2} 
\cov\parenb{U_m(\xi_1,\xi_1),U_{m'}(\xi_1,\xi_1)}
\enspace.
\end{align*}
\end{enumerate}
\end{proof}

\bibliography{penvfreech}

\clearpage

\section{Supplementary Material} 
\label{sec.supmat}
The supplementary material is organized as follows. 
Section~\ref{sec.app.proof.thm-variance} gives complementary computations of variances.
Then, results concerning hold-out penalization are detailed in Section~\ref{sec.supmat.penHO}, with the proof of the oracle inequality stated in Section~\ref{sec.discussion.hold-out} (Theorem~\ref{thm:PenHo}) and an exact computation of the variance. 
Section~\ref{sect.comp.comp} provides complements on the computational aspects stated in Section~\ref{sec.algo}. In particular, we state and analyse the basic algorithm for computing the $V$-fold criteria and we give the proof of Proposition~\ref{pro.proc.penVF-fast.general-density}.
A useful concentration inequality is recalled in Section~\ref{sec.supmat.proba-tools}. 
Finally, some simulation results are detailed in Section~\ref{sec.supmat.simus}, as a supplement to the ones of Section~\ref{sec.simus}.

\subsection{Additional Variance Computations}
\label{sec.app.proof.thm-variance}

\begin{proposition} \label{pro.variance}
Let $(\psil)_{\lL_{m_1}}$ and $(\psil)_{\lL_{m_2}}$ be two finite orthonormal families of vectors of $L^{4}(\mu)$.
Assume that $\B$ satisfies \eqref{hyp.part-reg.exact} and, for any $m\in\setj{m_1,m_2}$, let 
\[ \CV_{id}(m)=P_n\gamma(\ERM_m)+\E\crochb{\penid(m)}\enspace.\]
Then, with the notation of Theorem~\ref{theo.variance.penVF}, 

\begin{align*}
\var \parenb{ \CV_{id}(m_1)}&
=  \frac{2(n-1)}{n^3}\termeBvar{m_1 , m_1} +\frac{2}{n}\var \parenj{ \parenj{1-\frac{1}{n}}\bayes_{m_1}(\xi)+\frac{1}{2n}\Psi_{m_1}(\xi)}\enspace .
\end{align*}
We also have
\begin{align*}
 \var \parenb{ \CV_{id}(m_1)-\CV_{id}(m_2)}
&=  \frac{2(n-1)}{n^3}\termeBvaracr{m_1,m_2}
\\
& +\frac{2}{n}\var \parenj{ \parenj{1-\frac{1}{n}} \parenb{ \bayes_{m_1}(\xi)-\bayes_{m_2}(\xi) } + \frac{1}{2n} \parenb{ \Psi_{m_1}(\xi)-\Psi_{m_2}(\xi) } }
\enspace .
\end{align*}
\end{proposition}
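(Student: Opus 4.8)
The plan is to recognise $\CV_{id}(m)$ as a special case of the quadratic-plus-linear statistic handled by \lemref{CovGen}, exactly as in the proof of Theorem~\ref{theo.variance.penVF}, but now with \emph{constant} weights. Since $\Ex{\penid(m)}$ is deterministic it contributes nothing to any variance or covariance, so $\CV_{id}(m)$ and $P_n\gamma\paren{\ERM_m}$ have the same centred part. Using the decomposition \eqref{eq:DecRiskEmp}, I would therefore write $\CV_{id}(m) = \sum_{1\leq i,j\leq n}\omega_{i,j}U_m(\xi_i,\xi_j) + \sum_{i=1}^n f_m(\xi_i)$ up to an additive constant, with $\omega_{i,j} = -1/n^2$ (independent of $i,j$) and $f_m = -2\bayes_m/n$, so that \lemref{CovGen} applies directly.

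The second step is to evaluate the four coefficients appearing in \lemref{CovGen} for these weights. Because there is no longer any block structure, this is immediate: $\sum_{1\leq i\neq j\leq n}\paren{\omega_{i,j}^2 + \omega_{i,j}\omega_{j,i}} = 2(n-1)/n^3$, $\sum_{i=1}^n\omega_{i,i}^2 = 1/n^3$ and $\sum_{i=1}^n\omega_{i,i} = -1/n$. Substituting these into \lemref{CovGen} and recalling, from the proof of Theorem~\ref{theo.variance.penVF}, that $\cov\paren{U_m(\xi_1,\xi_2),U_{m'}(\xi_1,\xi_2)} = \termeBvar{\Lambda_m,\Lambda_{m'}}$, the first coefficient produces the announced leading term $\tfrac{2(n-1)}{n^3}\termeBvar{\Lambda_m,\Lambda_{m'}}$, and $\tfrac{2(n-1)}{n^3}\termeBvaracr{m,m'}$ for the variance of the difference via $\Var{U_{m}(\xi_1,\xi_2)-U_{m'}(\xi_1,\xi_2)} = \termeBvaracr{m,m'}$.

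It then remains to collect the three remaining (univariate) contributions, namely those carrying $\sum_i\omega_{i,i}^2$, $\sum_i\omega_{i,i}$ and $n\cov\paren{f_m,f_{m'}}$. Factoring out $1/n$, I would recombine them as $\tfrac1n\cov\paren{\tfrac1n U_m(\xi,\xi)+2\bayes_m(\xi),\,\tfrac1n U_{m'}(\xi,\xi)+2\bayes_{m'}(\xi)}$, and then use the pointwise identity $U_m(\xi,\xi) = \Psi_m(\xi)-2\bayes_m(\xi)+\norm{\bayes_m}^2$ from \eqref{eq.Um}. Discarding the additive constant $\norm{\bayes_m}^2$ (irrelevant to covariances), the argument $\tfrac1n U_m(\xi,\xi)+2\bayes_m(\xi)$ equals $2\paren{1-\tfrac1n}\bayes_m(\xi)+\tfrac1n\Psi_m(\xi)$; pulling the factor $2$ out of each slot yields the claimed variance term, and keeping $m\neq m'$ throughout gives its analogue for the difference.

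There is essentially no combinatorial obstacle here: the constancy of the weights makes this a strict specialisation of the computation already performed for the $V$-fold criterion, the only genuinely delicate point being the bookkeeping of constants in the final regrouping. In particular, that regrouping produces the factor $4/n$ in front of $\Var{\paren{1-\tfrac1n}\bayes_m(\xi)+\tfrac{1}{2n}\Psi_m(\xi)}$, in agreement with the corresponding expression for $\critEpenid$ in Section~\ref{sec.variance}.
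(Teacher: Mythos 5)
Your proof is correct and takes essentially the same route as the paper's: discard the deterministic term $\Ex{\penid(m)}$ so that $\Var{\CV_{id}(m)}=\Var{P_n\gamma\paren{\ERM_m}}$, use \eqref{eq:DecRiskEmp} to put the criterion in the form required by \lemref{CovGen} with constant weights $\omega_{i,j}=-1/n^2$ and $f_m=-2\bayes_m/n$, evaluate the coefficient sums, and regroup; all of your intermediate values are the same as the paper's and are correct.

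One point matters, though: the prefactor you obtain, $\frac{4}{n}$ in front of $\Var{\paren{1-\frac{1}{n}}\bayes_m(\xi)+\frac{1}{2n}\Psi_m(\xi)}$, disagrees with the $\frac{2}{n}$ appearing in the statement of Proposition~\ref{pro.variance}, and it is your constant that is correct. Expanding both sides, the univariate contribution coming out of \lemref{CovGen} equals
\begin{align*}
&\frac{1}{n^3}\Var{\Psi_m(\xi)}+\frac{4(n-1)}{n^3}\cov\paren{\Psi_m(\xi),\bayes_m(\xi)}+\frac{4(n-1)^2}{n^3}\Var{\bayes_m(\xi)}\\
&\qquad=\frac{4}{n}\Var{\paren{1-\frac{1}{n}}\bayes_m(\xi)+\frac{1}{2n}\Psi_m(\xi)}
\enspace,
\end{align*}
and not $\frac{2}{n}$ times that variance. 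Three independent checks confirm this. First, at $n=1$ one has $P_n\gamma\paren{\ERM_m}=-\Psi_m(\xi_1)$ up to a constant, so the variance must equal $\Var{\Psi_m(\xi)}$, which only the factor $\frac{4}{n}$ delivers. Second, formally setting $C=0$ in Eq.~\eqref{eq.pro.variance.critpenVF} of Theorem~\ref{theo.variance.penVF} makes the $V$-fold penalty vanish, so that $\CV_{(0,\B)}(m)=P_n\gamma\paren{\ERM_m}$ has the same variance as $\CV_{id}(m)$; that formula then reads exactly as yours, with $\frac{4}{n}$. Third, the expression for $\Var{\critEpenid(m_1)-\critEpenid(m_2)}$ displayed in Section~\ref{sec.variance} of the main text also carries $\frac{4}{n}$, as you noted. (The last display of the paper's own proof is yet another variant, $\frac{2}{n}\Var{\paren{1-\frac{1}{n}}\bayes_m(\xi)+\frac{1}{n}\Psi_m(\xi)}$, which matches neither the statement nor the correct value.) In short, your bookkeeping is right, and the discrepancy is a typo in the stated proposition and in its printed proof, not a flaw in your argument.
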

\begin{proof} 
Simply notice that 
\begin{align} \notag 
\var \parenb{ \CV_{id}(m_1)}=\var \parenb{ P_n\gamma(\ERM_{m_1})}
\label{eq.penid.formule-lambda} \enspace.
\end{align}
Therefore, from \eqref{eq:DecRiskEmp}, the variance of $\CV_{id}(m_1)$ is the one of
\[-\frac{1}{n^2}\sum_{1\leq i,j\leq n}U_{m_1}(\xi_i,\xi_j)-\sum_{i=1}^n\frac{2\bayes_{m_1}(\xi_i)}n\enspace.\]
so that, by Lemma~\ref{lem:CovGen.2},
\begin{align*} 
\var \parenb{ \CV_{id}(m_1)} 
&= \frac{2(n-1)}{n^3}\termeBvar{m_1 , m_1} +\frac{1}{n^3}\var \parenb{ \Psi_{m_1}(\xi)-2\bayes_{m_1}(\xi)}\\
&\qquad +\frac{4}{n^2}\sum_{i=1}^n\cov\parenb{\Psi_{m_1}(\xi)-2\bayes_{m_1}(\xi),\bayes_{m_1}(\xi)}+\frac{4}{n}\var \parenb{ \bayes_{m_1}(\xi)}\\
&= \frac{2(n-1)}{n^3}\termeBvar{m_1 , m_1} +\frac{2}{n}\var \parenj{ \parenj{1-\frac{1}{n}}\bayes_{m_1}(\xi)+\frac{1}{n}\Psi_{m_1}(\xi)}
\enspace .
\end{align*}
The variance of the increments follows from the same computations.
\end{proof}

\subsubsection{Evaluation of the Terms in the Variance Formula} 
The following proposition gives a formula for the terms appearing in Theorem~\ref{theo.variance.penVF} and Proposition~\ref{pro.variance} which does not depend on the basis $(\psil)_{\lL_{m}}$. 
\begin{proposition}\label{pro.calcul.termes.variance}
For any $m_1 , m_2 \in \M_n$, we have
\begin{align}
\notag\termeBvar{m_1 , m_2} &=n\cov\parenb{\ERM_{m_1} (\xi),\ERM_{m_2}(\xi)} - (n+1)\cov\parenb{\bayes_{m_1}(\xi),\bayes_{m_2}(\xi)}\\
\label{eq.pro.calcul.termes.variance.B}
\termeBvaracr{ m_1 , m_2 } &=n\var \parenb{ (\ERM_{m_1}-\ERM_{m_2})(\xi)}-(n+1)\var \parenb{ (\bayes_{m_1}-\bayes_{m_2})(\xi)}
\enspace ,
\end{align}
where $\xi$ denotes a copy of $\xi_1$, independent of $\xi_{\inter{n}}$. 
\end{proposition}
\begin{proof} 
By definition, we have
\begin{align*}
\termeBvar{m_1 , m_2} 
&= 
\sum_{\lL_{m_1} } \sum_{\lpL_{m_2}} \cov\parens{\psil(\xi_1), \psi_{\lp}(\xi_1)}^2 \\
&= \sum_{\lL_{m_1} } \sum_{\lpL_{m_2}}  \parenb{P\parens{\psil \psi_{\lp}}-P\psil P\psilp}^2
\\
&= \sum_{\lL_{m_1} } \sum_{\lpL_{m_2}}  \parenb{P\parens{\psil \psi_{\lp}}}^{2} 
- 2 \sum_{\lL_{m_1} } \sum_{\lpL_{m_2}} P\psil P\psilp P\parenj{\psil \psi_{\lp}} 
\\
&\qquad 
+ \sum_{\lL_{m_1} } \sum_{\lpL_{m_2}} \parens{P\psil P\psilp}^2
\\
&=\sum_{\lL_{m_1} } \sum_{\lpL_{m_2}}  \parenb{P\parens{\psil \psi_{\lp}}}^{2} 
- 2 P\parens{\bayes_{m_1}\bayes_{m_2}} 
+ \norms{\bayes_{m_1}}^{2} \norms{\bayes_{m_2}}^{2}.
\end{align*}
Now, by Eq.~\eqref{eq.proj-Q-formule}, we have
\begin{align*}
&\qquad 
\cov\parenb{\ERM_{m_1}(\xi),\ERM_{m_2}(\xi)}
\\
&=\frac{1}{n^2}\sum_{1\leq i,j\leq n} \sum_{\lL_{m_1} } \sum_{\lpL_{m_2}}  
\cov\parenb{\psil(\xi_i)\psil(\xi),\psilp(\xi_j)\psilp(\xi)} \\
&=\frac{1}{n} \sum_{\lL_{m_1} } \sum_{\lpL_{m_2}}  \parenb{P\parens{\psil\psilp}}^2-\parens{P\psil P\psilp}^2
\\
&\qquad 
+\frac{n-1}{n} \sum_{\lL_{m_1} } \sum_{\lpL_{m_2}}  
\parenb{P\parens{\psil\psilp}-P\psil P\psilp}P\psil P\psilp
\\
&=\frac{1}{n} \sum_{\lL_{m_1} } \sum_{\lpL_{m_2}}   \parenb{P\parens{\psil\psilp}}^2
-\frac{1}{n} \norms{\bayes_{m_1}}^2\norms{\bayes_{m_2}}^2 
+ \frac{n-1}{n} \cov\parenb{\bayes_{m_1}(\xi),\bayes_{m_2}(\xi)}
\enspace . 
\end{align*}
It follows that
\begin{equation*}
\begin{split}
\sum_{\lL_{m_1} } \sum_{\lpL_{m_2}}  
\parenb{P\parens{\psil\psilp}}^2
&= n\cov \parenb{\ERM_{m_1}(\xi),\ERM_{m_2}(\xi)} + \norms{\bayes_{m_1}}^2 \norms{\bayes_{m_2}}^2
\\
&\qquad -(n-1)\cov\parenb{\bayes_{m_1}(\xi),\bayes_{m_2}(\xi)}
\enspace . 
\end{split}
\end{equation*}
Thus,
\begin{align*}
\termeBvar{m_1 , m_2} =n\cov\parenb{\ERM_{m_1}(\xi),\ERM_{m_2}(\xi)}
- (n+1)\cov\parenb{\bayes_{m_1}(\xi),\bayes_{m_2}(\xi)} 
\enspace .
\end{align*}
Eq.~\eqref{eq.pro.calcul.termes.variance.B} follows. 
\end{proof}

\subsubsection{Evaluation of the Variance in the Regular Histogram Case}
\label{sec.app.variance.eval-terms-histos}
The following lemma gives the value of the terms appearing in Theorem~\ref{theo.variance.penVF} 
for two nested regular histogram models. 
\begin{lemma}\label{lem:var2}
Let $m_1 = \Lambda_{m_1}$ and $m_2=\Lambda_{m_2}$ be two regular partitions of $\R$, 
as defined by Example~\ref{ex:regular} in Section~\ref{sec.ex.histos}, 
so that for $i \in \sets{1,2}$, for any $\lambda\in m_i$, $\mu(\lambda)=d_{m_i}^{-1}$. 
We assume that $m_2$ is a subpartition of $m_1$, 
that is, any element of $m_2$ is a subset of an element of $m_1$. 
For any $m^{\star} \in \sets{m_1,m_2}$, 
we define 
\[ 
T_{m^{\star}}(x)=\sum_{\lambda\in m^{\star}} \parens{ \psil(x)-P\psil }^{2}=\sup_{t\in\Boule_{m^{\star}}} 
\parenb{ t(x)-Pt }^{2}
\]
where we recall that 
$\Boule_{m^{\star}} = \sets{ t \in S_{m^{\star}} \, / \, \norms{t} \leq 1 }$ 
and 
for any $\lambda \in m_1 \cup m_2$, 
$\psil = \parens{ \mu(\lambda) }^{-1/2} \un_{\lambda}$. 
Then, we have 
\begin{align}
\label{eq.termeBvar.histos-reg-nested}
\termeBvar{m_1 , m_2}
&
= d_{m_1}\norms{\bayes_{m_2}}^{2} - 2 P \parens{\bayes_{m_1} \bayes_{m_2}} + \norms{\bayes_{m_1}}^{2}\norms{\bayes_{m_2}}^{2}
= P\parenj{T_{m_1}\bayes_{m_2}}
\\
\notag 
\text{and} \qquad 
\termeBvaracr{ m_1 , m_2 }
&= P\parenb{ T_{m_1}(\bayes_{m_1}-\bayes_{m_2})+(T_{m_2}-T_{m_1})\bayes_{m_2}}
\\ \notag 
&=
(d_{m_2}-d_{m_1})\norms{\bayes_{m_2}}^{2} + d_{m_1}\norms{\bayes_{m_1}-\bayes_{m_2}}^{2} 
\\ \notag 
&\qquad \qquad - 2\var_{P}  (\bayes_{m_1}-\bayes_{m_2}) - \norms{\bayes_{m_1}-\bayes_{m_2}}^{4}
\enspace.
\end{align}
\end{lemma}
\begin{proof}
On the one hand, 
by definition, 
\begin{align*}
&\qquad 
\termeBvar{m_1 , m_2}
\\
&= 
\sum_{\lambda\in m_1} \sum_{\lambda^{\prime}\in m_2}
  \parenbb{\E\crochB{\parenb{\psil(\xi_1)-P\psil}\parenb{\psilp(\xi_1)-P\psilp}}}^2
\\
&= 
\sum_{\lambda\in m_1} \sum_{\lambda^{\prime}\in m_2}
 \parenj{ \crochb{  P\parenj{\psi_{\lambda}\psi_{\lambda^{\prime}}} }^2
 -2P\parenj{ \psi_{\lambda} \psi_{\lambda^{\prime}} } P\psi_{\lambda} P\psi_{\lambda^{\prime}}  
 + \parens{P\psi_{\lambda}}^{2} \parens{P\psi_{\lambda^{\prime}}}^{2} }
\\
&= 
\sum_{\lambda\in m_1} \sum_{\lambda^{\prime}\in m_2}
\crochb{  P\parenj{\psi_{\lambda}\psi_{\lambda^{\prime}}} }^2 
- 2 P \parenbb{ \underbrace{\parenB{ \sum_{\lambda\in m_1} (P\psil) \psil }}_{= \bayes_{m_1}} \underbrace{\parenB{ \sum_{\lambda\in m_2} (P\psil) \psil }}_{= \bayes_{m_2}} }
\\
&\qquad \qquad 
+ \underbrace{\sum_{\lambda\in m_1} \parens{P\psi_{\lambda}}^{2}}_{= \norms{ \bayes_{m_1} }^2}
\underbrace{\sum_{\lambda\in m_2} \parens{P\psi_{\lambda}}^{2}}_{= \norms{ \bayes_{m_2} }^2}
\enspace.
\end{align*}
For computing the first term, we use that $\psil \psilp = 0$ if $\lambda \cap \lambda^{\prime} = \emptyset$ and  $m_2$ is a subpartition of $m_1$, so that 
\begin{align*}
\sum_{\lambda\in m_1} \sum_{\lambda^{\prime}\in m_2}
\crochsqb{P\parenj{\psi_{\lambda}\psi_{\lambda^{\prime}}}}
&= \sum_{\lambda\in m_1}  \mathop{\sum_{\lambda^{\prime}\in m_2}}_{\lambda^{\prime}\subset \lambda} 
\crochsqb{P\parenj{\psi_{\lambda}\psi_{\lambda^{\prime}}}} 
\\
&=\sum_{\lambda\in m_1} \frac{1}{\mu(\lambda)} 
\mathop{\sum_{\lambda^{\prime}\in m_2}}_{\lambda^{\prime}\subset \lambda} 
\parens{P\psi_{\lambda^{\prime}}}^{2}
= d_{m_1}\sum_{\lambda^{\prime}\in m_2}(P\psi_{\lambda^{\prime}})^{2} 
= d_{m_1} \norms{ \bayes_{m_2} }^2
\end{align*}
hence 
\begin{align*}
\termeBvar{m_1 , m_2}
= d_{m_1}\norms{\bayes_{m_2}}^{2}-2P(\bayes_{m_1}\bayes_{m_2})
+ \norms{\bayes_{m_1}}^{2}\norms{\bayes_{m_2}}^{2}
\enspace .
\end{align*}
On the other hand, by definition of $T_m$, 
\begin{align*}
P(T_{m_1}\bayes_{m_2})
&= 
\sum_{\lambda \in m_1} \sum_{\lp \in m_2} P \parenb{ \parens{\psil - P \psil}^2 \psilp P(\psilp) }
\\
&= 
\sum_{\lambda \in m_1} \sum_{\lp \in m_2} 
\parenj{ P(\psil^2 \psilp) (P \psilp) - 2 P(\psil \psilp) (P\psil) (P\psilp) + (P\psil)^2 (P\psilp)^2 }
\\
\\
&= 
P \parenBb{ \underbrace{\sum_{\lambda \in m_1} \psil^2}_{=d_{m_1}}  
\underbrace{\sum_{\lp \in m_2} (P\psilp) \psilp }_{= \bayes_{m_2}}  }
- 2 P (\bayes_{m_1} \bayes_{m_2}) + \norms{\bayes_{m_1}}^2 \norms{\bayes_{m_2}}^2 
\end{align*}
which proves Eq.~\eqref{eq.termeBvar.histos-reg-nested} 
since $P(\bayes_{m_2}) = \norms{\bayes_{m_2}}^2$. 

Now, we remark that Eq.~\eqref{eq.termeBvar.histos-reg-nested} also gives formulas for 
$\termeBvar{m_i,m_i}$, $i\in \sets{1,2}$, since $m_i$ is a subpartition of itself. 
So, the second formula for $\termeBvar{m_i,m_j}$ in Eq.~\eqref{eq.termeBvar.histos-reg-nested} 
yields 
\begin{align*}
\termeBvaracr{ m_1 , m_2 } 
&= P\parenb{ T_{m_1} \bayes_{m_1} + T_{m_2} \bayes_{m_2} - 2 T_{m_1} \bayes_{m_2} }
\\
&= P\parenb{ T_{m_1} (\bayes_{m_1}-\bayes_{m_2})+(T_{m_2}-T_{m_1})\bayes_{m_2}}
\enspace . 
\end{align*}
Similarly, the first formula for $\termeBvar{m_i,m_j}$ in Eq.~\eqref{eq.termeBvar.histos-reg-nested} 
gives 
\begin{align*}
&\qquad 
\termeBvaracr{ m_1 , m_2 } 
\\
&=d_{m_1} \parenb{ \norms{\bayes_{m_1}}^{2}-\norms{\bayes_{m_2}}^{2} } 
+ (d_{m_2}-d_{m_1}) \norms{\bayes_{m_2}}^{2} 
-2P \parenb{ (\bayes_{m_1}-\bayes_{m_2})^{2} }
+ \parenb{ \norms{\bayes_{m_1}}^{2}-\norms{\bayes_{m_2}}^{2} }^{2}
\\
&=(d_{m_2}-d_{m_1})\norms{\bayes_{m_2}}^{2} + d_{m_1}\norms{\bayes_{m_1}-\bayes_{m_2}}^{2} 
- 2\var_{P}  (\bayes_{m_1}-\bayes_{m_2}) - \norms{\bayes_{m_1}-\bayes_{m_2}}^{4}
\enspace ,
\end{align*}
where we used that $P(\bayes_m)=\norms{\bayes_m}^2$ and 
$\norms{\bayes_{m_1}-\bayes_{m_2}}^{2} = 
\norms{\bayes_{m_1}}^{2}-\norms{\bayes_{m_2}}^{2}$. 
\end{proof}


\subsection{Results on MCCV and Some Other Cross-Validation Criteria} \label{sec.supmat.MCCV}
We prove here the results stated in Section~\ref{sec.discussion.MCCV}. 
Note that we here prove slightly more general results 
(Theorems \ref{thm.oracle-CV-gal} and~\ref{thm.var-MCCV-gal}), 
from which Theorems \ref{thm.oracle-MCCV} and~\ref{thm.var-MCCV} are corollaries. 
In particular, we do not always restrict to MCCV criteria: 
we always assume \eqref{hyp.CV.same-size} and \eqref{hyp.CV.ind} hold true, 
but we sometimes do not need to have \eqref{hyp.MCCV} satisfied. 

\subsubsection{Preliminary Computations} \label{sec.supmat.MCCV.calculs}

Our proofs rely on a simple closed-form formula for cross-validation criteria. 
Let us start by the hold-out criterion. 
Let $T \subset \inter{n}$ with $|T|=n-p$, independent from $D_n$. 
Then, 
\begin{align}
\notag 
\critHO(m, T) 
&= P_n^{(T^c)} \gamma\parenj{ \ERM_m^{(T)} }
\\
\notag 
&= \normb{ \ERM_m^{(T)} }^2 - 2 P_n^{(T^c)} \parenj{ \ERM_m^{(T)} }
\\
\notag 
&= \normb{ \ERM_m^{(T)} - \bayes_m }^2 + \norms{\bayes_m}^2 + 2 \prodscal{\ERM_m^{(T)} - \bayes_m }{\bayes_m} 
\\
\notag 
&\qquad 
- 2 \parenj{ P_n^{(T^c)} - P } \parenj{ \ERM_m^{(T)} - \bayes_m } - 2 P \parenj{ \ERM_m^{(T)} - \bayes_m } - 2 P_n^{(T^c)} (\bayes_m)
\\
\label{eq.critHO.calcul}
&= \normb{ \ERM_m^{(T)} - \bayes_m }^2 - 2 \parenj{ P_n^{(T^c)} - P } \parenj{ \ERM_m^{(T)} - \bayes_m }
- 2 P_n^{(T^c)} (\bayes_m) + \norms{\bayes_m}^2
\end{align}
where the last equality uses that 
\[ 
P \parenj{ \ERM_m^{(T)} - \bayes_m } = \prodscal{ \ERM_m^{(T)} - \bayes_m }{ \bayes }
= \prodscal{ \ERM_m^{(T)} - \bayes_m }{ \bayes_m }
\]
since $\bayes_m$ is the orthogonal projection in $L^2(\mu)$ of $\bayes_m$ onto $S_m$ and $\ERM_m^{(T)} - \bayes_m \in S_m$. 

The last two terms in the right-hand side of Eq.~\eqref{eq.critHO.calcul} 
can be rewritten as 
\begin{align*}
 - 2 P_n^{(T^c)} (\bayes_m) + \norms{\bayes_m}^2
&= - 2 \parenj{ P_n^{(T^c)} - P} (\bayes_m) - 2 P(\bayes_m) + \norms{\bayes_m}^2
\\
&= - 2 \parenj{ P_n^{(T^c)} - P} (\bayes_m) - \norms{\bayes_m}^2 
\end{align*}
since $\norms{\bayes_m}^2 = P(\bayes_m)$. 
For the first two terms, 
we write that 
\begin{align}
\notag 
&\qquad 
\norms{ \ERM_m^{(T)} - \bayes_m }^2 - 2 \parenj{ P_n^{(T^c)} - P } \parenj{ \ERM_m^{(T)} - \bayes_m }
\\
\notag 
&= 
\sum_{\lambda \in \Lambda_m} \crochj{ \parenb{ \parens{ P_n^{(T)} - P }(\psil) }^2 
- 2 \parenj{ P_n^{(T^c)} - P }(\psil) \parenj{ P_n^{(T)} - P }(\psil) }
\\
\notag 
&= 
\sum_{\lambda \in \Lambda_m} \biggl[  
\frac{1}{(n-p)^2} \sum_{1 \leq i,j \leq n} \un_{i \in T, \, j \in T} \parenb{ \psil(\xi_i) - P\psil } \parenb{ \psil(\xi_j) - P\psil } 
\\
\notag 
&\qquad 
- \frac{2}{p (n-p)} \sum_{1 \leq i,j \leq n} \un_{i \in T^c, \, j \in T} \parenb{ \psil(\xi_i) - P\psil } \parenb{ \psil(\xi_j) - P\psil }
\biggr]
\\
&= 
\sum_{1 \leq i,j \leq n} \crochj{ \frac{\un_{j \in T}}{n-p} \parenj{  \frac{\un_{i \in T}}{n-p}  - \frac{2 \un_{i \in T^c}}{p}  } U_m(\xi_i , \xi_j) }
\notag 
\end{align}
where we recall that for any $x,y \in \X$, 
\[ 
U_m(x,y) = \sum_{\lL_m}\parenb{ \psil(x)-P\psil }\parenb{ \psil(y)-P\psil } 
= \sum_{\lL_m}\psil(x)\psil(y) - \bayes_m(x) - \bayes_m(y) + \norms{\bayes_m}^2
\]
is defined by Eq.~\eqref{eq.Um}, 
and that 
$U_m(x,x)=\Psi_m(x) - 2 \bayes_m(x) + \norms{\bayes_m}^2$. 

Therefore, Eq.~\eqref{eq.critHO.calcul} can be rewritten as 
\begin{align}
\critHO (m, T) 
&= 
\sum_{1 \leq i,j \leq n} \crochj{ \frac{\un_{j \in T}}{n-p} \parenj{  \frac{\un_{i \in T}}{n-p}  - \frac{2 \un_{i \in T^c}}{p}  } U_m(\xi_i , \xi_j) }
- 2 \parenb{ P_n^{(T^c)} - P} (\bayes_m) - \norms{\bayes_m}^2
\notag 
\\
&= 
\sum_{1 \leq i,j \leq n} \omegaHO_{i,j}(T) U_m(\xi_i , \xi_j) 
+ \sum_{i=1}^n \cteformHOb_i(T) \parenb{ \bayes_m(\xi_i) - P(\bayes_m) } 
- \norms{\bayes_m}^2
\label{eq.critHO.calcul.2}
\end{align}
with 
\begin{align*}
\omegaHO_{i,j} (T)
&= 
\frac{\un_{j \in T}}{n-p} \parenj{  \frac{\un_{i \in T}}{n-p}  - \frac{2 \un_{i \in T^c}}{p}  } 
\\
\cteformHOb_i (T)
&= 
\frac{-2}{p} \un_{i \in T^c}
\enspace .  
\end{align*}
As a consequence, under assumption \eqref{hyp.CV.same-size}, 
\begin{align}
\critCV \parenb{ m, (T_j)_{1 \leq j \leq K} } 
&= 
\sum_{1 \leq i,j \leq n} \omega_{i,j} U_m(\xi_i , \xi_j) 
+ \sum_{i=1}^n \cteformCVb_i \parenb{\bayes_m(\xi_i) - P(\bayes_m) } 
- \norms{\bayes_m}^2
\label{eq.critCV.calcul}
\end{align}
with 
\begin{align*}
\omega_{i,j} 
&= 
\frac{1}{B} \sum_{K=1}^B \crochj{ \frac{\un_{j \in T_K}}{n-p} \parenj{  \frac{\un_{i \in T_K}}{n-p}  - \frac{2 \un_{i \in T_K^c}}{p}  } }
\\
\cteformCVb_i 
&= 
\frac{-2}{p B} \sum_{K=1}^B \un_{i \in T_K^c}
\enspace .  
\end{align*}

\medbreak

Note that Eq.~\eqref{eq.critCV.calcul} is consistent with previously obtained formulas. 
For $V$-fold cross-validation, under assumption \eqref{hyp.part-reg.exact}, 
Eq.~\eqref{eq.critCV.calcul} holds with 
\begin{align*}
\omega_{i,j} 
&= 
\omegaVFCV_{i,j} 
\egaldef 
\frac{1}{n^2} 
\begin{cases}
\frac{V}{V-1} \qquad  &\text{if $i$ and $j$ belong to the same block} \\
- \parensq{ \frac{V}{V-1} }  &\text{otherwise}
\end{cases}
\\
\cteformCVb_i 
&= 
\cteformVFCVb_{i} 
\egaldef 
\frac{-2}{n}
\enspace , 
\end{align*}
which can also be obtained from the combination of 
Eq.~\eqref{eq.le.penVF-VFCV} in Lemma~\ref{le.penVF-VFCV} 
and 
Eq.~\eqref{eq.crit-penVF-gal.U-stat.1}. 
For the leave-$p$-out, 
Eq.~\eqref{eq.critCV.calcul} holds with 
\begin{align*}
\omega_{i,j} 
&= 
\omegaLPO_{i,j} 
\egaldef 
\begin{cases}
\frac{1}{n(n-p)} \qquad &\text{if } i \neq j \\
\frac{-(n-p+1)}{n(n-1)(n-p)} \qquad &\text{otherwise} 
\end{cases}
\\
\cteformCVb_i 
&= 
\cteformLPOb_{i} 
\egaldef 
\frac{-2}{n}
\enspace , 
\end{align*}
which 
can also be obtained from 
Eq.~\eqref{eq.le.penLOO-LPO} in Lemma~\ref{le.penVF-VFCV} 
and 
Eq.~\eqref{eq.crit-penVF-gal.U-stat.1}. 

\medbreak

Using that $U_m(x,x)=\Psi_m(x) - 2 \bayes_m(x) + \norms{\bayes_m}^2$, 
Eq.~\eqref{eq.critCV.calcul} can be rewritten as  
\begin{align*}
\crit_{\CV}\parenb{ m, (T_j)_{1 \leq j \leq K} } 
&= 
\parenj{\sum_{i=1}^n \omega_{i,i}} \parenB{\Dcal_m-\norms{\bayes_m}^2} 
- \norms{\bayes_m}^2
+ \sum_{i=1}^n\omega_{i,i} \parenb{ \Psi_m(\xi_i)-\Dcal_m }
\\ 
& + \sum_{i=1}^n (-2\omega_{i,i}+\cteformCVb_i) \parenb{ \bayes_m(\xi_i)-P\bayes_m } 
+ \sum_{1 \leq i\neq j \leq n} \omega_{i,j} U_m(\xi_i , \xi_j) 
\enspace . 
\end{align*}
Using \eqref{hyp.CV.same-size} 
we have 
\[
\sum_{i=1}^n\omega_{i,i}
= \frac{1}{B(n-p)^2} \sum_{K=1}^B\sum_{i=1}^n\un_{i\in T_K}
= \frac{1}{n-p}
\enspace ,
\]
and we get 
\begin{equation}
\label{eq.critCV.calcul.bis}
\begin{split}
\crit_{\CV}\parenb{ m, (T_j)_{1 \leq j \leq K} } 
&= 
\frac{\Dcal_m - \norms{\bayes_m}^2}{n-p} - \norms{\bayes_m}^2
+ \sum_{i=1}^n \omega_{i,i} \parenb{ \Psi_m(\xi_i)-\Dcal_m }
\\
& + \sum_{i=1}^n (-2\omega_{i,i}+\cteformCVb_i) \parenb{ \bayes_m(\xi_i)-P\bayes_m } 
+ \sum_{1 \leq i\neq j \leq n} \omega_{i,j} U_m(\xi_i , \xi_j) 
\enspace . 
\end{split}
\end{equation}

\subsubsection{Concentration Inequalities} \label{sec.supmat.MCCV.conc}
In the proof of Theorem~\ref{thm.oracle-MCCV} in Section~\ref{sec.supmat.MCCV.oracle}, 
given formula \eqref{eq.critCV.calcul.bis} for the cross-validation criterion, 
we need concentration inequalities for the three random sums appearing in 
Eq.~\eqref{eq.critCV.calcul.bis}. 
These are stated and proved in three lemmas below. 

\fauxparagraph{Concentration of $\sum_{i=1}^n\omega_{i,i}(\Psi_m(\xi_i)-\Dcal_m)$} 
\begin{lemma}\label{le.eq:TermePsiCritVFGen}
Assume that \eqref{hyp.CV.same-size}, \eqref{hyp.CV.ind} and~\eqref{hyp.NormSupNorm2} hold true.
Then, for any $x>0$, 
an event of probability at least $1-2 \e^{-x}$ exists on which the following holds true\textup{:} 
for any $\epsilon \in (0,1]$, 
\begin{equation*} 
\absj{ \sum_{i=1}^n\omega_{i,i} \parenb{ \Psi_m(\xi_i)-\Dcal_m } }
\leq 
\epsilon \frac{\Dcal_m}{n-p} 
+ \frac{5 x \parens{ n + A } }{3 \epsilon (n-p)^2} 
\enspace . 
\end{equation*}
\end{lemma}
\begin{proof} 
By \eqref{hyp.CV.ind}, 
conditionally to $(\omega_{i,i})_{1 \leq i \leq n}$, 
$\sum_{i=1}^n\omega_{i,i} \parenb{ \Psi_m(\xi_i)-\Dcal_m }$ 
is a sum of independent real-valued random variables. 
So, we can apply Bernstein's inequality. 

First, 
for any $i \in \inter{n}$, 
using \eqref{hyp.CV.same-size}, 
\[ 
\omega_{i,i} = \frac{1}{B} \sum_{K=1}^B \frac{\un_{i \in T_K}}{(n-p)^2} 
\leq \frac{1}{(n-p)^2}
\]
and using Eq.~\eqref{eq.maj-sup-Um}, 
\[
\norms{\Psi_m}_{\infty} \leq \norms{U_m}_{\infty} \leq 2 \parenB{ b_m^2 + \norms{\bayes_m}^2 }
\enspace , 
\]
so that 
\[ 
\omega_{i,i} \Psi_m(\xi_i) \leq 
\max_{1 \leq i \leq n} \omega_{i,i} \times \norms{\Psi_m}_{\infty} 
\leq 
\frac{2 \parenB{ b_m^2 + \norms{\bayes_m}^2 }}{(n-p)^2} 
\]
almost surely. 

Second, 
using \eqref{hyp.CV.same-size}, we have 
\[
\sum_{i=1}^n \omega_{i,i}^2 \leq 
\max_{1 \leq i \leq n} \omega_{i,i} \times \sum_{i=1}^n \omega_{i,i}
\leq \frac{1}{(n-p)^3 }
\]
and using Eq.~\eqref{eq.maj-sup-Um} again, 
\[ 
\E\crochj{ \Psi_m(\xi_i)^2 } \leq \norms{\Psi_m}_{\infty} \times P(\Psi_m) 
=  \norms{\Psi_m}_{\infty} \times \Dcal_m
\leq 2 \parenB{ b_m^2 + \norms{\bayes_m}^2 } \Dcal_m
\enspace , 
\]
so that 
\[
\sum_{i=1}^n \omega_{i,i}^2 \E\crochj{ \Psi_m(\xi_i)^2 } 
\leq 
\frac{2 \parenB{ b_m^2 + \norms{\bayes_m}^2 } \Dcal_m}{(n-p)^3}
\enspace. 
\]

Then, by Bernstein's inequality \citep[Theorem~2.10]{Bou_Lug_Mas:2011:livre}, 
conditionally to $(\omega_{i,i})_{1 \leq i \leq n}$, 
an event of probability at least $1-2 \e^{-x}$ exists on which 
\begin{align*}
\absj{ \sum_{i=1}^n\omega_{i,i} \parenb{ \Psi_m(\xi_i)-\Dcal_m } }
&\leq 
2 \sqrt{ \frac{x \parenB{ b_m^2 + \norms{\bayes_m}^2 } \Dcal_m}{(n-p)^3} } 
 +  \frac{2 \parenB{ b_m^2 + \norms{\bayes_m}^2 }}{(n-p)^2} \frac{x }{3}
\\
&\leq 
\epsilon \frac{\Dcal_m}{n-p} 
+ \parenj{ \frac{2}{3} + \frac{1}{\epsilon} } \frac{ x \parenB{ b_m^2 + \norms{\bayes_m}^2 } }{(n-p)^2} 
\\
&\leq 
\epsilon \frac{\Dcal_m}{n-p} 
+ \frac{5}{3 \epsilon} \frac{ x \parens{ n + A } }{(n-p)^2} 
\end{align*}
for any $\epsilon \in (0,1]$, 
where we used that $b_m^2 \leq n$ by \eqref{hyp.NormSupNorm2}, 
and that $\norms{\bayes_m}^2 \leq \norms{\bayes}^2 \leq \norms{\bayes}_{\infty} \leq A$. 
The result follows by integrating this conditional concentration inequality 
with respect to $(\omega_{i,i})_{1 \leq i \leq n}$. 
\end{proof}

\fauxparagraph{Concentration of $\sum_{i=1}^n(-2\omega_{i,i}+\cteformCVb_i)(\bayes_m(\xi_i)-P\bayes_m)$}
\begin{lemma} \label{le.conc.TermeBiaisCritVFGen}
Assume that \eqref{hyp.CV.same-size} and \eqref{hyp.CV.ind} hold true.
Then, for any $x>0$, 
an event of probability at least $1- \e^{-x}$ exists on which the following holds true\textup{:} 
for any $\epsilon \in (0,1]$, 
\begin{equation}\label{eq:TermeBiaisCritVFGen}
\begin{split}
\sum_{i=1}^n (-2\omega_{i,i}+\cteformCVb_i) \parenb{ \bayes_m(\xi_i)-P\bayes_m-\bayes_{m'}(\xi_i)-P\bayes_{m'} } 
\\
\leq \epsilon \norms{\bayes_m - \bayes_{m'}}^2 + \RbVFgal_n(x,\epsilon,\pi^*,A)
\end{split}
\end{equation}
where the remainder term depends on the additional assumption that we make.
If \eqref{hyp.UBbayes} holds true, then 
\[ 
\RbVFgal_n(x,\epsilon,\pi^*,A) \egaldef 
\frac{ 16 A x}{ 3 \epsilon} \parenj{ \frac{1}{(n-p)^2} + \frac{\pi^{\star}}{p} } 
\enspace . 
\]
If \eqref{hyp.NormSupNorm2} and \eqref{hyp.Nested} hold true, then, 
some numerical constant $\kappa>0$ exists such that 
\[
\RbVFgal_n(x,\epsilon,\pi^*,A) \egaldef 
\frac{ \kappa  }{ \epsilon} \crochj{ 
Ax \parenj{ \frac{1}{(n-p)^2} + \frac{\pi^{\star}}{p} } 
+ x^2 n  \parensq{ \frac{1}{(n-p)^2} + \frac{\pi^{\star}}{p} } 
}
\enspace . 
\]
\end{lemma}
Before proving Lemma~\ref{le.conc.TermeBiaisCritVFGen}, 
let us introduce some useful notation: 
given a sequence $T_1, \ldots, T_B$ of subsets of $\inter{n}$, 
for every $i,j \in \inter{n}$, we define 
\[
\pi_i = \frac{1}{B} \sum_{K=1}^B \un_{i\in T_K^c}
\qquad 
\pi_{i,j} = \frac{1}{B} \sum_{K=1}^B \un_{i\in T_K^c}\un_{j\in T_K^c}
\qquad \text{and} \qquad 
\pi^* = \max_{i=1,\ldots,n}\pi_i
\enspace.
\]
Note that, assuming \eqref{hyp.CV.same-size}, we have 
\begin{equation}
\label{eq.ineg-simple-piij-pi}
\begin{split}
0 \leq \pi_{i,j} \leq \min\parens{ \pi_i, \pi_j } \leq \pi^* \leq 1
\qquad 
\sum_{i=1}^n \pi_i = p
\\ 
\sum_{i=1}^n\pi_{i,j} = p \pi_j \leq p \pi^* 
\qquad \text{and} \qquad 
\sum_{1\le i,j\le n}\pi_{i,j}=p^2 
\enspace . 
\end{split}
\end{equation}

\begin{proofof}{Lemma~\ref{le.conc.TermeBiaisCritVFGen}} 
By \eqref{hyp.CV.ind}, 
conditionally to $(-2\omega_{i,i}+\cteformCVb_i)_{1 \leq i \leq n}$, 
\[ 
\sum_{i=1}^n\omega_{i,i} \parenb{ \Psi_m(\xi_i)-\Dcal_m }
\] 
is a sum of independent real-valued random variables. 
So, we can apply Bernstein's inequality. 

First, we notice that for every $i \in \inter{n}$, 
\[ 
-2\omega_{i,i} + \cteformCVb_i
= 
\frac{1}{B} \sum_{K=1}^B \parenj{ \frac{-2}{(n-p)^2} \un_{i \in T_K} - \frac{2}{p} \un_{i \notin T_K} }
= 
-2 \parenj{ 
\frac{1}{(n-p)^2} (1-\pi_i) + \frac{\pi_i}{p} 
}
\]
hence
\[ 
\absj{-2\omega_{i,i} + \cteformCVb_i}
= 
2 \parenj{ \frac{1}{(n-p)^2} (1-\pi_i) + \frac{\pi_i}{p} }
\leq 
2 \parenj{ \frac{1}{(n-p)^2} + \frac{\pi^*}{p} }
\]
since $0 \leq \pi_i \leq \pi^* \leq 1$. 
So, for every $i \in \inter{n}$, 
\[
\parens{ -2\omega_{i,i} + \cteformCVb_i } \parenb{ \bayes_m(\xi_i) - \bayes_{m'}(\xi_i) }
\leq 
2 \parenj{ \frac{1}{(n-p)^2} + \frac{\pi^*}{p} } \norms{\bayes_m - \bayes_{m'}}_{\infty}
\]
almost surely. 
Second, 
\begin{align*}
\sum_{i=1}^n \parens{-2\omega_{i,i} + \cteformCVb_i}^2 
&\leq 
2 \parenj{ \frac{1}{(n-p)^2} + \frac{\pi^*}{p} }
\sum_{i=1}^n \abss{-2\omega_{i,i} + \cteformCVb_i}
\\
&= 2 \parenj{ \frac{1}{(n-p)^2} + \frac{\pi^*}{p} }
2 \parenj{ \frac{1}{n-p} + 1 }
\\
&\leq 8 \parenj{ \frac{1}{(n-p)^2} + \frac{\pi^*}{p} }
\end{align*}
and 
\[
\E\crochj{ \parenb{ \bayes_m(\xi) - \bayes_{m'}(\xi) }^2 }
\leq \norms{\bayes}_{\infty} \norms{ \bayes_m - \bayes_{m'} }^2
\leq A \norms{ \bayes_m - \bayes_{m'} }^2
\]
so that 
\[ 
\sum_{i=1}^n \E\crochbb{ \parenB{ (-2\omega_{i,i} + \cteformCVb_i) \parenb{ \bayes_m(\xi) - \bayes_{m'}(\xi) } }^2 } 
\leq 
8 A \parenj{ \frac{1}{(n-p)^2} + \frac{\pi^*}{p} } 
\norms{ \bayes_m - \bayes_{m'} }^2
\enspace . 
\]

Then, by Bernstein's inequality \citep[Theorem~2.10]{Bou_Lug_Mas:2011:livre}, 
conditionally to $(-2\omega_{i,i}+\cteformCVb_i)_{1 \leq i \leq n}$, 
an event of probability at least $1-\e^{-x}$ exists on which 
\begin{gather}
\sum_{i=1}^n (-2\omega_{i,i}+\cteformCVb_i) \parenb{ \bayes_m(\xi_i)-P\bayes_m-\bayes_{m'}(\xi_i)-P\bayes_{m'} } 
\leq 
R^{0} (m,m')
\notag 
\\
\notag 
R^{0} (m,m') \egaldef 
 \sqrt{16 x  A \parenj{ \frac{1}{(n-p)^2} + \frac{\pi^*}{p} } \norms{ \bayes_m - \bayes_{m'} }^2 } 
+ \frac{ 2  x \norms{\bayes_m - \bayes_{m'}}_{\infty} }{ 3 } \parenj{ \frac{1}{(n-p)^2} + \frac{\pi^*}{p} }  
\enspace . 
\end{gather}
Since $1-2 \e^{-x}$ is deterministic, the same inequality holds unconditionally on an event of probability at least $1-2\e^{-x}$. 

We now upperbound $R^{0} (m,m')$, 
differently depending on the assumption we make.
On the one hand, if \eqref{hyp.UBbayes} holds true, 
\[ 
\norms{\bayes_m - \bayes_{m'}}_{\infty}
\leq 
\norms{\bayes_m}_{\infty} + \norms{\bayes_{m'}}_{\infty}
\leq 
2 A 
\]
and we get 
\begin{align*}
R^{0} (m,m') 
&\leq  
 \sqrt{16 A x  \parenj{ \frac{1}{(n-p)^2} + \frac{\pi^*}{p} } \norms{ \bayes_m - \bayes_{m'} }^2 } 
+ \frac{ 4  A  x  }{ 3 } \parenj{ \frac{1}{(n-p)^2} + \frac{\pi^*}{p} }  
\\
&\leq \epsilon \norms{\bayes_m - \bayes_{m'}}^2 
+ \frac{ 16 A x}{ 3 \epsilon} \parenj{ \frac{1}{(n-p)^2} + \frac{\pi^{\star}}{p} } 
\end{align*}
for any $\epsilon \in (0,1]$, 
which proves Eq.~\eqref{eq:TermeBiaisCritVFGen}. 
On the other hand, if  \eqref{hyp.NormSupNorm2} and \eqref{hyp.Nested} hold true, 
$\bayes_m - \bayes_{m'} \in S_{m''}$ with $m'' \in \sets{ m , m'}$, so that 
\[
\norms{\bayes_m - \bayes_{m'}}_{\infty}
\leq 
b_{m''} \norms{\bayes_m - \bayes_{m'}}
\leq 
\sqrt{n} \norms{\bayes_m - \bayes_{m'}}
\]
and we get 
\begin{align*}
R^{0} (m,m') 
&\leq  
 \sqrt{16 x  A \parenj{ \frac{1}{(n-p)^2} + \frac{\pi^*}{p} } \norms{ \bayes_m - \bayes_{m'} }^2 } 
+ \frac{ 2  x \sqrt{n} \norms{\bayes_m - \bayes_{m'}} }{ 3 } \parenj{ \frac{1}{(n-p)^2} + \frac{\pi^*}{p} }  
\\
&\leq \epsilon \norms{\bayes_m - \bayes_{m'}}^2 
+ \frac{ 1  }{ \epsilon} \crochj{ 
8 Ax \parenj{ \frac{1}{(n-p)^2} + \frac{\pi^{\star}}{p} } 
+ \frac{2}{9} x^2 n  \parensq{ \frac{1}{(n-p)^2} + \frac{\pi^{\star}}{p} } 
} 
\end{align*}
for any $\epsilon \in (0,1]$, 
which proves Eq.~\eqref{eq:TermeBiaisCritVFGen} with $\kappa=8$.
\end{proofof}

\fauxparagraph{Concentration of $\sum_{1 \leq i\neq j \leq n} \omega_{i,j} U_m(\xi_i , \xi_j)$}
\begin{lemma} \label{le.eq:Conc.UStat.Gen}
Suppose that assumptions~\eqref{hyp.CV.same-size}, \eqref{hyp.CV.ind} and \eqref{hyp.NormSupNorm2} hold true.
Then, an absolute constant $\kappa>0$ exists such that, for any $x>1$, with probability larger than $1-6\e^{-x}$, for any $\epsilon\in(0,1]$,
\begin{align}\label{eq:Conc.UStat.Gen}
\hspace*{-0.22cm}
\absj{\sum_{1 \leq i\neq j \leq n} \omega_{i,j} U_m(\xi_i , \xi_j)}
\leq \frac{\epsilon \Dcal_m}{n-p} +  \frac{\kappa n}{(n-p)^2} \parenj{1+\frac{n \pi^*}{p}} \crochj{\frac{nA x}{(n-p)\epsilon} + \parenj{1+\frac{A}{n}}  x^2 } 
\enspace .  
\end{align}
\end{lemma}

\begin{proof}
We start with the following symmetrization trick
\begin{align*}
 \sum_{1 \leq i\neq j \leq n} \omega_{i,j} U_m(\xi_i , \xi_j)&=\sum_{1\le i<j\le n}\omega_{i,j} U_m(\xi_i , \xi_j)+\omega_{j,i} U_m(\xi_j , \xi_i)\\
 &=\sum_{1\le i<j\le n}(\omega_{i,j}+\omega_{j,i})  U_m(\xi_i , \xi_j)\\
 &=\sum_{1\le i\ne j\le n}\omega'_{i,j}  U_m(\xi_i , \xi_j)\enspace,
\end{align*}
where
\begin{align*}
\omega'_{i,j}
= \frac{\omega_{i,j}+\omega_{j,i}}{2}
&=\frac{1}{(n-p)^2}\crochj{1-(\pi_i+\pi_j)\frac{n}p+\parenj{\frac{2n}{p} - 1}\pi_{i,j}}
\\
&= \frac{1}{(n-p)^2}\crochj{ (1-\pi_{i,j}) + \frac{n}{p} (\pi_{i,j} - \pi_i) + \frac{n}{p} (\pi_{i,j} - \pi_j) }
\enspace.
\end{align*}
From the last formula for $\omega'_{i,j}$, 
using Eq.~\eqref{eq.ineg-simple-piij-pi}, 
we get that 
\begin{align} 
\label{eq.maj-carre-omegaprimeij}
( \omega'_{i,j}  )^2 
&\leq \frac{1}{(n-p)^4} \crochj{ 1 + \frac{n^2}{p^2} \parens{ \pi_i + \pi_j }^2 }
\\
\label{eq.maj-max-omegaprimeij}
\text{and} \qquad 
\max_{i,j \in \inter{n}} \abss{ \omega'_{i,j} } &\leq \frac{1}{(n-p)^2} \parenj{ 1 + \frac{2n}{p} \pi^*}
\enspace . 
\end{align}
The concentration of the $U$-statistics follows from \citet[Theorem~3.4]{Hou_Rey:2003}, that is Eq.~\eqref{eq:Conc.U.Stat} 
with $g_{i,j}(\xi_i,\xi_j) = \omega'_{i,j} U_m(\xi_i,\xi_j)$. 
To apply this result, it remains to compute the terms $\oA$, $\oB$, $\oC$, $\oD$. 
First, 
\begin{align*}
 2\oA^2=\sum_{1\le i\ne j\le n}(\omega_{i,j}')^2\E\crochj{U_m(\xi_i,\xi_j)^2}
\leq \norms{\bayes}_\infty\Dcal_m \sum_{1\le i\ne j\le n}(\omega_{i,j}')^2
\end{align*}
by Eq~\eqref{eq.maj-E[Um^2]}. 
Algebraic computations and Eq.~\eqref{eq.maj-carre-omegaprimeij} and~\eqref{eq.ineg-simple-piij-pi} show that
\begin{align*}
\sum_{1\le i\ne j\le n}(\omega_{i,j}')^2
&\leq \frac{1}{(n-p)^4} \sum_{1\le i\ne j\le n} \crochj{ 1 + \frac{n^2}{p^2} (\pi_i + \pi_j)^2 }
\\
&\leq \frac{1}{(n-p)^4} \sum_{1\le i, j\le n} \crochj{ 1 + \frac{n^2}{p^2} (\pi^* \pi_i + \pi^* \pi_j + 2 \pi_i \pi_j) }
\\
&= \frac{n^2}{(n-p)^4} \parenj{ 3 + \frac{2 \pi^* n}{p}  }
\end{align*}
Hence,
\begin{equation*} 
 \oA\le \frac{n}{(n-p)^2}\sqrt{\parenj{\frac{3}{2}+\frac{\pi^* n}{p}}\norms{\bayes}_\infty\Dcal_m}
 \enspace.
\end{equation*}
Second, let $a_i$ and $b_j$ be functions such that $\sum_{i=1}^n\E\crochj{a_i(\xi)^2}\le 1$ and $\sum_{i=1}^n\E\crochj{b_i(\xi)^2}\le 1$. 
Eq~\eqref{eq.maj-terme-B-Um} shows that 
\[ \absB{ \E\crochj{a_i(\xi)b_j(\xi')U_m(\xi,\xi')} } \le \frac{\norms{\bayes}_\infty}2\parenB{\E\crochj{a_i(\xi)^2}+\E\crochj{b_j(\xi)^2}}\enspace,\]
hence, using Eq.~\eqref{eq.maj-max-omegaprimeij}, 
\begin{align}
\notag 
\oB &= 
\sum_{1\le i\ne j\le n}\omega_{i,j}'\E\crochj{a_i(\xi)b_j(\xi')U_m(\xi,\xi')}
\\
\notag 
&\le \max_{1\le i\ne j\le n} \absj{ \omega'_{i,j} } \frac{\norms{\bayes}_\infty}{2} \sum_{1\le i\ne j\le n} \parenB{\E\crochj{a_i(\xi)^2}+\E\crochj{b_j(\xi)^2}} 
\\
&\le \frac{ n \norms{\bayes}_\infty  }{(n-p)^2} \parenj{ 1 + \frac{2n}{p} \pi^*}  
\enspace . 
\notag 
\end{align}
Third, Eq~\eqref{eq.maj-terme-C-Um} shows that, for any $x>0$,
\[
\E\crochj{U_m(\xi,x)^2}\le 2 \parenB{ b_m^2+\norms{\bayes_m}^2 } \norms{\bayes}_\infty
\]
and by Eq.~\eqref{eq.maj-carre-omegaprimeij} we have 
\begin{align*}
 \sum_{i=2}^n (\omega'_{i,1})^2
\leq 
\frac{1}{(n-p)^4} \sum_{i=2}^n \parenj{ 1 + (\pi_i+\pi_1)^2 \frac{n^2}{p^2} }
\leq \frac{n}{(n-p)^4} \parensq{ 1 + 2\pi^*\frac{n}{p} }
\enspace.
\end{align*}
So, for any $x>0$, 
\begin{align*}
 \sum_{i=2}^n (\omega'_{i,1})^2\E\crochj{U_m(\xi,x)^2}
&\le 
2 \parenB{ b_m^2+\norms{\bayes_m}^2 } \norms{\bayes}_\infty
\times \frac{1}{(n-p)^4} \parensq{ 1 + 2\pi^*\frac{n}{p} }
\end{align*}
hence
\begin{equation*} 
 \oC \le\parenj{1+2\pi^*\frac np}\frac{ n}{(n-p)^2} 
 \sqrt{\frac{2 \parenb{ b_m^2+\norms{\bayes_m}^2 } \norms{\bayes}_\infty}{n}} 
 \enspace .
\end{equation*}
Fourth, using Eq~\eqref{eq.maj-sup-Um} and~\eqref{eq.maj-max-omegaprimeij}, 
\begin{equation*} 
\oD \leq
\max_{i,j \in \inter{n}} \absj{ \omega'_{i,j} } \sup_{x,y} \absb{ U_m(x,y) } 
\leq 
\parenj{1 + \frac{2n}{p} \pi^*} \frac{n}{(n-p)^2} \frac{2 \parenb{ b_m^2+\norms{\bayes_m}^2 }}{n}
\enspace.
\end{equation*}

%
Now, we remark that $b_m^2 \leq n$ by \eqref{hyp.NormSupNorm2}, 
and $\norms{\bayes_m}^2 \leq \norms{\bayes}^2 \leq \norms{\bayes}_{\infty} \leq A$, 
and we can plug this two inequalities in the upper bounds above. 
By \eqref{hyp.CV.ind}, we can apply \citet[Theorem~3.4]{Hou_Rey:2003}, conditionally on the weights $\omega_{i,j}$. 
We obtain that an absolute constant $\kappa>0$ exists such that, 
for any $x>1$, with probability larger than $1-6\e^{-x}$, for any $\epsilon\in(0,1]$, 
Eq.~\eqref{eq:Conc.UStat.Gen} holds true. 
\end{proof}

\subsubsection{Oracle Inequality (Proof of Theorem~\ref{thm.oracle-MCCV})} \label{sec.supmat.MCCV.oracle}

Theorem~\ref{thm.oracle-MCCV} actually is a corollary of the following general result. 
\begin{theorem} \label{thm.oracle-CV-gal}
Let $\xi_{\inter{n}}$ be i.i.d. real-valued random variables with common density $\bayes \in L^{\infty}(\mu)$, 
$(T_K)_{1 \leq K \leq B}$ some sequence of subsets of $\inter{n}$ satisfying \eqref{hyp.CV.same-size} and \eqref{hyp.CV.ind}, 
and $(S_m)_{m\in\M_n}$ be a collection of separable linear spaces satisfying \eqref{hyp.NormSupNorm2}. 
Assume that either \eqref{hyp.UBbayes} or \eqref{hyp.Nested} holds true.
For every $\mM_n$, let $\ERM_m$ be the estimator defined by Eq.~\eqref{def.ERM}, 
and $\widetilde{s} = \ERM_{\mh}$ where 
\[ 
\mh \in \argmin_{\mM_n} \setB{ \critCV \parenb{ m , (T_K)_{1 \leq K \leq B} } }
\]
and $\critCV$ is defined by Eq.~\eqref{def.CVgal}. 
Define $\pi^*=\max_{i=1,\ldots,n}\frac{1}{B} \sum_{K=1}^B\un_{i\in T_K^c}$ 
and 
for any $x,\epsilon,\kappa>0$,
\begin{equation*}
\resteCVgal \parenj{\epsilon,x,\kappa,n,\fractrain_n,\pi^*,A}
\egaldef 
\frac{\kappa }{n \fractrain_n^2 } \mathopen{} \left( {1+\frac{\pi^*}{1-\fractrain_n}} \right)^{\alpha} \mathclose{} \crochj{\frac{ A x}{ \fractrain_n \epsilon} + \frac{(A \vee 1) x^2}{\epsilon^3}} 
\end{equation*}
with $\alpha = 1$ under assumption \eqref{hyp.UBbayes}
and $\alpha = 2$ under assumption \eqref{hyp.Nested}. 
Then, an absolute constant $\kappa>0$ exists such that, for any $x\geq 0$, with probability at least $1- 12|\M_n|^2 \e^{-x}$, for any $\epsilon \in (0,\kappa^{-1})$, 
\begin{equation*}
\parenj{1-\frac{\epsilon}{\fractrain_n}}\perte{\widetilde{s}}
\leq
\frac{1+\epsilon}{\fractrain_n} \inf_{\mM_n} \setj{ \perte{\ERM_{m}} }
+ \resteCVgal \parenj{\epsilon,x,\kappa,n,\fractrain_n,\pi^*,A}
\enspace . 
\end{equation*}
\end{theorem}

The oracle inequality of Theorem~\ref{thm.oracle-CV-gal} 
is similar to the one of Theorem~\ref{thm.oracle-penVF.cas_reel}, 
with $\delta$ replaced by $1/\fractrain_n - 1$ (both quantities correspond to the bias 
of the criterion as an estimator of the risk) 
and a slightly different remainder term. 
In addition to the remarks already made about Theorem~\ref{thm.oracle-penVF.cas_reel}, 
we can make the following comments. 
\begin{itemize}
\item 
The remainder term $\resteCVgal$ is of order $x^2 / n$,  
as in Theorem~\ref{thm.oracle-penVF.cas_reel}  
under the following sufficient conditions: 
(i) $\fractrain_n$ stays away from $0$, 
(ii) $\pi^* / (1-\fractrain_n)$ is bounded. 

\item 
For $V$-fold criteria, $\fractrain_n=(V-1)/V\ge 1/2$ and $\pi^*/(1-\fractrain_n) = 1$, 
so conditions (i) and (ii) are satisfied and we recover an oracle inequality 
for $V$-fold cross-validation similar to Theorem~\ref{thm.oracle-penVF.cas_reel}. 

\item 
The leading constant in front of the oracle inequality of Theorem~\ref{thm.oracle-CV-gal} 
is of order $1/\fractrain_n$, 
so we can get asymptotic optimality only if $\fractrain_n\to 1$, 
that is, $p\ll n$.
This is consistent with the fact that the bias of the cross-validation criterion is 
negligible at first order if and only if $\fractrain_n \to 1$. 

\item 
For hold-out criteria, $\pi^* = 1$ so the remainder term is 
of order $x^2 / (n(1-\fractrain_n)^{\alpha}) \geq x^2 / p$
which is large when $\fractrain_n$ is close to~$1$, that is, when $p$ is small. 
Hence, for such criteria, we cannot get a leading constant close to~$1$ and a ``small'' remainder term. 
\end{itemize}

Let us now explain why Theorem~\ref{thm.oracle-MCCV} is also a corollary of Theorem~\ref{thm.oracle-CV-gal}. 

\medbreak

\begin{proofof}{Theorem~\ref{thm.oracle-MCCV}} 
We only have to prove some upper bound on $\pi^*$ under assumption \eqref{hyp.MCCV}, 
thanks to which Theorem~\ref{thm.oracle-MCCV} is a straightforward corollary 
of  Theorem~\ref{thm.oracle-CV-gal}. 

By \eqref{hyp.CV.same-size} and~\eqref{hyp.MCCV}, 
for any $i\in \inter{n}$, $\pi_i$ is the empirical mean of $K$ independent Bernoulli random variables with common parameter $\P\parenj{i\in T_K^c} = p/n$. 
Then, by Bernstein's inequality \citep[Theorem~2.10]{Bou_Lug_Mas:2011:livre} 
\[
\forall y>0,\forall i\in\inter{n},\qquad 
\P\parenj{ \pi_i-\frac{p}{n} > \sqrt{\frac{2 p (n-p) y}{n^2 B}}+\frac{x}{3B}}\le \e^{-y}
\enspace.\]
A union bound over $i \in \inter{n}$ yields that 
for any $x>0$,
\begin{equation} \notag 
\P\parenj{\pi^*\le1\wedge\parenj{ \frac{2p}{n}+\frac{\log n+x}{B} }}\ge 1-\e^{-x}\enspace ,
\end{equation}
where we used also that $\pi^* \leq 1$ almost surely. 
Theorem~\ref{thm.oracle-MCCV} follows. 
\end{proofof}

We finally prove Theorem~\ref{thm.oracle-CV-gal}.

\medbreak

\begin{proofof}{Theorem~\ref{thm.oracle-CV-gal}}
Throughout the proof, $L$ denotes some positive numerical constant, whose value may change from line to line. 
Given Eq.~\eqref{eq.critCV.calcul.bis}, 
the proof relies on concentration inequalities that are detailed in Section~\ref{sec.supmat.MCCV.conc}. 
Let us fix $x \geq 0$ and define for every $\kappa\geq 1$ the event $ \Omega_{good}(\kappa,x)$ where all the following inequalities hold 
for any $m,m' \in \M_n$ and any $\epsilon\in(0,1]$
\begin{align*}
&\absj{\sum_{i=1}^n\omega_{i,i} \parenb{ \Psi_m(\xi_i)-\Dcal_m }}
\le \epsilon\frac{\Dcal_m}{n-p}+\kappa\frac{(n+A)x}{\epsilon(n-p)^2}
\\
& \sum_{i=1}^n (-2\omega_{i,i}+\cteformCVb_i) \parenb{ \bayes_m(\xi_i)-P\bayes_m-\bayes_{m'}(\xi_i)-P\bayes_{m'} }
\le \epsilon\norms{\bayes_m-\bayes_{m'}}^2 + \RbVFgal_n(x,\epsilon,\pi^*,A) 
\\
&\absj{\sum_{1 \leq i\neq j \leq n} \omega_{i,j} U_m(\xi_i , \xi_j)}
\le \epsilon \frac{\Dcal_m}{n-p} + \kappa\frac{n}{(n-p)^2} \parenj{1+\pi^*\frac{n}{p}} \crochj{\frac{n A x}{(n-p)\epsilon} + \parenj{n+A}\frac{x^2}{n}} \\
& \absj{\norms{\ERM_m-\bayes_m}^{2}-\frac{\Dcal_{m}}{n}} 
\le \epsilon\frac{\Dcal_m}{n} + \kappa \frac{A x^2}{\epsilon^3 n}
\enspace . 
\end{align*}
It follows from Lemmas~\ref{lem:conc.supPn-P}, \ref{le.eq:TermePsiCritVFGen}, \ref{le.conc.TermeBiaisCritVFGen} and~\ref{le.eq:Conc.UStat.Gen} 
that an absolute constant $\kappa>0$ exists such that $\P\parens{\Omega_{good}(\kappa,x)}\ge 1 - |\M_n|^2 \e^{-x} - 10 |\M_n| \e^{-x}$. 
Let us remark that we can assume $x \geq \log(11) \geq 1$ in the following, since otherwise the above probability bound is negative. 
On $\Omega_{good}(\kappa,x)$, for every $\mM_n$ and $\epsilon \in (0,1)$, 
\begin{equation}
\label{eq.Omgood.controle-err-estim}
\frac{\Dcal_m}{n} \leq \frac{1}{1 - \epsilon} \norms{\ERM_m-\bayes_m}^{2} + \frac{L A x^2}{\epsilon^3 (1-\epsilon) n}
\enspace . 
\end{equation}
By definition of $\mh$, for every $\mM_n$, 
\begin{equation} \label{eq.pr.Oracle.Gal.debut}
\perte{\ERM_{\mh}} 
\leq \perte{\ERM_m} 
+ \parenB{ \critCV(m) - \perte{\ERM_m} }
- \parenB{ \critCV(\mh) - \perte{\ERM_{\mh}} }
\enspace . 
\end{equation}
In addition, by Eq.~\eqref{eq.critCV.calcul.bis}, 
\begin{align}
\critCV(m) - \perte{\ERM_m}
\notag 
&= 
\frac{\Dcal_m - \norms{\bayes_m}^2}{n-p} 
- \underbrace{ \parenj{ \norms{\bayes_m}^2 + \pertes{\bayes_m} } }_{= \norms{\bayes}^2}
- \frac{\Dcal_m}{n}
\\
\notag 
&+ \sum_{i=1}^n \omega_{i,i} \parenb{ \Psi_m(\xi_i)-\Dcal_m }
+ \sum_{i=1}^n (-2\omega_{i,i}+\cteformCVb_i) \parenb{ \bayes_m(\xi_i)-P\bayes_m } 
\\
&+ \sum_{1 \leq i\neq j \leq n} \omega_{i,j} U_m(\xi_i , \xi_j) 
- \parenj{ \perte{\ERM_m} - \frac{\Dcal_m}{n} }
\enspace . 
\notag 
\end{align}
So, on $\Omega_{good}(\kappa,x)$, for every $m,m' \in \M_n$ and $\epsilon \in (0,1/5)$, 
\begin{align*}
&\qquad 
\critCV(m) - \perte{\ERM_m} 
- \parenB{ \critCV(m') - \perte{\ERM_{m'}} }
\\ &
\leq 
\Dcal_m \mathopen{} \left( { \frac{1 + 2 \epsilon}{n-p}  - \frac{ 1 - \epsilon }{n} } \right)_{\!\!+} \mathclose{} 
+ \Dcal_{m'} \mathopen{} \left( { \frac{1 + \epsilon}{n}  - \frac{ 1 - 2\epsilon }{n-p} } \right)_{\!\!+} \mathclose{} 
+ \epsilon\norms{\bayes_m-\bayes_{m'}}^2
\\ & \qquad 
+ \RbVFgal_n(x,\epsilon,\pi^*,A)
+ \frac{L n}{(n-p)^2} \parenj{1+\pi^*\frac{n}{p}} \parenj{\frac{n A x}{(n-p)\epsilon} + \frac{(A \vee 1) x^2}{\epsilon^3}} 
+ \frac{\norms{\bayes_{m'}}^2}{n-p}
\\ &
\leq 
\frac{n}{1-\epsilon} \mathopen{} \left( { \frac{1 + 2 \epsilon}{n-p}  - \frac{ 1 - \epsilon }{n} } \right)_{\!\!+} \mathclose{} \norms{\ERM_m - \bayes_m}^2 
+ \frac{n}{1-\epsilon}  \mathopen{} \left( { \frac{1 + \epsilon}{n}  - \frac{ 1 - 2\epsilon }{n-p} } \right)_{\!\!+} \mathclose{} \norms{\ERM_{m'} - \bayes_{m'}}^2
\\ & \qquad 
+ 2 \epsilon \perte{\bayes_m}
+ 2 \epsilon \perte{\bayes_{m'}}
\\ & \qquad 
+ \RbVFgal_n(x,\epsilon,\pi^*,A)
+ \frac{L n}{(n-p)^2} \parenj{1+\pi^*\frac{n}{p}} \parenj{\frac{n A x}{(n-p)\epsilon} + \frac{(A \vee 1) x^2}{\epsilon^3}} 
\\ &
\leq 
\max\setj{ 
\frac{1}{1-\epsilon} \mathopen{} \left( { \frac{1 }{\fractrain_n}  - 1 + \epsilon + \frac{ 2\epsilon }{\fractrain_n} } \right)_{\!\!+} \mathclose{} 
, 2 \epsilon}
\pertes{\ERM_m}
\\ & \qquad 
+ 
\max\setj{ 
\frac{1}{1-\epsilon}  \mathopen{} \left( { 1  - \frac{ 1 }{\fractrain_n} + \epsilon + \frac{ 2\epsilon }{\fractrain_n} } \right)_{\!\!+} \mathclose{} 
, 2 \epsilon}
\perte{ \ERM_{m'} }
\\ & \qquad 
+ \RbVFgal_n(x,\epsilon,\pi^*,A)
+ \frac{L }{n \fractrain_n^2 } \parenj{1+\frac{\pi^*}{1-\fractrain_n}} \parenj{\frac{ A x}{ \fractrain_n \epsilon} + \frac{(A \vee 1) x^2}{\epsilon^3}} 
\\ &
\leq 
\parenj{ \frac{1 }{\fractrain_n}  - 1 + \frac{L \epsilon}{\fractrain_n} } \perte{\ERM_m }
+ \frac{4 \epsilon}{\fractrain_n} \perte{\ERM_{m'} }
\\ & \qquad 
+ \RbVFgal_n(x,\epsilon,\pi^*,A)
+ \frac{L }{n \fractrain_n^2 } \parenj{1+\frac{\pi^*}{1-\fractrain_n}} \parenj{\frac{ A x}{ \fractrain_n \epsilon} + \frac{(A \vee 1) x^2}{\epsilon^3}} 
\end{align*}
where we used Eq.~\eqref{eq.Omgood.controle-err-estim} for the second inequality. 
Note also that by Lemma~\ref{le.conc.TermeBiaisCritVFGen}, 
\begin{align*}
\RbVFgal_n(x,\epsilon,\pi^*,A) 
& 
\leq 
\frac{ \kappa  }{ n \epsilon} \crochj{ 
Ax \parenj{ \frac{1}{n \fractrain_n^2} + \frac{\pi^{\star}}{1-\fractrain_n} } 
+ x^2 \parensq{ \frac{1}{n \fractrain_n^2} + \frac{\pi^{\star}}{1-\fractrain_n} } 
}
\\ &
\leq 
\frac{ \kappa  }{ n \epsilon} \crochj{ 
A x \parenj{ \frac{1}{\fractrain_n} + \frac{\pi^{\star}}{1-\fractrain_n} } 
+ x^2 \parensq{ \frac{1}{\fractrain_n} + \frac{\pi^{\star}}{1-\fractrain_n} } 
}
\\ &
\leq 
\frac{ \kappa  }{ n \epsilon} \crochj{ 
 \frac{Ax}{\fractrain_n} \parenj{ 1 + \frac{\pi^{\star}}{1-\fractrain_n} } 
+  \frac{x^2}{\fractrain_n^2} \parensq{ 1 + \frac{\pi^{\star}}{1-\fractrain_n} } 
}
\end{align*}
where the term 
$\frac{x^2}{\fractrain_n^2} \parens{ 1 + \frac{\pi^{\star}}{1-\fractrain_n} }^2 $
is not present in $\RbVFgal_n(x,\epsilon,\pi^*,A) $ 
under assumption \eqref{hyp.UBbayes}, 
so that 
$\RbVFgal_n(x,\epsilon,\pi^*,A) \leq \resteCVgal \parenj{\epsilon,x,\kappa,n,\fractrain_n,\pi^*,A}$ 
whatever the assumption among \eqref{hyp.UBbayes} and \eqref{hyp.Nested}. 

Therefore, Eq.~\eqref{eq.pr.Oracle.Gal.debut} yields that, 
on $\Omega_{good}(\kappa,x)$, for every $m,m' \in \M_n$ and $\epsilon \in (0,1/5)$, 
\begin{align*}
\parenj{ 1 - \frac{4 \epsilon}{\fractrain_n} } \perte{\ERM_{\mh}} 
&\leq 
\frac{ 1  + L \epsilon}{\fractrain_n} \perte{\ERM_m}
+ \RbVFgal_n(x,\epsilon,\pi^*,A)
\\ & \qquad 
+ \frac{L }{n \fractrain_n^2 } \parenj{1+\frac{\pi^*}{1-\fractrain_n}} \parenj{\frac{ A x}{ \fractrain_n \epsilon} + \frac{(A \vee 1) x^2}{\epsilon^3}} 
\end{align*}
hence the result by changing $\epsilon$ into $\epsilon /L$. 
\end{proofof}

\subsubsection{Variance (Proof of Theorem~\ref{thm.var-MCCV})} \label{sec.supmat.MCCV.var}

We prove in this section the variance computation of Theorem~\ref{thm.var-MCCV}, which is a 
straightforward corollary of the following result, 
since $\Psi_{m_1}$ and $\Psi_{m_2}$ are constant for regular histogram models. 
\begin{theorem} \label{thm.var-MCCV-gal}
We consider the setting and notation of Theorem~\ref{theo.variance.penVF}. 
We recall that 
\[ 
\CVMCCV(m) = \critCV \parens{ m , (T_K)_{1 \leq K \leq B} }
\]
 for some sequence 
$T_1, \ldots, T_B$ of subsets of $\inter{n}$ satisfying 
\eqref{hyp.CV.same-size}, \eqref{hyp.MCCV} and \eqref{hyp.CV.ind}, 
where $\critCV$ is defined by Eq.~\eqref{def.CVgal}
Then, we have 
\begin{align}
\notag 
&\quad \var\parenB{ \CVMCCV(m_1) - \CVMCCV(m_2) } 
\\
\label{eq.var.MCCV.incr}
&= 
\cteMCCVa (B,n,\fractrain_n) 
\frac{2}{n^2} \termeBvaracr{m_1,m_2}
\\
\notag 
&\quad 
+ 
\frac{4}{B n} \frac{1}{n^2 \fractrain_n^3} \var\parenj{ ( \bayes_{m_1} - \bayes_{m_2} )(\xi_1) - \frac{1}{2}(\Psi_{m_1} - \Psi_{m_2}) (\xi_1)}
\\
\notag 
&\quad 
 + \frac{4}{B n} \frac{1}{1-\fractrain_n} \var\parenb{\bayes_{m_1} (\xi_1) - \bayes_{m_2}(\xi_1)}
\\
\notag 
&\quad 
+ \parenj{ 1 - \frac{1}{B} } 
\frac{4}{n} \var \parenj{ \parenj{1+\frac{1}{n \fractrain_n}} (\bayes_{m_1}-\bayes_{m_2}) (\xi_1)-\frac{1}{2 n \fractrain_n}(\Psi_{m_1}-\Psi_{m_2})(\xi_1)}
\end{align}
and 
\begin{align}
\var\parenB{ \CVMCCV(m_1)} 
\label{eq.var.MCCV}
&= 
\cteMCCVa (B,n,\fractrain_n) 
\frac{2}{n^2} \termeBvar{m_1 , m_1}
\\
\notag 
&+ \frac{1}{B} \frac{4}{n} \crochj{ 
\frac{1}{n^2 \fractrain_n^3} \var\parenj{ \bayes_{m_1} (\xi_1) - \frac{1}{2} \Psi_{m_1} (\xi_1)}
 + \frac{1}{1-\fractrain_n} \var\parenb{\bayes_{m_1} (\xi_1)}
}
\\
\notag 
&\quad 
+ \parenj{ 1 - \frac{1}{B} } 
\frac{4}{n} \var \parenj{ \parenj{1+\frac{1}{n \fractrain_n}} \bayes_{m_1} (\xi_1)-\frac{1}{2 n \fractrain_n} \Psi_{m_1}(\xi_1)}
\end{align}
where 
\begin{align*}
\cteMCCVa  (B,n,\fractrain_n) 
&= 
\frac{1}{B}  \parenj{ \frac{1}{\fractrain_n^2} + \frac{2}{\fractrain_n (1-\fractrain_n)} - \frac{1}{n \fractrain_n^3} } 
+ \parenj{ 1 - \frac{1}{B} } \crochj{1+\frac{1}{n-1} \parensq{ \frac{1}{\fractrain_n} + 1 } -\frac{1}{n \fractrain_n^2}}   
\end{align*}
and we recall that $\fractrain_n = |T_K|/n = 1 - (p/n)$. 
\end{theorem}
Theorem~\ref{thm.var-MCCV-gal} is proved below. 
Note that a similar argument can be used for computing the variance of Monte-Carlo penalized criteria, 
where the Monte-Carlo penalty is defined from hold-out penalties similarly to MCCV. 
Indeed, given Lemma~\ref{le.var-gal}, we only need to compute the variance of 
hold-out penalized criteria (as done by Proposition~\ref{pro.variance.penHO}) 
and the variance of leave-$p$-out criteria (as done by combining Lemma~\ref{le.penVF-VFCV}
and Theorem~\ref{theo.variance.penVF}). 

\medbreak

Before proving Theorem~\ref{thm.var-MCCV-gal}, we state and prove a general result that 
relates the variance of (increments of) Monte-Carlo CV criteria 
to the variance of (increments of) hold-out and leave-$p$-out criteria. 

\begin{lemma}\label{le.var-gal}
Let $n \geq p \geq 1$ and $F: \X^n \times \mathfrak{P}(\inter{n}) \to \R$ be some mesurable function. 
Let $D_n$ denote some sample of $n$ independent variables with common distribution $P$. 
Assume that \eqref{hyp.CV.same-size}, \eqref{hyp.CV.ind} and \eqref{hyp.MCCV} hold true, 
as well as 
\begin{equation}
\label{hyp.F-mom}
\forall T \in \mathcal{E}_{n-p}, \qquad \E\crochj{ F(D_n,T)^2 } < +\infty
\enspace . 
\end{equation}

Let $B \geq 1$ and $T_1, \ldots, T_B$ 
be some random sequence of subsets of $\inter{n}$. 
Let us define 
\[ 
Z_B \egaldef \frac{1}{B} \sum_{K=1}^B F(D_n, T_K)
\qquad \text{and} \qquad 
F^{\mathrm{lpo}}(D_n,p) \egaldef \frac{1}{\binom{n}{p}} \sum_{T \in \mathcal{E}_{n-p}} F(D_n,T)
\enspace . 
\]
Then, we have 
\begin{align}\label{eq.le.var-gal.MCCV}
\var(Z_B) 
&= \var\parenb{ F^{\mathrm{lpo}}(D_n,p) } + \frac{1}{B} \E\crochB{ \var \parenb{ F(D_n,T_1) \sachant D_n } }
\\
\label{eq.le.var-gal.MCCV.2}
&= \var\parenb{ F^{\mathrm{lpo}}(D_n,p) } + \frac{1}{B} \crochB{ \var\parenb{ F(D_n,T_1) } - \var\parenb{ F^{\mathrm{lpo}}(D_n,p) } }
\\
\notag 
&= \parenj{ 1 - \frac{1}{B}  } \var\parenb{ F^{\mathrm{lpo}}(D_n,p) } + \frac{1}{B} \var\parenb{ F(D_n,T_1) } 
\enspace . 
\end{align}
\end{lemma}

\begin{proofof}{Lemma~\ref{le.var-gal}} 
By \eqref{hyp.F-mom}, $Z_B$ admits a finite variance. 
Then, we can write that 
\begin{align*}
\var\parens{ Z_B } 
&= \var\parenb{ \E\crochs{Z_B \sachant D_n} } + \E\crochb{ \var\parens{Z_B \sachant D_n}}
\enspace . 
\end{align*}
By \eqref{hyp.CV.same-size}, \eqref{hyp.MCCV} and \eqref{hyp.CV.ind}, 
\[ 
\E\crochj{Z_B \sachant D_n} = F^{\mathrm{lpo}} (D_n,p) 
\qquad \text{and} \qquad 
\var\parenj{Z_B \sachant D_n} = \frac{1}{B} \var\parenb{ F(D_n, T_1) \sachant D_n }
\]
which proves Eq.~\eqref{eq.le.var-gal.MCCV}. 
Eq.~\eqref{eq.le.var-gal.MCCV.2} follows by remarking that $Z_B = F(D_n,T_1)$ when $B=1$. 
\end{proofof}

\medbreak

We can now prove Theorem~\ref{thm.var-MCCV-gal}. 
The idea is to apply Lemma~\ref{le.var-gal} when $F(D_n,T)$ is the hold-out estimator of the risk of $\ERM_m$, 
so that $F^{\mathrm{lpo}}(D_n,p)$ corresponds to some leave-$p$-out estimator of the risk. 
Similarly, Lemma~\ref{le.var-gal} applies when $F(D_n,T)$ is the difference between the hold-out estimators of 
the risks of $\ERM_{m_1}$ and $\ERM_{m_2}$. 
\medbreak
\begin{proofof}{Theorem~\ref{thm.var-MCCV-gal}} 
First note that the variance of the CV criterion at model $m_1$ 
can be deduced from the variance of the increment between 
the CV criterion at model $m_1$ and CV criterion at model $m_0$ with $S_{m_0} = \{0\}$ 
the null model. 
So, Eq.~\eqref{eq.var.MCCV} directly follows from Eq.~\eqref{eq.var.MCCV.incr}. 

For proving Eq.~\eqref{eq.var.MCCV.incr}, we apply Lemma~\ref{le.var-gal} with 
\[ 
F(D_n,T) = 
F_{\HO,m_1,m_2}(D_n,T) \egaldef \critHO(m_1, T) - \critHO(m_2, T)
\]
so that 
\[ 
F_{\HO,m_1,m_2}^{\mathrm{lpo}}(D_n,p) 
= \CV_{ \parenj{ \frac{n/p - 1/2}{ n/p - 1},\B_{{\rm LOO}} } } (m_1) 
- \CV_{ \parenj{ \frac{n/p - 1/2}{ n/p - 1},\B_{{\rm LOO}} } } (m_2)
\]
by Eq.~\eqref{eq.le.penLOO-LPO} in Lemma~\ref{le.penVF-VFCV}. 

The variance of $F_{\HO,m_1,m_2}^{\mathrm{lpo}}(D_n,p)$ 
is given by Theorem~\ref{theo.variance.penVF}:  
by Eq.~\eqref{eq.pro.variance.penVF-penid-incrReg} with 
$V=n$ and 
\[ 
C = \frac{n/p - 1/2}{ n/p - 1} = 1 + \frac{p}{2(n-p)}
\enspace , 
\]
we get  
\begin{align}
\notag 
&\qquad \var \parenj{  F_{\HO,m_1,m_2}^{\mathrm{lpo}}(D_n,p)  }
\\
&= \frac{2}{n^2} \crochj{1+\frac{4}{n-1} \parensq{ 1 + \frac{p}{2(n-p)} } -\frac{n}{(n-p)^2}} \termeBvaracr{m_1 , m_2 }
\\
\notag 
&\qquad +\frac{4}{n} \var \parenj{ \parenj{1+\frac{1}{n-p}} (\bayes_{m_1}-\bayes_{m_2}) (\xi_1)-\frac{1}{2(n-p)}(\Psi_{m_1}-\Psi_{m_2})(\xi_1)}
\\
\label{eq.var.critLPO.incr}
&= \crochj{1+\frac{1}{n-1} \parensq{ \frac{1}{\fractrain_n} + 1 } -\frac{1}{n \fractrain_n^2}}
\frac{2}{n^2} \termeBvaracr{m_1,m_2}
\\
\notag 
&\qquad + \frac{4}{n} \var \parenj{ \parenj{1+\frac{1}{n \fractrain_n}} (\bayes_{m_1}-\bayes_{m_2}) (\xi_1)-\frac{1}{2 n \fractrain_n}(\Psi_{m_1}-\Psi_{m_2})(\xi_1)}
\end{align} 
where we recall that $\fractrain_n = |T|/n = 1 - (p/n)$. 

\medbreak

It now remains to compute the variance of 
\[ 
F_{\HO,m_1,m_2}(D_n,T) \egaldef \critHO(m_1, T) - \critHO(m_2, T)
\enspace . 
\] 
By Eq.~\eqref{eq.critHO.calcul.2}, 
$F_{\HO,m_1,m_2}(D_n,T)$ has the same variance as $\CV_{m_1} - \CV_{m_2}$ where 
$\CV_m$ is defined as in Lemma~\ref{lem:CovGen.2} with 
\[
\olomega_{i,j} = \omegaHO_{i,j}(T) 
\qquad 
\olcteformCVb_{i} = \cteformHOb_{i}(T) 
\qquad \text{and} \qquad 
f_m = \bayes_m
\enspace . 
\]
Since $|T|=n-p$, we have 
\begin{align*}
\sum_{1\leq i\neq j\leq n} \parenb{ \olomega_{i,j}^2 + \olomega_{i,j}\olomega_{j,i}  }
&= 
\sum_{i,j \in T, \, i \neq j} \parenb{ \olomega_{i,j}^2 + \olomega_{i,j}\olomega_{j,i}  }
+ \sum_{1 \leq i \leq n, j \in T^c, i\neq j} \parenb{ \underbrace{\olomega_{i,j}^2 + \olomega_{i,j}\olomega_{j,i}}_{=0}  }
\\&\quad + \sum_{i \in T^c , \, j \in T} \parenb{ \olomega_{i,j}^2 + \olomega_{i,j}\underbrace{\olomega_{j,i}}_{=0}  }
\\
&= \frac{2 (n-p-1)}{(n-p)^3} 
+ \frac{4}{p(n-p)}
\end{align*}
and
\[ 
\sum_{i=1}^n\olomega_{i,i}^2
= \frac{1}{(n-p)^3}
\qquad 
\sum_{i=1}^n\olomega_{i,i} \olcteformCVb_i 
= 0
\qquad 
\sum_{i=1}^n \olcteformCVb_i^2
= \frac{4}{p}
\enspace . 
\]
Therefore, by Lemma~\ref{lem:CovGen.2}, 
\begin{align}
\notag 
&\qquad 
\var\parenb{ \crit_{\HO}(m_1,T) - \crit_{\HO}(m_2, T)}
\\
\notag 
&= 
\parenj{ \frac{2 (n-p-1)}{(n-p)^3} + \frac{4}{p(n-p)}  }
\var\parenb{U_{m_1}(\xi_1,\xi_2)-U_{m_2}(\xi_1,\xi_2)}
\\
\notag 
 &\qquad + \frac{1}{(n-p)^3} 
\var\parenb{U_{m_1}(\xi_1,\xi_1)-U_{m_2}(\xi_1,\xi_1)}
 + \frac{4}{p}
\var\parenb{\bayes_{m_1} (\xi_1) - \bayes_{m_2}(\xi_1)}
\\
\notag 
&= 
\parenj{ \frac{2 (n-p-1)}{(n-p)^3} + \frac{4}{p(n-p)}  }
\termeBvaracr{m_1,m_2}
\\
\notag 
 &\qquad + \frac{1}{(n-p)^3} 
\var\parenb{ (\Psi_{m_1} - \Psi_{m_2}) (\xi_1) - 2 ( \bayes_{m_1} - \bayes_{m_2} )(\xi_1) }
 + \frac{4}{p}
\var\parenb{\bayes_{m_1} (\xi_1) - \bayes_{m_2}(\xi_1)}
\\
\label{eq.var.critHO.incr}
&= 
\parenj{ \frac{1}{\fractrain_n^2} + \frac{2}{\fractrain_n (1-\fractrain_n)} - \frac{1}{n \fractrain_n^3} } 
\frac{2}{n^2} \termeBvaracr{m_1,m_2}
\\
\notag 
 &\qquad + 
 \frac{4}{n} 
\frac{1}{n^2 \fractrain_n^3} \var\parenb{ ( \bayes_{m_1} - \bayes_{m_2} )(\xi_1) - \frac{1}{2}(\Psi_{m_1} - \Psi_{m_2}) (\xi_1)}
\\
\notag 
 &\qquad + 
 \frac{4}{n} 
\frac{1}{1-\fractrain_n} \var\parenb{\bayes_{m_1} (\xi_1) - \bayes_{m_2}(\xi_1)}
\end{align}
where we used that $\var \parenj{ U_{m_1}(\xi_1,\xi_2)-U_{m_2}(\xi_1,\xi_2)}=\termeBvaracr{m_1,m_2}$ as proved at the beginning of Section~\ref{sec.proof.variance.main}, 
and that $U_{m}(\xi_1,\xi_1) = \Psi_m(\xi_1) - 2 \bayes_m(\xi_1) + \norms{\bayes_m}^2$. 

\medbreak

Combining Eq.~\eqref{eq.var.critHO.incr} and~\eqref{eq.var.critLPO.incr} with Lemma~\ref{le.var-gal}, 
we get Eq.~\eqref{eq.var.MCCV.incr}. 
\end{proofof}


\subsection{Results on Hold-Out Penalization} \label{sec.supmat.penHO}
This section gathers the proof of Theorem~\ref{thm:PenHo} (oracle inequality for hold-out penalization) and the variance computations we can make for hold-penalization. 

\subsubsection{Proof of Theorem~\ref{thm:PenHo}}
\label{sec.supmat.penHO.oracle}
The hold-out penalty is equal to  
\begin{align*} 
\penHO(m,T,x)
&= 2x(1-\fractrain_n)^2 \parenB{ P^{(\ItHO)}_{n}-P_{n}^{(\ItHO^c)} } \parenB{ \ERM^{(\ItHO)}_{m}-\ERM^{(\ItHO^c)}_{m} } \\
&= 2x(1-\fractrain_n)^2\sum_{\lamm}\crochsq{\parenb{P^{(\ItHO)}_{n}-P_{n}^{(\ItHO^c)}}(\psil)}
\enspace ,
\end{align*} 
where we recall that $\fractrain_n=\absj{T}/n$. 
As for Theorem~\ref{thm.oracle-penVF.cas_reel}, the oracle inequality is based on a concentration result for $\penHO(m,T,x)$. 
Let us start with an exact formula for the hold-out penalty (Lemma~\ref{lem:DecPenHO}, analogous to Lemma~\ref{lem:exact.formula.penvf}). 

\begin{lemma}\label{lem:DecPenHO}
For all $\mM_n$, we have
\begin{align*}
\penHO(m,T,x)
&=2x(1-\fractrain_n)^2\crochj{\normsqb{\ERM_m^{(\ItHO)}-\bayes_m} + \normsqb{\ERM_m^{(\ItHO^c)}-\bayes_m}-2 \parenB{ P_n^{(\ItHO)}-P } \parenB{ \ERM_m^{(\ItHO^c)}-\bayes_m} }
.
\end{align*}
In particular, we have 
\[\E\crochb{\penHO(m,T,x)}=2x\frac{1-\fractrain_n}{\fractrain_n}\frac{\Dcal_m}n\enspace.\]
\end{lemma}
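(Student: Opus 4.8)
The plan is to reduce both claims to coordinate computations in the orthonormal basis $(\psil)_{\lamm}$, using the elementary fact that for any $A\subset\inter{n}$ the projection estimator satisfies $\ERM_m^{(A)}=\sum_{\lamm}(P_n^{(A)}\psil)\,\psil$, so that $P_n^{(A)}\psil=\prodscal{\ERM_m^{(A)}}{\psil}$ and $P\psil=\prodscal{\bayes_m}{\psil}$. The key consequence, used repeatedly, is the duality identity: for every $t\in S_m$,
\[
(P_n^{(\ItHO)}-P)(t)=\prodscal{\ERM_m^{(\ItHO)}-\bayes_m}{t}\enspace,
\]
obtained by expanding $t$ in the basis and applying the coordinate formula term by term.

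For the \textbf{first (pathwise) identity}, I would start from the symmetric representation in Eq.~\eqref{eq:2fold=HO}, which I rewrite as $\penHO(m,\ItHO,x)=2x(1-\tau)^2\norm{\ERM_m^{(\ItHO)}-\ERM_m^{(\ItHO^c)}}^2$. Setting $u=\ERM_m^{(\ItHO)}-\bayes_m$ and $v=\ERM_m^{(\ItHO^c)}-\bayes_m$, so that $\ERM_m^{(\ItHO)}-\ERM_m^{(\ItHO^c)}=u-v$, the polarization identity gives $\norm{u-v}^2=\norm{u}^2+\norm{v}^2-2\prodscal{u}{v}$. The only non-routine point is to recognise the cross term: since $v\in S_m$, the duality identity above yields $\prodscal{u}{v}=(P_n^{(\ItHO)}-P)(\ERM_m^{(\ItHO^c)}-\bayes_m)$, and substituting back produces exactly the claimed formula. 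I stress this is an exact, deterministic identity, not merely one in expectation.

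For the \textbf{expectation}, I would take expectations in the identity just proved. The cross term vanishes: since $\ItHO$ and $\ItHO^c$ are disjoint, $\ERM_m^{(\ItHO^c)}-\bayes_m$ depends only on $\xi_{\ItHO^c}$ and is therefore independent of $P_n^{(\ItHO)}$; conditioning on $\xi_{\ItHO^c}$ and using $\Ex{P_n^{(\ItHO)}}=P$ gives $\Ex{(P_n^{(\ItHO)}-P)(\ERM_m^{(\ItHO^c)}-\bayes_m)}=0$. For the two square terms I would use $\norm{\ERM_m^{(A)}-\bayes_m}^2=\sum_{\lamm}\bigl[(P_n^{(A)}-P)\psil\bigr]^2$, whose expectation is $|A|^{-1}\sum_{\lamm}\Var{\psil(\xi_1)}=\Dcal_m/|A|$ — precisely the computation behind $\Dcal_m=n\Ex{\norm{\bayes_m-\ERM_m}^2}$ recalled in Proposition~\ref{prop:concpenvf}. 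With $|\ItHO|=\tau n$ and $|\ItHO^c|=(1-\tau)n$ this gives $\Dcal_m/(\tau n)+\Dcal_m/((1-\tau)n)=\Dcal_m/(n\tau(1-\tau))$, and multiplying by the prefactor $2x(1-\tau)^2$ collapses to $2x\frac{1-\tau}{\tau}\frac{\Dcal_m}{n}$.

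There is no genuine obstacle here: the argument rests entirely on the duality identity and on independence across the split. The only things to watch are carrying the factor $(1-\tau)^2$ through from the symmetric representation \eqref{eq:2fold=HO}, and verifying the first identity holds pathwise before any averaging, so that the displayed formula of the lemma is exact.
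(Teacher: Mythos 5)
Your proof is correct and follows essentially the same route as the paper: the paper expands $\sum_{\lamm}\scroch{(P_n^{(\ItHO)}-P_n^{(\ItHO^c)})\psil}^2$ coordinate-wise via $(a-b)^2=a^2+b^2-2ab$ and identifies the cross term, which is exactly your polarization of $\snorm{\ERM_m^{(\ItHO)}-\ERM_m^{(\ItHO^c)}}^2$ in $L^2(\mu)$ together with the duality identity $(P_n^{(\ItHO)}-P)(t)=\prodscal{\ERM_m^{(\ItHO)}-\bayes_m}{t}$ for $t\in S_m$. Your treatment of the expectation (cross term vanishing by independence across the split, and $\E\scroch{\snorm{\ERM_m^{(A)}-\bayes_m}^2}=\Dcal_m/\absj{A}$, then $\frac{1}{\tau n}+\frac{1}{(1-\tau)n}=\frac{1}{\tau(1-\tau)n}$) is also correct and merely spells out what the paper's proof leaves implicit.
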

\begin{proof} 
By definition 
\begin{align*}
\penHO(m,T,x)
&=2x(1-\fractrain_n)^2 \sum_{\lamm} \setj{ \parensqB{  \parenb{ P_n^{(\ItHO^c)}-P } (\psi_{\lambda}) } 
+ \parensqB{ \parenb{ P_n^{(\ItHO)}-P } (\psi_{\lambda}) } }
\\
& \qquad -2x(1-\fractrain_n)^2 \sum_{\lamm}\setj{ 2 \parenB{ \parenb{ P_n^{(\ItHO^c)}-P } (\psi_{\lambda}) } \parenB{ \parenb{ P_n^{(\ItHO)}-P } (\psi_{\lambda}) } }
\\
&=2x(1-\fractrain_n)^2 \Biggl[ \normb{\ERM_m^{(\ItHO^c)}-\bayes_m}^2 + \normb{\ERM_m^{(\ItHO)}-\bayes_m}^2
\\
&
\qquad - 2 \parenB{ P_n^{(\ItHO)} -P }  \parenBb{ \sum_{\lamm} \parenB{ \parenb{ P_n^{(\ItHO^c)} - P } \psi_{\lambda}} 
\psi_{\lambda} } \Biggr] 
\enspace .
\end{align*}
\end{proof}
\begin{lemma}\label{lem:ConcIneqPenHO}
For all $\mM_n$ and $x>0$, with probability larger than $1-2\e^{-x}$, 
for all $\eta>0$, we have 
\begin{align*} 
\absj{  \parenb{ P_n^{(\ItHO)}-P } \parenb{\ERM_m^{(\ItHO^c)}-\bayes_m} } 
\leq  \frac{\eta}{2} \normb{\ERM_m^{(\ItHO^c)}-\bayes_m}^2 +\frac{2\norms{\bayes}_{\infty} x}{\eta \fractrain_n n} + \frac{b_m^2 x^2}{9\eta (\fractrain_n n)^2}\enspace.
\end{align*}
\end{lemma}
\begin{proof} 
Let us apply Bernstein's inequality to the function $\parens{\ERM_m^{(\ItHO^c)}-\bayes_m}$, 
conditionally to $(\xi_i)_{i\notin \ItHO}$. Recall that $v_m^2\leq \norms{\bayes}_{\infty}$, hence
\begin{align*}
\normb{\ERM_m^{(\ItHO^c)}-\bayes_m}_{\infty}
&\leq \normb{\ERM_m^{(\ItHO^c)}-\bayes_m}b_{m}
\\
\text{and} \qquad 
\var\parenj{\ERM_m^{(\ItHO^c)}(\xi)-\bayes_m(\xi) \sachantb (\xi_{i})_{i\notin T}}&\leq \normb{\ERM_m^{(\ItHO^c)}-\bayes_m}^{2}v_{m}^{2}
\leq  \normb{\ERM_m^{(\ItHO^c)}-\bayes_m}^{2}  \norms{\bayes}_{\infty}
\enspace.
\end{align*}
Therefore, for all $x>0$, with probability larger than $1-2\e^{-x}$, conditionally to $(\xi_{i})_{i\notin T}$,
\begin{align*}
\absj{ \parenb{ P_n^{(\ItHO)}-P } \parenb{\ERM_m^{(\ItHO^c)}-\bayes_m}}
&\leq \normb{\ERM_m^{(\ItHO^c)}-\bayes_m} \parenj{\sqrt{\frac{2\norms{\bayes}_{\infty} x}{\fractrain_n n}} + \frac{b_m x}{3\fractrain_n n}}
\\
&\leq \frac{\eta}{2} \normb{\ERM_m^{(\ItHO^c)}-\bayes_m}^{2}+\frac{1}{\eta}\parenj{\frac{2\norms{\bayes}_{\infty} x}{\fractrain_n n} + \frac{b_m^2 x^2}{9(\fractrain_n n)^2}}\enspace.
\end{align*}
As the bound on the probability does not depend on $(\xi_i)_{i \notin \ItHO}$, the same inequality holds unconditionally.
\end{proof}
\begin{proofof}{Theorem~\ref{thm:PenHo}} 
From \citet[Theorem~4.1]{Ler:2010:mixing}---a result recalled with Proposition~\ref{prop:concLe09} in Section~\ref{sec.supmat.proba-tools}---, 
Lemma~\ref{lem:DecPenHO} and Lemma~\ref{lem:ConcIneqPenHO}, 
an absolute constant $\kappa$ exists such that, 
for all $x>0$, with probability larger than $1-8 \e^{-x}$, 
for all $\epsilon \in (0,1]$, we have 
\begin{multline*}
\forall m\in\M_n,\;\left|\penHO\parenj{m,T,\frac{\fractrain_n}{1-\fractrain_n}}-\norms{\ERM_{m}-\bayes_{m}}^{2}\right|\\
\leq \epsilon\norms{\ERM_{m}-\bayes_{m}}^{2}+\kappa\parenj{\frac{\norms{\bayes}_{\infty}x_n}{\epsilon n}+\frac{b_m^2x_n^2}{\epsilon^3 n^2}\frac{\fractrain_n^2+(1-\fractrain_n)^2}{\fractrain_n(1-\fractrain_n)}}\enspace .
\end{multline*}
We can then conclude the proof as in Theorem~\ref{thm.oracle-penVF.cas_reel}.
\end{proofof}

\subsubsection{Variance} \label{sec.supmat.penHO.variance}
\begin{proposition} \label{pro.variance.penHO}
Let $(\psil)_{\lL_{m_1}}$ and $(\psil)_{\lL_{m_2}}$ denote two orthonormal families in $L^4(\mu)$.
Assume that $|\ItHO| \in \inter{n-1}$ and denote for any $m\in\setj{m_1,m_2}$, 
\[\CVho_{(C,\ItHO)}(m)=P_n\gamma(\ERM_{m})+\penHO\parenb{m,T,C\fractrain_n/(1-\fractrain_n)} 
\enspace  .\] 
Then, with the notations introduced in Theorem~\ref{theo.variance.penVF}, 
we have 
\begin{align} 
\notag\var \parenj{ \CVho_{(C,\ItHO)}(m_1)}
&=\frac{4}{n}\var \parenj{ \parenj{1+\frac{2C-1}n}\bayes_{m_1}(\xi)-\frac{2C-1}{2n}\Psi_{m_1}(\xi)}
\\
\notag 
&\qquad +\frac{2}{n^2} \crochj{1+4C^2-\frac{(2C-1)^2}n}\beta(\Lambda_{m_1},\Lambda_{m_1})
\\
\notag&\qquad +\frac{4C^2}{n^3}\frac{(1-2\fractrain_n)^2}{\fractrain_n(1-\fractrain_n)}\parenB{\var \parenb{ \Psi_{m_1}(\xi)-2\bayes_{m_1}(\xi)}-2\termeBvar{m , m}}
\end{align}
and
\begin{align}
\notag&\var \parenj{ \CVho_{(C,\ItHO)}(m_1)-\CVho_{(C,\ItHO)}(m_2)}
\\
\notag&=\frac{4}{n}\var \parenj{ \parenj{1+\frac{2C-1}{n}} 
\parenb{ \bayes_{m_1}(\xi)-\bayes_{m_2}(\xi) } 
- \frac{2C-1}{2n} \parenb{ \Psi_{m_1}(\xi)-\Psi_{m_2}(\xi) }}
\\
\label{eq.pro.variance.penHO-penid}
&\qquad +\frac{2}{n^2} \parenj{1+4C^2-\frac{(2C-1)^2}n}\termeBvaracr{m_1,m_2}
\\
\notag &\qquad + \frac{4C^2}{n^3}\frac{(1-2\fractrain_n)^2}{\fractrain_n(1-\fractrain_n)} \parenbb{\var \parenB{ \parenb{ \Psi_{m_1}(\xi)-\Psi_{m_2}(\xi) } 
- 2 \parenb{ \bayes_{m_1}(\xi)-\bayes_{m_2}(\xi) } } - 2\termeBvaracr{m_1,m_2} }
\enspace .
\end{align}
\end{proposition}

\begin{proof} 
By definition
\begin{align}
\notag\penHO(m_1,T,x)&=2x
\sum_{\lL_{m_1}}\crochsqB{\parenb{P^{(\ItHO)}_{n}-P_{n}}\psil}\\
\notag&=\frac{2x}{n^{2}}\sum_{\lL_{m_1}}\parensq{\sum_{i=1}^{n}\parenj{\frac{1}{\fractrain_n}\un_{i\in\ItHO}-1}\psil(\xi_{i})} \\
\label{eq.penHO.formule-lambda}
&= \frac{2x}{n^{2}} \sum_{i,j=1}^{n}E_{i,j}^{(\HO)}U_{m_1}(\xi_i,\xi_j)
\enspace ,
\end{align} 
where, for all $i,j \in \setj{1 \ldots, n}$, 
we recall that 
\begin{align*}
U_{m_1}(\xi_i,\xi_j)
&= \sum_{\lL_{m_1}} \parenb{ \psil(\xi_i)-P\psil } \parenb{ \psil(\xi_j)-P\psil } 
\\
\text{and} \qquad 
E_{i,j}^{(\HO)} &= \parenj{\frac{1}{\fractrain_n}\un_{i\in\ItHO}-1}\parenj{\frac{1}{\fractrain_n}\un_{j\in\ItHO}-1} \enspace . 
\end{align*}
Therefore, from Eq.~\eqref{eq:DecRiskEmp}, if $x=C\fractrain_n/(1-\fractrain_n)$, we have 
\begin{align*}
\CVho_{(C,\ItHO)}(m_1)
&\egaldef P_n\gamma(\ERM_{m_1})+\penHO(m_1,\ItHO,x)
\\
&=\sum_{1\leq i,j\leq n}\frac{2xE_{i,j}^{(\HO)} -1}{n^2}U_{m_1}(\xi_i,\xi_j)-\sum_{i=1}^n\frac{2}{n}\bayes_{m_1}(\xi_i)+\norms{\bayes_{m_1}}^2
\enspace.
\end{align*}
By definition
\begin{align*}
E_{i,j}^{(\HO)} &= \parensq{ \frac{1-\fractrain_n}{\fractrain_n}} \un_{i,j \in \ItHO} 
- \frac{1-\fractrain_n}{\fractrain_n} \un_{ i \in \ItHO , \, j \notin \ItHO } 
- \frac{1-\fractrain_n}{\fractrain_n} \un_{ i \notin \ItHO , \, j \in \ItHO } 
+ \un_{i , j \notin \ItHO} \enspace . 
\end{align*} 
Therefore, we can compute 
\begin{align} 
\label{eq.proof.pro.variance.HO.SumEii}
\sum_{i=1}^n E_{i,i}^{(\HO)}  
&=n\crochj{ \fractrain_n \parensq{ \frac{1-\fractrain_n}{\fractrain_n}} +1-\fractrain_n}
 = n\frac{1-\fractrain_n}{\fractrain_n}\enspace,
\\  
\label{eq.proof.pro.variance.HO.SumEiicarre}
 \sum_{i=1}^n \parenB{ E_{i,i}^{(\HO)} }^2 
&= n\crochj{ \fractrain_n \mathopen{} \left( { \frac{1-\fractrain_n}{\fractrain_n}} \right)^4 \mathclose{} +1-\fractrain_n}
 = n(1-\fractrain_n) \frac{(1-\fractrain_n)^3+\fractrain_n^3}{\fractrain_n^3}
\enspace . 
\end{align}
Moreover, the $E_{i,j}^{(\HO)}$ satisfy 
\begin{equation*} 
\sum_{1 \leq i,j \leq n} E_{i,j}^{(\HO)} = \E\crochj{ \parensq{ \sum_{i=1}^n  \parenj{\frac 1{\fractrain_n}\un_{i\in\ItHO}-1} } } = 0 
\enspace , 
\end{equation*}
so Eq.~\eqref{eq.proof.pro.variance.HO.SumEii} implies that 
\begin{equation} 
\label{eq.proof.pro.variance.HO.SumEij}
 \sum_{1 \leq i \neq j \leq n} E_{i,j}^{(\HO)} 
= - \sum_{i=1}^n E_{i,i}^{(\HO)}   
= -n\frac{1-\fractrain_n}{\fractrain_n}
\enspace . 
\end{equation}
In addition, we compute 
\begin{align}
 \sum_{1 \leq i \neq j \leq n} \parenB{ E_{i,j}^{(\HO)} }^2 
&= 2 n^2\fractrain_n(1-\fractrain_n) \parensq{ \frac{1-\fractrain_n}{\fractrain_n}} + n\fractrain_n (n\fractrain_n - 1) \mathopen{} \left( {\frac{1-\fractrain_n}{\fractrain_n}} \right)^4 \mathclose{} \notag
\\
&\qquad  + n(1 - \fractrain_n) \crochb{ n(1-\fractrain_n) - 1 }
\notag 
\\
&= n^2(1-\fractrain_n)^2\crochj{2\frac{1-\fractrain_n}{\fractrain_n}  + \parensq{\frac{1-\fractrain_n}{\fractrain_n}}+1} \notag
\\
&\qquad -n(1-\fractrain_n) \crochj{ \mathopen{} \left( {\frac{1-\fractrain_n}{\fractrain_n}} \right)^3 \mathclose{} +1} 
\notag 
\\
&= n^2\parensq{\frac{1-\fractrain_n}{\fractrain_n}} - n(1-\fractrain_n)\frac{(1-\fractrain_n)^3+\fractrain_n^3}{\fractrain_n^3} 
\label{eq.proof.pro.variance.HO.SumEijcarre}
\enspace . 
\end{align}

\medbreak

According to Eq.~\eqref{eq.penHO.formule-lambda} and \eqref{eq:DecRiskEmp}, $\var \parens{ \CVho_{(C,\ItHO)}(m_1)}$ can be computed using Lemma~\ref{lem:CovGen.2} with 
\[ 
\forall i,j \in \setj{1, \ldots, n}, \quad 
\omega_{i,j} = \frac{1}{n^2} \parenj{ 2x E_{i,j}^{(\HO)} - 1} 
\qquad f_{m_1}= \frac{- 2 \bayes_{m_1}}{n} 
\qquad \text{and} \qquad \cteformCVb_i = 1 
\enspace . \]
So, using Eq.~\eqref{eq.proof.pro.variance.HO.SumEii}, \eqref{eq.proof.pro.variance.HO.SumEiicarre}, \eqref{eq.proof.pro.variance.HO.SumEij} and \eqref{eq.proof.pro.variance.HO.SumEijcarre}, we have 
\begin{align}
\notag\sum_{i=1}^n \omega_{i,i}^2 
&
=\frac{1}{n^4} \crochj{4x^2\sum_{i=1}^n \parenB{ E_{i,i}^{(\HO)} }^2 -4x \sum_{i=1}^n E_{i,i}^{(\HO)} + n } 
\\
\notag 
& = \frac{1}{n^3} \crochj{ 4x^2(1-\fractrain_n) \frac{(1-\fractrain_n)^3+\fractrain_n^3}{\fractrain_n^3}- 4x\frac{1-\fractrain_n}{\fractrain_n} + 1}
\\
\notag \mathop{\sum_{1 \leq i , j \leq n}}_{i \neq j} \omega_{i,j}^2
&=  \frac{1}{n^4} \crochj{ 4x^2 \sum_{1 \leq i \neq j \leq n} \parenB{ E_{i,j}^{(\HO)} }^2 - 4x\sum_{1 \leq i \neq j \leq n} E_{i,j}^{(\HO)} + n(n-1)} 
\\
\notag 
&=  \frac{1}{n^4} \crochj{ 4x^2\parenj{n^2 \parensq{\frac{1-\fractrain_n}{\fractrain_n}}-n(1-\fractrain_n) \frac{(1-\fractrain_n)^3+\fractrain_n^3}{\fractrain_n^3}}+4xn\frac{1-\fractrain_n}{\fractrain_n}+n(n-1) } 
\\
\notag 
\sum_{i=1}^n \omega_{i,i} \cteformCVb_i 
&= \frac{1}{n} \parenj{2x\frac{1-\fractrain_n}{\fractrain_n}-1 }
\enspace . 
\end{align}
Therefore, by Lemma~\ref{lem:CovGen.2} with $m=m^\prime =m_1$, we deduce 
\begin{align*}
\var \parenj{ \CVho_{(C,\ItHO)}(m_1)}
&=\frac{1}{n^3} \parenj{4C^2\frac{(1-2\fractrain_n)^2}{\fractrain_n(1-\fractrain_n)}+(2C-1)^2}\var \parenb{ \Psi_{m_1}(\xi)-2\bayes_{m_1}(\xi)}
\\
&\qquad +\frac{2}{n^2} \crochj{1+4C^2-\frac{1}{n}\parenj{4C^2\frac{(1-2\fractrain_n)^2}{\fractrain_n(1-\fractrain_n)}+(2C-1)^2} }\termeBvar{m_1 , m_1}
\\
&\qquad - \frac{4}{n^2} \parens{2C-1 }\cov\parenb{\Psi_{m_1}(\xi)-2\bayes_{m_1}(\xi),\bayes_{m_1}(\xi)} + \frac{4}{n}\var \parenb{ \bayes_{m_1}(\xi)}
\\
&=\frac{4}{n}\var \parenj{ \parenj{1+\frac{2C-1}n}\bayes_{m_1}(\xi)-\frac{2C-1}{2n}\Psi_{m_1}(\xi)}\\
&\qquad +\frac{2}{n^2} \parenj{1+4C^2-\frac{(2C-1)^2}n}\beta(\Lambda_{m_1},\Lambda_{m_1})\\
&\qquad +\frac{4C^2}{n^3} \frac{(1-2\fractrain_n)^2}{\fractrain_n(1-\fractrain_n)}\crochB{ \var \parenb{ \Psi_{m_1}(\xi)-2\bayes_{m_1}(\xi)}-2\termeBvar{m_1 , m_1}}\enspace .
\end{align*}
Eq~\eqref{eq.pro.variance.penHO-penid} follows from similar  computations.
\end{proof}

\subsection{Additional Comments on Computational Issues}\label{sect.comp.comp}
This section is an appendix to Section~\ref{sec.algo}. 
We first detail a naive algorithm for computing $V$-fold criteria, Algorithm~\ref{proc.penVF.naive}. 
Then, we prove Proposition~\ref{pro.proc.penVF-fast.general-density} 
which shows that Algorithm~\ref{proc.penVF.fast} also computes 
correctly the $V$-fold criteria, much faster than Algorithm~\ref{proc.penVF.naive}. 

\subsubsection{Naive Implementation} \label{sec.algo.naive}
\begin{proc} \label{proc.penVF.naive} \hfill \\
\vspace{-0.5cm}
\begin{enumerate}
\item[] \textbf{Input\textup{:}} $\B$ some partition of $\setj{1,...,n}$ satisfying \eqref{hyp.part-reg.exact}, 
 $\xi_1, \ldots, \xi_n \in \X$ 
 and $\parenj{\psil}_{\lamm}$ a finite orthonormal family of $L^2(\mu)$, with $\card(m)=d_m $.
\item For $j \in \setj{1, \ldots, V}$, 
\begin{enumerate}
\item[(a)] train $\ERM_m(\cdot)$ with the data set $(\xi_i)_{i \notin \B_j}$, that is, for all $\lambda\in \Lambda_m $, compute 
\[ 
\alpha_{\lambda,j} \egaldef P_n^{(-\B_{j})}(\psil) = \frac{V}{(V-1) n} \sum_{i \notin \B_j} \psil(\xi_i) 
\] 
so that $\ERM_m^{(-\B_j)} = \sum_{\lambda\in\Lambda_m} \alpha_{\lambda,j} \psil $\textup{;} 
\item[(b)] compute the norm of $\ERM_m^{(-\B_j)}$\textup{:} $N_j \egaldef 
\sum_{ \lambda\in \Lambda_m } \alpha_{\lambda,j}^2$\textup{;}
\item[(c)] compute $Q_j \egaldef P_n^{(\B_j)} \parenj{ \ERM_m^{(-\B_j)} } = 
\frac{V}{n} \sum_{\lambda\in\Lambda_m} \sum_{i \in \B_j} \alpha_{\lambda,j} \psil(\xi_i)$\textup{;}
\item[(d)] compute $R_j \egaldef P_n^{(-\B_{j})} \parenj{ \ERM_m^{(-\B_j)} } = 
\frac{V}{n(V-1)} \sum_{\lambda\in\Lambda_m} \sum_{i \notin \B_j} \alpha_{\lambda,j} \psil(\xi_i)$. 
\end{enumerate}
\item Compute the $V$-fold cross-validation criterion\textup{:} $\mathcal{C} = V^{-1} \sum_{j=1}^V (N_j - 2 Q_j)$. 
\item Compute the empirical risk\textup{:}
\begin{enumerate}
\item[(a)] train $\ERM_m(\cdot)$ with the data set $(\xi_i)_{ 1 \leq i \leq n}$, that is, for all $\lambda\in \Lambda_m $, compute 
\[ 
\alpha_{\lambda} \egaldef P_n(\psil) = \frac{1}{n} \sum_{i=1}^n  \psil(\xi_i) 
\] 
so that $\ERM_m = \sum_{\lambda\in\Lambda_m} \alpha_{\lambda} \psil $\textup{;}
\item[(b)] compute the norm of $\ERM_m$\textup{:} $N \egaldef 
\sum_{ \lambda\in \Lambda_m } \alpha_{\lambda}^2$\textup{;}
\item[(c)] compute $R \egaldef 
\frac{1}{n} \sum_{\lambda\in\Lambda_m} \sum_{i=1}^n \alpha_{\lambda} \psil(\xi_i)$. 
\end{enumerate}
\item Compute the $V$-fold penalty\textup{:} $\mathcal{D} \egaldef 2 (V-1) V^{-2} \sum_{j=1}^V (Q_j - R_j)$. 
\item[] \textbf{Output\textup{:}} \\
Empirical risk\textup{:} $N - 2 R $ \\
$V$-fold cross-validation estimator of the risk of $\ERM_m$\textup{:} $\critVF(m) = \mathcal{C} $ \\
$V$-fold penalty\textup{:}  $\penVF(m) = \mathcal{D} $. 
\end{enumerate}
\end{proc}
Assuming that the computational cost of evaluating $\psil$ at some point $\xi \in \Xi$ is of order 1, the computational cost of this naive algorithm~\ref{proc.penVF.naive} is as follows: $n (V-1) d_m $ for step 1, $V$ for steps 2 and 4, $n d_m$ for step 3. 
So the overall cost of computing the $V$-fold penalization criterion for $m$ is of order $n V d_m$.

\subsubsection{Proof of Proposition~\ref{pro.proc.penVF-fast.general-density}}
\label{sec.app.proof.pro.algo}
Let us first note that for every $i \in \setj{1, \ldots, V}$ and $\lamm$, $A_{i,\lambda} = P_n^{(\B_i)} (\psil)$. 
So, at step 2, for every $i,j \in \setj{1, \ldots, V}$, 
we have 
\[ C_{i,j} = \sum_{\lamm} P_n^{(\B_i)} (\psil) P_n^{(\B_j)} (\psil) 
= P_n^{(\B_i)} \parenj{ \sum_{\lamm} P_n^{(\B_j)} (\psil) \psil } = P_n^{(\B_i)} \parenB{ \ERM_m^{(\B_j)} } 
\]
and by symmetry $C_{i,j} = C_{j,i} = P_n^{(\B_j)} \parenB{ \ERM_m^{(\B_i)} }$. 

\fauxparagraph{Correctness of Algorithm~\ref{proc.penVF.fast}}
By assumption \eqref{hyp.part-reg.exact}, we have 
\begin{gather*}
P_n = \frac{1}{V} \sum_{j=1}^V P_n^{(\B_j)} 
\enspace , \qquad 
\ERM_m = \frac{1}{V} \sum_{j=1}^V \ERM_m^{(\B_j)} 
\enspace ,
\\ 
P_n^{(-\B_i)} = \frac{1}{V-1} \mathop{\sum_{1 \leq j \leq V}}_{j \neq i} P_n^{(\B_j)} 
\quad \text{and} \quad 
\ERM_m^{(-\B_i)} = \frac{1}{V-1} \mathop{\sum_{1 \leq j \leq V}}_{j \neq i} \ERM_m^{(\B_j)} 
\enspace .
\end{gather*}
Therefore, 
\[ \norms{\ERM_m}^2 = - P_n \gamma(\ERM_m) = P_n(\ERM_m) = \frac{1}{V^2} \sum_{1 \leq i,j \leq V} P_n^{(\B_i)}\parenB{ \ERM_m^{(\B_j)} } = \frac{1}{V^2} \mathcal{S} \]
and 
\begin{align*}
&\qquad 
\critVF(m) 
\\
&= \frac{1}{V} \sum_{j=1}^V P_n^{(\B_j)} \gamma\parenj{\ERM_m^{(-\B_j)}} \\
 &= \frac{1}{V} \sum_{j=1}^V \crochB{ \normb{\ERM_m^{(-\B_j)}}^2 - 2 P_n^{(\B_j)} \parenb{\ERM_m^{(-\B_j)}} } \\
 &= \frac{1}{V} \sum_{j=1}^V \parenj{ \frac{1}{(V-1)^2} \mathop{\sum_{1 \leq i,\ell \leq V}}_{i,\ell \neq j} P_n^{(\B_i)} \parenB{ \ERM_m^{(\B_{\ell})} } - \frac{2}{V-1} \sum_{i \neq j} P_n^{(\B_j)} \parenj{\ERM_m^{(\B_i)}} } \\
 &= \frac{1}{V (V-1)^2} \sum_{1 \leq i,\ell \leq V} \crochj{ P_n^{(\B_i)} \parenb{ \ERM_m^{(\B_{\ell})} } \sum_{j=1}^V \un_{i\neq j , \, \ell \neq j} } 
 - \frac{2}{V(V-1)}  \sum_{1 \leq i \neq j \leq V} P_n^{(\B_j)} \parenj{\ERM_m^{(\B_i)}} \\
  &= \frac{1}{V (V-1)^2} \sum_{1 \leq i,\ell \leq V} \crochj{ P_n^{(\B_i)} \parenb{ \ERM_m^{(\B_{\ell})} } (V-1 -\un_{i \neq \ell}) } 
 - \frac{2}{V(V-1)}  \parenj{ \mathcal{S} -\mathcal{T}} 
 \\
  &= \frac{1}{V (V-1)} \sum_{1 \leq i \leq V} \crochj{ P_n^{(\B_i)} \parenb{ \ERM_m^{(\B_i)} } } 
  + \frac{V-2}{V (V-1)^2} \sum_{1 \leq i \neq \ell \leq V} \crochj{ P_n^{(\B_i)} \parenb{ \ERM_m^{(\B_{\ell})} } } 
  \\
  &\qquad 
  - \frac{2}{V(V-1)}  \parenj{ \mathcal{S} -\mathcal{T}} \\
  &= \frac{1}{V (V-1)} \mathcal{T}
  + \frac{V-2}{V (V-1)^2} \parenj{ \mathcal{S} -\mathcal{T}} 
 - \frac{2}{V(V-1)}  \parenj{ \mathcal{S} -\mathcal{T}} \\
  &= \frac{1}{V (V-1)} \mathcal{T}
  - \frac{1}{(V-1)^2}  \parenj{ \mathcal{S} -\mathcal{T}} \enspace ,
\end{align*}
so the formula for $\critVF$ is correct. 
Lemma~\ref{le.penVF-VFCV} implies the formula for $\penVF$ is also correct.

\fauxparagraph{Computational cost of Algorithm~\ref{proc.penVF.fast}}
Step~1 has a cost of order 
\[ 
V \times \card(\Lambda_m) \times \frac{n}{V} 
= n \card(\Lambda_m) 
\enspace . 
\]
Step~2 has a cost of order $V^2 \card(\Lambda_m)$. 
Step~3 has a cost of order $V^2$. 
Summing the three steps yields the result.

\fauxparagraph{Computational cost for histograms}
In the histogram case, step 1 can be performed with a cost of order $V \card(\Lambda_m) + n$. 
Indeed, one can initialize the $V \times\card(\Lambda_m)$ matrix $A$ with zeros (cost: $V \card(\Lambda_m)$), and then go sequentially through the data set: for $j =1 , \ldots, n$, find the unique $i(j) \in  \setj{1, \ldots, V}$ such that $j \in \B_{i(j)}$, the unique $\lambda(j) \in \Lambda_m$ such that $\xi_j \in \lambda(j)$, and add $(V/n) \psil(\xi_j)$ to $A_{(i(j),\lambda(j))}$. 
Since the partitions $\B$ and $\Lambda_m$ can be coded so that finding $i(j)$ and $\lambda(j)$ has a cost of order 1, the resulting cost of step 1 is $V \card(\Lambda_m) + n$, hence the overall cost is of order $V^2 \card(\Lambda_m) + n$. 
\qed

\subsection{Probabilistic Tool} \label{sec.supmat.proba-tools}
\begin{proposition}[\citealp{Ler:2010:mixing}]\label{prop:concLe09}
Let $\xi_{\inter{N}}$ be iid random variables valued in a measurable space $(\mathbb{X},\mathcal{X})$, with common distribution $P$. Let $S$ be a symmetric class of functions bounded by $b$. For all $t \in S$, let us define 
\begin{gather*}
P_N t = \frac{1}{N} \sum_{i=1}^{N} t(\xi_i)
\qquad 
v^2=\sup_{t\in S} P \crochj{(t-Pt)^2}
\\
Z=\sup_{t\in S} \setb{ (P_N-P)(t) }
\qquad \text{and} \qquad 
D=N \E\crochj{ Z^{2} }
\enspace . 
\end{gather*}
There exists an absolute constant $\kappa$ such that, for all $x>0$, 
with probability larger than $1-2\e^{-x}$, 
for all $\epsilon \in (0,1]$, 
\[ 
\absj{ Z^{2}-\frac{D}{N} } 
\leq \epsilon\frac{D}{N} 
+ \kappa\parenj{\frac{v^{2}x}{\epsilon N}+\frac{b^2x^2}{\epsilon^3 N^2}}
\enspace . 
\]
\end{proposition}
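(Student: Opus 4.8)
The plan is to derive Proposition~\ref{prop:concLe09} from Talagrand's concentration inequality for the supremum of an empirical process, followed by a ``squaring'' argument that turns deviations of $Z$ into deviations of $Z^2$. Since $S$ is symmetric, $Z=\sup_{t\in S}(P_N-P)t=\sup_{t\in S}\absj{(P_N-P)t}\geq 0$, and I would work with the centred functions $f_t\egaldef t-Pt$, which satisfy $\norm{f_t}_{\infty}\leq 2b$ and $\sup_{t\in S}Pf_t^2=v^2$. First I would apply Bousquet's version of Talagrand's inequality to the upper tail of $NZ=\sup_{t\in S}\sum_{i=1}^{N}f_t(\xi_i)$ and the Klein--Rio inequality to its lower tail. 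Writing $a\egaldef\E Z$, this produces an absolute constant $c$ such that, with probability at least $1-2e^{-x}$,
\[
\absj{Z-a}\leq c\paren{\sqrt{\frac{(v^2+ba)x}{N}}+\frac{bx}{N}}\enspace.
\]

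On this event I would control $Z^2$ through $\absj{Z^2-a^2}\leq 2a\absj{Z-a}+\absj{Z-a}^2$ and expand the bound above. The terms carrying a power of $a$ are the delicate ones: $a\sqrt{v^2x/N}$, the mixed term $a\sqrt{bax/N}=a^{3/2}\sqrt{bx/N}$, and $bax/N$. Each is handled by a weighted Young inequality chosen so that the factor carrying a power of $a$ is absorbed into $\eta a^2$ while the residual has the prescribed shape. The first and last use $2\alpha\beta\leq\eta\alpha^2+\eta^{-1}\beta^2$, giving terms in $v^2x/(\eta N)$ and $b^2x^2/(\eta N^2)$. The mixed term is split as $(a^2)^{3/4}(b^2x^2/N^2)^{1/4}$ with conjugate exponents $4/3$ and $4$: the weighting $\lambda\mapsto\lambda^{-3}$ on the second factor is exactly what creates the $\eta^{-3}$ dependence. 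Collecting everything and setting $\eta\sim\epsilon$ yields, for every $\epsilon\in(0,1]$,
\[
\absj{Z^2-a^2}\leq\epsilon a^2+\kappa\paren{\frac{v^2x}{\epsilon N}+\frac{b^2x^2}{\epsilon^3 N^2}}\enspace.
\]

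It then remains to replace $a^2=(\E Z)^2$ by $D/N=\Ex{Z^2}$. Since $\Ex{Z^2}=(\E Z)^2+\Var{Z}$, I would invoke the variance estimate that accompanies Talagrand's inequality (equivalently an Efron--Stein bound), namely $\Var{Z}\leq c'(v^2+ba)/N$, so that $0\leq D/N-a^2\leq c'(v^2+ba)/N$. This difference has exactly the form of the remainder terms already produced; after absorbing its $ba/N$ part into another $\epsilon a^2$ by Young's inequality and using $a^2\leq D/N$ to convert the remaining $\epsilon a^2$ into $\epsilon\,D/N$, one obtains the stated comparison of $Z^2$ with $D/N$, with an absolute constant $\kappa$.

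The main obstacle is bookkeeping rather than conceptual: the three ingredients—upper and lower Talagrand deviations, the variance bound, and the squaring—are standard, but one must pick the Young splittings so that \emph{every} term with a positive power of $a=\E Z$ is absorbed into a single $\epsilon\,D/N$, leaving only terms of the form $v^2x/(\epsilon N)$ or $b^2x^2/(\epsilon^3 N^2)$ with no residual dependence on $a$; the exponent $\epsilon^{-3}$ is forced precisely by the mixed $a^{3/2}$ term. A secondary point is checking that the (possibly infinite) symmetric class $S$ is regular enough for Talagrand's inequality, which holds in all our applications because $S$ is the unit ball $\Boule_m$ of a separable space $S_m$.
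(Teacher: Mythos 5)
The paper itself does not prove Proposition~\ref{prop:concLe09}: it is recalled verbatim from \cite{Ler:2010:mixing} (Theorem~4.1 there), so there is no internal proof to compare against, and your reconstruction must be judged on its own. It is correct, and it is essentially the standard argument behind this type of statement (and the one underlying the cited reference): two-sided Talagrand concentration for $Z$ around $a=\E Z$ (Bousquet for the upper tail, Klein--Rio for the lower tail), squaring via weighted Young inequalities---where, as you rightly identify, the mixed term $a^{3/2}\sqrt{bx/N}$, split as $(a^2)^{3/4}(b^2x^2/N^2)^{1/4}$ with exponents $4/3$ and $4$, is exactly what forces the $\epsilon^{-3}$---and the variance bound $\Var{Z}\leq c'(v^2+ba)/N$ (Efron--Stein or integrated Talagrand tails) to pass from $a^2$ to $D/N=\E[Z^2]$, with Jensen giving $a^2\leq D/N$ so that $\epsilon a^2$ becomes $\epsilon D/N$. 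One step you should make explicit: the variance correction $c'(v^2+ba)/N$ carries no factor of $x$, so it does \emph{not} ``have exactly the form of the remainder terms already produced'' ($v^2x/(\epsilon N)$ and $b^2x^2/(\epsilon^3N^2)$ both vanish as $x\to 0$, while $(v^2+ba)/N$ does not). The fix is the observation that the statement is vacuous for $x<\log 2$, since then $1-2e^{-x}\leq 0$; assuming $x\geq \log 2$, one has $v^2/N\leq (\log 2)^{-1}\,v^2x/(\epsilon N)$ and, after the Young split of $ba/N$, the residual $b^2/(\epsilon N^2)\leq (\log 2)^{-2}\,b^2x^2/(\epsilon^3 N^2)$, so everything is absorbed as you claim. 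With that one-line addition, and your closing remark on separability of $S$ (needed for Talagrand's inequality, and satisfied by the unit balls $\Boule_m$ used in the paper), the proof is complete.
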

For instance, taking $S = \Boule_m$, by Eq.~\eqref{eq.p1.4formules}, this result applies to 
\[ 
Z = \sup_{t \in \Boule_m} \setb{ (P_n - P)(t) } 
= \norms{ \ERM_m - \bayes_m}^2 
\enspace .  
\]

\subsection{Additional Simulation Results} \label{sec.supmat.simus}
This section provides simulation results in addition to the ones of Section~\ref{sec.simus}. 

\medbreak

Figure~\ref{fig.oracles.Regu-Dya2.Li01} is an analogous of Figure~\ref{fig.oracles.Regu-Dya2.SiG5} in setting L, that illustrates the difference between the model collections Regu and Dya2. 
\begin{figure}
\begin{minipage}[b]{.48\linewidth}
\includegraphics[width=\textwidth]{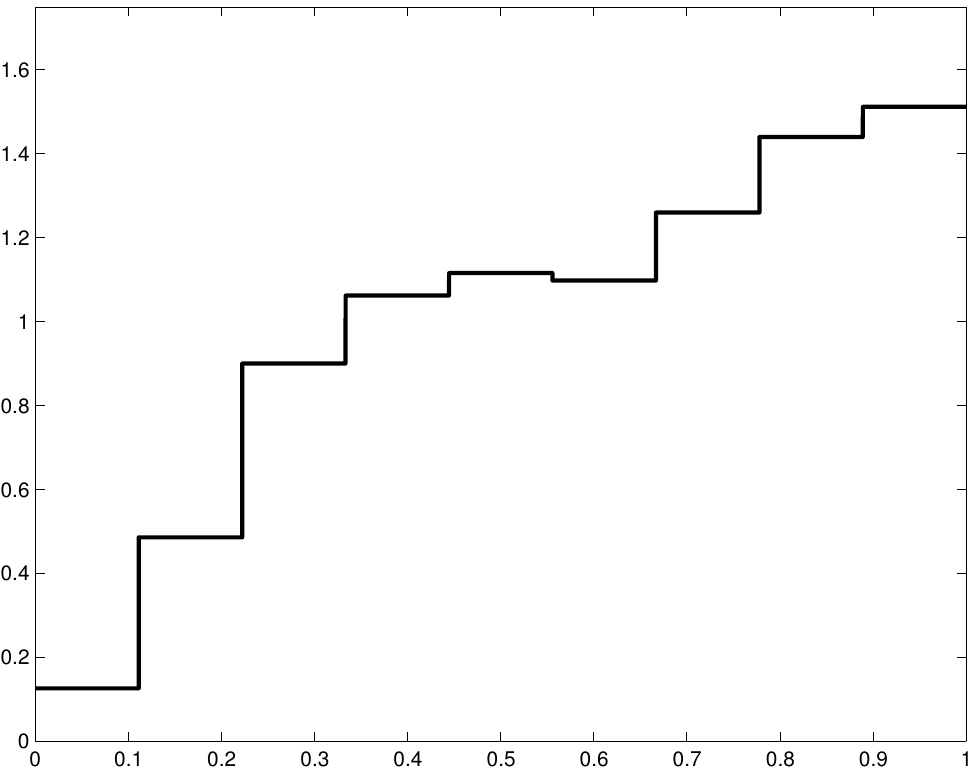}
\end{minipage}
\hspace{.025\linewidth}
\begin{minipage}[b]{.48\linewidth}
\includegraphics[width=\textwidth]{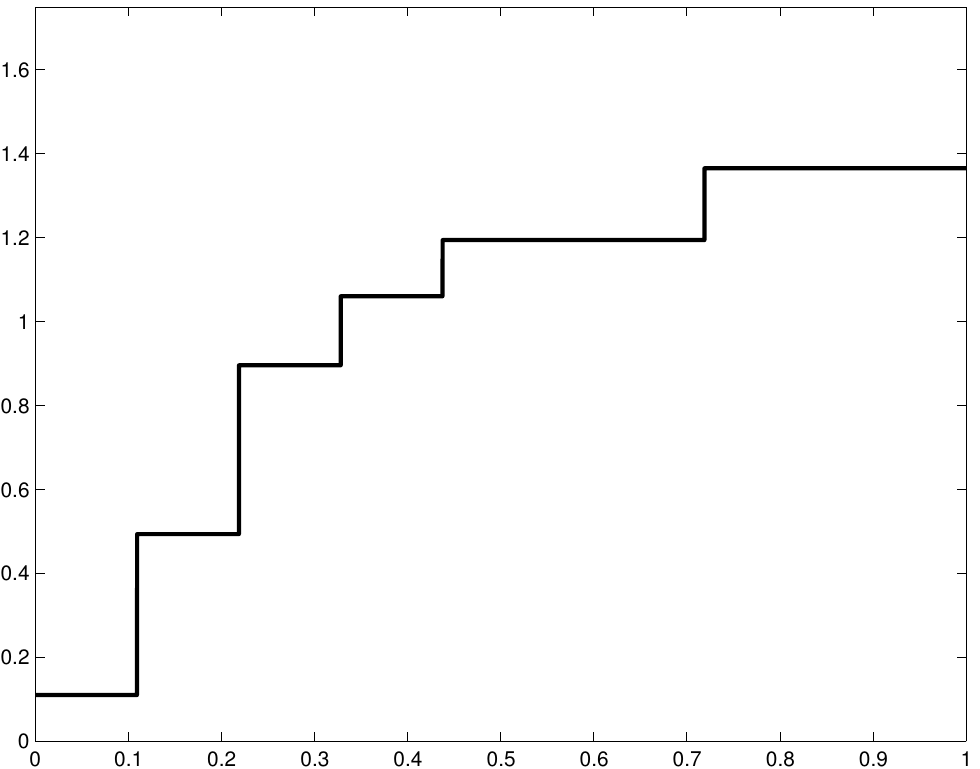}
\end{minipage}
\caption{Oracle model for some sample of size $n=500$, in setting L. Left: Regu. Right: Dya2.
\label{fig.oracles.Regu-Dya2.Li01}
}
\end{figure}

%
Table~\ref{tab.complet.Li01.SiG5} is an extended version of Table~\ref{tab.simus-results}, with more procedures compared and two additional settings (L-Regu and S-Regu). 
\begin{table}[p] 
\begin{center}
%
\begin{tabular}
{l@{\hspace{0.025\textwidth}}c@{\hspace{0.025\textwidth}}c@{\hspace{0.025\textwidth}}c@{\hspace{0.025\textwidth}}c}
\hline\noalign{\smallskip}
Experiment  & L--Dya2 & L--Regu  & S--Dya2 & S--Regu \\ 
\noalign{\smallskip} 
\hline 
\noalign{\smallskip} 
$\E[\penid]$   & $  6.52 \pm 0.05  $  & $  2.33 \pm 0.01  $  & $  2.07 \pm 0.01  $  & $  1.75 \pm 0.01  $ \\ 
1.25 $\times$ $\E[\penid]$   & $  4.81 \pm 0.04  $  & $  2.01 \pm 0.01  $  & $  1.94 \pm 0.01  $  & $  { \bf  \phantom{4}1.62 \pm 0.004  }  $ \\ 
1.5 $\times$ $\E[\penid]$   & $  4.12 \pm 0.03  $  & $  { \bf  1.93 \pm 0.01  }  $  & $  { \bf  1.92 \pm 0.01  }  $  & $  \phantom{3}1.65 \pm 0.003  $ \\ 
2 $\times$ $\E[\penid]$   & $  { \bf  3.61 \pm 0.02  }  $  & $  1.96 \pm 0.01  $  & $  2.01 \pm 0.01  $  & $  \phantom{4}1.84 \pm 0.004  $ \\ 
\noalign{\smallskip} 
\hline \hline 
\noalign{\smallskip} 
 $\pendim$    & $  8.27 \pm 0.07  $  & $  2.33 \pm 0.01  $  & $  3.21 \pm 0.01  $  & $  1.75 \pm 0.01  $ \\ 
1.25 $\times$  $\pendim$    & $  5.95 \pm 0.05  $  & $  2.01 \pm 0.01  $  & $  3.01 \pm 0.01  $  & $  { \bf  \phantom{4}1.62 \pm 0.004  }  $ \\ 
1.5 $\times$  $\pendim$    & $  4.99 \pm 0.04  $  & $  { \bf  1.94 \pm 0.01  }  $  & $  3.03 \pm 0.01  $  & $  \phantom{3}1.66 \pm 0.003  $ \\ 
2 $\times$  $\pendim$    & $  4.38 \pm 0.03  $  & $  1.97 \pm 0.01  $  & $  3.24 \pm 0.01  $  & $  \phantom{4}1.85 \pm 0.004  $ \\ 
\noalign{\smallskip} 
\hline 
\noalign{\smallskip} 
$\penLOO$   & $  6.35 \pm 0.05  $  & $  2.33 \pm 0.01  $  & $  2.06 \pm 0.01  $  & $  1.75 \pm 0.01  $ \\ 
1.25 $\times$ $\penLOO$   & $  4.62 \pm 0.04  $  & $  2.01 \pm 0.01  $  & $  1.92 \pm 0.01  $  & $  { \bf  \phantom{4}1.62 \pm 0.004  }  $ \\ 
1.5 $\times$ $\penLOO$   & $  3.97 \pm 0.03  $  & $  { \bf  1.94 \pm 0.01  }  $  & $  { \bf  \phantom{5}1.90 \pm 0.005  }  $  & $  \phantom{3}1.66 \pm 0.003  $ \\ 
2 $\times$ $\penLOO$   & $  { \bf  3.55 \pm 0.02  }  $  & $  1.97 \pm 0.01  $  & $  1.98 \pm 0.01  $  & $  \phantom{4}1.85 \pm 0.004  $ \\ 
\noalign{\smallskip} 
\hline 
\noalign{\smallskip} 
 $\penVF$  ($V$=10)   & $  6.89 \pm 0.06  $  & $  2.42 \pm 0.02  $  & $  2.11 \pm 0.01  $  & $  1.77 \pm 0.01  $ \\ 
1.25 $\times$  $\penVF$  ($V$=10)   & $  5.01 \pm 0.04  $  & $  2.04 \pm 0.01  $  & $  1.95 \pm 0.01  $  & $  { \bf  \phantom{4}1.62 \pm 0.004  }  $ \\ 
1.5 $\times$  $\penVF$  ($V$=10)   & $  4.27 \pm 0.03  $  & $  1.94 \pm 0.01  $  & $  1.92 \pm 0.01  $  & $  \phantom{4}1.63 \pm 0.004  $ \\ 
2 $\times$  $\penVF$  ($V$=10)   & $  3.68 \pm 0.02  $  & $  1.94 \pm 0.01  $  & $  1.98 \pm 0.01  $  & $  \phantom{4}1.78 \pm 0.004  $ \\ 
\noalign{\smallskip} 
\hline 
\noalign{\smallskip} 
 $\penVF$  ($V$=5)   & $  7.47 \pm 0.06  $  & $  2.55 \pm 0.02  $  & $  2.16 \pm 0.01  $  & $  1.80 \pm 0.01  $ \\ 
1.25 $\times$  $\penVF$  ($V$=5)   & $  5.50 \pm 0.04  $  & $  2.10 \pm 0.01  $  & $  1.98 \pm 0.01  $  & $  \phantom{4}1.63 \pm 0.004  $ \\ 
1.5 $\times$  $\penVF$  ($V$=5)   & $  4.58 \pm 0.03  $  & $  1.96 \pm 0.01  $  & $  1.93 \pm 0.01  $  & $  { \bf  \phantom{4}1.62 \pm 0.004  }  $ \\ 
2 $\times$  $\penVF$  ($V$=5)   & $  3.86 \pm 0.02  $  & $  { \bf  1.93 \pm 0.01  }  $  & $  1.98 \pm 0.01  $  & $  \phantom{4}1.73 \pm 0.004  $ \\ 
\noalign{\smallskip} 
\hline 
\noalign{\smallskip} 
 $\penVF$  ($V$=2)   & $  10.21 \pm 0.08  $  & $  3.37 \pm 0.03  $  & $  2.39 \pm 0.01  $  & $  2.01 \pm 0.01  $ \\ 
1.25 $\times$  $\penVF$  ($V$=2)   & $  7.69 \pm 0.06  $  & $  2.49 \pm 0.02  $  & $  2.15 \pm 0.01  $  & $  1.71 \pm 0.01  $ \\ 
1.5 $\times$  $\penVF$  ($V$=2)   & $  6.41 \pm 0.05  $  & $  2.18 \pm 0.01  $  & $  2.05 \pm 0.01  $  & $  \phantom{4}1.63 \pm 0.004  $ \\ 
2 $\times$  $\penVF$  ($V$=2)   & $  5.11 \pm 0.04  $  & $  1.99 \pm 0.01  $  & $  2.04 \pm 0.01  $  & $  \phantom{4}1.64 \pm 0.004  $ \\ 
\noalign{\smallskip} 
\hline 
\noalign{\smallskip} 
LOO   & $  6.34 \pm 0.05  $  & $  2.33 \pm 0.01  $  & $  2.06 \pm 0.01  $  & $  1.75 \pm 0.01  $ \\ 
10-fold CV    & $  6.24 \pm 0.05  $  & $  2.29 \pm 0.01  $  & $  2.05 \pm 0.01  $  & $  1.71 \pm 0.01  $ \\ 
5-fold CV    & $  6.27 \pm 0.05  $  & $  2.26 \pm 0.01  $  & $  2.05 \pm 0.01  $  & $  1.68 \pm 0.01  $ \\ 
2-fold CV    & $  6.41 \pm 0.05  $  & $  2.18 \pm 0.01  $  & $  2.05 \pm 0.01  $  & $  \phantom{4}1.63 \pm 0.004  $ \\ 
\noalign{\smallskip} 
\hline \hline 
\noalign{\smallskip} 
Oracle: $10^{-3} \times $   & $ 5.46 \pm 0.02 $  & $ 13.39 \pm 0.05 $  & $ 43.86 \pm 0.09 $  & $ 62.37 \pm 0.13 $ \\ 
Best: $10^{-3} \times $   & $ 19.38 \pm 0.10 $  & $ 25.77 \pm 0.10 $  & $ 83.39 \pm 0.22 $  & $ 100.86 \pm 0.23 $ \\ 
\hline
\end{tabular}
\end{center}
\caption{Simulation results: settings L and S, $n=500$. 
The best procedures \textup{(}up to standard-deviations\textup{)} are bolded, where the data-driven procedures are considered separately from the procedures using the knowledge of $\E\crochs{\penid}$. 
\label{tab.complet.Li01.SiG5}}
\end{table}
Table~\ref{tab.complet.Li01.SiG5.n100} provides a similar comparison of model selection performances with a reduced sample size $n=100$, again from $N=10\,000$ independent samples. 
\begin{table}[p] 
\begin{center}
%
\begin{tabular}
{l@{\hspace{0.025\textwidth}}c@{\hspace{0.025\textwidth}}c@{\hspace{0.025\textwidth}}c@{\hspace{0.025\textwidth}}c}
\hline\noalign{\smallskip}
Experiment  & L--Dya2 & L--Regu  & S--Dya2 & S--Regu \\ 
\noalign{\smallskip} 
\hline 
\noalign{\smallskip} 
$\E[\penid]$   & $  8.38 \pm 0.08  $  & $  3.29 \pm 0.03  $  & $  1.97 \pm 0.01  $  & $  2.09 \pm 0.01  $ \\ 
1.25 $\times$ $\E[\penid]$   & $  6.53 \pm 0.07  $  & $  2.61 \pm 0.02  $  & $  { \bf  1.93 \pm 0.01  }  $  & $  1.72 \pm 0.01  $ \\ 
1.5 $\times$ $\E[\penid]$   & $  5.59 \pm 0.06  $  & $  { \bf  2.46 \pm 0.02  }  $  & $  { \bf  1.92 \pm 0.01  }  $  & $  1.61 \pm 0.01  $ \\ 
2 $\times$ $\E[\penid]$   & $  { \bf  4.72 \pm 0.05  }  $  & $  2.57 \pm 0.01  $  & $  \phantom{5}1.94 \pm 0.005  $  & $  { \bf  \phantom{4}1.60 \pm 0.004  }  $ \\ 
\noalign{\smallskip} 
\hline \hline 
\noalign{\smallskip} 
 $\pendim$    & $  9.67 \pm 0.09  $  & $  3.28 \pm 0.03  $  & $  2.17 \pm 0.01  $  & $  2.09 \pm 0.01  $ \\ 
1.25 $\times$  $\pendim$    & $  7.85 \pm 0.08  $  & $  2.62 \pm 0.02  $  & $  2.10 \pm 0.01  $  & $  1.72 \pm 0.01  $ \\ 
1.5 $\times$  $\pendim$    & $  6.74 \pm 0.07  $  & $  { \bf  2.48 \pm 0.02  }  $  & $  2.05 \pm 0.01  $  & $  1.62 \pm 0.01  $ \\ 
2 $\times$  $\pendim$    & $  5.70 \pm 0.06  $  & $  2.60 \pm 0.01  $  & $  2.00 \pm 0.01  $  & $  \phantom{4}1.61 \pm 0.004  $ \\ 
\noalign{\smallskip} 
\hline 
\noalign{\smallskip} 
$\penLOO$   & $  8.10 \pm 0.08  $  & $  3.29 \pm 0.03  $  & $  1.97 \pm 0.01  $  & $  2.09 \pm 0.01  $ \\ 
1.25 $\times$ $\penLOO$   & $  6.20 \pm 0.06  $  & $  2.62 \pm 0.02  $  & $  1.92 \pm 0.01  $  & $  1.72 \pm 0.01  $ \\ 
1.5 $\times$ $\penLOO$   & $  5.18 \pm 0.05  $  & $  { \bf  2.49 \pm 0.02  }  $  & $  1.91 \pm 0.01  $  & $  1.62 \pm 0.01  $ \\ 
2 $\times$ $\penLOO$   & $  { \bf  4.44 \pm 0.04  }  $  & $  2.59 \pm 0.01  $  & $  \phantom{5}1.94 \pm 0.005  $  & $  \phantom{4}1.61 \pm 0.004  $ \\ 
\noalign{\smallskip} 
\hline 
\noalign{\smallskip} 
 $\penVF$  ($V$=10)   & $  8.61 \pm 0.08  $  & $  3.54 \pm 0.04  $  & $  1.97 \pm 0.01  $  & $  2.21 \pm 0.01  $ \\ 
1.25 $\times$  $\penVF$  ($V$=10)   & $  6.76 \pm 0.07  $  & $  2.76 \pm 0.02  $  & $  1.92 \pm 0.01  $  & $  1.78 \pm 0.01  $ \\ 
1.5 $\times$  $\penVF$  ($V$=10)   & $  5.77 \pm 0.06  $  & $  2.52 \pm 0.02  $  & $  { \bf  1.90 \pm 0.01  }  $  & $  1.64 \pm 0.01  $ \\ 
2 $\times$  $\penVF$  ($V$=10)   & $  4.81 \pm 0.05  $  & $  2.57 \pm 0.01  $  & $  1.91 \pm 0.01  $  & $  { \bf  \phantom{4}1.60 \pm 0.004  }  $ \\ 
\noalign{\smallskip} 
\hline 
\noalign{\smallskip} 
 $\penVF$  ($V$=5)   & $  9.14 \pm 0.08  $  & $  3.92 \pm 0.04  $  & $  1.98 \pm 0.01  $  & $  2.34 \pm 0.02  $ \\ 
1.25 $\times$  $\penVF$  ($V$=5)   & $  7.38 \pm 0.07  $  & $  2.90 \pm 0.03  $  & $  1.93 \pm 0.01  $  & $  1.85 \pm 0.01  $ \\ 
1.5 $\times$  $\penVF$  ($V$=5)   & $  6.31 \pm 0.06  $  & $  2.60 \pm 0.02  $  & $  { \bf  1.91 \pm 0.01  }  $  & $  1.68 \pm 0.01  $ \\ 
2 $\times$  $\penVF$  ($V$=5)   & $  5.21 \pm 0.05  $  & $  2.56 \pm 0.02  $  & $  { \bf  1.90 \pm 0.01  }  $  & $  { \bf  \phantom{5}1.60 \pm 0.005  }  $ \\ 
\noalign{\smallskip} 
\hline 
\noalign{\smallskip} 
 $\penVF$  ($V$=2)   & $  11.15 \pm 0.09  $  & $  6.14 \pm 0.08  $  & $  2.01 \pm 0.01  $  & $  2.92 \pm 0.02  $ \\ 
1.25 $\times$  $\penVF$  ($V$=2)   & $  9.61 \pm 0.08  $  & $  4.05 \pm 0.05  $  & $  1.97 \pm 0.01  $  & $  2.24 \pm 0.01  $ \\ 
1.5 $\times$  $\penVF$  ($V$=2)   & $  8.60 \pm 0.07  $  & $  3.30 \pm 0.03  $  & $  1.94 \pm 0.01  $  & $  1.94 \pm 0.01  $ \\ 
2 $\times$  $\penVF$  ($V$=2)   & $  7.30 \pm 0.07  $  & $  2.80 \pm 0.02  $  & $  1.91 \pm 0.01  $  & $  1.70 \pm 0.01  $ \\ 
\noalign{\smallskip} 
\hline 
\noalign{\smallskip} 
LOO   & $  8.04 \pm 0.08  $  & $  3.26 \pm 0.03  $  & $  1.97 \pm 0.01  $  & $  2.07 \pm 0.01  $ \\ 
10-fold CV    & $  8.11 \pm 0.08  $  & $  3.28 \pm 0.03  $  & $  1.95 \pm 0.01  $  & $  2.06 \pm 0.01  $ \\ 
5-fold CV    & $  8.15 \pm 0.08  $  & $  3.28 \pm 0.03  $  & $  1.95 \pm 0.01  $  & $  2.01 \pm 0.01  $ \\ 
2-fold CV    & $  8.60 \pm 0.07  $  & $  3.30 \pm 0.03  $  & $  1.94 \pm 0.01  $  & $  1.94 \pm 0.01  $ \\ 
\noalign{\smallskip} 
\hline \hline 
\noalign{\smallskip} 
Oracle: $10^{-3} \times $   & $ 12.66 \pm 0.05 $  & $ 33.58 \pm 0.16 $  & $ 118.21 \pm 0.25 $  & $ 133.04 \pm 0.28 $ \\ 
Best: $10^{-3} \times $   & $ 56.15 \pm 0.53 $  & $ 83.42 \pm 0.51 $  & $ 224.09 \pm 0.63 $  & $ 212.84 \pm 0.61 $ \\ 
\hline
\end{tabular}
\end{center}
\caption{Simulation results: settings L and S, $n=100$. 
The best procedures \textup{(}up to standard-deviations\textup{)} are bolded, where the data-driven procedures are considered separately from the procedures using the knowledge of $\E\crochs{\penid}$. 
\label{tab.complet.Li01.SiG5.n100}}
\end{table}

\clearpage
The influence of overpenalization is considered in Figures~\ref{fig.supmat.surpen.C140630_Li01Dya2_n100}--\ref{fig.supmat.surpen.C140704_SiG5Regu_n500}. 
As on Figure~\ref{fig.surpen.LS-Dya2.n500}, the top graph represents the estimated model selection performance $\Cor(\CV_{(C,\B)})$ as a function of $C$, for various values of $V=|\B|$. 
Error bars are not shown on these graphs for clarity; all visible differences on the graph correspond to significant differences, as can be seen in Tables~\ref{tab.complet.Li01.SiG5}--\ref{tab.complet.Li01.SiG5.n100} for instance. 
The bottom tables in Figures~\ref{fig.supmat.surpen.C140630_Li01Dya2_n100}--\ref{fig.supmat.surpen.C140704_SiG5Regu_n500} 
show the estimated model selection performance for three key values of $C$: the optimal one $C^{\star}_n$, the unbiased case ($C=1$, which corresponds to an AIC-type penalty) and the value $C=\log(n)/2$ (which corresponds to a BIC-type penalty). 
The estimated value of the optimal overpenalizing constant $C^{\star}_n$ was obtained by minimizing over $C \in [0,10]$ the estimated value of $\Cor(\CV_{(C,\B)})$. 
Error bars on $C^{\star}_n$ show the maximum of $|C^{\star}_n-C|$ over the set of values of $C$ that are ``not significantly worse than $C^{\star}_n$'', where we define by convention ``significantly worse'' as having a $\abss{ \Cor(\CV_{(C^{\star}_n,\B)}) - \Cor(\CV_{(C,\B)}) }$ larger than the sum of the corresponding error bars. 

\begin{figure}
\begin{center}
\begin{minipage}[b]{\largfiguniq}
\includegraphics[width=\textwidth]{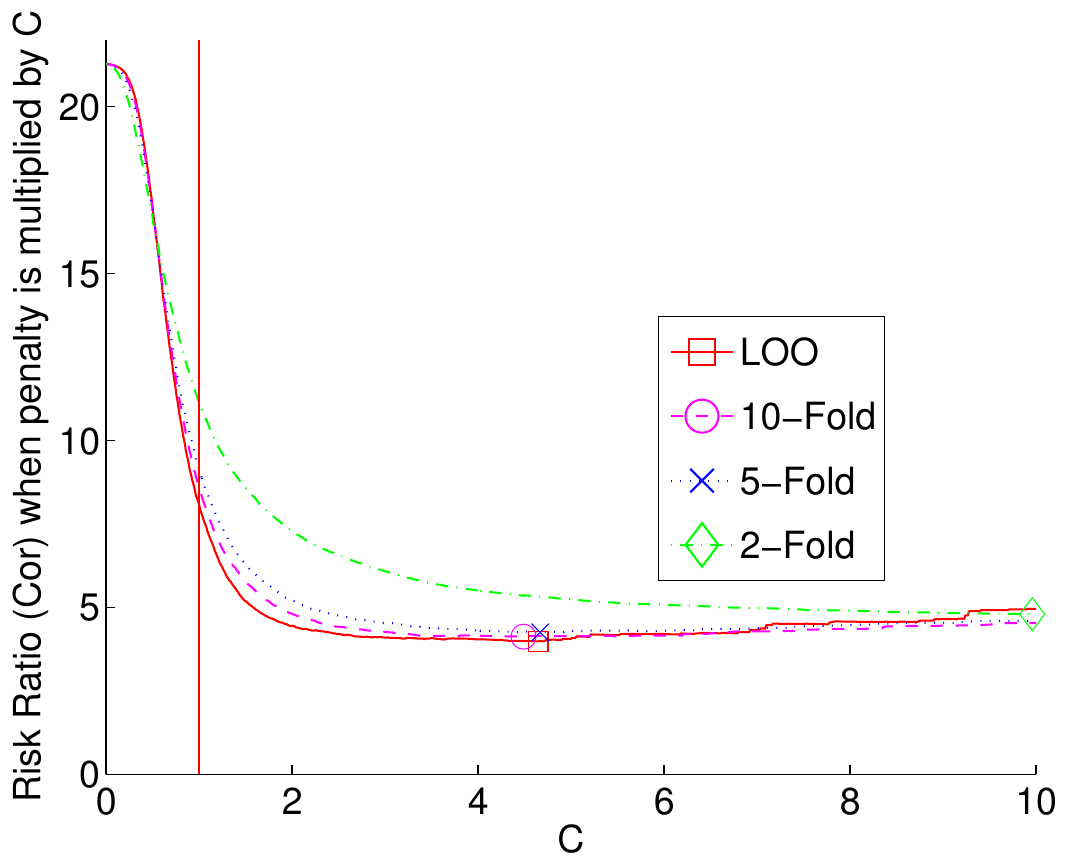}
\end{minipage}

\begin{tabular}{l@{\hspace{0.025\textwidth}}c@{\hspace{0.025\textwidth}}c@{\hspace{0.025\textwidth}}c@{\hspace{0.025\textwidth}}c@{\hspace{0.025\textwidth}}}
\hline\noalign{\smallskip}
 Penalty & $C^{\star}_n$ & \Cor ($C=C^{\star}_n$)  & \Cor ($C=1$)  & \Cor ($C=2.30$) \\ 
\noalign{\smallskip} 
\hline 
\noalign{\smallskip} 
$\E[\penid]$   & $ 6.58 \pm 1.33 $  & $ 3.89 \pm 0.02 $  &  $ 8.38 \pm 0.08$  &  $\mathbf{  4.46 \pm 0.04 }$ \\ 
 $\penLOO$   & $ 4.65 \pm 0.54 $  & $ 3.98 \pm 0.02 $  &  $ 8.10 \pm 0.08$  &  $\mathbf{  4.29 \pm 0.04 }$ \\ 
 $\pen 10$F   & $ 4.49 \pm 1.71 $  & $ 4.12 \pm 0.03 $  &  $ 8.61 \pm 0.08$  &  $\mathbf{  4.53 \pm 0.04 }$ \\ 
 $\pen 5$F   & $ 4.67 \pm 1.88 $  & $ 4.25 \pm 0.03 $  &  $ 9.14 \pm 0.08$  &  $\mathbf{  4.90 \pm 0.05 }$ \\ 
 $\pen 2$F   & $ 9.96 \pm 1.61 $  & $ 4.79 \pm 0.04 $  &  $ 11.15 \pm 0.09$  &  $\mathbf{  6.80 \pm 0.06 }$ \\ 
 \hline
\end{tabular}
\end{center}
\caption{%
Overpenalization in setting L-Dya2, $n=100$. \newline
Top: same as Figure~\ref{fig.surpen.LS-Dya2.n500} (estimated loss ratio as a function of the overpenalization constant $C$). \newline
Bottom: Table showing the estimated optimal overpenalization constant $C^{\star}_n$ as well as the estimated loss ratio for several values of $C$: $C = C^{\star}_n$ (optimal value), $C=1$ (AIC-type penalty) and $C=\log(n)/2$ (BIC-type penalty). 
See text for details. 
\label{fig.supmat.surpen.C140630_Li01Dya2_n100}
}
\end{figure}

\begin{figure}
\begin{center}
\begin{minipage}[b]{\largfiguniq}
\includegraphics[width=\textwidth]{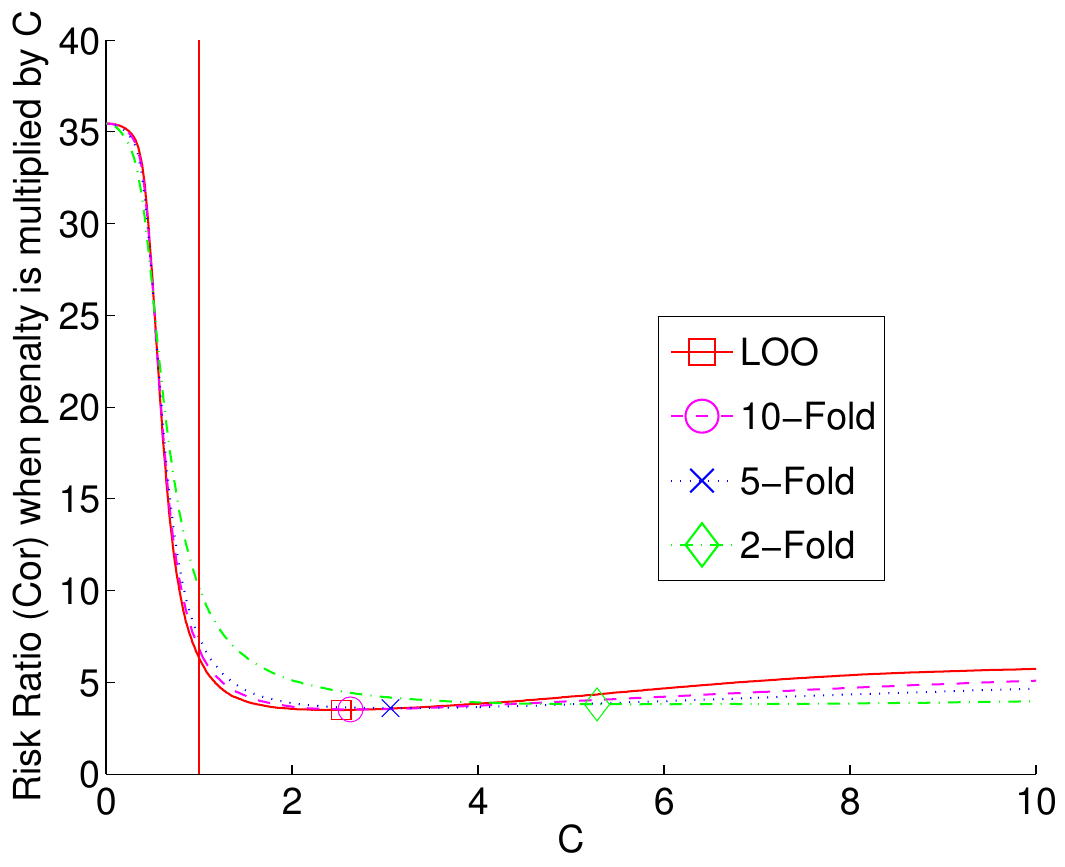}
\end{minipage}

\begin{tabular}{l@{\hspace{0.025\textwidth}}c@{\hspace{0.025\textwidth}}c@{\hspace{0.025\textwidth}}c@{\hspace{0.025\textwidth}}c@{\hspace{0.025\textwidth}}}
\hline\noalign{\smallskip}
 Penalty & $C^{\star}_n$ & \Cor ($C=C^{\star}_n$)  & \Cor ($C=1$)  & \Cor ($C=3.11$) \\ 
\noalign{\smallskip} 
\hline 
\noalign{\smallskip} 
$\E[\penid]$   & $ 2.56 \pm 0.40 $  & $ 3.53 \pm 0.02 $  &  $ 6.52 \pm 0.05$  &  $\mathbf{  3.59 \pm 0.02 }$ \\ 
 $\penLOO$   & $ 2.53 \pm 0.43 $  & $ 3.49 \pm 0.02 $  &  $ 6.35 \pm 0.05$  &  $\mathbf{  3.58 \pm 0.02 }$ \\ 
 $\pen 10$F   & $ 2.63 \pm 0.49 $  & $ 3.52 \pm 0.02 $  &  $ 6.89 \pm 0.06$  &  $\mathbf{  3.55 \pm 0.02 }$ \\ 
 $\pen 5$F   & $ 3.06 \pm 0.67 $  & $ 3.59 \pm 0.02 $  &  $ 7.47 \pm 0.06$  &  $\mathbf{  3.59 \pm 0.02 }$ \\ 
 $\pen 2$F   & $ 5.28 \pm 1.90 $  & $ 3.80 \pm 0.02 $  &  $ 10.21 \pm 0.08$  &  $\mathbf{  4.14 \pm 0.03 }$ \\ 
 \hline
\end{tabular}
\end{center}
\caption{%
Overpenalization in setting L-Dya2, $n=500$. 
\newline
Top: same as Figure~\ref{fig.surpen.LS-Dya2.n500} (estimated loss ratio as a function of the overpenalization constant $C$). \newline
Bottom: Table showing the estimated optimal overpenalization constant $C^{\star}_n$ as well as the estimated loss ratio for several values of $C$: $C = C^{\star}_n$ (optimal value), $C=1$ (AIC-type penalty) and $C=\log(n)/2$ (BIC-type penalty). 
See text for details. 
\label{fig.supmat.surpen.C140704_Li01Dya2_n500}
}
\end{figure}

\clearpage

\begin{figure}
\begin{center}
\begin{minipage}[b]{\largfiguniq}
\includegraphics[width=\textwidth]{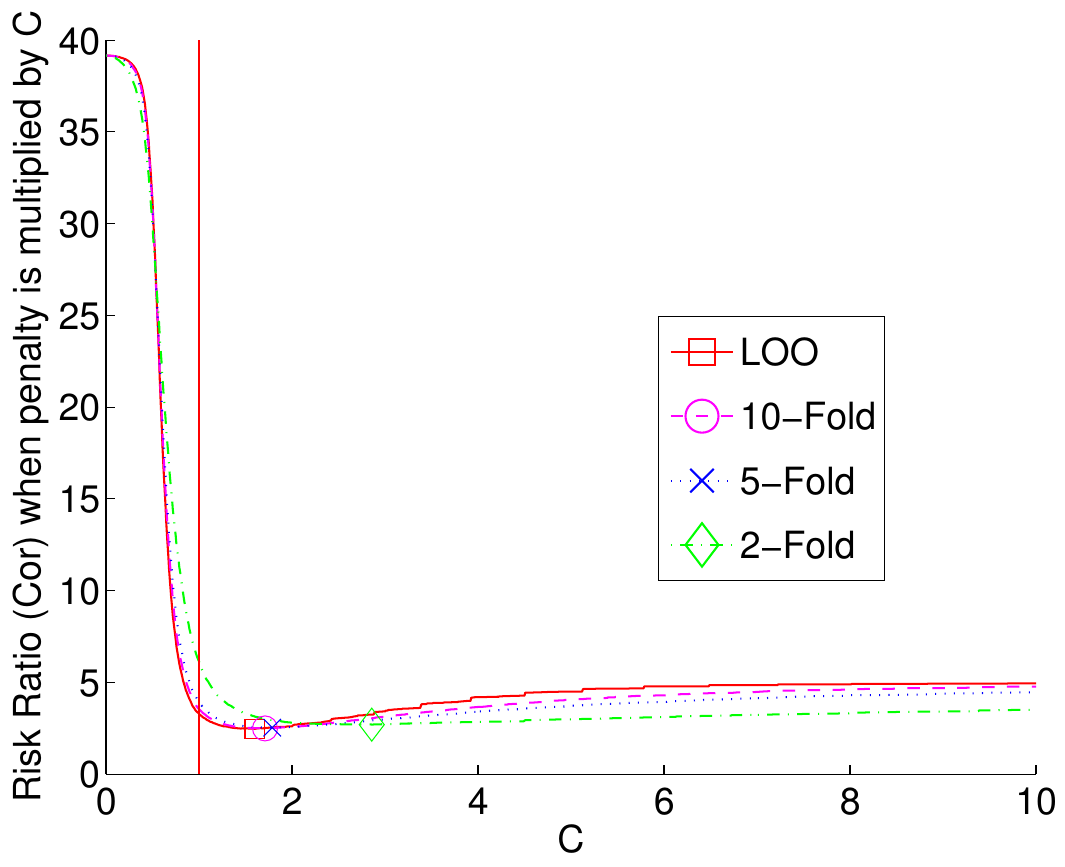}
\end{minipage}

\begin{tabular}{l@{\hspace{0.025\textwidth}}c@{\hspace{0.025\textwidth}}c@{\hspace{0.025\textwidth}}c@{\hspace{0.025\textwidth}}c@{\hspace{0.025\textwidth}}}
\hline\noalign{\smallskip}
 Penalty & $C^{\star}_n$ & \Cor ($C=C^{\star}_n$)  & \Cor ($C=1$)  & \Cor ($C=2.30$) \\ 
\noalign{\smallskip} 
\hline 
\noalign{\smallskip} 
$\E[\penid]$   & $ 1.66 \pm 0.21 $  & $ 2.44 \pm 0.01 $  &  $ 3.29 \pm 0.03$  &  $\mathbf{  2.77 \pm 0.01 }$ \\ 
 $\penLOO$   & $ 1.60 \pm 0.18 $  & $ 2.47 \pm 0.01 $  &  $ 3.29 \pm 0.03$  &  $\mathbf{  2.81 \pm 0.01 }$ \\ 
 $\pen 10$F   & $ 1.71 \pm 0.22 $  & $ 2.49 \pm 0.02 $  &  $ 3.54 \pm 0.04$  &  $\mathbf{  2.71 \pm 0.02 }$ \\ 
 $\pen 5$F   & $ 1.79 \pm 0.31 $  & $ 2.53 \pm 0.02 $  &  $ 3.92 \pm 0.04$  &  $\mathbf{  2.67 \pm 0.02 }$ \\ 
 $\pen 2$F   & $ 2.86 \pm 0.58 $  & $ 2.70 \pm 0.02 $  &  $ 6.14 \pm 0.08$  &  $\mathbf{  2.74 \pm 0.02 }$ \\ 
 \hline
\end{tabular}
\end{center}
\caption{%
Overpenalization in setting L-Regu, $n=100$. 
\newline
Top: same as Figure~\ref{fig.surpen.LS-Dya2.n500} (estimated loss ratio as a function of the overpenalization constant $C$). \newline
Bottom: Table showing the estimated optimal overpenalization constant $C^{\star}_n$ as well as the estimated loss ratio for several values of $C$: $C = C^{\star}_n$ (optimal value), $C=1$ (AIC-type penalty) and $C=\log(n)/2$ (BIC-type penalty). 
See text for details. 
\label{fig.supmat.surpen.C140630_Li01Regu_n100}
}
\end{figure}

\begin{figure}
\begin{center}
\begin{minipage}[b]{\largfiguniq}
\includegraphics[width=\textwidth]{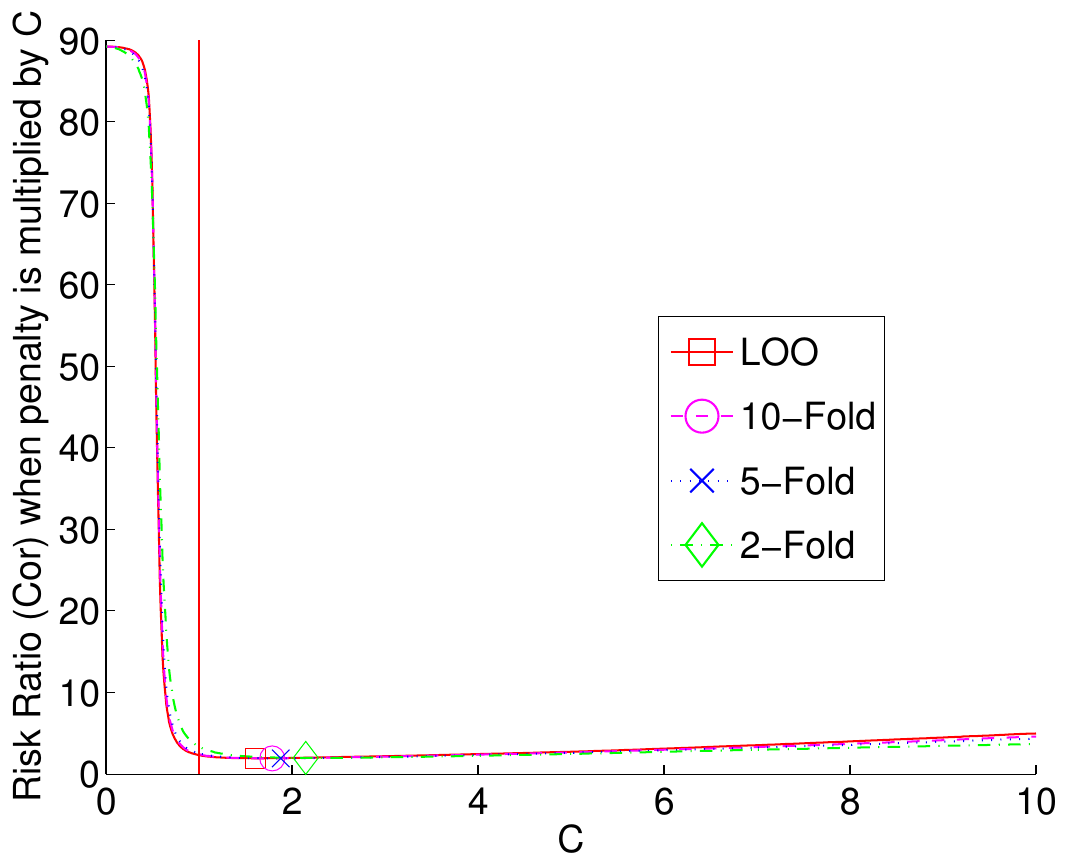}
\end{minipage}

\begin{tabular}{l@{\hspace{0.025\textwidth}}c@{\hspace{0.025\textwidth}}c@{\hspace{0.025\textwidth}}c@{\hspace{0.025\textwidth}}c@{\hspace{0.025\textwidth}}}
\hline\noalign{\smallskip}
 Penalty & $C^{\star}_n$ & \Cor ($C=C^{\star}_n$)  & \Cor ($C=1$)  & \Cor ($C=3.11$) \\ 
\noalign{\smallskip} 
\hline 
\noalign{\smallskip} 
$\E[\penid]$   & $ 1.63 \pm 0.25 $  & $ 1.93 \pm 0.01 $  &  $ 2.33 \pm 0.01$  &  $\mathbf{  2.20 \pm 0.01 }$ \\ 
 $\penLOO$   & $ 1.61 \pm 0.23 $  & $ 1.93 \pm 0.01 $  &  $ 2.33 \pm 0.01$  &  $\mathbf{  2.21 \pm 0.01 }$ \\ 
 $\pen 10$F   & $ 1.79 \pm 0.26 $  & $ 1.92 \pm 0.01 $  &  $ 2.42 \pm 0.02$  &  $\mathbf{  2.16 \pm 0.01 }$ \\ 
 $\pen 5$F   & $ 1.88 \pm 0.29 $  & $ 1.92 \pm 0.01 $  &  $ 2.55 \pm 0.02$  &  $\mathbf{  2.11 \pm 0.01 }$ \\ 
 $\pen 2$F   & $ 2.15 \pm 0.34 $  & $ 1.97 \pm 0.01 $  &  $ 3.37 \pm 0.03$  &  $\mathbf{  2.07 \pm 0.01 }$ \\ 
 \hline
\end{tabular}
\end{center}
\caption{%
Overpenalization in setting L-Regu, $n=500$. 
\newline
Top: same as Figure~\ref{fig.surpen.LS-Dya2.n500} (estimated loss ratio as a function of the overpenalization constant $C$). \newline
Bottom: Table showing the estimated optimal overpenalization constant $C^{\star}_n$ as well as the estimated loss ratio for several values of $C$: $C = C^{\star}_n$ (optimal value), $C=1$ (AIC-type penalty) and $C=\log(n)/2$ (BIC-type penalty). 
See text for details. 
\label{fig.supmat.surpen.C140704_Li01Regu_n500}
}
\end{figure}

\clearpage

\begin{figure}
\begin{center}
\begin{minipage}[b]{\largfiguniq}
\includegraphics[width=\textwidth]{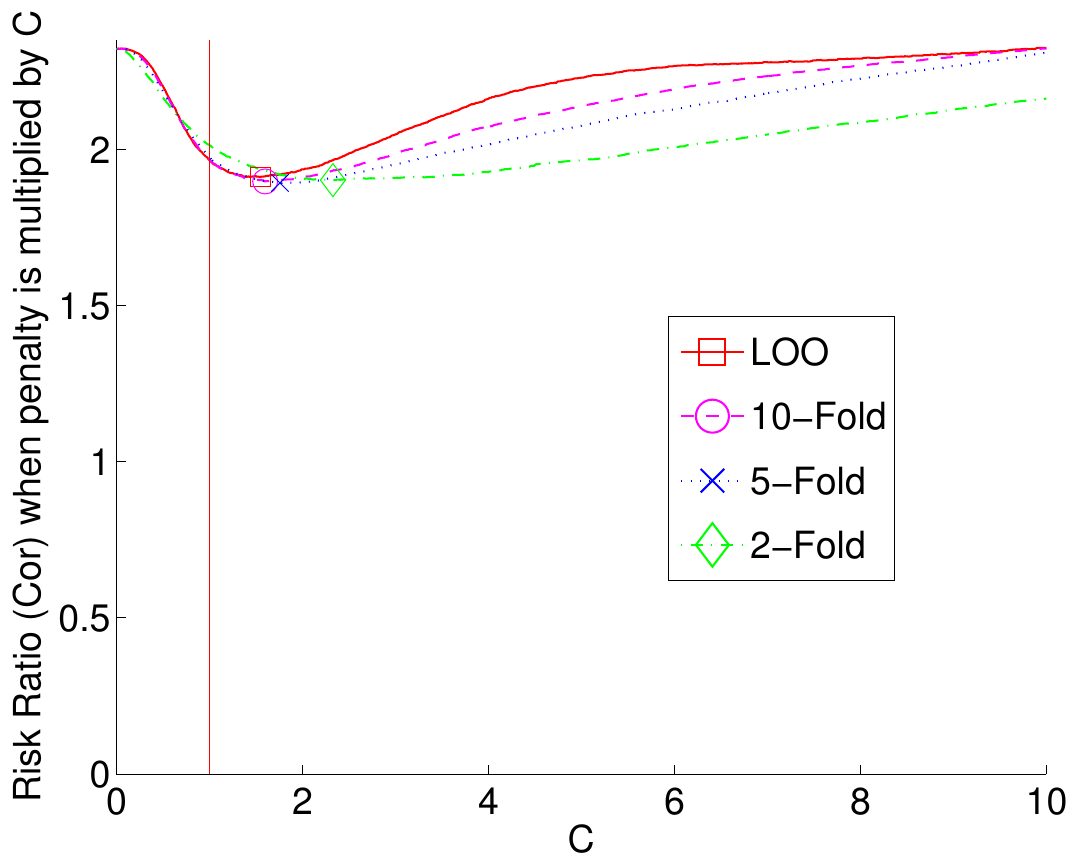}
\end{minipage}

\begin{tabular}{l@{\hspace{0.025\textwidth}}c@{\hspace{0.025\textwidth}}c@{\hspace{0.025\textwidth}}c@{\hspace{0.025\textwidth}}c@{\hspace{0.025\textwidth}}}
\hline\noalign{\smallskip}
 Penalty & $C^{\star}_n$ & \Cor ($C=C^{\star}_n$)  & \Cor ($C=1$)  & \Cor ($C=2.30$) \\ 
\noalign{\smallskip} 
\hline 
\noalign{\smallskip} 
$\E[\penid]$   & $ 1.54 \pm 0.28 $  & $ 1.91 \pm 0.01 $  &  $\mathbf{  1.97 \pm 0.01 }$  &  $\mathbf{  1.97 \pm 0.00 }$ \\ 
 $\penLOO$   & $ 1.55 \pm 0.32 $  & $ 1.91 \pm 0.01 $  &  $\mathbf{  1.97 \pm 0.01 }$  &  $\mathbf{  1.96 \pm 0.00 }$ \\ 
 $\pen 10$F   & $ 1.60 \pm 0.31 $  & $ 1.90 \pm 0.01 $  &  $ 1.97 \pm 0.01$  &  $\mathbf{  1.93 \pm 0.01 }$ \\ 
 $\pen 5$F   & $ 1.76 \pm 0.39 $  & $ 1.89 \pm 0.01 $  &  $ 1.98 \pm 0.01$  &  $\mathbf{  1.91 \pm 0.01 }$ \\ 
 $\pen 2$F   & $ 2.33 \pm 1.09 $  & $ 1.90 \pm 0.01 $  &  $ 2.01 \pm 0.01$  &  $\mathbf{  1.90 \pm 0.01 }$ \\ 
 \hline
\end{tabular}
\end{center}
\caption{%
Overpenalization in setting S-Dya2, $n=100$. 
\newline
Top: same as Figure~\ref{fig.surpen.LS-Dya2.n500} (estimated loss ratio as a function of the overpenalization constant $C$). \newline
Bottom: Table showing the estimated optimal overpenalization constant $C^{\star}_n$ as well as the estimated loss ratio for several values of $C$: $C = C^{\star}_n$ (optimal value), $C=1$ (AIC-type penalty) and $C=\log(n)/2$ (BIC-type penalty). 
See text for details. 
\label{fig.supmat.surpen.C140630_SiG5Dya2_n100}
}
\end{figure}

\begin{figure}
\begin{center}
\begin{minipage}[b]{\largfiguniq}
\includegraphics[width=\textwidth]{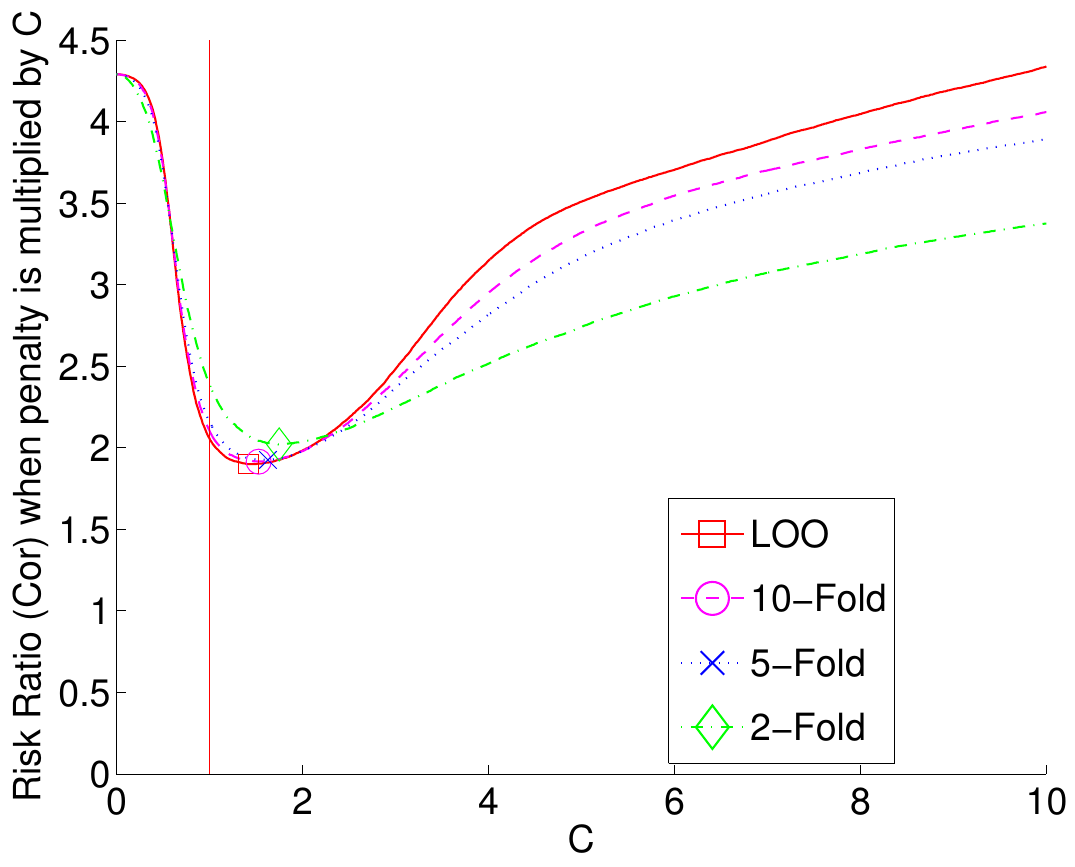}
\end{minipage}

\begin{tabular}{l@{\hspace{0.025\textwidth}}c@{\hspace{0.025\textwidth}}c@{\hspace{0.025\textwidth}}c@{\hspace{0.025\textwidth}}c@{\hspace{0.025\textwidth}}}
\hline\noalign{\smallskip}
 Penalty & $C^{\star}_n$ & \Cor ($C=C^{\star}_n$)  & \Cor ($C=1$)  & \Cor ($C=3.11$) \\ 
\noalign{\smallskip} 
\hline 
\noalign{\smallskip} 
$\E[\penid]$   & $ 1.44 \pm 0.15 $  & $ 1.92 \pm 0.01 $  &  $\mathbf{  2.07 \pm 0.01 }$  &  $ 2.58 \pm 0.01$ \\ 
 $\penLOO$   & $ 1.42 \pm 0.19 $  & $ 1.90 \pm 0.01 $  &  $\mathbf{  2.06 \pm 0.01 }$  &  $ 2.56 \pm 0.01$ \\ 
 $\pen 10$F   & $ 1.53 \pm 0.19 $  & $ 1.92 \pm 0.01 $  &  $\mathbf{  2.11 \pm 0.01 }$  &  $ 2.47 \pm 0.01$ \\ 
 $\pen 5$F   & $ 1.63 \pm 0.21 $  & $ 1.93 \pm 0.01 $  &  $\mathbf{  2.16 \pm 0.01 }$  &  $ 2.42 \pm 0.01$ \\ 
 $\pen 2$F   & $ 1.75 \pm 0.23 $  & $ 2.02 \pm 0.01 $  &  $ 2.39 \pm 0.01$  &  $\mathbf{  2.28 \pm 0.01 }$ \\ 
 \hline
\end{tabular}
\end{center}
\caption{%
Overpenalization in setting S-Dya2, $n=500$. 
\newline
Top: same as Figure~\ref{fig.surpen.LS-Dya2.n500} (estimated loss ratio as a function of the overpenalization constant $C$). \newline
Bottom: Table showing the estimated optimal overpenalization constant $C^{\star}_n$ as well as the estimated loss ratio for several values of $C$: $C = C^{\star}_n$ (optimal value), $C=1$ (AIC-type penalty) and $C=\log(n)/2$ (BIC-type penalty). 
See text for details. 
\label{fig.supmat.surpen.C140704_SiG5Dya2_n500}
}
\end{figure}

\clearpage

\begin{figure}
\begin{center}
\begin{minipage}[b]{\largfiguniq}
\includegraphics[width=\textwidth]{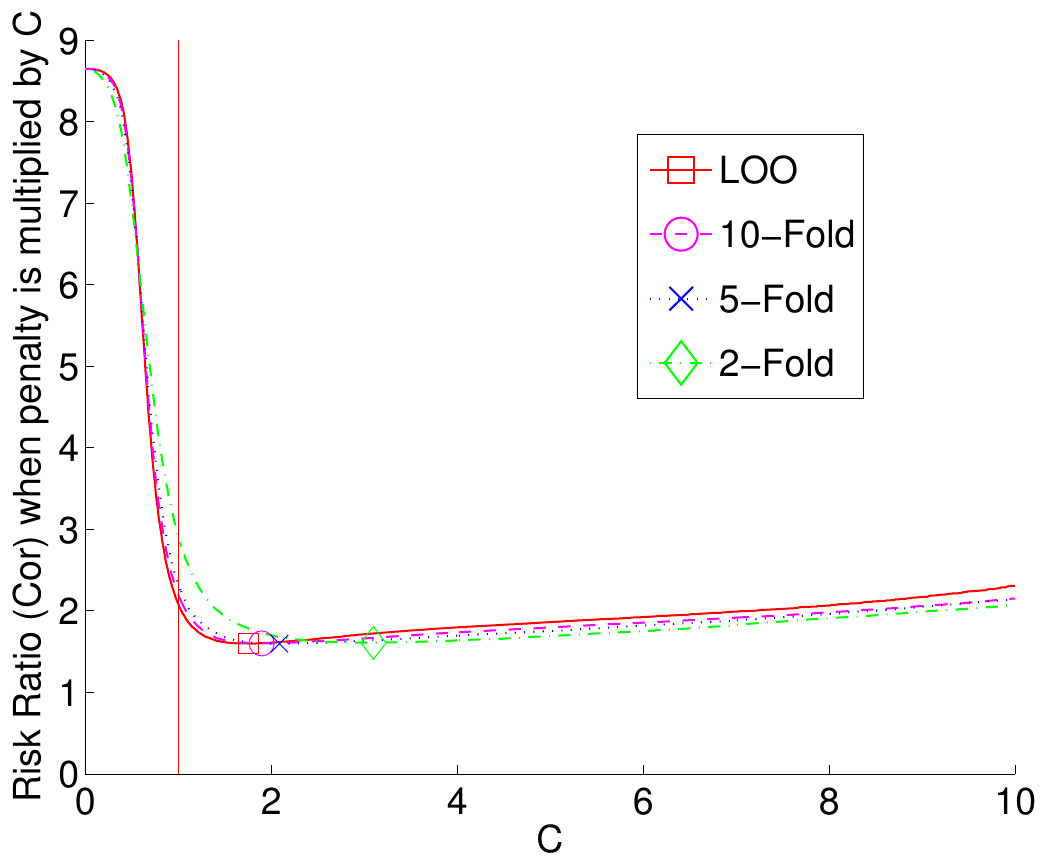}
\end{minipage}

\begin{tabular}{l@{\hspace{0.025\textwidth}}c@{\hspace{0.025\textwidth}}c@{\hspace{0.025\textwidth}}c@{\hspace{0.025\textwidth}}c@{\hspace{0.025\textwidth}}}
\hline\noalign{\smallskip}
 Penalty & $C^{\star}_n$ & \Cor ($C=C^{\star}_n$)  & \Cor ($C=1$)  & \Cor ($C=2.30$) \\ 
\noalign{\smallskip} 
\hline 
\noalign{\smallskip} 
$\E[\penid]$   & $ 1.77 \pm 0.21 $  & $ 1.59 \pm 0.00 $  &  $ 2.09 \pm 0.01$  &  $\mathbf{  1.63 \pm 0.00 }$ \\ 
 $\penLOO$   & $ 1.76 \pm 0.27 $  & $ 1.60 \pm 0.00 $  &  $ 2.09 \pm 0.01$  &  $\mathbf{  1.64 \pm 0.00 }$ \\ 
 $\pen 10$F   & $ 1.90 \pm 0.26 $  & $ 1.60 \pm 0.00 $  &  $ 2.21 \pm 0.01$  &  $\mathbf{  1.62 \pm 0.00 }$ \\ 
 $\pen 5$F   & $ 2.09 \pm 0.34 $  & $ 1.60 \pm 0.00 $  &  $ 2.34 \pm 0.02$  &  $\mathbf{  1.60 \pm 0.00 }$ \\ 
 $\pen 2$F   & $ 3.10 \pm 0.50 $  & $ 1.61 \pm 0.01 $  &  $ 2.92 \pm 0.02$  &  $\mathbf{  1.65 \pm 0.01 }$ \\ 
 \hline
\end{tabular}
\end{center}
\caption{%
Overpenalization in setting S-Regu, $n=100$. 
\newline
Top: same as Figure~\ref{fig.surpen.LS-Dya2.n500} (estimated loss ratio as a function of the overpenalization constant $C$). \newline
Bottom: Table showing the estimated optimal overpenalization constant $C^{\star}_n$ as well as the estimated loss ratio for several values of $C$: $C = C^{\star}_n$ (optimal value), $C=1$ (AIC-type penalty) and $C=\log(n)/2$ (BIC-type penalty). 
See text for details. 
\label{fig.supmat.surpen.C140630_SiG5Regu_n100}
}
\end{figure}

\begin{figure}
\begin{center}
\begin{minipage}[b]{\largfiguniq}
\includegraphics[width=\textwidth]{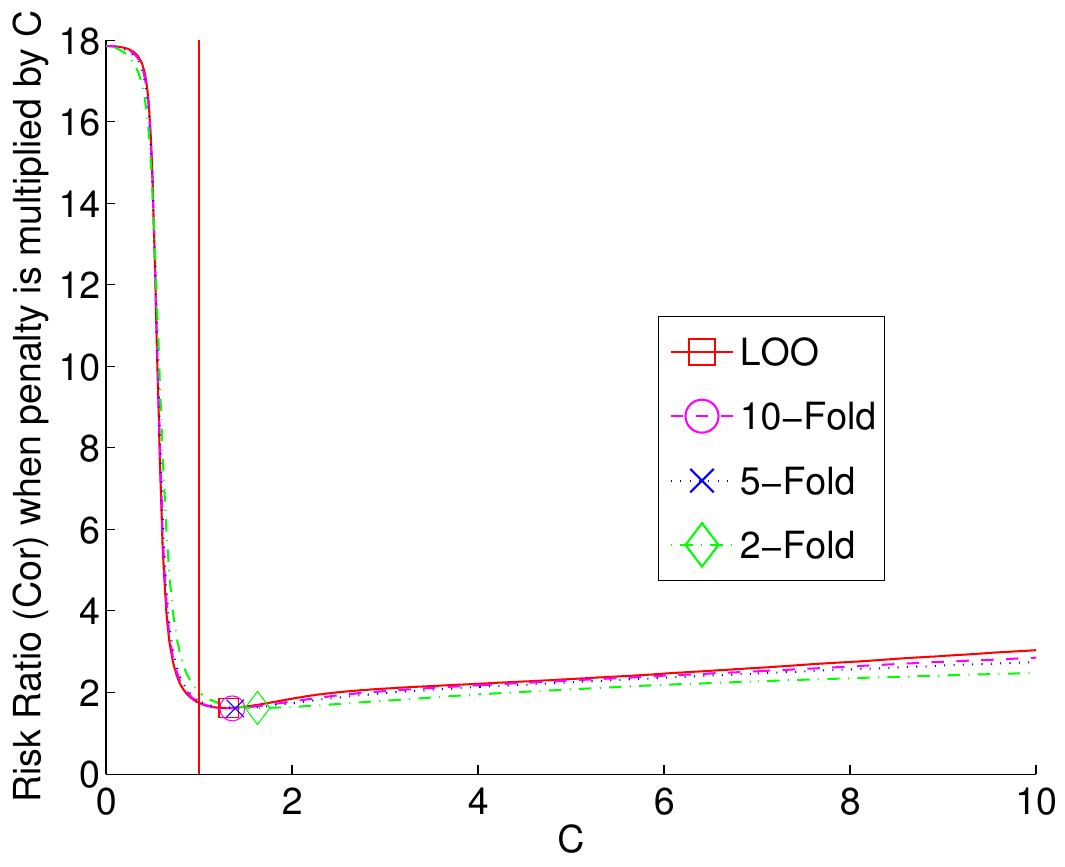}
\end{minipage}

\begin{tabular}{l@{\hspace{0.025\textwidth}}c@{\hspace{0.025\textwidth}}c@{\hspace{0.025\textwidth}}c@{\hspace{0.025\textwidth}}c@{\hspace{0.025\textwidth}}}
\hline\noalign{\smallskip}
 Penalty & $C^{\star}_n$ & \Cor ($C=C^{\star}_n$)  & \Cor ($C=1$)  & \Cor ($C=3.11$) \\ 
\noalign{\smallskip} 
\hline 
\noalign{\smallskip} 
$\E[\penid]$   & $ 1.33 \pm 0.11 $  & $ 1.62 \pm 0.00 $  &  $\mathbf{  1.75 \pm 0.01 }$  &  $ 2.11 \pm 0.00$ \\ 
 $\penLOO$   & $ 1.32 \pm 0.10 $  & $ 1.62 \pm 0.00 $  &  $\mathbf{  1.75 \pm 0.01 }$  &  $ 2.11 \pm 0.00$ \\ 
 $\pen 10$F   & $ 1.36 \pm 0.09 $  & $ 1.61 \pm 0.00 $  &  $\mathbf{  1.77 \pm 0.01 }$  &  $ 2.05 \pm 0.00$ \\ 
 $\pen 5$F   & $ 1.39 \pm 0.08 $  & $ 1.61 \pm 0.00 $  &  $\mathbf{  1.80 \pm 0.01 }$  &  $ 2.01 \pm 0.00$ \\ 
 $\pen 2$F   & $ 1.63 \pm 0.23 $  & $ 1.62 \pm 0.00 $  &  $ 2.01 \pm 0.01$  &  $\mathbf{  1.81 \pm 0.01 }$ \\ 
 \hline
\end{tabular}
\end{center}
\caption{%
Overpenalization in setting S-Regu, $n=500$. 
\newline
Top: same as Figure~\ref{fig.surpen.LS-Dya2.n500} (estimated loss ratio as a function of the overpenalization constant $C$). \newline
Bottom: Table showing the estimated optimal overpenalization constant $C^{\star}_n$ as well as the estimated loss ratio for several values of $C$: $C = C^{\star}_n$ (optimal value), $C=1$ (AIC-type penalty) and $C=\log(n)/2$ (BIC-type penalty). 
See text for details. 
\label{fig.supmat.surpen.C140704_SiG5Regu_n500}
}
\end{figure}

\clearpage

The study of variance of Section~\ref{sec.simus.variance} (setting S with $n=100$) is completed with Figure~\ref{fig.variance.SR-vs-Pmh.SiG5Regu.n100}, which tests the validity of the heuristic of Section~\ref{sec.oracle.key-quant}, Figure~\ref{fig.variance.Pmh-nozoom.SiG5Regu.n100}, which is the equivalent of Figure~\ref{fig.variance.Pmh.SiG5Regu.n100} without zooming on the smallest dimensions, 
and Figure~\ref{fig.variance.SR-vs-Rmo.SiG5-Li01Regu.n100}, which shows that 
\[ \forall m \neq \mo , \quad 
\SR(m) \approx \frac{\E\crochb{ \Delta(m,\mo) }}{\sqrt{\var\parenb{\Delta(m,\mo)}}} \enspace . \]

\medbreak

The next figures present the same results as the ones of 
Section~\ref{sec.simus.variance} about the variance, 
for other experimental settings. 

Figures~\ref{fig.variance.SR-vs-Rmo.SiG5-Li01Regu.n100}--\ref{fig.variance.Pmh-nozoom.Li01Regu.n100} 
show the results for setting L with $n=100$, based upon $N=10\,000$ independent samples.

Figures~\ref{fig.variance.SR-vs-Rmo.SiG5-Li01Regu.n500}--\ref{fig.variance.Pmh-nozoom.Li01Regu.n500} 
show the results for settings S and L with $n=500$, based upon $N=1\,000$ independent samples.

\begin{figure}[p]
\begin{center}
\begin{minipage}[b]{\largfiguniq}
\includegraphics[width=\textwidth]{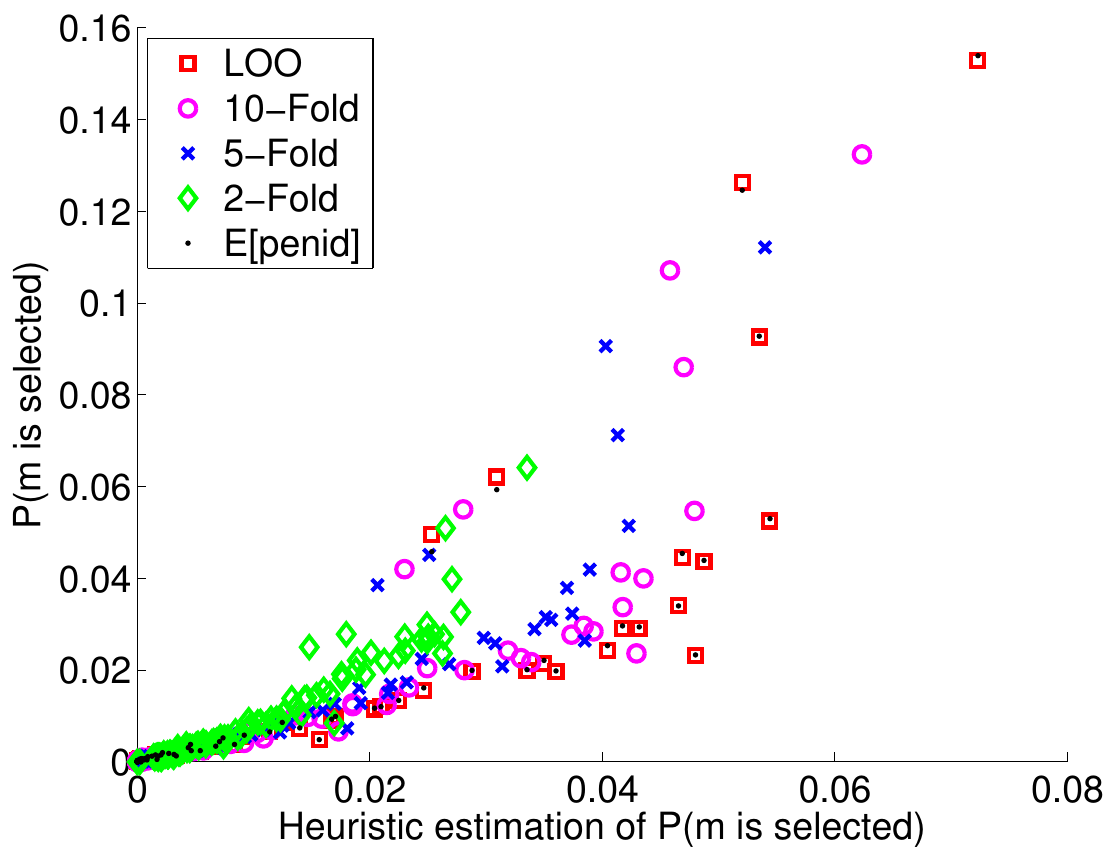}
\end{minipage}
\end{center}
\caption{%
Illustration of the variance heuristic: $\Prob(\mh=m)$ as a function of $\overline{\Phi}(\SR(m))$ (renormalized to have a sum equal to one). 
Setting S-Regu, $n=100$.  
\label{fig.variance.SR-vs-Pmh.SiG5Regu.n100}
}
\end{figure}
\begin{figure}
\begin{center}
\begin{minipage}[b]{\largfiguniq}
\includegraphics[width=\textwidth]{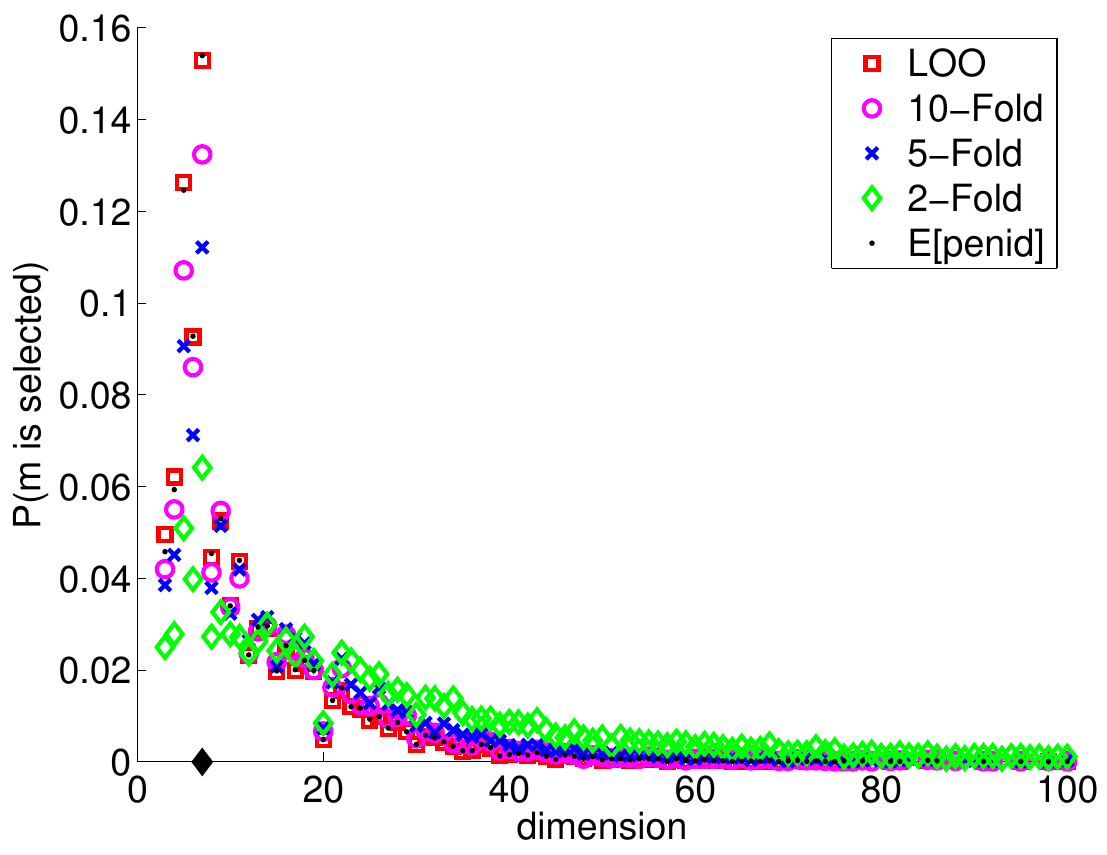}
\end{minipage}
\end{center}
\caption{%
Setting S-Regu, $n=100$.  
$\Prob\parenj{\mh=m}$ as a function of $m$.
The black diamond shows $\mo=7$. 
\label{fig.variance.Pmh-nozoom.SiG5Regu.n100}
}
\end{figure}
\clearpage

\begin{figure}[p]
\begin{center}
\begin{minipage}[b]{.48\linewidth}
\includegraphics[width=\textwidth]{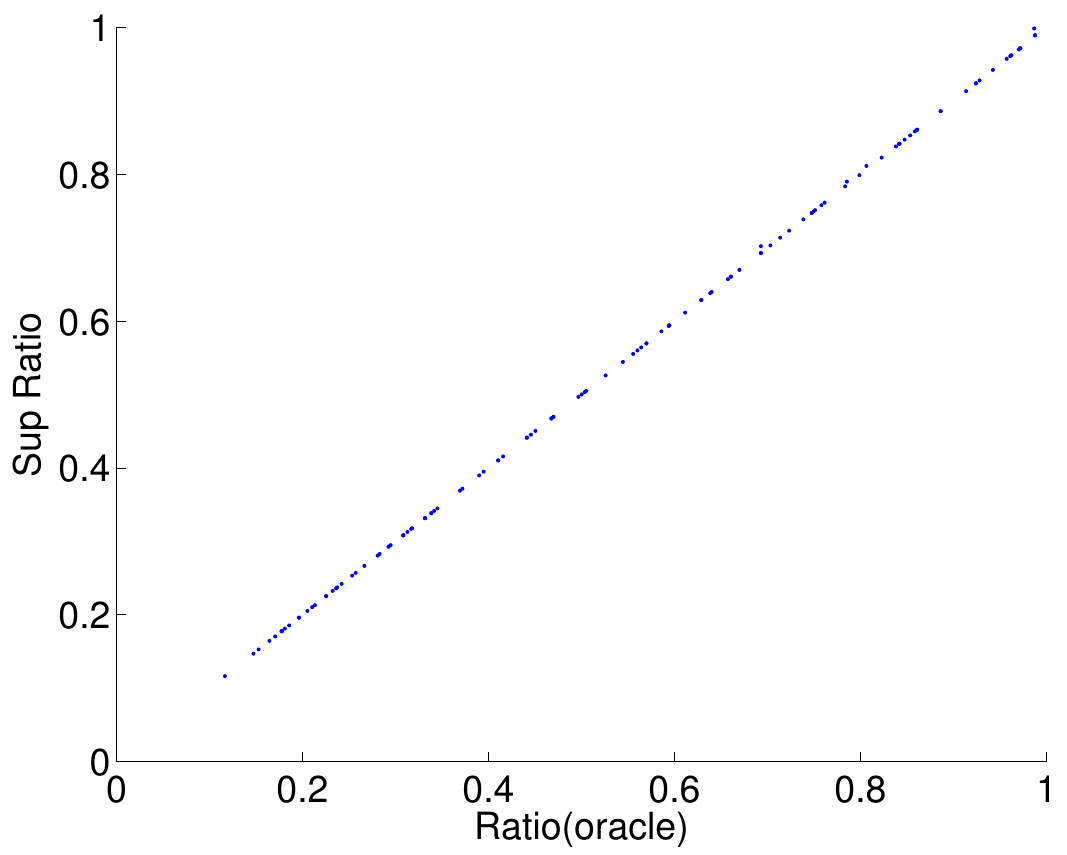}
\end{minipage}
\hspace{.025\linewidth}
\begin{minipage}[b]{.48\linewidth}
\includegraphics[width=\textwidth]{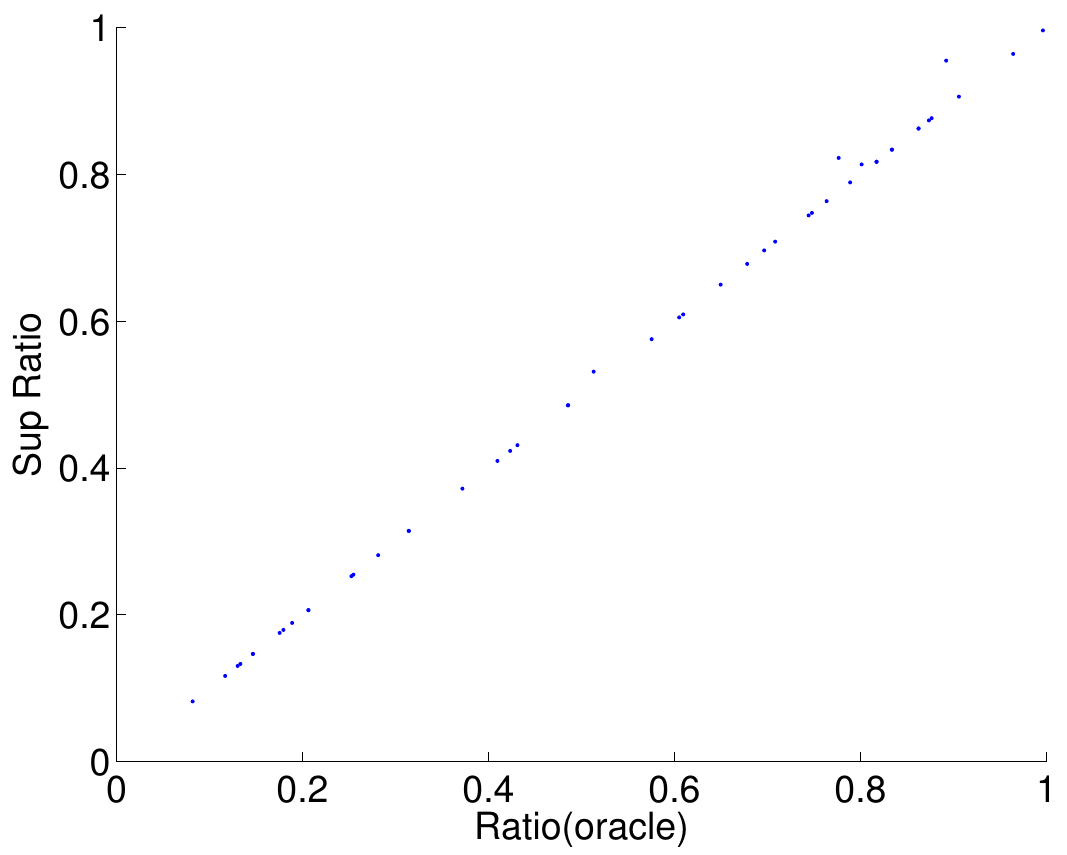}
\end{minipage}
\end{center}
\caption{%
$\SR(m)$ as a function of the ratio at $m^{\prime}=\mo$. 
$n=100$. 
Left: S-Regu. 
Right: L-Regu. 
\label{fig.variance.SR-vs-Rmo.SiG5-Li01Regu.n100}
}
\end{figure}

\begin{figure}
\begin{center}
\begin{minipage}[b]{\largfiguniq}
\includegraphics[width=\textwidth]{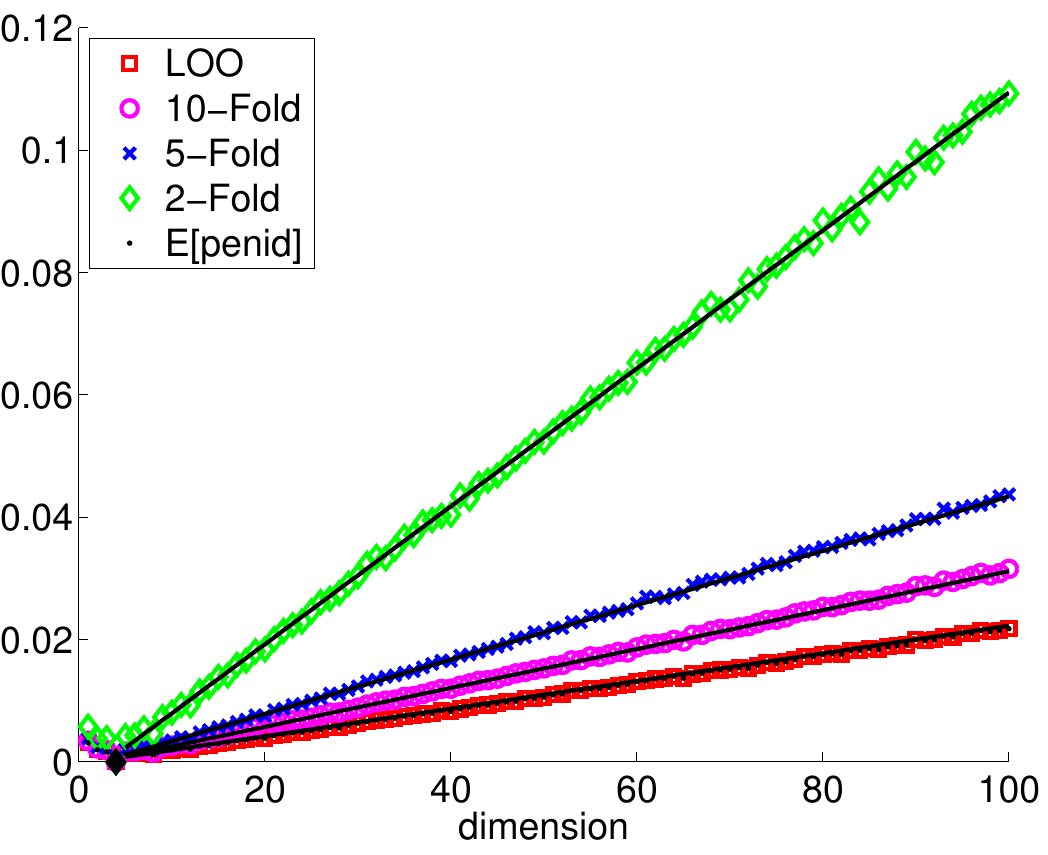}
\end{minipage}
\end{center}
\caption{%
L-Regu, $n=100$. 
$\var(\Delta_{\CV}(m,\mo))$ as a function of $m$. 
The black lines show the linear approximation $n^{-2} [ 5.6 (1 + \frac{1.1}{V-1} ) + 2.2 ( 1 + \frac{4.2 }{V-1} ) (m-\mo) ]$ for $m > \mo = 4$. 
\label{fig.variance.var.Li01Regu.n100}
}
\end{figure}
\begin{figure}
\begin{center}
\begin{minipage}[b]{\largfiguniq}
\includegraphics[width=\textwidth]{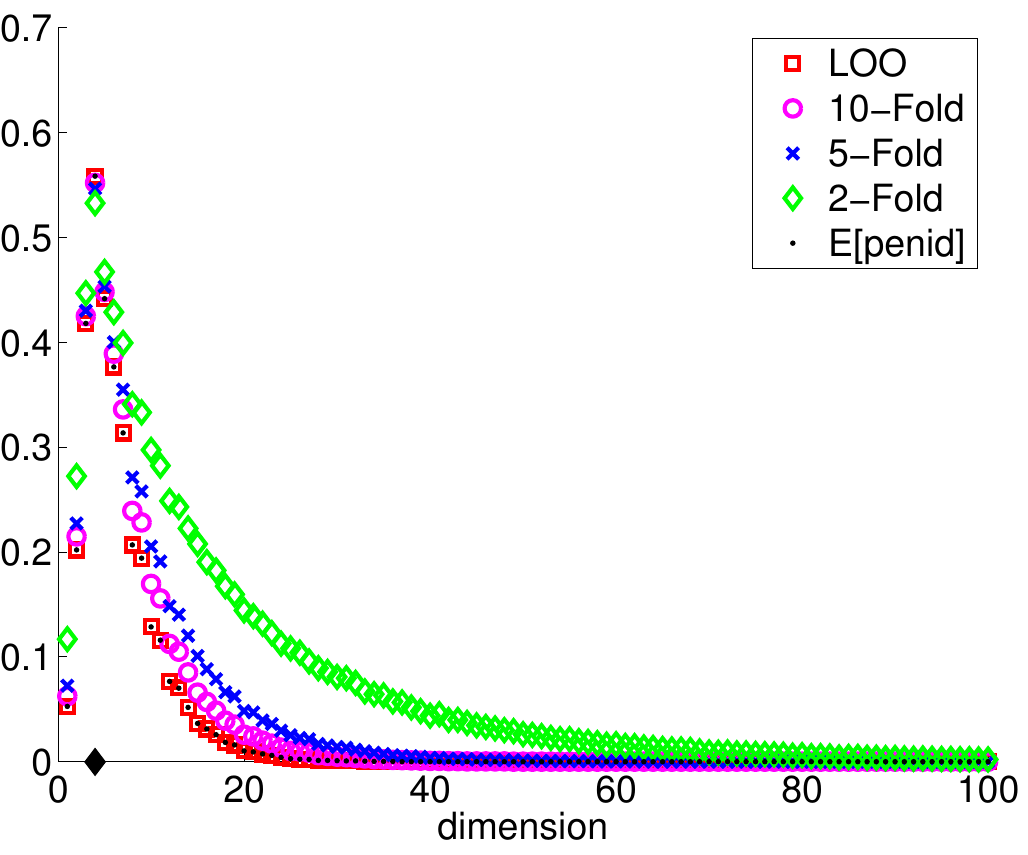}
\end{minipage}
\end{center}
\caption{%
L-Regu, $n=100$. 
$\overline{\Phi}(\SR_{\, \CV}(m))$ as a function of $m$. 
\label{fig.variance.PhiSR.Li01Regu.n100}
}
\end{figure}
\begin{figure}
\begin{center}
\begin{minipage}[b]{\largfiguniq}
\includegraphics[width=\textwidth]{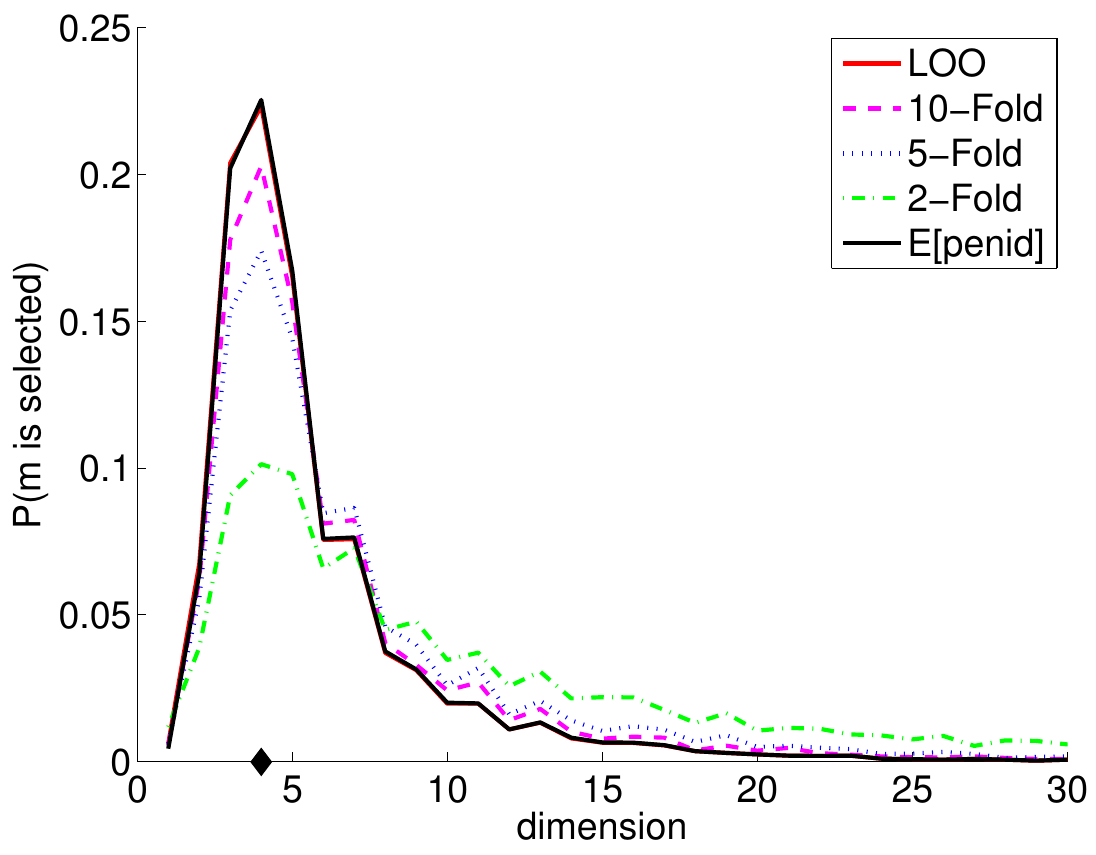}
\end{minipage}
\end{center}
\caption{%
L-Regu, $n=100$. 
$\Prob(\mh(\CV)=m)$ as a function of $m$.
\label{fig.variance.Pmh.Li01Regu.n100}
}
\end{figure}
\begin{figure}
\begin{center}
\begin{minipage}[b]{\largfiguniq}
\includegraphics[width=\textwidth]{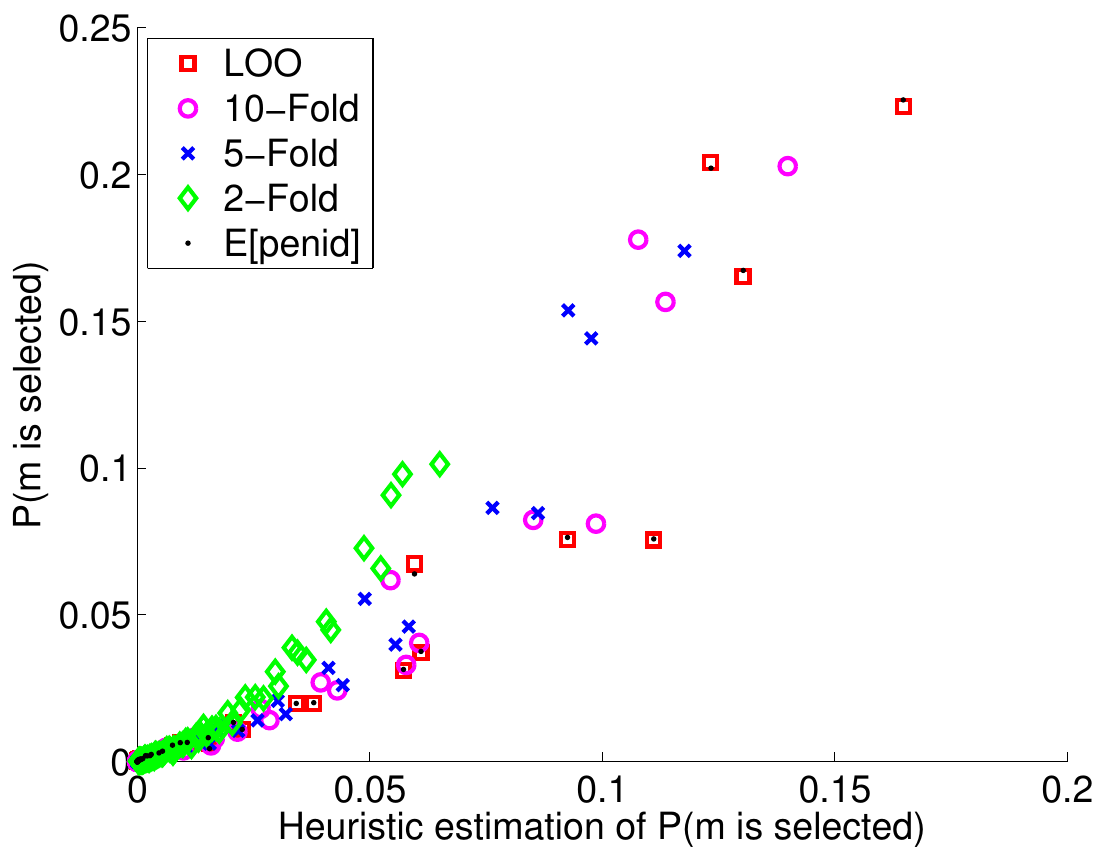}
\end{minipage}
\end{center}
\caption{%
L-Regu, $n=100$. 
$\Prob(\mh(\CV)=m)$ as a function of $\overline{\Phi}(\SR_{\, \CV}(m))$. 
\label{fig.variance.Pmh-vs-PhiSR.Li01Regu.n100}
}
\end{figure}
\begin{figure}
\begin{center}
\begin{minipage}[b]{\largfiguniq}
\includegraphics[width=\textwidth]{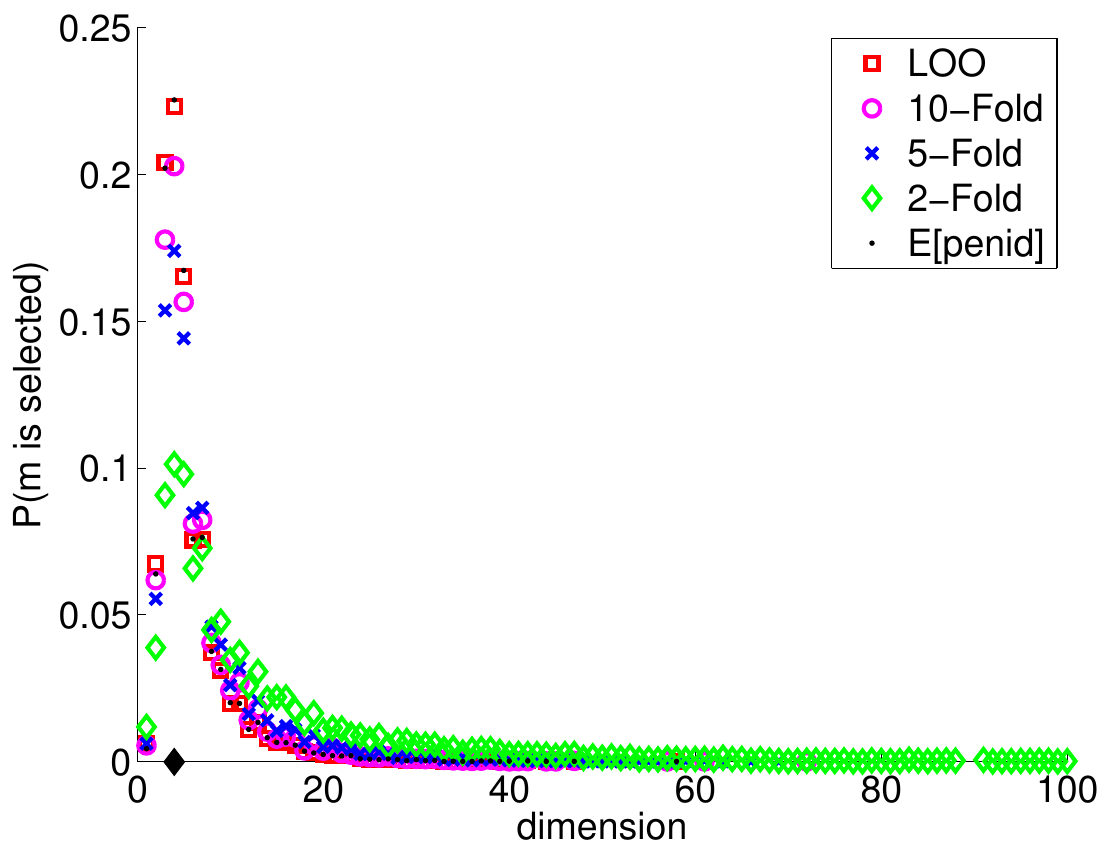}
\end{minipage}
\end{center}
\caption{%
L-Regu, $n=100$. 
$\Prob(\mh(\CV)=m)$ as a function of $m$.
\label{fig.variance.Pmh-nozoom.Li01Regu.n100}
}
\end{figure}
%
%
%
%
\begin{figure}
\begin{center}
\begin{minipage}[b]{.48\linewidth}
\includegraphics[width=\textwidth]{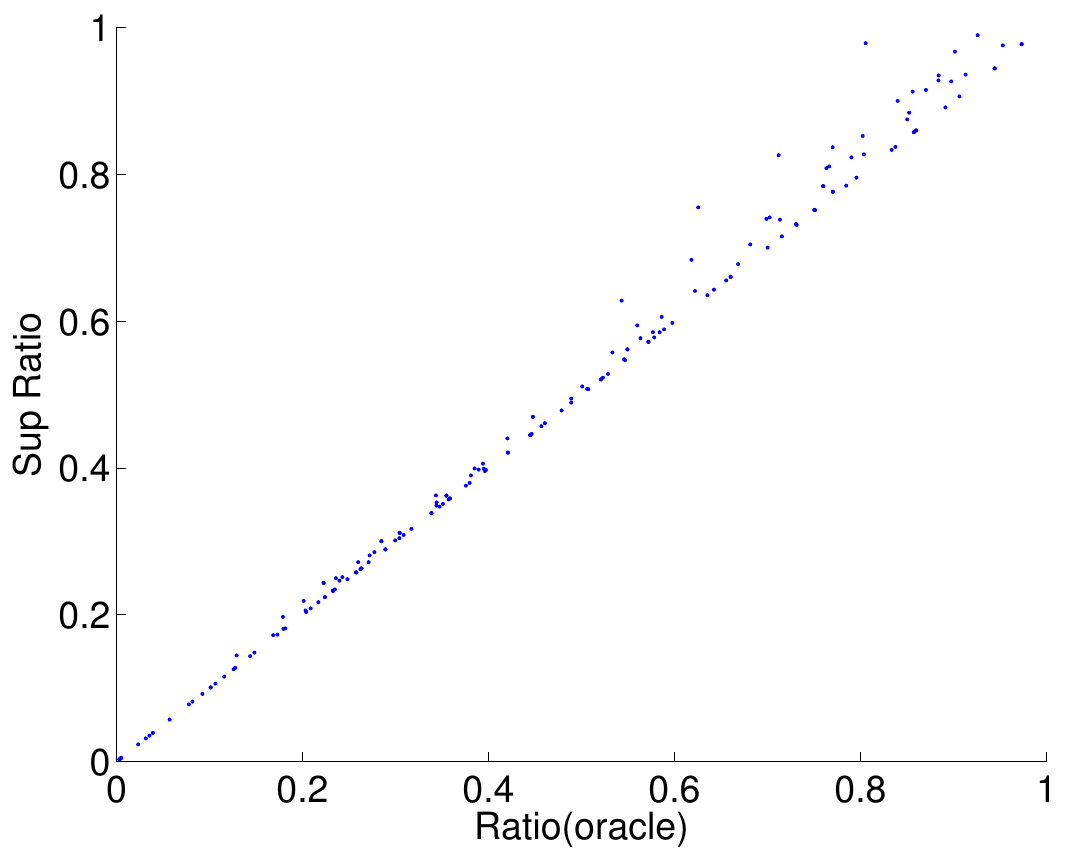}
\end{minipage}
\hspace{.025\linewidth}
\begin{minipage}[b]{.48\linewidth}
\includegraphics[width=\textwidth]{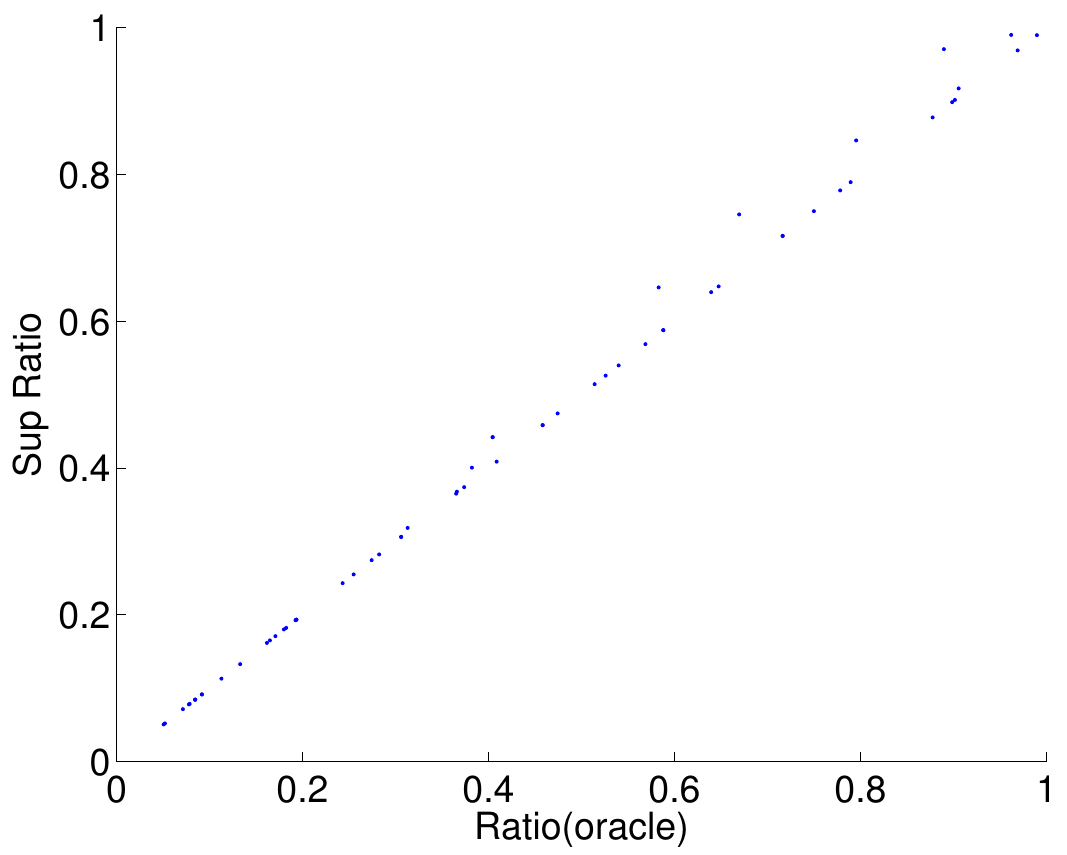}
\end{minipage}
\end{center}
\caption{%
$\SR(m)$ as a function of the ratio at $m^{\prime}=\mo$. 
$n=500$. 
Left: S-Regu. 
Right: L-Regu. 
\label{fig.variance.SR-vs-Rmo.SiG5-Li01Regu.n500}
}
\end{figure}

\begin{figure}
\begin{center}
\begin{minipage}[b]{\largfiguniq}
\includegraphics[width=\textwidth]{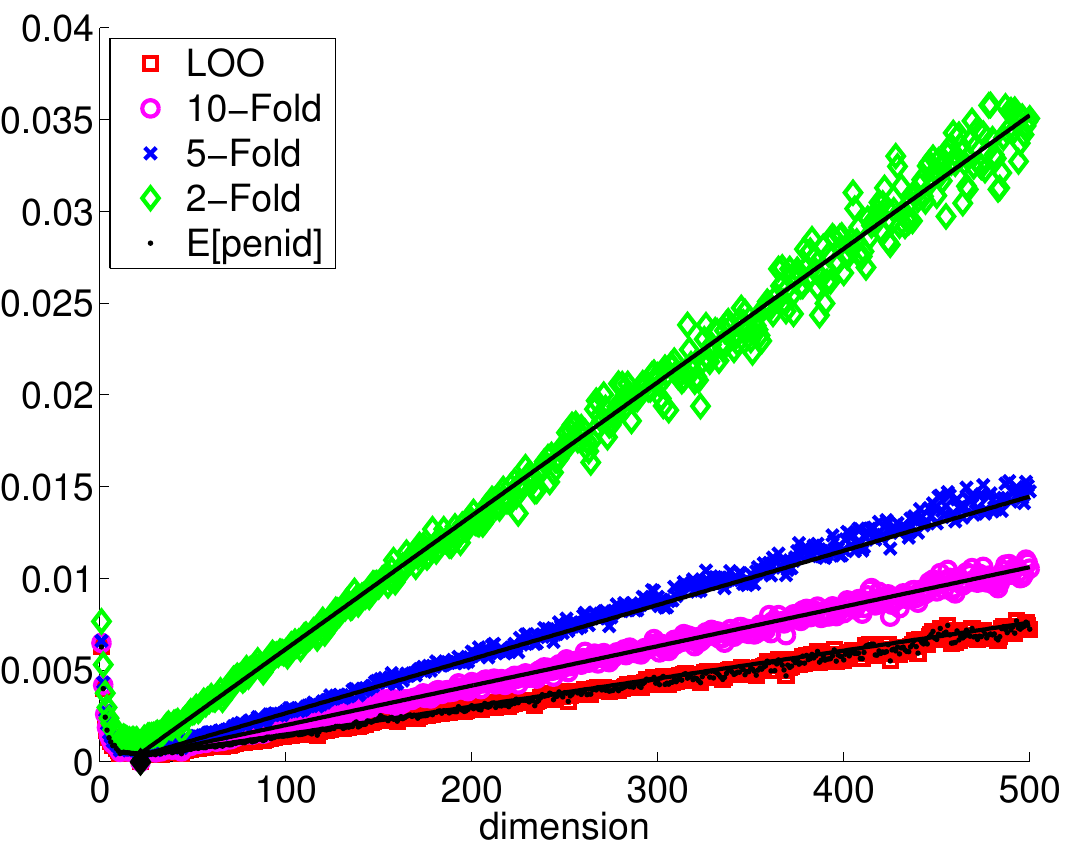}
\end{minipage}
\end{center}
\caption{%
S-Regu, $n=500$. 
$\var(\Delta_{\CV_V}(m,\mo))$ as a function of $m$. 
The black lines show the linear approximation $n^{-2} [ 75 (1 + \frac{0.52}{V-1} ) + 3.8 ( 1 + \frac{3.8}{V-1} ) (m-\mo) ]$ for $m > \mo = 22$. 
\label{fig.variance.var.SiG5Regu.n500}
}
\end{figure}
\begin{figure}
\begin{center}
\begin{minipage}[b]{\largfiguniq}
\includegraphics[width=\textwidth]{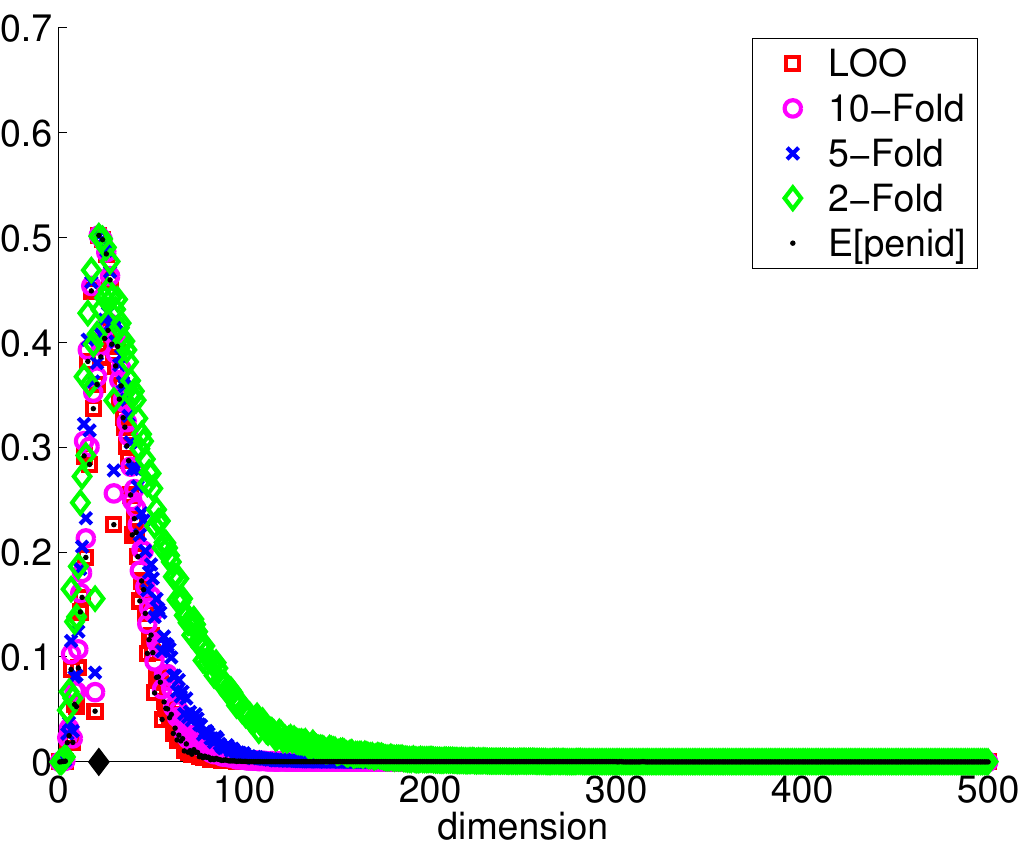}
\end{minipage}
\end{center}
\caption{%
S-Regu, $n=500$. 
$\overline{\Phi}(\SR_{\, \CV_V}(m))$ as a function of $m$. 
\label{fig.variance.PhiSR.SiG5Regu.n500}
}
\end{figure}
\begin{figure}
\begin{center}
\begin{minipage}[b]{\largfiguniq}
\includegraphics[width=\textwidth]{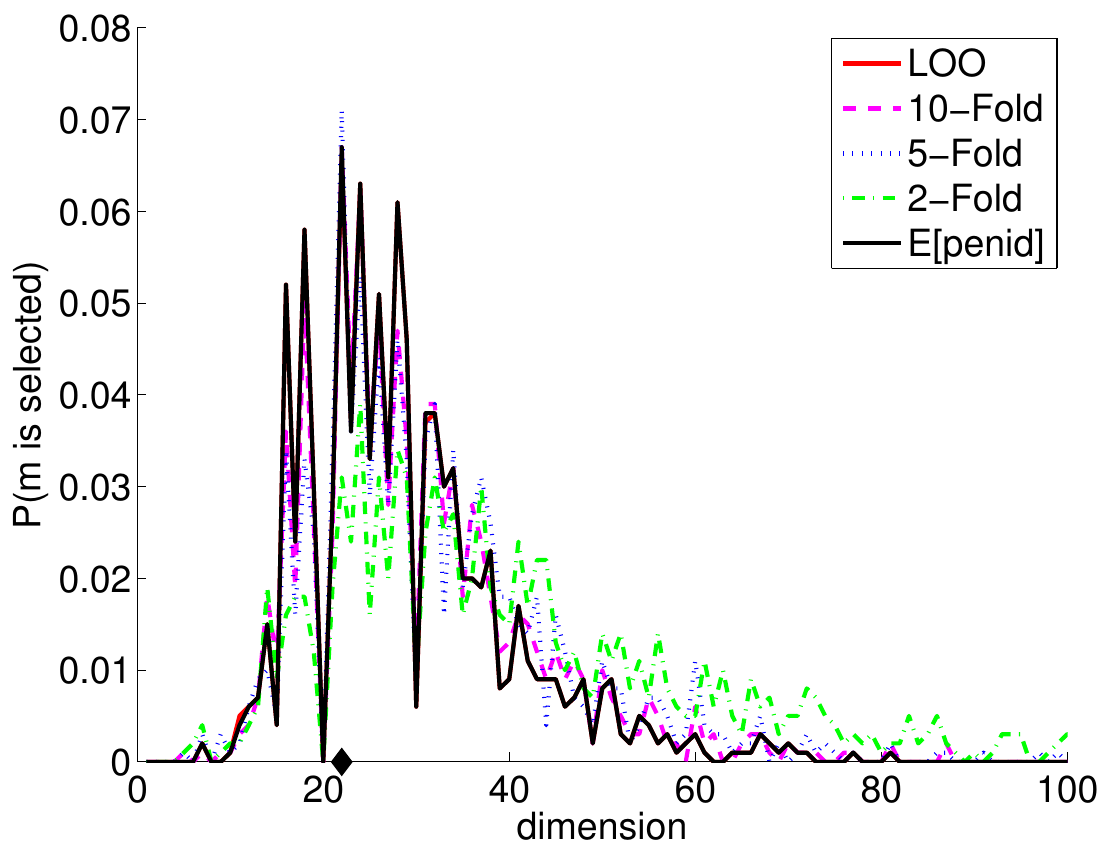}
\end{minipage}
\end{center}
\caption{%
S-Regu, $n=500$. 
$\Prob(\mh=m)$ as a function of $m$.
\label{fig.variance.Pmh.SiG5Regu.n500}
}
\end{figure}
\begin{figure}
\begin{center}
\begin{minipage}[b]{\largfiguniq}
\includegraphics[width=\textwidth]{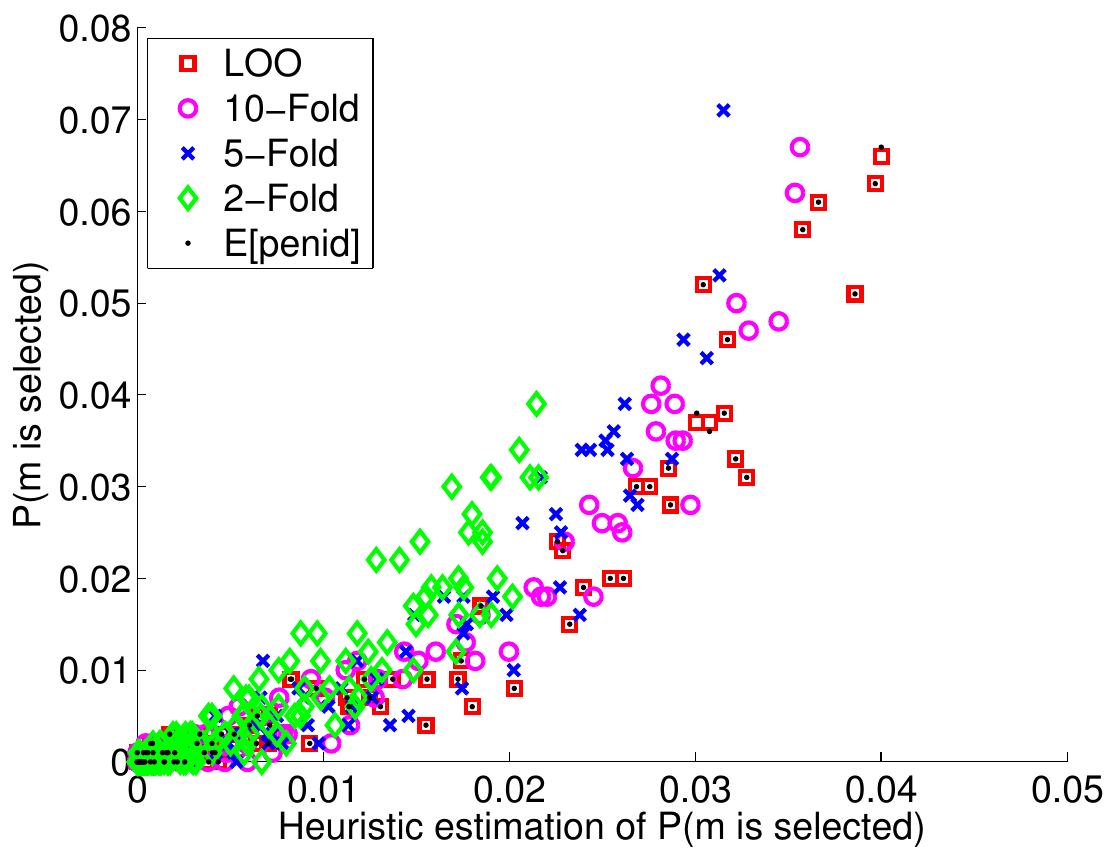}
\end{minipage}
\end{center}
\caption{%
S-Regu, $n=500$. 
$\Prob(\mh(\CV)=m)$ as a function of $\overline{\Phi}(\SR(m))$. 
\label{fig.variance.Pmh-vs-PhiSR.SiG5Regu.n500}
}
\end{figure}
\begin{figure}
\begin{center}
\begin{minipage}[b]{\largfiguniq}
\includegraphics[width=\textwidth]{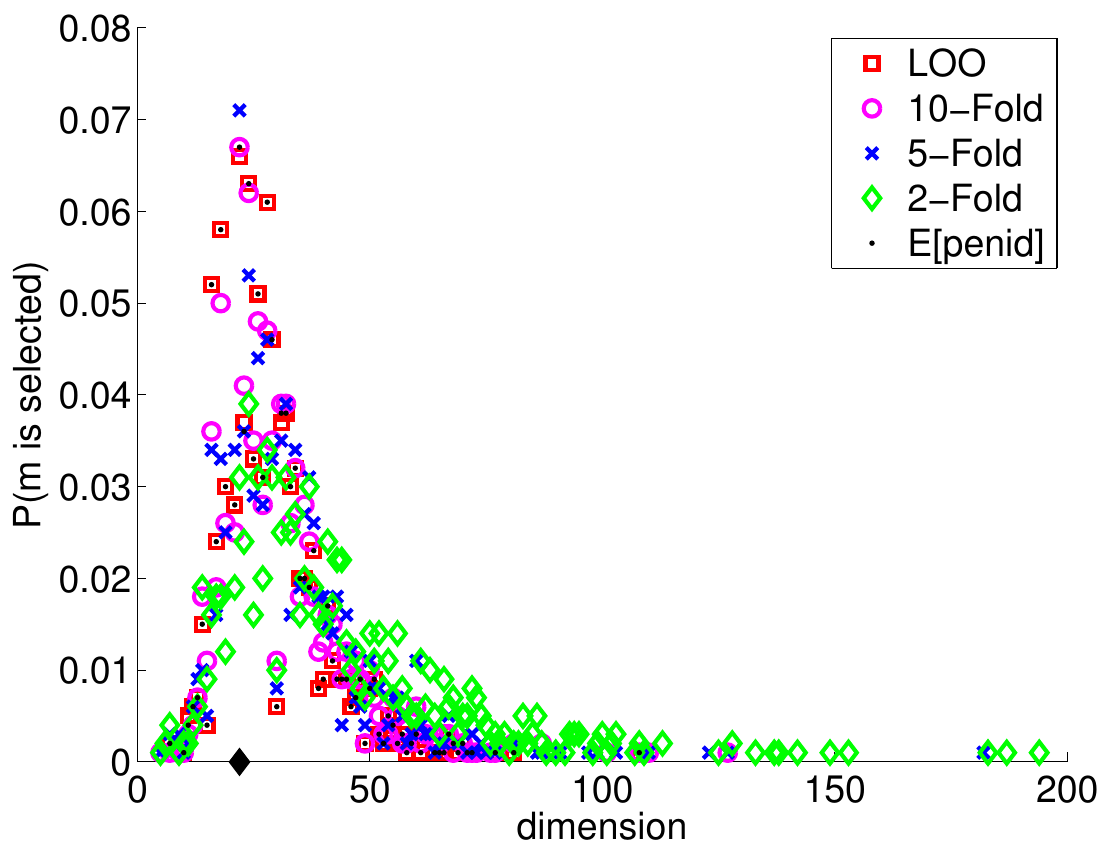}
\end{minipage}
\end{center}
\caption{%
S-Regu, $n=500$. 
$\Prob(\mh=m)$ as a function of $m$.
\label{fig.variance.Pmh-nozoom.SiG5Regu.n500}
}
\end{figure}
\begin{figure}
\begin{center}
\begin{minipage}[b]{\largfiguniq}
\includegraphics[width=\textwidth]{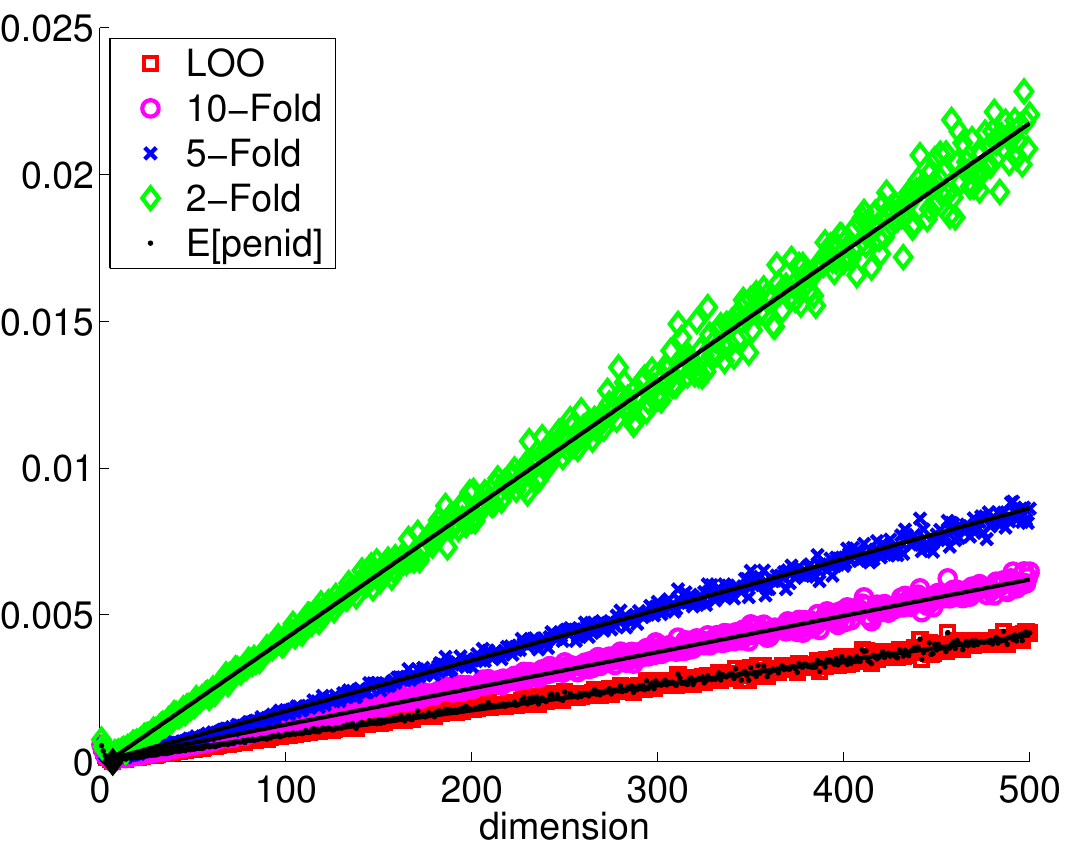}
\end{minipage}
\end{center}
\caption{%
L-Regu, $n=500$. 
$\var(\Delta_{\CV_V}(m,\mo))$ as a function of $m$. 
The black lines show the linear approximation $n^{-2} [ 28 (1 + \frac{0.06}{V-1} ) + 2.1 ( 1 + \frac{4.2 }{V-1} ) (m-\mo) ]$ for $m > \mo = 7$. 
\label{fig.variance.var.Li01Regu.n500}
}
\end{figure}
\begin{figure}
\begin{center}
\begin{minipage}[b]{\largfiguniq}
\includegraphics[width=\textwidth]{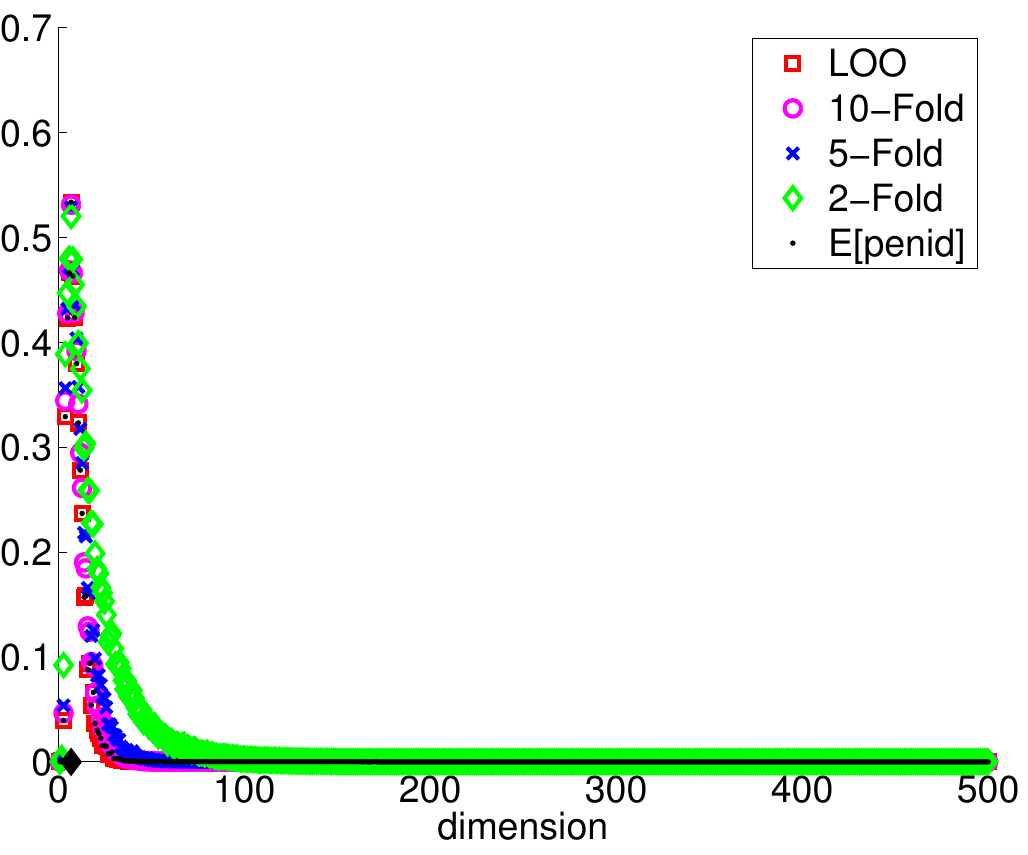}
\end{minipage}
\end{center}
\caption{%
L-Regu, $n=500$. 
$\overline{\Phi}(\SR_{\, \CV_V}(m))$ as a function of $m$.
\label{fig.variance.PhiSR.Li01Regu.n500}
}
\end{figure}
\begin{figure}
\begin{center}
\begin{minipage}[b]{\largfiguniq}
\includegraphics[width=\textwidth]{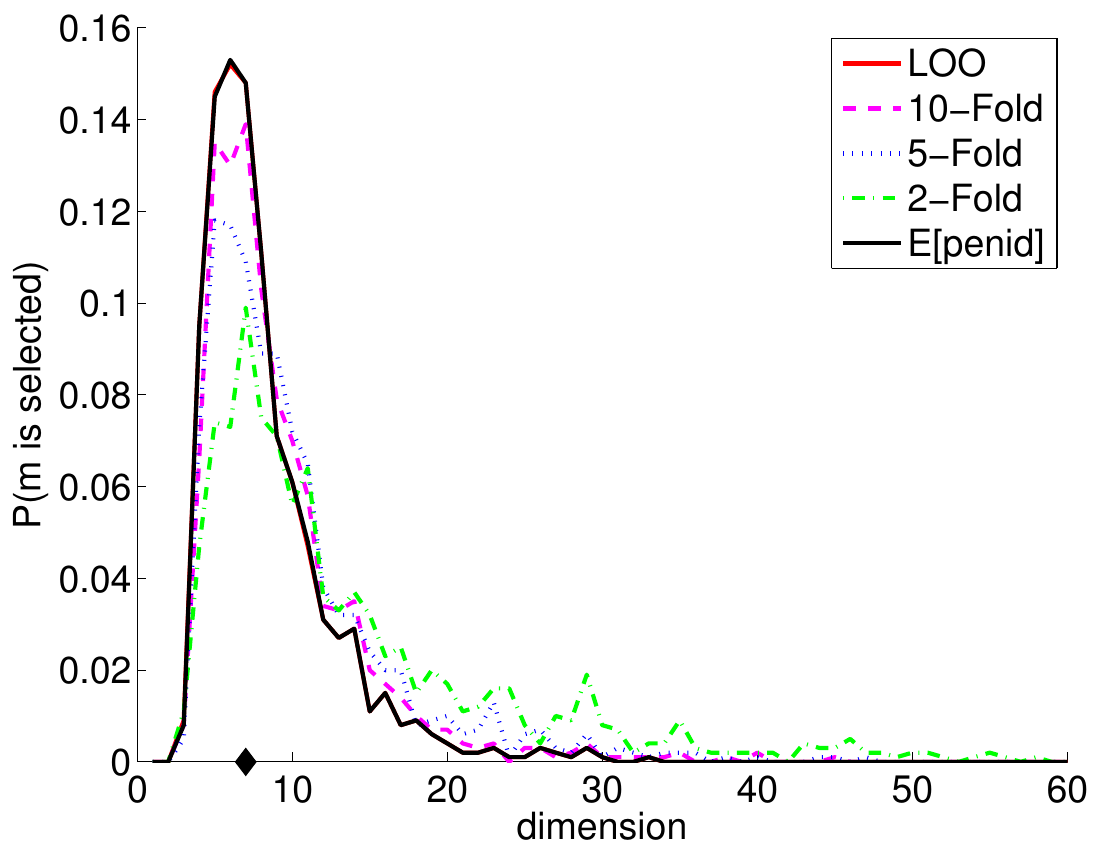}
\end{minipage}
\end{center}
\caption{%
L-Regu, $n=500$. 
$\Prob(\mh=m)$ as a function of $m$.
\label{fig.variance.Pmh.Li01Regu.n500}
}
\end{figure}
\begin{figure}
\begin{center}
\begin{minipage}[b]{\largfiguniq}
\includegraphics[width=\textwidth]{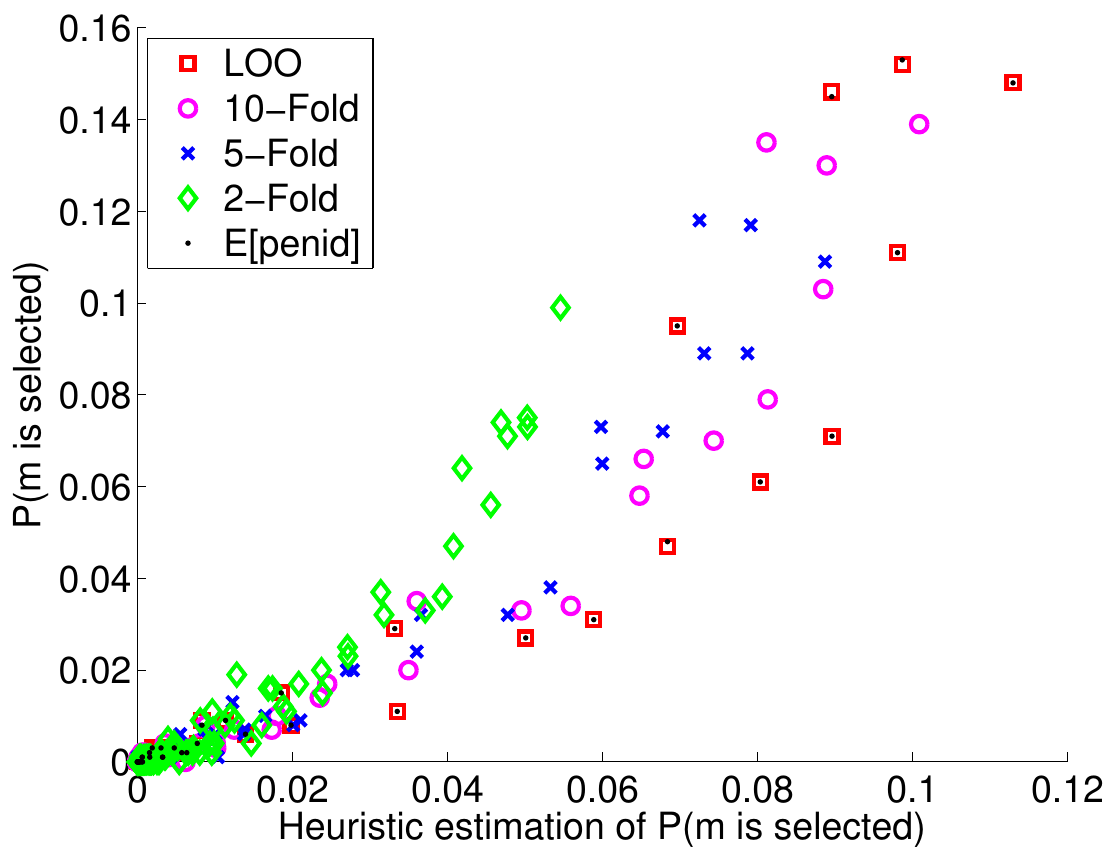}
\end{minipage}
\end{center}
\caption{%
L-Regu, $n=500$. 
$\Prob(\mh(\CV)=m)$ as a function of $\overline{\Phi}(\SR(m))$. 
\label{fig.variance.Pmh-vs-PhiSR.Li01Regu.n500}
}
\end{figure}
\begin{figure}
\begin{center}
\begin{minipage}[b]{\largfiguniq}
\includegraphics[width=\textwidth]{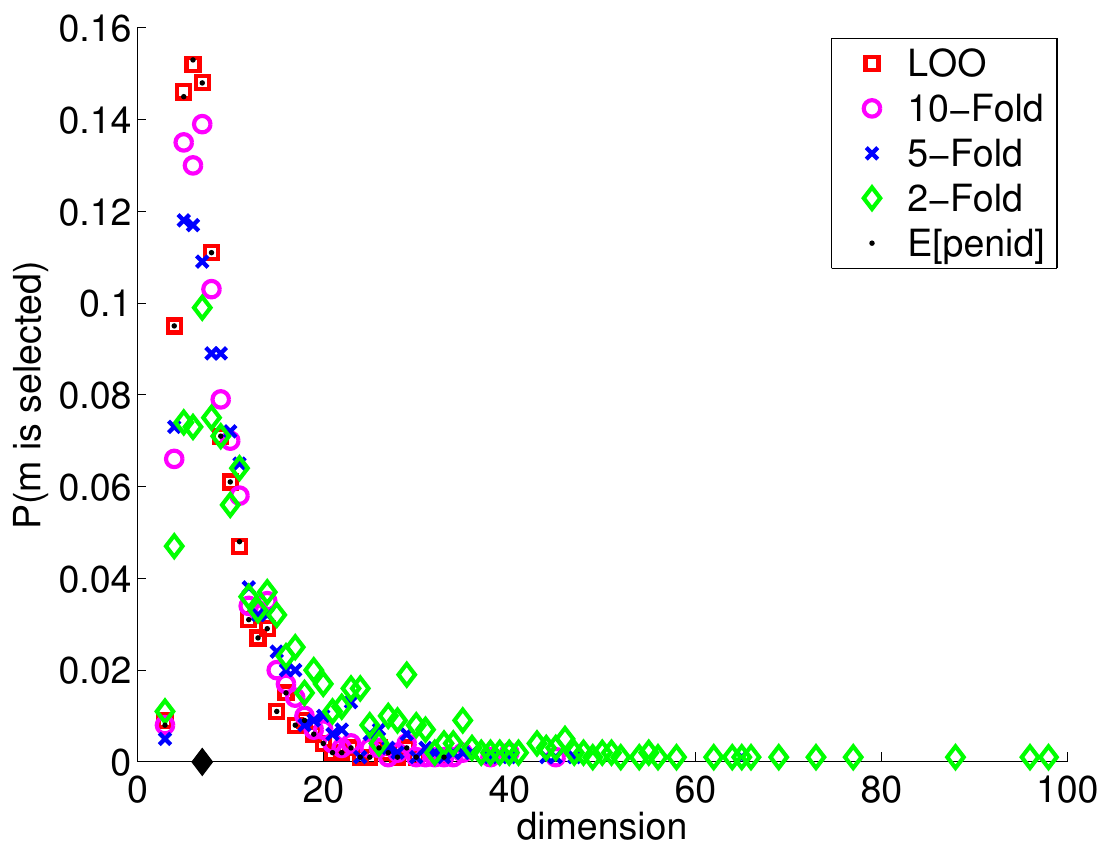}
\end{minipage}
\end{center}
\caption{%
L-Regu, $n=500$. 
$\Prob(\mh=m)$ as a function of $m$.
\label{fig.variance.Pmh-nozoom.Li01Regu.n500}
}
\end{figure}

\end{document}